\newcommand{\iinfty}{{\mathchoice
{\begin{minipage}{.15in}\includegraphics[width=.15in]{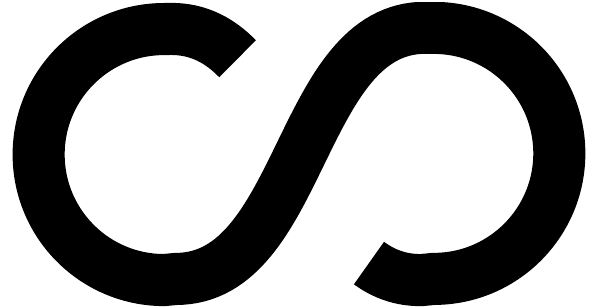}\end{minipage}}
{\begin{minipage}{.10in}\includegraphics[width=.10in]{infty2.pdf}\end{minipage}}
{\begin{minipage}{.08in}\includegraphics[width=.08in]{infty2.pdf}\end{minipage}}
{\begin{minipage}{.08in}\includegraphics[width=.08in]{infty2.pdf}\end{minipage}}
}}
\newcommand{\tree}[3]{\text{\Large {$ {\text{\normalsize $ #1$}}-\!\!\!<^{#2}_{#3}$}}}
\newcommand{\dash}{\!-\!\!\!\!\!-\,} 
\renewcommand{\int}{\operatorname{int}} 
\newcommand{\Ker}{\operatorname{Ker}} 
\newcommand{\Cok}{\operatorname{Cok}} 
\newcommand{\id}{\operatorname{id}}
\newcommand{\Arf}{\mathrm{Arf}}
\newcommand{\im}{\operatorname{Im}}
\newcommand\W{{\sf W}} 
\newcommand\sL{{\sf L}}
\newcommand\sD{{\sf D}}
\newcommand\sK{{\sf K}}
\newcommand{\z}{\mathbb Z_2}
\newcommand{\N}{\mathbb{N}} 
\newcommand{\bG}{\mathbb{G}} 
\newcommand{\bW}{\mathbb{W}} 
\newcommand{\bL}{\mathbb{L}}
\newcommand{\cT}{\mathcal{T}} 
\newcommand{\cV}{\mathcal{V}} 
\newcommand{\cW}{\mathcal{W}} 
\newcommand{\SL}{\operatorname{SL}}
\newcommand{\sra}{\twoheadrightarrow}
\newcommand{\sW}{\W}
\title{Whitney tower concordance of classical links} 
\author{James Conant}
\address{Dept. of Mathematics, University of Tennessee, Knoxville, TN, 37996}
\email{jconant@math.utk.edu}
\urladdr{}
\author{Rob Schneiderman}
\address{Dept. of Mathematics and Computer Science, Lehman College, City University of New York, Bronx, NY 10468}
\email{robert.schneiderman@lehman.cuny.edu}
\urladdr{}
\author{Peter Teichner}
\address{Dept. of Mathematics, University of California, Berkeley, CA and} 
\address{Max-Planck Institut f\"ur Mathematik, Bonn, Germany} 
\email{teichner@mac.com}
\urladdr{}
\newtheorem{thm}{Theorem}[section]    
\newtheorem{lem}[thm]{Lemma}          
\newtheorem{prop}[thm]{Proposition}
\newtheorem{cor}[thm]{Corollary}
\newtheorem{conj}[thm]{Conjecture}
\theoremstyle{definition}
\newtheorem{defn}[thm]{Definition}    
\newtheorem{rem}{Remark} [section]            
\begin{document}

\begin{abstract} 
This paper computes \emph{Whitney tower} filtrations of classical links. Whitney towers consist of iterated stages of Whitney disks and allow a tree-valued intersection theory, showing that the associated graded quotients of the filtration are finitely generated abelian groups. 
\emph{Twisted Whitney towers} are studied and a new quadratic refinement of the intersection theory is introduced, measuring Whitney disk framing obstructions.  
It is shown that the filtrations are completely classified by Milnor invariants together with new \emph{higher-order Sato-Levine} and \emph{higher-order Arf invariants}, which are obstructions to framing a twisted Whitney tower in the $4$--ball bounded by a link in the $3$--sphere. 
Applications include computation of the \emph{grope filtration}, and new geometric characterizations of Milnor's link invariants.

\end{abstract}

\keywords{Whitney towers, gropes, link 
concordance, trees, higher-order Arf invariants, higher-order Sato-Levine invariants, twisted Whitney disks} 

\maketitle

\section{Introduction}\label{sec:intro}
The general failure of the {\em Whitney move} is one reason why $4$-dimensional manifolds are notoriously difficult to understand. A successful Whitney move is shown in Figure~\ref{fig:canceling-pair-and-whitney-disk-and-whitney-move}:
\begin{figure}[h]
        \centerline{\includegraphics[scale=.35]{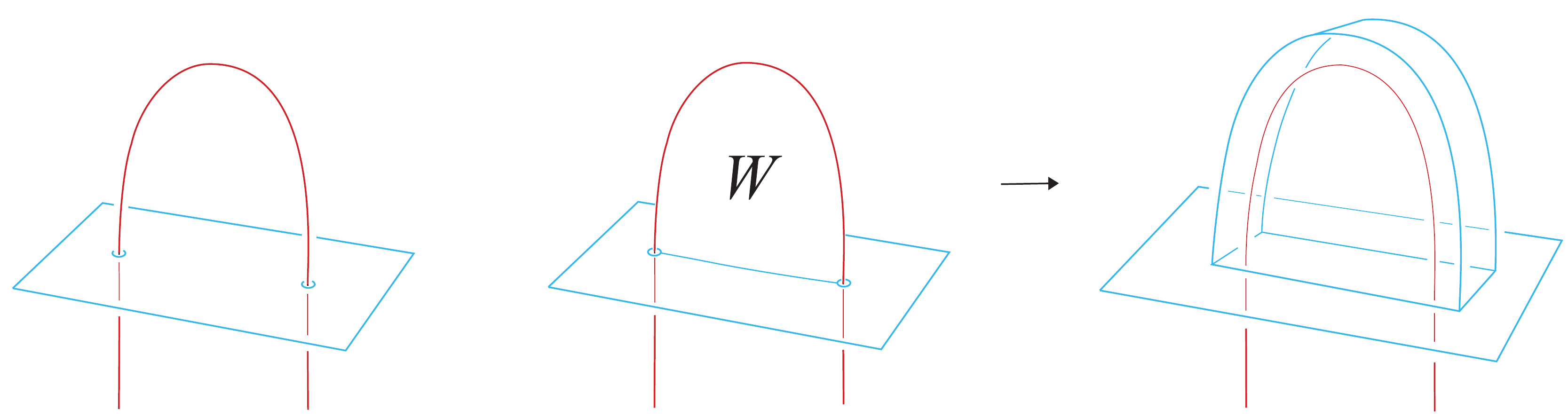}}
        \caption{Left:~A canceling pair of transverse intersections between two local sheets of surfaces in a $3$-dimensional slice of $4$--space. The translucent horizontal blue sheet appears entirely in this 3-dimensional `present', and the red sheet appears as an arc which is assumed to extend into `past' and `future'.
         Middle:~A Whitney disk $W$ pairing the intersections. Right:~A Whitney move guided by $W$ eliminates the intersection pair, without creating any new intersections.}
        \label{fig:canceling-pair-and-whitney-disk-and-whitney-move}
\end{figure}

In higher dimensions, this move is the key to Whitney's strong embedding theorem \cite{Wh} as well as the s-cobordism theorem and the surgery exact sequence. In each case, a pair of intersection points between two submanifolds is removed by a homotopy along an embedded Whitney disk $W$ as in Figure~\ref{fig:canceling-pair-and-whitney-disk-and-whitney-move}. Whitney disks can be found by controlling the intersections between the submanifolds algebraically over the fundamental group of the ambient manifold, and in dimensions greater than four can be assumed by general position to be embedded and disjoint from the relevant submanifolds. 

In four dimensions, generic intersections between Whitney disks and surface sheets can obstruct a successful Whitney move: 
Figure~\ref{fig:unsuccessful-W-move-higher-order-ints}(a) shows how 
such an intersection point leads to an \emph{un}successful Whitney move. 
\begin{figure}[h]
\subfloat[]{\includegraphics[width=3.2in]{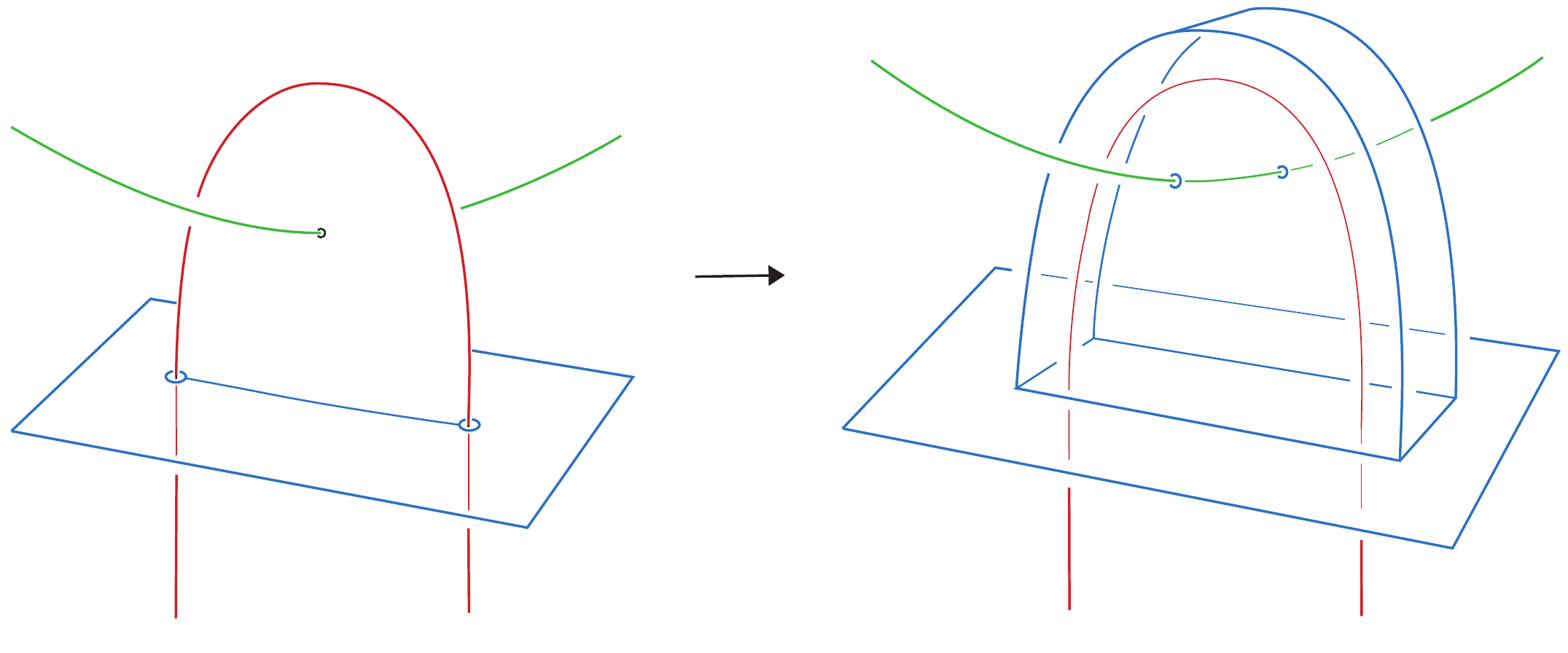}}\hspace{3em}
\subfloat[]{\includegraphics[width=1.7in]{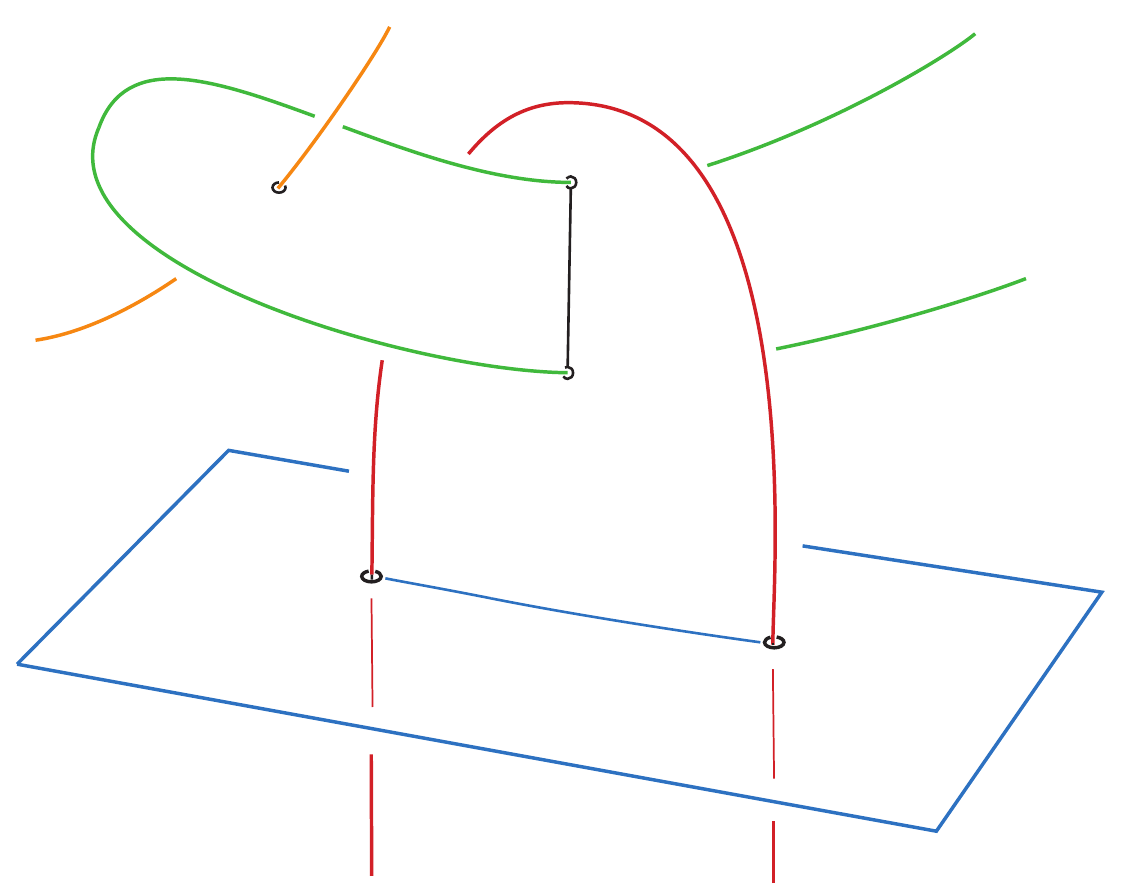}}
\caption{(a) This Whitney move eliminates the previous pair of intersections but creates a new pair of intersections between the 
translucent blue sheet and the sheet described by the green arc. (b) Higher-order intersections and Whitney disks.
All arcs are assumed to extend into past and future, describing local sheets of surfaces in a $4$--ball.}\label{fig:unsuccessful-W-move-higher-order-ints}
\end{figure}

In a $4$--manifold, a Whitney disk $W$ can always be made to be embedded (and \emph{framed}, see section~\ref{subsec:twisted-w-disks}) at the cost of creating
intersections with the surface sheets paired by $W$, but the converse is not always possible (as explained below in Section~\ref{sec:w-towers}). It is natural to try to eliminate such {\em higher-order} intersections using (higher-order) Whitney moves, and this attempt leads to the notion of a \emph{Whitney tower} (Definition~\ref{def:framed-tower}), constructed on immersed surfaces in a $4$--manifold by pairing up as many intersection points as possible with iterated Whitney disks (see Figure~\ref{fig:W-tower-and-trees-and-split}(a)). 

\begin{figure}[h]
\subfloat[]{\includegraphics[width=2in]{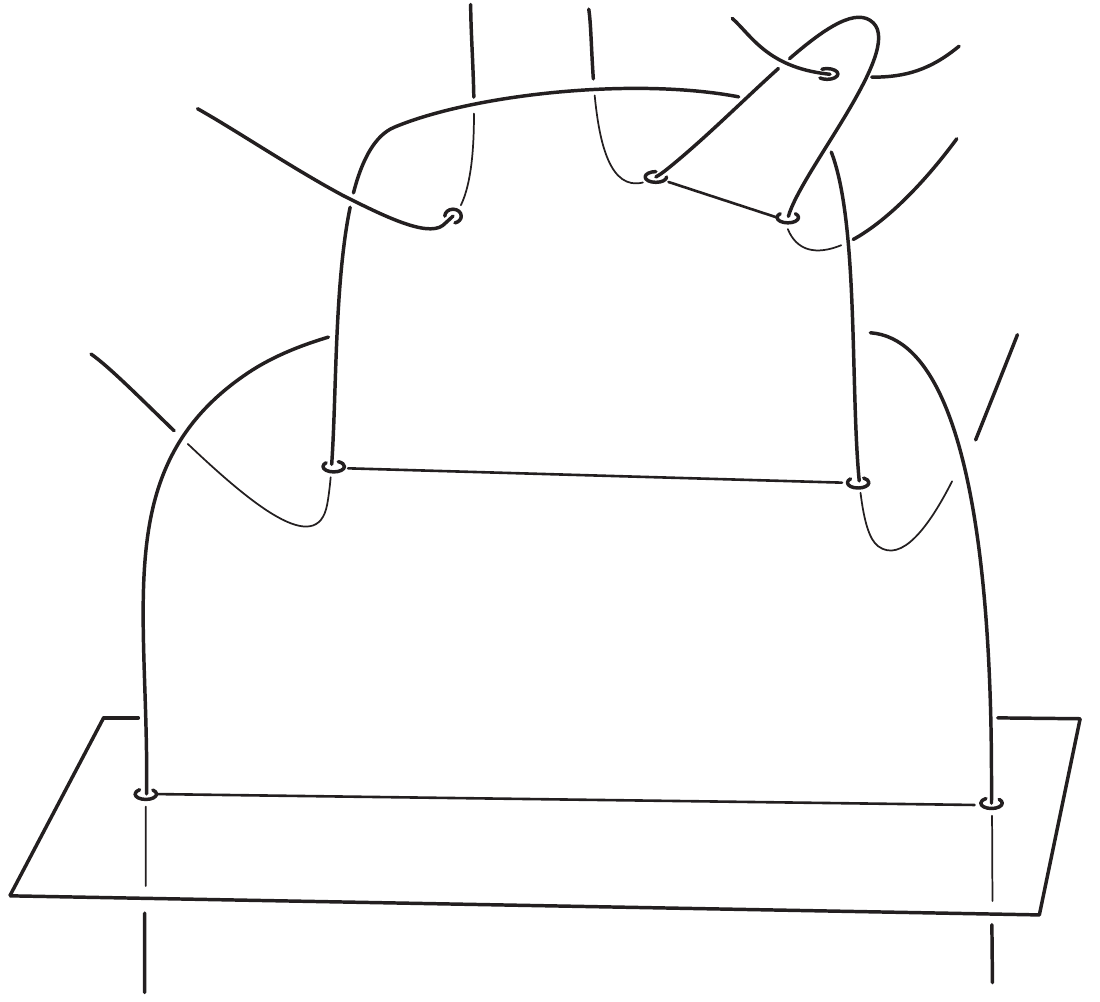}}\hspace{6em}
\subfloat[]{\includegraphics[width=2in]{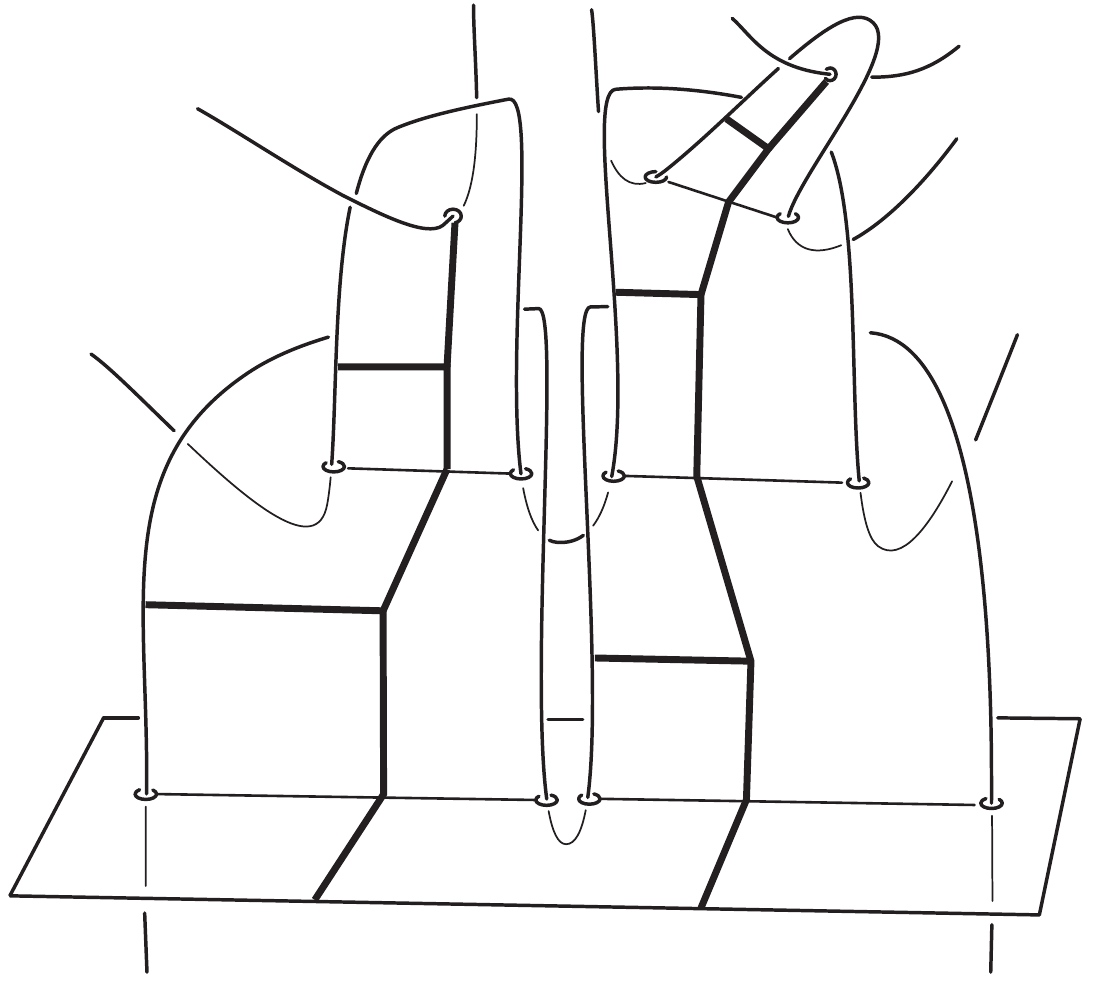}}
\caption{(a) Part of a Whitney tower $\cW$. (b) The unpaired intersections determine trivalent trees, 
and $\cW$ can be `split' so that all singularities are contained in neighborhoods of embeddings of these trees.}\label{fig:W-tower-and-trees-and-split}
\end{figure}

A Whitney tower has a fundamental complexity called {\em order}, which roughly corresponds to the
number of layers of Whitney disks,
and in \cite{CST,ST2} we introduced an accompanying obstruction theory that will be used in this paper to study link concordance. The main idea is that the chaos of multiple Whitney disks and intersection points
can be organized by associating certain unitrivalent trees to the unpaired intersections 
(Figure~\ref{fig:W-tower-and-trees-and-split}), and summing these trees defines an intersection invariant whose vanishing is sufficient to raise the order of the Whitney tower by finding another layer of Whitney disks. By taking the values of this invariant in an appropriate target group, we will also be able to measure
the failure of the Whitney move by determining when the order of a Whitney tower can \emph{not} be raised.

One reason that classical links are especially relevant to Whitney towers comes from the observation that the boundaries of the embedded disk-sheets (blue, red and green) in the $4$--ball described by Figure~\ref{fig:unsuccessful-W-move-higher-order-ints}(a) form the Borromean rings in the boundary the $3$--sphere (see e.g.~Figure~\ref{fig:Borromean-Bing-Hopf} in Section~\ref{sec:realization-maps}). Figure~\ref{fig:W-move-preserves-tree} shows how the trivalent tree associated to the unpaired intersection in the Whitney disk is preserved under the Whitney move, hinting at the fact that this tree represents an obstruction to the existence of disjointly embedded disks bounded by the Borromean rings.
\begin{figure}[h]
\centerline{\includegraphics[scale=.45]{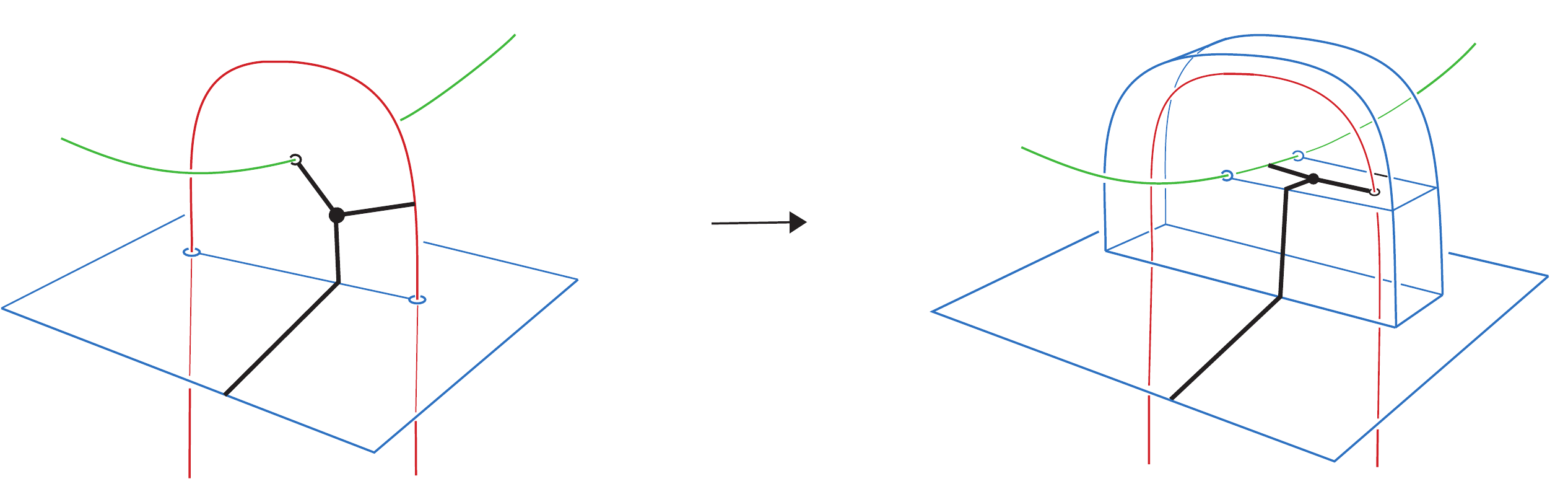}}
         \caption{A Whitney move preserves the associated tree.}
         \label{fig:W-move-preserves-tree}
\end{figure}

In fact, 
any Whitney tower can be `split' as in Figure~\ref{fig:W-tower-and-trees-and-split}(b) so that all singularities are contained in $4$--ball neighborhoods of the unitrivalent trees,
each containing embedded disks bounded by links which are iterated Bing-doubles of unknots. 
While the local ``tree-preserving'' property of Figure~\ref{fig:W-move-preserves-tree} holds for a Whitney move on any Whitney disk containing an unpaired intersection, it turns out that if a Whitney disk $W$ contains a boundary arc of a higher-order Whitney disk then a Whitney move on $W$ locally converts the tree to
two new trees, expressing a Jacobi identity (also called an IHX relation) \cite[Lem.7.2]{S3}. Thus, interpreting these trees as well-defined obstructions to raising the order of a Whitney tower requires some care, see Definition~\ref{def:Tau}.

On the other hand, if a link bounds immersed disks supporting a Whitney tower with \emph{no} unpaired intersections, then doing Whitney moves on all the Whitney disks
leads to disjointly embedded \emph{slice disks} bounded by the link. Motivated by the notion that a Whitney tower of larger order is in some sense a ``better approximation'' of slice disks,
the main goal of this paper is to provide an answer to the following question for any given $n$: ``Which links in the $3$--sphere bound an order $n$ Whitney tower in the $4$--ball?''

The rest of this introduction will mostly be concerned with outlining the answer to this question which is roughly summarized by the following theorem:
\begin {thm}\label{thm:intro-classification}
A link bounds a Whitney tower of order $n$ if and only if its Milnor invariants, higher-order Sato-Levine invariants and higher-order Arf invariants vanish up to order $n$. Compare Corollary~\ref{cor:mu-sl-arf-classify}.
\end {thm}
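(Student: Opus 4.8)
The plan is to deduce Theorem~\ref{thm:intro-classification} from a precise computation of the associated graded groups $\mathbb{W}_n$ of the Whitney tower filtration on $m$--component links, carried out by induction on $n$. Reformulated, the claim is that $\mathbb{W}_n$ --- the group of links bounding an order $n$ Whitney tower, taken modulo those bounding an order $n+1$ one --- is exactly the intersection of the kernels of all the Milnor, higher-order Sato--Levine and higher-order Arf invariants of order $<n$. One inclusion is essentially formal: if $L$ bounds an order $n$ Whitney tower $\cW$, then $\cW$ is in particular an order $k$ Whitney tower for each $k<n$ with \emph{no} unpaired intersections of order $k$, so every intersection-theoretic invariant of $L$ of order $<n$ (all of which are extracted from order $k$ Whitney towers) vanishes. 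The substance is the reverse inclusion, and this is where the rest of the paper is used.

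First I would install the intersection theory in its twisted form. To an order $n$ twisted Whitney tower $\cW$ one assigns $\tau_n(\cW)$: the signed sum of the unitrivalent trees carried by the unpaired order $n$ intersections, together with the $\infty$--decorated trees recording the (non)triviality of the framings of the top-order Whitney disks, landing in the combinatorial group $\cT^{\le n}_n$ of order $n$ trees modulo antisymmetry, the Jacobi/IHX relation (forced by Whitney moves across boundary arcs of higher Whitney disks, as hinted at by Figure~\ref{fig:W-move-preserves-tree}), and the framing relations supplied by the new quadratic refinement. The key geometric input, from \cite{CST,ST2} together with the twisted extension developed in the body, is the \emph{order-raising theorem}: if $\tau_n(\cW)=0$ in $\cT^{\le n}_n$ then $\partial\cW$ bounds an order $n+1$ twisted Whitney tower. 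Dually, one needs the \emph{realization epimorphism} $\cT^{\le n}_n\twoheadrightarrow\mathbb{W}_n$, built from the ``split'' picture of Figure~\ref{fig:W-tower-and-trees-and-split}(b) in which a Whitney tower is assembled from standard models supported near its trees, whose boundaries are iterated Bing doubles of unknots.

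With these in hand the inductive step runs as follows. Assuming $L$ has vanishing invariants of order $\le n-1$, induction gives an order $n-1$ Whitney tower $\cW$ bounded by $L$; its class $\tau_{n-1}(\cW)$ is pinned down only modulo the indeterminacy coming from other choices of $\cW$, and analyzing that indeterminacy (this is exactly what the twisted framework and the extra framing relations are designed to control) identifies $\mathbb{W}_{n-1}$ with $\cT^{\le n-1}_{n-1}$ modulo a subgroup, and shows that the order $n-1$ invariants of $L$ --- Milnor (via the ``sum over roots'' map to the appropriate degree $n+1$ piece of the free Lie algebra), higher Sato--Levine, and, in the orders where $\mathbb{W}_{n-1}$ has $\z$--torsion not seen by trees, the higher-order Arf invariants --- jointly detect this class. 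Hence the hypothesis that these invariants vanish means $\tau_{n-1}(\cW)$ can be arranged to be zero (after modifying $\cW$ using the realization epimorphism and band-summing), and the order-raising theorem then produces an order $n$ Whitney tower bounded by $L$.

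I expect the main obstacle to be precisely the two intertwined points hidden in the previous paragraph: \textbf{(i)} proving that $\tau_{n-1}$, correctly interpreted, descends to a genuine invariant of the link --- getting the relation set in $\cT^{\le n}$ exactly right, neither too coarse nor too fine, is delicate and is the reason one must pass from framed to twisted Whitney towers and introduce the quadratic refinement of the intersection pairing; and \textbf{(ii)} computing $\mathbb{W}_n$ on the nose, including its $2$--torsion, where one must show that the tree-valued invariant is provably insufficient and that the higher-order Arf invariants are the correct (and complete) supplementary $\z$--valued obstructions. The remaining ingredients --- the inductive packaging, the identification of the ``root'' map on trees with Milnor's invariants, and the requisite free Lie algebra computations (Levine-type exact sequences, torsion calculations) --- are comparatively routine once (i) and (ii) are settled, and Theorem~\ref{thm:intro-classification} follows by assembling them exactly as in Corollary~\ref{cor:mu-sl-arf-classify}.
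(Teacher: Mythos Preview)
Your outline is essentially the paper's own strategy: realization epimorphisms plus the order-raising theorem reduce the question to computing the associated graded $\W_n$, and the computation is carried out via the twisted intersection theory and the Levine Conjecture, with the final assembly exactly as in Corollary~\ref{cor:mu-sl-arf-classify}. Two clarifications are worth making.

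First, the paper's route is more specifically structured than your sketch suggests: it classifies the \emph{twisted} filtration $\W^\iinfty_n$ first (by Milnor invariants and higher-order Arf invariants alone, Corollary~\ref{cor:intro-mu-arf-classify-twisted}), and only then derives the framed classification via the four-term exact sequence of Theorem~\ref{thm:exact-sequence} relating $\W_{2k}\to\W^\iinfty_{2k}\to\W_{2k-1}\to\W^\iinfty_{2k-1}$. The higher-order Sato--Levine invariants arise precisely as the obstruction to framing a twisted tower, i.e.\ they detect the kernel $\sK^\mu_{2k-1}=\Ker(\W_{2k-1}\to\W^\iinfty_{2k-1})$. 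Your proposal folds this two-step passage into a single induction, which obscures where each invariant actually lives.

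Second, your point (ii) slightly misstates the role of the Arf invariants. They do not need to be \emph{shown} complete: by Definition~\ref{def:Arf-k} each $\Arf_k$ is the inverse of an induced isomorphism on a quotient of $\Z_2\otimes\sL_k$, so it is tautologically a complete invariant of $\sK^\iinfty_{4k-2}$. What remains open (Conjecture~\ref{conj:Arf-k}) is whether $\Arf_k$ is \emph{nontrivial} for $k>1$; the classification in Theorem~\ref{thm:intro-classification} holds regardless.

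Finally, a small slip in your first paragraph: the reformulation ``$\mathbb{W}_n$ \dots\ is exactly the intersection of the kernels'' conflates the associated graded $\W_n$ with the filtration level $\bW_n$. The correct statement is that $\bW_n$ equals the common zero-set of the invariants of order $<n$, or equivalently that the order~$n$ invariants jointly embed $\W_n$.
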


We work in the smooth oriented category, with orientations frequently suppressed from notation. Throughout this paper identical statements hold in the locally flat topological category, as explained in Remark~\ref{rem:locally-flat-and-smooth}. 


\subsection{The Whitney tower filtration of classical links}\label{subsec:intro-W-tower-filtration}

Referring to Section~\ref{sec:w-towers} for a precise definition of Whitney tower, including
detailed discussion of \emph{framing} requirements,
we consider the (framed) \emph{Whitney tower filtration}
$$
\dots \subseteq \bW_{3} \subseteq \bW_{2} \subseteq \bW_{1} \subseteq  \bW_{0} \subseteq  \bL
$$
on the set $\bL=\bL(m)$ of $m$-component framed links in $S^3$. Here $\bW_n=\bW_n(m)$ is the subset of those framed links that bound 
order $n$ (framed) Whitney towers in $B^4$. In section~\ref{subsec:intro-compare-other-towers} we compare this filtration to other known iterated disk constructions. 

Throughout this paper the number $m$ of link components
will frequently be suppressed from notation because it is fixed in most constructions.

The intersection of all $\bW_n$ contains all slice links because a $2$--disk is a Whitney tower of arbitrarily large order. In fact, this filtration factors through link concordance; and we shall use this fact implicitly in various places. 



Whitney towers built on immersed annuli connecting link components in $S^3\times I$ induce equivalence relations of {\em Whitney tower concordance} on links. The quotient $\W_n$ of 
$\bW_n$ modulo the equivalence relation of Whitney tower concordance of order $n+1$ is the {\em associated graded} of our filtration in the sense that $L\in \bW_{n+1}$ if and only if $L\in\bW_n$ and $[L]=0\in\W_n$. (We will show in Section~\ref{sec:realization-maps} that connected sum leads to a well defined group structure on $\W_n$, so $0$ corresponds to the unlink.)



As a first step towards our goal of describing a classification of this filtration and the associated $\W_n$, we recall (e.g.~from \cite{CST,CST3,L3,ST2}) a combinatorially defined group which is a natural target for the 
intersection invariant associated to the obstruction theory for Whitney towers.

\begin{defn}\label{def:Tau}
 In this paper, a {\em tree} will always refer to a finite oriented unitrivalent tree, where the {\em orientation} of a tree is given by cyclic orderings of the adjacent edges around each trivalent vertex. The \emph{order} of a tree is the number of trivalent vertices.
Univalent vertices will usually be labeled from the set $\{1,2,3,\ldots,m\}$ indexing the link components, and we consider trees up to isomorphisms preserving these labelings.
Define $\cT=\cT(m)$ to be the free abelian group on such trees, modulo the antisymmetry (AS) and Jacobi (IHX) relations shown in Figure~\ref{fig:ASandIHXtree-relations}. 
\begin{figure}[h]
\centerline{\includegraphics[scale=.65]{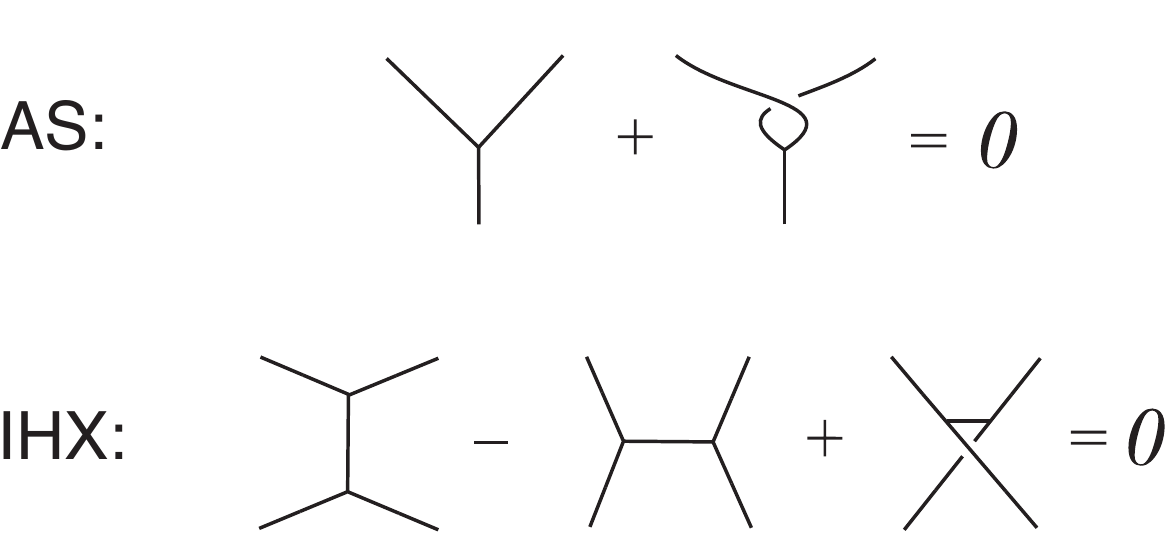}}
         \caption{Local pictures of the \emph{antisymmetry} (AS) and \emph{Jacobi} (IHX) relations
         in $\cT$. Here all trivalent orientations are induced from a fixed orientation of the plane, and univalent vertices possibly extend to subtrees which are fixed in each equation.}
         \label{fig:ASandIHXtree-relations}
\end{figure}

Since the AS and IHX relations are homogeneous with respect to order, $\cT$ inherits a grading $\cT=\oplus_n\cT_n$, where $\cT_n=\cT_n(m)$ is the free abelian group on order $n$ trees, modulo AS and IHX relations.
\end{defn}

As recalled below in Section~\ref{sec:w-towers}, the Whitney tower obstruction theory \cite{CST,ST2} assigns to each order~$n$ Whitney tower $\cW$ an 
order $n$ intersection invariant 
$\tau_n(\cW)\in\cT_n$, which is defined by summing the trees pictured in Figure~\ref{fig:W-tower-and-trees-and-split}(b)
(see Figure~\ref{Cochran-R2-map-fig-Color3} for an explicit example in the $4$--ball).  
The tree orientations are induced by Whitney disk orientations via a convention that corresponds to the AS relations (section~\ref{subsec:w-tower-orientations}), and  
the IHX relations can be realized geometrically by controlled maneuvers on Whitney towers as described in \cite{CST,S2}. 
It follows from the obstruction theory that a link bounds a Whitney tower $\cW$ of order~$n$ with $\tau_n(\cW)=0$ 
if and only if it bounds a Whitney tower of order $n+1$. This fact is the essential ingredient in the proof of the following theorem
(see Section~\ref{sec:realization-maps}):
\begin{thm} \label{thm:R-onto-W} 
The sets $\W_n$ are finitely generated abelian groups under the (well-defined) operation of connected sum $\#$ and there are epimorphisms 
$R_n : \cT_n \sra\W_n$.
\end{thm}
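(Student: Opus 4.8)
The plan is to establish three things: (1) the connected-sum operation $\#$ on links descends to a well-defined binary operation on $\bW_n$ modulo order $n+1$ Whitney tower concordance, making $\W_n$ an abelian monoid; (2) the realization map sending an order $n$ tree $t\in\cT_n$ to a link bounding a Whitney tower whose order $n$ intersection invariant equals $t$ is additive and surjective onto $\W_n$; and (3) $\W_n$ is actually a group, with inverses coming from mirror images (or from the group structure transported along $R_n$). The finite generation will then be immediate, since $\cT_n(m)$ is finitely generated (it is a quotient of the free abelian group on the finite set of order $n$ trees with univalent vertices labeled by $\{1,\dots,m\}$).

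The first step, well-definedness of $\#$, is where I would start: given links $L, L'$ each bounding order $n$ Whitney towers, one forms $L\# L'$ by performing the connected sums inside $S^3$ in a way that is compatible with embedding $B^4 \natural B^4 \cong B^4$, and one boundary-connect-sums the two Whitney towers. The key point is that the order of a Whitney tower is preserved under this operation (disjoint union of towers, then boundary band sums that can be taken to miss all singularities), so $L\#L'\in\bW_n$. For well-definedness modulo order $n+1$ concordance, one checks that a Whitney tower concordance of order $n+1$ from $L_0$ to $L_1$ band-sums with a fixed tower concordance on the $L'$ side to give an order $n+1$ concordance from $L_0\#L'$ to $L_1\#L'$; commutativity and associativity follow from the standard isotopies of connected sum, now carried out in $S^3\times I$ rel boundary. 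The identity element is the class of the unlink, which bounds a trivial (in fact order $\infty$) Whitney tower.

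Next I would construct $R_n$. For each order $n$ tree $t$ there is an explicit link $L_t$ — an iterated Bing double / "$\tree{}{}{}$-type" link built from the rooted subtrees of $t$ — bounding an order $n$ Whitney tower $\cW_t$ in $B^4$ with $\tau_n(\cW_t)=t$; this is exactly the local model appearing in Figure~\ref{fig:unsuccessful-W-move-higher-order-ints} and Figure~\ref{fig:W-move-preserves-tree} (the order 1 case is the Borromean rings). Define $R_n(t)=[L_t]\in\W_n$ and extend additively using $\#$. One must check this respects the AS and IHX relations: AS corresponds to the Whitney-disk orientation conventions (reversing a Whitney disk orientation reverses the band sum, and the resulting link is concordant, rel the tower structure, to the mirror/reverse — giving $[L_{-t}]=-[L_t]$), and IHX is realized by the geometric Jacobi maneuver on Whitney towers cited as \cite[Lem.7.2]{S3} and \cite{CST,S2}, which shows $L_{t_I}$ is order $n+1$ Whitney-tower-concordant to $L_{t_H}\# L_{t_X}$. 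Surjectivity of $R_n$ is then the heart of the matter and follows from the obstruction theory recalled in Section~\ref{sec:w-towers}: any $L\in\bW_n$ bounds a split order $n$ Whitney tower $\cW$ (Figure~\ref{fig:W-tower-and-trees-and-split}(b)), and by the splitting procedure $L$ is order $n+1$ Whitney-tower-concordant to the connected sum of the local links $L_{t_i}$ over the unpaired intersections, i.e. $[L]=R_n\big(\sum_i t_i\big)=R_n(\tau_n(\cW))$.

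The main obstacle I expect is the bookkeeping in surjectivity — namely, making precise that "splitting" a Whitney tower (isolating all singularities into $4$--ball neighborhoods of the trees) can be done through a Whitney tower \emph{concordance}, so that the ambient link changes only up to the equivalence relation defining $\W_n$, and that the residual pieces are precisely the standard local links $L_{t_i}$ with their standard towers. This requires invoking the results of \cite{CST,S2} carefully and controlling orientations so that the signs of the trees $t_i$ match the conventions built into $\cT_n$. Once surjectivity of $R_n$ is in hand, the group structure on $\W_n$ is forced: $R_n$ is an additive surjection from the group $\cT_n$ onto the monoid $\W_n$, so $\W_n$ is a quotient group of $\cT_n$, hence a finitely generated abelian group, and $R_n$ is the asserted epimorphism. (Alternatively, inverses are exhibited directly: $[L]\#[\overline{L}]=0$, where $\overline{L}$ is the reverse of the mirror image, which bounds the "reflected" Whitney tower, and the union caps off to an order $n+1$, indeed slice, Whitney tower concordance to the unlink.)
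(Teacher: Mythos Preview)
Your outline is in the right spirit but misses the paper's central organizing lemma, and this leads to a real gap in your surjectivity argument.

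The paper's proof hinges on Corollary~\ref{cor:tau=w-concordance}: two links represent the same element of $\W_n$ if and only if they bound order~$n$ Whitney towers with equal $\tau_n\in\cT_n$. This follows directly from the order-raising Theorem~\ref{thm:framed-order-raising-on-A} by tubing the two towers together in $S^3\times I$. Once you have this criterion, almost everything becomes formal: well-definedness of $R_n$ needs no separate geometric verification of AS and IHX (those relations are already imposed in $\cT_n$, so any two links realizing the same element $g$ automatically agree in $\W_n$); independence of the band choice $\beta$ in $\#$ follows because $\tau_n(\cW^\#)=\tau_n(\cW)+\tau_n(\cW')$ regardless of $\beta$; and surjectivity is immediate---given $L\in\bW_n$, compute $\tau:=\tau_n(\cW)$ for any bounding tower, and then $L$ and $R_n(\tau)$ are equal in $\W_n$ by the criterion.

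Your surjectivity argument, by contrast, claims that splitting the Whitney tower exhibits $L$ as order $n+1$ Whitney-tower-concordant to a connected sum of the standard local links $L_{t_i}$. This is where the gap is: splitting (Lemma~\ref{lem:split-w-tower} and its framed predecessor) is performed by finger moves \emph{on the Whitney disks and order-zero surfaces rel boundary}; it reorganizes the singularities of $\cW$ into standard $4$--ball neighborhoods but does not change $L=\partial\cW$ at all, and it certainly does not produce a concordance in $S^3\times I$ to some other link. Converting ``$\cW$ locally looks like a disjoint union of standard models'' into ``$L$ is Whitney-tower-concordant to $\#_i L_{t_i}$'' is exactly the content of Corollary~\ref{cor:tau=w-concordance}, which you have not invoked. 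Your parenthetical about inverses via mirror images is correct and matches the paper, but the cleaner route---and the one the paper takes---is to establish the $\tau_n$-criterion first and let it do all the work.
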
 
These {\em realization maps} $R_n$ are defined similarly to Cochran's iterated Bing-doubling construction for realizing Milnor invariants \cite{C}, and are equivalent to `simple clasper surgery along trees' in the sense of Goussarov and Habiro (see section~\ref{subsec:realization-maps}, and Figure~\ref{Cochran-R2-map-fig-Color3} for an example).
\begin{figure}[h]
\centerline{\includegraphics[scale=.32]{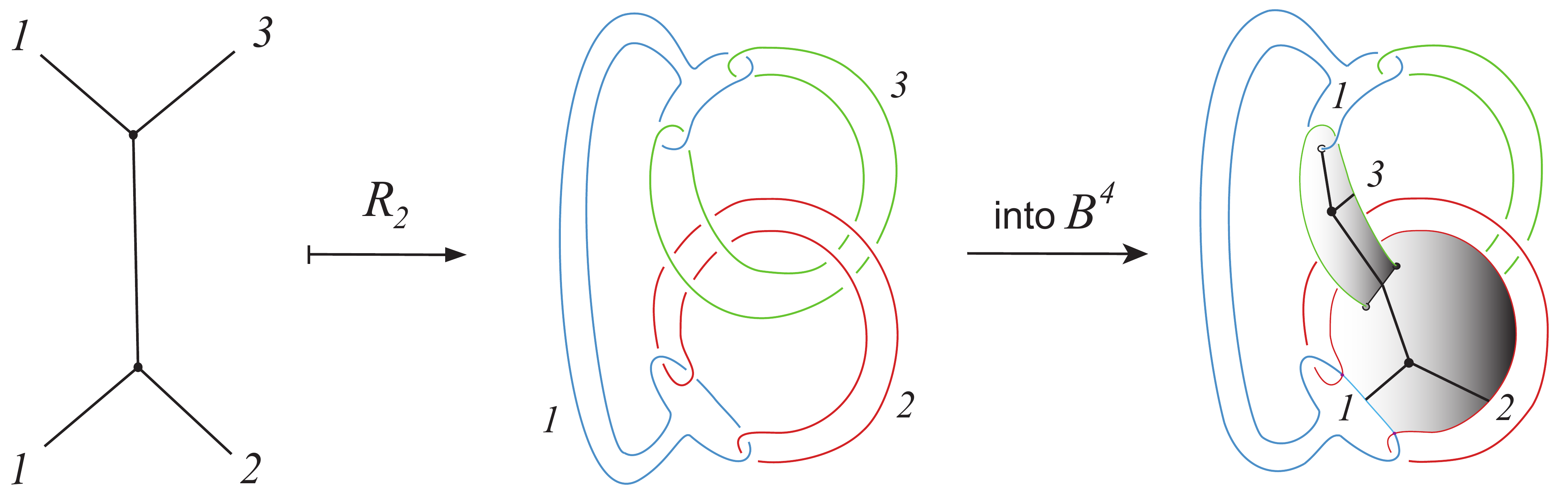}}
\caption{The realization map $R_2$ sends the tree $t$ on the left to the link $L\subset S^3$ shown in the middle. 
The trace of a null-homotopy of $L$ described by a pair of crossing-changes between the blue component 1 and the red component 2 supports an order $2$ Whitney tower $\cW\subset B^4$ bounded by $L$, with $\tau_2(\cW)=t$, as shown on the right. (Pushing further into $B^4$ would show a 3-component unlink bounding disjointly embedded disks).}
         \label{Cochran-R2-map-fig-Color3}
\end{figure}

Theorem~4 of \cite{ST2} implies that $R_n$ is \emph{rationally} an isomorphism for all $n$, and from this a formula for the free ranks of the groups $\W_n$ can be given. Until now we didn't know much about torsion phenomena, and the following fact comes as a huge surprise: 

\begin{thm} \label{thm:intro-even-order-R}
In all even orders, the realization maps $R_{2k}: \cT_{2k}\to \W_{2k}$ are isomorphisms and $\W_{2k}$ are free abelian groups of known rank, detected by Milnor invariants.
\end{thm}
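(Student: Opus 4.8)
The plan is to reduce Theorem~\ref{thm:intro-even-order-R} to a single algebraic fact --- that the tree group $\cT_{2k}$ is \emph{torsion-free} --- and then to combine this with what is already known about the realization maps. Recall that $R_{2k}\colon\cT_{2k}\sra\W_{2k}$ is a surjective homomorphism of finitely generated abelian groups (Theorem~\ref{thm:R-onto-W}) and that $R_{2k}\otimes\mathbb{Q}$ is an isomorphism (Theorem~4 of \cite{ST2}). Since $\mathbb{Q}$ is flat over $\mathbb{Z}$, tensoring the exact sequence $0\to\ker R_{2k}\to\cT_{2k}\to\W_{2k}\to0$ (the middle map being $R_{2k}$) with $\mathbb{Q}$ shows $(\ker R_{2k})\otimes\mathbb{Q}=0$, so $\ker R_{2k}$ is a torsion subgroup of $\cT_{2k}$. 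If $\cT_{2k}$ is torsion-free this forces $\ker R_{2k}=0$, so $R_{2k}$ is an isomorphism and $\W_{2k}\cong\cT_{2k}$ is free abelian of the same (known) rank as $\cT_{2k}\otimes\mathbb{Q}$.

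To prove that $\cT_{2k}$ is torsion-free --- and simultaneously the assertion that $\W_{2k}$ is \emph{detected by Milnor invariants} --- the plan is to linearize $R_{2k}$ using Milnor's $\bar\mu$-invariants. A link bounding an order~$n$ Whitney tower has vanishing Milnor invariants of length $\le n+1$, and its length-$(n+2)$ Milnor invariants assemble into a homomorphism $\bar\mu_n\colon\W_n\to\sD_n$, where $\sD_n\subseteq\sL_1\otimes\sL_{n+1}$ is the kernel of the bracketing map $\sL_1\otimes\sL_{n+1}\to\sL_{n+2}$ of the free Lie ring $\sL=\sL(m)$ on meridional generators (with the self-bracket convention dictated by the antisymmetry relations on trees). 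The composition $\bar\mu_n\circ R_n\colon\cT_n\to\sD_n$ is the purely combinatorial homomorphism $\eta_n$ sending a tree $t$ to the sum over its univalent vertices $v$ of $X_{\ell(v)}\otimes B_v(t)$, where $B_v(t)\in\sL_{n+1}$ is the iterated bracket obtained by rooting $t$ at $v$; this identification is precisely the statement that $R_n$ is Cochran's iterated Bing-doubling construction \cite{C}, whose Milnor invariants realize these bracket expressions.

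The crux is then the claim that $\eta_{2k}\colon\cT_{2k}\to\sD_{2k}$ is \emph{injective} --- the even-degree case of Levine's conjecture on the maps $\eta_n$ (see \cite{L3}). The structural reason the even case is clean is that $\sL_{2k+1}$ sits in odd degree, where the free Lie ring involved is free abelian; hence $\sL_1\otimes\sL_{2k+1}$, and a fortiori its subgroup $\sD_{2k}$, is a finitely generated free abelian group, and the $\z$-valued phenomena that obstruct $\eta_n$ from being an isomorphism in odd degrees --- those responsible for the higher-order Sato--Levine and Arf invariants --- simply have no room to occur. I would prove injectivity of $\eta_{2k}$ by a direct combinatorial analysis of the tree group, following Levine's approach in \cite{L3}. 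Once this is in hand, $\cT_{2k}$ embeds into the torsion-free group $\sD_{2k}$ and is therefore itself torsion-free, completing the argument of the first paragraph; moreover $\bar\mu_{2k}=\eta_{2k}\circ R_{2k}^{-1}$ is injective with image $\eta_{2k}(\cT_{2k})\subseteq\sD_{2k}$, which gives the ``detected by Milnor invariants'' statement and pins down the rank.

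I expect the main obstacle to be precisely the integral injectivity of $\eta_{2k}$, equivalently the torsion-freeness of $\cT_{2k}$: the topological inputs --- surjectivity of $R_{2k}$ and its rational injectivity --- are already available, so the real difficulty is a statement about the combinatorics of unitrivalent trees modulo the AS and IHX relations in even order. A secondary, bookkeeping-flavored point is carefully matching the geometrically defined map $\bar\mu_n\circ R_n$ with the combinatorial $\eta_n$, including the orientation and sign conventions for rooting trees and the precise Bing-double/Milnor-number dictionary; this is routine but must be verified with care.
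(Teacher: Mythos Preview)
Your overall strategy is sound and essentially the same as the paper's: the result reduces to the algebraic fact that $\cT_{2k}$ is torsion-free, equivalently that the map $\eta_{2k}$ (which the paper calls $\eta'_{2k}:\cT_{2k}\to\sD'_{2k}$, since for $n=2k$ the target sits inside $\sL_1\otimes\sL'_{2k+1}=\sL_1\otimes\sL_{2k+1}$) is injective. The paper's route is slightly more direct than yours: rather than invoking rational injectivity of $R_{2k}$ from \cite{ST2}, it simply uses the factorization $\eta'_{2k}=\mu_{2k}\circ R_{2k}$; once $\eta'_{2k}$ is known to be an isomorphism and $R_{2k}$ is surjective, $R_{2k}$ is automatically injective. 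Your detour through the rational statement is harmless but unnecessary.

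The genuine gap is in your treatment of the key algebraic input. You correctly flag injectivity of $\eta_{2k}$ as the main obstacle, but your plan to prove it ``by a direct combinatorial analysis of the tree group, following Levine's approach in \cite{L3}'' will not succeed: Levine \emph{conjectured} that $\eta'_n$ is an isomorphism in \cite{L2} but did not prove it, and \cite{L3} establishes only partial results. The full statement is Theorem~\ref{thm:LC} of the present paper, proven in \cite{CST3}, and the paper explicitly calls it ``a surprisingly difficult fact.'' Your heuristic for why the even case should be clean is also off: $\sL_n$ is free abelian for \emph{all} $n$, and the Levine Conjecture $\eta'_n:\cT_n\overset{\cong}{\to}\sD'_n$ holds uniformly in $n$, so even orders are not algebraically easier at the level of the conjecture itself. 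What is special about even orders is rather that $\widetilde\cT_{2k}=\cT_{2k}$ (no framing relations) and that the Milnor map already lands in $\sD'_{2k}$, so the Levine Conjecture alone suffices without the further analysis of twisted towers and higher-order Sato--Levine/Arf phenomena needed in odd orders.
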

As explained in section~\ref{sec:twisted-to-framed-classification} (Theorem~\ref{thm:even-R-and-mu}), this result follows from the relationship between the Whitney tower intersection invariant $\tau$ and Milnor invariants \cite{CST2, ST2}, together with our proof in \cite{CST3} of a combinatorial conjecture of J. Levine \cite{L2}, which also implies that $\cT_{2k}$ is a free abelian group of known rank \cite[Thm.1.5]{CST3}.

The affirmation of Levine's conjecture also implies that the torsion in $\cT_{2k-1}$ is generated by symmetric trees of the form $\tree{i}{J}{J}$, where $J$ is a subtree of order~$k-1$, and $i$ is a univalent vertex label (see \cite[Cor.1.2]{CST3}). These trees are actually 2-torsion by the antisymmetry relation and hence all torsion in $\cT$ is 2-torsion. The next result shows that a large part of this 2-torsion actually maps trivially to $\W_{2k-1}$. 

\begin{thm} \label{thm:reduced R}
The realization maps $R_{2k-1}$ factor through a quotient $\widetilde\cT_{2k-1}$ of $\cT_{2k-1}$.
\end{thm}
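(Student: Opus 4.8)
The plan is to show that certain symmetric $2$--torsion trees bound order $2k-1$ Whitney towers that are themselves order $2k$, so that $R_{2k-1}$ kills them and hence descends to the quotient $\widetilde\cT_{2k-1}$ obtained by setting those generators to zero. Concretely, consider a symmetric tree $\tree{i}{J}{J}$ of order $2k-1$, where $J$ is a rooted subtree of order $k-1$. By the affirmation of Levine's conjecture (invoked just before the statement, via \cite[Cor.1.2]{CST3}), the torsion subgroup of $\cT_{2k-1}$ is generated by such trees, and it is $2$--torsion. The realization map $R_{2k-1}$ sends $\tree{i}{J}{J}$ to a link $L$ built by the iterated Bing-doubling / simple clasper construction. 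The key geometric point is that the Whitney tower $\cW$ bounded by this particular $L$, which a priori has order $2k-1$ with $\tau_{2k-1}(\cW)=\tree{i}{J}{J}$, can be modified near its top-order unpaired intersection so that the single remaining unpaired intersection point is paired by a (possibly twisted, but here framed) Whitney disk, raising the order.

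The mechanism I would use is the geometric interpretation of the antisymmetry relation. A symmetric tree $\tree{i}{J}{J}$ arises from a Whitney tower in which a Whitney disk $W$ intersects a sheet in a single point $p$, where the two trees $J$ feeding into the two sides of the edge at $p$ are isomorphic. In that situation, the mirror symmetry exchanging the two $J$--branches is realized by an ambient isotopy, and the unpaired intersection $p$ can be cancelled against ``itself'' — more precisely, one uses a boundary-twist or finger-move trick together with the symmetry to produce a canceling partner for $p$, so the intersection pair is paired by a Whitney disk. Because the orientations are arranged so that the associated tree reads $+\tree{i}{J}{J}$ from one side and $-\tree{i}{J}{J}$ from the other (this is exactly the AS relation, which forces $2\tree{i}{J}{J}=0$), the net effect is that the top-order obstruction vanishes geometrically on the nose for this link, not merely in $\cT_{2k-1}$. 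Hence $L=R_{2k-1}(\tree{i}{J}{J})$ bounds an order $2k$ Whitney tower, so $[L]=0\in\W_{2k-1}$, i.e.\ $\tree{i}{J}{J}\in\ker R_{2k-1}$.

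Having shown each generator $\tree{i}{J}{J}$ of (at least a large part of) the torsion lies in $\ker R_{2k-1}$, I would \emph{define} $\widetilde\cT_{2k-1}$ to be the quotient of $\cT_{2k-1}$ by the subgroup generated by these symmetric trees (this is the quotient the theorem refers to; its precise description will be pinned down in the body, presumably isolating exactly which symmetric trees — those of the form $\tree{i}{J}{J}$ with $J$ of order $k-1$ — are to be killed). Since $R_{2k-1}$ is a homomorphism (Theorem~\ref{thm:R-onto-W}) vanishing on the generators of this subgroup, it factors uniquely as $\cT_{2k-1}\sra\widetilde\cT_{2k-1}\to\W_{2k-1}$, which is the assertion. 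The surjectivity of the induced map is inherited from that of $R_{2k-1}$.

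The main obstacle I anticipate is the geometric step: carefully constructing the Whitney disk that pairs the self-intersection $p$ coming from a symmetric tree, and verifying that introducing it does not create new unpaired intersections of order $\le 2k-1$ (lower-order intersections would be fatal). This requires splitting the Whitney tower near $p$, tracking framings (since a naive boundary-twist introduces a twisted, rather than framed, Whitney disk — one must check the framing obstruction, valued in the relevant $\z$, also vanishes by the symmetry, or absorb it into a further move), and using the IHX/AS realization maneuvers of \cite{CST,S2,S3} to arrange the two $J$--branches to be genuinely interchanged by an isotopy of a $4$--ball neighborhood. I would expect the bulk of the real work — and the place where the ``twisted Whitney tower'' technology of the paper is actually needed — to be in making this self-pairing move precise and framing-controlled; the algebraic factorization at the end is then immediate.
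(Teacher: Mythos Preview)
Your proposal has a genuine gap: you are trying to prove too much. The quotient $\widetilde\cT_{2k-1}$ in the paper is \emph{not} obtained by killing individual symmetric trees $\tree{i}{J}{J}$; it is the quotient by the image of the map $\Delta_{2k-1}\colon\cT_{k-1}\to\cT_{2k-1}$, whose elements are specific \emph{sums} $\sum_v \tree{\ell(v)}{T_v(t)}{T_v(t)}$ over all univalent vertices of an order $k-1$ tree $t$. Killing all individual symmetric trees would instead give $\cT^\iinfty_{2k-1}$, and the paper shows there is a nontrivial kernel $\Z_2\otimes\sL'_{k+1}$ in the surjection $\widetilde\cT_{2k-1}\sra\cT^\iinfty_{2k-1}$. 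If your argument worked, $R_{2k-1}$ would factor through $\cT^\iinfty_{2k-1}$, forcing the higher-order Sato-Levine invariants to vanish identically and contradicting Theorem~\ref{thm:intro-odd-R-isos} (where $\widetilde R_{4k-1}$ is an isomorphism and the torsion of $\W_{4k-1}$ is nonzero).

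The geometric failure is exactly the framing issue you flagged but hoped to ``absorb.'' You can indeed produce a canceling partner for an unpaired intersection with tree $\langle (i,J),J\rangle$ by boundary-twisting $W_{(i,J)}$ into its supporting Whitney disk $W_J$, but this changes $\omega(W_{(i,J)})$ by $\pm 1$, leaving a twisted order~$k$ Whitney disk. That is allowed in a \emph{twisted} order $2k$ tower but not in a framed one. To recover the framing you must boundary-twist again, and doing so into the high-order side recreates a symmetric tree of order $2k-1$; boundary-twisting into the order-zero side would instead create a lower-order intersection. The paper's actual argument (Theorem~\ref{thm:framed-order-raising-mod-Delta}) instead starts from a clean framed Whitney disk, splits it by a twisted finger move into a $\pm 1$-twisted pair, pushes the $\iinfty$-roots outward via the twisted IHX Lemma~\ref{lem:twistedIHX}, and then boundary-twists each resulting twisted Whitney disk into its adjacent order-zero sheet. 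The upshot is that one can only realize the full $\Delta_{2k-1}(t)$-sum of symmetric trees, never a single one in isolation; this is the origin of the framing relations and the reason the factorization is through $\widetilde\cT_{2k-1}$ rather than $\cT^\iinfty_{2k-1}$.
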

Theorem~\ref{thm:reduced R} is proved in Section~\ref{sec:reduced-groups-obstruction-theory}, where it is also shown that our Whitney tower obstruction theory descends to these \emph{reduced groups} $\widetilde\cT_{2k-1}$:
\begin{defn}\label{def:reduced-tree-group}
Let $\widetilde{\mathcal T}_{2k-1}:={\mathcal T}_{2k-1}/\im\Delta_{2k-1}$, where $\Delta_{2k-1}\colon \mathcal{T}_{k-1}\to\mathcal{T}_{2k-1}$ is defined on generators $t$ of order $k-1$ as follows. For any univalent vertex $v$ of $t$, denote by $\ell(v)$ the label of $v$, and write $t = \ell(v)-\!\!\!- T_v(t)$. Then we get a 2-torsion element of $\cT_{2k-1}$ 
defined by
\[
\Delta_{2k-1}(t):=\sum_v\,\tree{\ell(v)}{T_v(t)}{T_v(t)}
\]
where the sum is over all univalent vertices $v$ of $t$.
\end{defn}
\begin{figure}[h]
\centerline{\includegraphics[width=125mm]{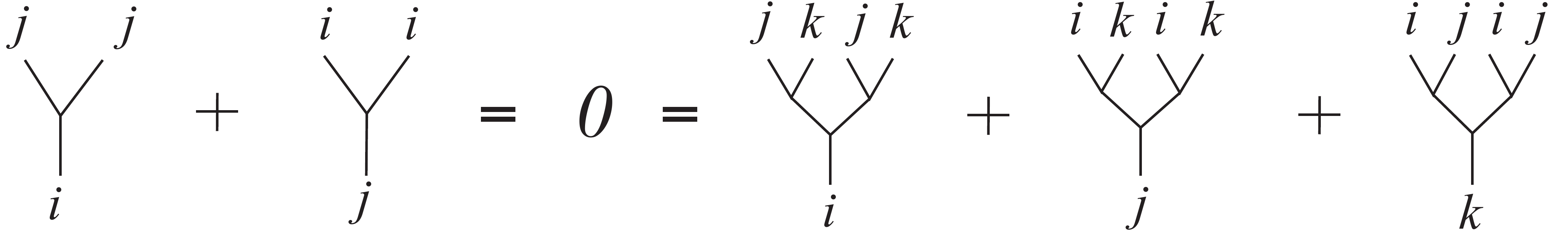}}
         \caption{The framing relations in orders $1$ and $3$.}
         \label{fig:framing-relations}
\end{figure}

We refer to the relations $\Delta_{2k-1}(t)=0$ as {\em framing relations} because they correspond to the image of \emph{twisted} IHX relations in a \emph{twisted Whitney tower} via a conversion to a framed Whitney tower, as explained in Section~\ref{sec:reduced-groups-obstruction-theory}.


Conjecturally, all odd order {\em reduced realization maps} $\widetilde R_{2k-1}: \widetilde\cT_{2k-1}\,{\to}\, \W_{2k-1}$ are isomorphisms, and the following theorem confirms this in half of the cases:

\begin{thm} \label{thm:intro-odd-R-isos}
The reduced realization maps $\widetilde R_{4k-1}$ are isomorphisms and the torsion of $\W_{4k-1}$ is a $\z$-vector space of known dimension, detected by higher order Sato-Levine invariants. 
\end{thm}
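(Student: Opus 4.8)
The plan is to prove Theorem~\ref{thm:intro-odd-R-isos} by combining an upper bound coming from the reduced obstruction theory with a matching lower bound coming from detecting invariants, showing both that $\widetilde R_{4k-1}$ is surjective (already known, being a quotient of the surjection $R_{2k-1}$ from Theorem~\ref{thm:R-onto-W} and Theorem~\ref{thm:reduced R}) and that its kernel is trivial. By Theorem~4 of \cite{ST2}, $R_{4k-1}$ is rationally an isomorphism, so $\widetilde R_{4k-1}\otimes\mathbb{Q}$ is an isomorphism onto the free part of $\W_{4k-1}$; hence the only issue is torsion. By the affirmation of Levine's conjecture \cite[Cor.1.2]{CST3}, the torsion of $\cT_{2k-1}$ is $2$-torsion generated by the symmetric trees $\tree{i}{J}{J}$, and passing to $\widetilde\cT_{2k-1}=\cT_{2k-1}/\im\Delta_{2k-1}$ kills precisely the span of the $\Delta_{2k-1}(t)$; so the first main step is a purely combinatorial computation of the torsion subgroup of $\widetilde\cT_{4k-1}$, identifying it as a $\z$-vector space whose dimension I will compute explicitly (this is where the case $4k-1$, as opposed to general odd order $2k-1$, matters: one needs an exact description of $\im\Delta_{4k-1}$ inside the span of symmetric trees, and the relevant linear algebra over $\z$ closes up only in these orders).

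The second main step is to construct enough invariants of $\W_{4k-1}$ to detect this torsion. The natural candidates are the \emph{higher-order Sato-Levine invariants}: these should be defined from a twisted Whitney tower of order $4k-1$ (or from the framing obstruction data of its twisted Whitney disks), taking values in $\z$, with the property that on the image of a symmetric tree $\tree{i}{J}{J}$ under $R_{4k-1}$ they read off the relevant $\z$-coefficient. I would define them via the $\z$-valued twisted intersection invariant $\tau^{\infty}_{4k-1}$ introduced for twisted Whitney towers in the body of the paper (the "quadratic refinement" advertised in the abstract), restricted to the summands corresponding to the order-$k$ subtrees that are half of an order-$4k-1$ symmetric tree, and then check: (a) well-definedness under the twisted-to-framed conversion and under IHX/twisted-IHX, so that they descend to honest invariants of links up to Whitney tower concordance of order $4k$; and (b) that these invariants vanish on exactly the links representing elements in the framing-relation subgroup and the image of the free part, i.e. that they separate the torsion of $\widetilde\cT_{4k-1}$ faithfully. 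Step (b) is a computation pairing the combinatorially defined invariants against the realization map $\widetilde R_{4k-1}$, using the example computation of $\tau$ on realized links (Figure~\ref{Cochran-R2-map-fig-Color3} being the order-$2$ prototype) as the local model.

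Assembling these: the reduced obstruction theory of Section~\ref{sec:reduced-groups-obstruction-theory} shows $|\text{torsion }\W_{4k-1}|\le\dim_{\z}(\text{torsion }\widetilde\cT_{4k-1})$ via surjectivity of $\widetilde R_{4k-1}$, while the higher-order Sato-Levine invariants provide an injection of torsion$(\widetilde\cT_{4k-1})$ into torsion$(\W_{4k-1})$; a dimension count then forces $\widetilde R_{4k-1}$ to be an isomorphism on torsion, and together with the rational statement this gives that $\widetilde R_{4k-1}$ is an isomorphism and that torsion$(\W_{4k-1})$ is a $\z$-vector space of the computed dimension, detected by the higher-order Sato-Levine invariants. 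The main obstacle, I expect, is step (b): proving that the combinatorially defined higher-order Sato-Levine invariants are genuinely well-defined \emph{concordance} invariants of links that detect the full $\z$-vector space of torsion — this requires carefully controlling how the twisted intersection invariant behaves under all the moves relating different Whitney towers on concordant links (twisted finger/Whitney moves, the twisted IHX relation, boundary-twist and transfer moves, and the conversion between twisted and framed towers), and showing that precisely the framing relations $\Delta_{4k-1}(t)=0$ — and nothing more — are forced. The even-order input (Theorem~\ref{thm:intro-even-order-R}, i.e. Milnor invariants give full control of $\W_{2k}$) will be used to handle the ambiguity of a twisted Whitney tower's order being potentially raised, closing the obstruction-theoretic loop between orders $4k-1$ and $4k$.
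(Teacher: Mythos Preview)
Your overall strategy --- surjectivity of $\widetilde R_{4k-1}$ from the reduced obstruction theory, plus injectivity via detecting invariants that see the torsion --- matches the paper's. The gap is in your proposed definition of the higher-order Sato-Levine invariants.

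You propose to extract them from $\tau^\iinfty_{4k-1}$ on an order $4k-1$ twisted Whitney tower. This cannot work for two reasons. First, by Definition~\ref{def:Twisted-W-towers} an order $4k-1$ twisted Whitney tower \emph{is} an order $4k-1$ framed Whitney tower, so there are no twisted Whitney disks or extra framing data at this order to exploit. Second, $\tau^\iinfty_{4k-1}$ takes values in $\cT^\iinfty_{4k-1}$, which by Definition~\ref{def:Tau-infty-odd} is exactly the quotient of $\cT_{4k-1}$ by the boundary-twist relations $\langle(i,J),J\rangle=0$ --- i.e.\ it kills precisely the symmetric trees generating the torsion you want to detect. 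Any attempt to instead read off a $\z$-coefficient from the symmetric-tree part of $\tau_{4k-1}(\cW)\in\cT_{4k-1}$ (before passing to $\cT^\iinfty$) runs into the well-definedness problem you flag in step~(b): showing independence of $\cW$ amounts to showing that only the framing relations are forced, which is equivalent to the injectivity of $\widetilde R_{4k-1}$ you are trying to prove.

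The paper's resolution is to go \emph{one order up}. The Sato-Levine invariant $\SL_{4k-1}$ is defined on $\sK^\mu_{4k-1}=\Ker(\mu_{4k-1}:\W_{4k-1}\to\sD_{4k-1})$ as a projection $s\ell_{4k}\circ\mu_{4k}$ of the order-$4k$ Milnor invariant (Definitions~\ref{def:sl-maps} and~\ref{def:SL-invariants}): if $\mu_{4k-1}(L)=0$ then $L\in\bW^\iinfty_{4k}$ by the twisted classification, so $\mu_{4k}(L)\in\sD_{4k}$ is defined, and one composes with an algebraically defined snake-lemma map $s\ell_{4k}:\sD_{4k}\to\z\otimes\sL_{2k+1}$. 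Well-definedness is then automatic since Milnor invariants are link-concordance invariants. That $\SL_{4k-1}$ is an isomorphism onto $\z\otimes\sL_{2k+1}$ (Proposition~\ref{prop:SL4k-1-iso}) follows from the twisted classification $\mu_{4k}:\W^\iinfty_{4k}\cong\sD_{4k}$ of Theorem~\ref{thm:intro-twisted-3-quarter-classification}, and the conclusion for $\widetilde R_{4k-1}$ is a five-lemma/diagram chase using the exact sequences of Theorem~\ref{thm:exact-sequence} together with $\sL_{2k+1}=\sL'_{2k+1}$. Your final paragraph gestures at using even-order input to ``close the loop,'' which is the right instinct, but the mechanism is not resolving ambiguity in an order-$(4k-1)$ invariant: it is that the detecting invariant \emph{is} the order-$4k$ Milnor invariant, suitably projected.
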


The \emph{higher-order Sato-Levine invariants} are certain projections
of Milnor invariants, shifted down one order. They represent obstructions to framing a twisted Whitney tower, as explained in 
section~\ref{sec:twisted-to-framed-classification}. 
To outline the proof of Theorem~\ref{thm:intro-odd-R-isos}, we next introduce the \emph{twisted Whitney tower filtration}, and explain how {\em higher-order Arf invariants} play a role in completing the classifications of both the twisted and framed filtrations.
Theorem~\ref{thm:intro-odd-R-isos} will follow from Theorem~\ref{thm:intro-framed-3-quarter-classification} in section~\ref{sec:twisted-to-framed-classification}.


\subsection{The twisted Whitney tower filtration of classical links}\label{subsec:intro-twisted-filtration}
As illustrated in Figure~\ref{fig:canceling-pair-and-whitney-disk-and-whitney-move} and detailed below in section~\ref{subsec:twisted-w-disks}, a successful Whitney move requires the existence of disjoint parallel copies of the Whitney disk (which extend a canonical normal section over the boundary). Such a Whitney disk is said to be \emph{framed}, 
and this condition is required for all Whitney disks in a Whitney tower. An order $n$ \emph{twisted Whitney tower} is the same as an order $n$ (framed) Whitney tower, except that the framing condition is not required for Whitney disks of order greater than or equal to $n/2$. 

Denote by $\bW^\iinfty_n=\bW^\iinfty_n(m)$ the set of framed $m$-component links that bound order $n$ {\em twisted} Whitney towers.
This gives the
{\em twisted Whitney tower filtration}
$$
\dots \subseteq \bW^\iinfty_{3} \subseteq \bW^\iinfty_{2} \subseteq \bW^\iinfty_{1} \subseteq  \bW^\iinfty_{0}=\bL 
$$
We refer to Section~\ref{sec:realization-maps} for a precise definition, also of the associated graded $\W^\iinfty_n=\W^\iinfty_n(m)$; and to Section~\ref{sec:w-towers} for details on twisted Whitney towers, including the associated \emph{twisted intersection invariant} $\tau_n^\iinfty(\cW)\in\cT^\iinfty_n$ in section~\ref{subsec:intro-w-tower-int-invariants}. 
These notions appear in this paper for the first time.

Briefly, the odd order groups  $\cT^\iinfty_{2k-1}$ are defined as quotients of $\cT_{2k-1}$ by the torsion subgroups, 
generated by trees of the form 
$ i\,-\!\!\!\!\!-\!\!\!<^{\,J}_{\,J}$; 
where $J$ is a subtree of order~$k-1$, and $i$ is a univalent vertex label. These \emph{boundary-twist relations} correspond to the intersections created by performing a boundary-twist on an order $k$ Whitney disk (Figure~\ref{boundary-twist-and-section-fig}). 

In addition to the generating trees 
for $\cT_{2k}$, the groups $\cT^\iinfty_{2k}$ include trees of the form 
\[ 
  \iinfty\,-\!\!\!\!\!-\!\!\!-\!\!\!-\,\,J
\] 
with $k$ trivalent vertices and one univalent vertex labeled by the twist symbol $\iinfty$ (whereas all other univalent vertices are still labeled by elements in $\{1,\dots,m\}$). These additional generators are called \emph{$\iinfty$-trees} or {\em twisted trees} because the vertex labeled by $\iinfty$ will lie in a twisted Whitney disk. 
They represent framing obstructions on order $k$ Whitney disks and are involved in the new \emph{symmetry}, \emph{twisted IHX}, and \emph{interior twist} relations, all of which have geometric interpretations (Definition~\ref{def:Tau-infty-even}). 


Note that here and in the following the symbol $\iinfty$ represents a {\em twist}, and in particular does {\em not} stand for ``infinity''. 

\begin{thm} \label{thm:twisted} 
The sets $\W^\iinfty_n$ are finitely generated abelian groups under the (well-defined) operation of connected sum $\#$ and there are epimorphisms 
$R^\iinfty_n: \cT^\iinfty_n \sra\W^\iinfty_n$.
\end{thm}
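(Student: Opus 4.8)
The plan is to mimic the proof of Theorem~\ref{thm:R-onto-W} (the framed case), adapting each ingredient to accommodate the twist symbol $\iinfty$. First I would establish that connected sum $\#$ descends to a well-defined operation on $\W^\iinfty_n$: given framed links $L, L' \in \bW^\iinfty_n$ bounding order $n$ twisted Whitney towers $\cW, \cW' \subset B^4$, one forms the boundary connected sum of the two $4$--balls along a ball meeting each link in a trivial arc, and observes that $\cW \natural \cW'$ is again an order $n$ twisted Whitney tower bounded by $L \# L'$. Well-definedness up to order $n+1$ twisted Whitney tower concordance follows by the same tubing/isotopy argument used for the framed filtration, since the framing relaxation for high-order Whitney disks is preserved under these operations. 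Commutativity and existence of inverses (the inverse of $[L]$ being the reverse mirror $-L$, which cobounds with $L$ an order $\infty$ twisted Whitney tower on the annulus traced by an isotopy to itself) are likewise formal, giving the abelian group structure.

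Next I would define $R^\iinfty_n \colon \cT^\iinfty_n \to \W^\iinfty_n$ on generators and check it is a well-defined homomorphism. On an ordinary (non-$\iinfty$) order $n$ generator $t$, set $R^\iinfty_n(t) := R_n(t)$, the image under the framed realization map, viewed in $\W^\iinfty_n$ via the natural map $\W_n \to \W^\iinfty_n$. On a twisted generator $\iinfty \edge J$ of order $2k$ (so $J$ has $k$ trivalent vertices), realize it by the analogous iterated-Bing-double / clasper construction, but where the final clasper leaf is framed with a $\pm 1$ twist rather than $0$; the trace of the corresponding null-homotopy supports an order $2k$ twisted Whitney tower whose single unpaired (twisted) intersection has tree $\iinfty \edge J$, so that $\tau^\iinfty_{2k}(\cW) = \iinfty \edge J$. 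To see $R^\iinfty_n$ is well-defined I must check that all defining relations of $\cT^\iinfty_n$ die in $\W^\iinfty_n$: the AS and IHX relations as before (geometric IHX moves on twisted Whitney towers, cf.~\cite{CST,S2}), the boundary-twist relations $\tree{i}{J}{J} = 0$ in odd order (the realization of such a symmetric tree bounds an order $2k$ twisted Whitney tower obtained by performing a boundary-twist, hence lies in $\bW^\iinfty_{2k}$, so is null in $\W^\iinfty_{2k-1}$), and in even order the interior-twist relation, the symmetry relation $\iinfty \edge (-J) = \iinfty \edge J$ up to lower-order corrections, and the twisted IHX relation, each of which has an explicit geometric model on twisted Whitney towers as asserted in Definition~\ref{def:Tau-infty-even}. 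Additivity $R^\iinfty_n(t + t') = R^\iinfty_n(t) \# R^\iinfty_n(t')$ comes from juxtaposing the two model tangles in disjoint balls.

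Finally I would prove surjectivity using the twisted Whitney tower obstruction theory: if $L \in \bW^\iinfty_n$ bounds an order $n$ twisted Whitney tower $\cW$, then its twisted intersection invariant $\tau^\iinfty_n(\cW) \in \cT^\iinfty_n$ satisfies $R^\iinfty_n(\tau^\iinfty_n(\cW)) = [L] \in \W^\iinfty_n$. This is the twisted analogue of the key geometric fact quoted in the excerpt (that a link bounding an order $n$ Whitney tower with vanishing $\tau_n$ bounds one of order $n+1$): one splits $\cW$ so its singularities sit in disjoint balls, each ball realizing one tree generator by the model construction, and then a controlled concordance (a ``geometric IHX / cancellation'' argument as in \cite{CST,ST2}) identifies $L$ with the connected sum of the realized pieces, up to order $n+1$ twisted Whitney tower concordance. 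Hence $[L]$ is in the image of $R^\iinfty_n$, and finite generation of $\W^\iinfty_n$ follows because $\cT^\iinfty_n$ is finitely generated (finitely many trees in each order). The main obstacle I expect is the bookkeeping for the twisted relations in even order — verifying that the \emph{symmetry}, \emph{twisted IHX}, and \emph{interior twist} relations are each killed requires constructing the precise geometric maneuvers on twisted Whitney disks (including tracking how boundary-twists and interior-twists on an order $k$ Whitney disk interact with the framing obstruction), and making sure these maneuvers change the twisted intersection invariant exactly by the stated relation and nothing more; this is where the argument is genuinely new rather than a transcription of the framed case.
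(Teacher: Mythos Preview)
Your outline is essentially correct and would succeed, but it is organized differently from the paper in a way worth noting. The paper does \emph{not} verify the relations of $\cT^\iinfty_n$ one by one in $\W^\iinfty_n$. Instead, it first proves the twisted order-raising Theorem~\ref{thm:twisted-order-raising-on-A} (this is where the geometric realizations of the twisted IHX, interior-twist, symmetry, and boundary-twist relations live), and then derives from it the single criterion of Corollary~\ref{cor:tau=w-concordance} (twisted version, Remark~\ref{rem:tau=w-concordance}): two links represent the same class in $\W^\iinfty_n$ if and only if they bound order~$n$ twisted Whitney towers with equal $\tau^\iinfty_n$. With this criterion in hand, well-definedness of $\#$ (Lemma~\ref{lem:link-sum-well-defined}), well-definedness of $R^\iinfty_n$, and surjectivity of $R^\iinfty_n$ all become two-line formal consequences. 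In particular, $R^\iinfty_n(g)$ is defined as the class of \emph{any} link bounding a tower with $\tau^\iinfty_n=g$, and there is nothing further to check about relations.

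Your approach of checking each relation geometrically amounts to redoing, piecemeal, exactly the constructions that go into the proof of Theorem~\ref{thm:twisted-order-raising-on-A}; so the total geometric content is the same, just distributed differently. Two places where your writeup is a bit loose: your ``tubing/isotopy argument'' for well-definedness of $\#$ is really Lemma~\ref{lem:link-sum-well-defined}, which in the paper already relies on the $\tau^\iinfty_n$-criterion (hence on the order-raising theorem) rather than on a direct isotopy; and your construction of inverses via ``an order $\infty$ twisted Whitney tower on the annulus'' is not quite how the paper does it---the paper instead reflects $\cW$ to get $\overline{\cW}$ bounding $-L$ with $\tau^\iinfty_n(\overline{\cW})=-\tau^\iinfty_n(\cW)$, and then invokes the criterion once more. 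Neither is a fatal gap, but both are most cleanly handled by proving the order-raising theorem first and letting Corollary~\ref{cor:tau=w-concordance} do the work.
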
 
These {\em twisted realization maps} are constructed in Section~\ref{sec:realization-maps}. As in the framed setting, the key to the proof is a criterion for {\em raising the order of a twisted Whitney tower}:
\begin{thm} \label{thm:twisted-raising-intro} 
A link bounds a twisted Whitney tower $\cW$ of order~$n$ with $\tau^{\iinfty}_{n}(\cW)=0$ if and only if it bounds a twisted Whitney tower of order $n+1$. 
\end{thm}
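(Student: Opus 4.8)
The plan is to mimic the framed Whitney tower obstruction theory from \cite{CST,ST2} and adapt each ingredient to the twisted setting, keeping careful track of the framing obstructions encoded by the $\iinfty$-labeled trees. The ``if'' direction is immediate: an order $n+1$ twisted Whitney tower is in particular an order $n$ twisted Whitney tower, and its order $n$ twisted intersection invariant vanishes because there are no order $n$ unpaired intersections and no framing obstructions on Whitney disks of order exactly $n/2$ that have not been paired up (all contributing trees have order $\geq n+1$). So the real content is the ``only if'' direction, which I would prove in two stages, following the standard template: first a \emph{geometric} step reducing to a normal form, then an \emph{algebraic} step showing that each relation in $\cT^\iinfty_n$ can be geometrically realized by local modifications that do not change the link type.

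First I would split the given order $n$ twisted Whitney tower $\cW$ (as in Figure~\ref{fig:W-tower-and-trees-and-split}(b)) so that all unpaired intersections and all twisted (non-framed, order $\geq n/2$) Whitney disks are contained in disjoint $4$--ball neighborhoods of embedded trees; this uses the splitting procedure of \cite{CST,S2}, which must be checked to be compatible with the presence of twisted Whitney disks — a boundary-twist or interior-twist contributes a well-understood local tree, and these are exactly the $\iinfty$-trees and boundary-twist relations in the definition of $\cT^\iinfty_n$. Next, the hypothesis $\tau^\iinfty_n(\cW) = 0 \in \cT^\iinfty_n$ means that the signed sum of trees (both the ordinary trees from unpaired intersections and the $\iinfty$-trees from twisted Whitney disk framings) is a consequence of the AS, IHX, interior-twist, boundary-twist, symmetry, and twisted IHX relations. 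The goal is to geometrically cancel all these trees. For each defining relation I need a geometric move on $\cW$, supported in a $4$--ball (hence not changing the boundary link in $S^3$, and in fact preserving the order $n$ twisted Whitney tower condition), that realizes that relation among the contributing trees: the AS relations come from the orientation conventions (section~\ref{subsec:w-tower-orientations}); the IHX relations are realized by the controlled maneuvers of \cite{CST,S2}; the interior-twist relation ($\iinfty$-tree vs.\ a self-intersection tree) and the boundary-twist relation come directly from the local pictures in Figure~\ref{boundary-twist-and-section-fig} and Definition~\ref{def:Tau-infty-even}; and the twisted IHX relation is realized by a twisted analogue of the IHX maneuver, which is the genuinely new piece.

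The hard part will be establishing the \emph{twisted IHX move} geometrically — i.e.\ showing that the twisted IHX relation among $\iinfty$-trees (and the associated framed trees it produces) is realized by an honest modification of a twisted Whitney tower that preserves its order. In the framed case the IHX maneuver is already delicate; in the twisted case one must additionally control what happens to the framing obstruction (the relative Euler number / twisting coefficient) of the Whitney disks involved, and show that the twisting can be transported/cancelled in the way the relation predicts, possibly at the cost of creating lower-valence contributions that are themselves part of the relation. Once all contributing trees are cancelled, $\cW$ has been modified (rel boundary, inside $B^4$) to an order $n$ twisted Whitney tower with no order $n$ unpaired intersections and no framing obstructions on the critical middle-dimensional Whitney disks; performing one more layer of Whitney moves and pushing off parallel copies then yields an order $n+1$ twisted Whitney tower with the same boundary, completing the proof. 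I would also remark that, just as in the framed case, this argument shows the analogous statement for Whitney tower \emph{concordance} in $S^3 \times I$ verbatim, which is what feeds into Theorem~\ref{thm:twisted}.
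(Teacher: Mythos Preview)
Your overall template matches the paper's: reduce Theorem~\ref{thm:twisted-raising-intro} to the general order-raising Theorem~\ref{thm:twisted-order-raising-on-A}, split the tower, realize each relator in $\cT^\iinfty_n$ by a local geometric move, and then cancel. You correctly identify the twisted IHX move (the paper's Lemma~\ref{lem:twistedIHX}) as a genuinely new ingredient.

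However, there is a real gap in your final step. You write that once the relators are realized, ``all contributing trees are cancelled'' and the tower has ``no order $n$ unpaired intersections and no framing obstructions.'' This conflates \emph{algebraic} cancellation with \emph{geometric} cancellation. Realizing relators only arranges the intersection forest $t(\cW)$ into algebraically canceling pairs $+t,-t$ (resp.\ $+J^\iinfty,-J^\iinfty$); the corresponding intersection points (resp.\ twisted Whitney disks) are still present in $\cW$ and may sit in entirely different Whitney disks. For ordinary trees the conversion from algebraic to geometric cancellation is handled by the simplification and re-pairing arguments of \cite{ST2}, which you can cite. But for $\iinfty$-trees there is no analogue in the prior literature: an algebraically canceling pair of $\pm 1$-twisted clean Whitney disks $W_{J}$, $W'_{J}$ cannot be removed by Whitney moves alone, and ``pushing off parallel copies'' does not make sense here since the obstruction is precisely that the Whitney section does not extend.

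The paper supplies exactly this missing mechanism, and it is a second substantial new idea beyond the twisted IHX move. First, one uses Lemma~\ref{lem:twistedIHX} repeatedly (not just to realize relators, but as a tree-shape manipulation) to replace every $\iinfty$-tree by \emph{simple} $\iinfty$-trees with the $\iinfty$-label at one end; the point is that the subsequent construction needs the relevant Whitney disks to sit over \emph{connected} surfaces. Second, for each algebraically canceling pair of simple $\iinfty$-trees, the paper introduces an iterated \emph{banding construction} (Figures~\ref{bandedWdisksWithTwistsNoHigherInts} and~\ref{bandedWdisksWithPushDown}, going back to \cite[10.8]{FQ}) that merges the two towers level by level, so that at the top the two $\pm 1$-twisted Whitney disks are banded into a single framed clean Whitney disk, at the cost only of intersections and twisted disks of strictly higher order. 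Without this step your argument does not produce an order $n+1$ tower in the even case.
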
 
Theorem~\ref{thm:twisted-raising-intro} follows from the more general Theorem~\ref{thm:twisted-order-raising-on-A}, 
which is proved in Section~\ref{sec:proof-twisted-thm} using {\em twisted Whitney moves}
(Lemma~\ref{lem:twistedIHX}) as well as boundary-twists and a construction for geometrically canceling twisted Whitney disks. 


Next we bring Milnor's $\mu$-invariants into the picture. Let $\sL_n=\sL_n(m)$ be the degree $n$ part of the free Lie $\Z$-algebra $\sL=\oplus \sL_n$ generated by degree $1$ generators $\{X_1,\ldots, X_m\}$. Via the usual correspondence between rooted oriented labeled trees and non-associative bracketings, $\sL_n$ can be identified with the abelian group on \emph{rooted} trees of order $(n-1)$, where the
root is a chosen unlabeled univalent vertex and the other univalent vertices are labeled as before from the index set $\{1,2,3,\ldots,m\}$, modulo IHX relations (Jacobi identities) and \emph{self-annhilation relations} which kill all generators having an order two symmetry. The self-annhilation relations, which are expressed in terms of brackets as $[X,X]=0$, imply the antisymmetry relations of Figure~\ref{fig:ASandIHXtree-relations}, but not vice versa.

\begin{defn}\label{def:eta-infty}
Define $\eta_n\colon \cT^\iinfty_n\to \sL_1\otimes\sL_{n+1}$ on trees $t$ by $\eta_n(t):=\sum_{v\in t} X_{\ell(v)}\otimes T_v(t)$, where the sum is over all univalent vertices $v$ of $t$, with $T_v(t)$ denoting the rooted tree gotten by replacing $v$ with a root,
and $\ell(v)$ the original label of $v$. On $\iinfty$-trees $\iinfty-\!\!\!-\,J$, define 
$\eta_n(\iinfty-\!\!\!-\,J):=\frac{1}{2}\eta_n(J-\!\!\!-\,J)$, where dividing by $2$ makes sense since the terms in $\eta_n(J-\!\!\!-\,J)$
all have even coefficients. It is easy to check that $\eta_n$ is a well-defined homomorphism for all $n$ (see \cite{CST2,CST4}).
\end{defn}

Define the group $\sD_n$ 
to be the kernel of the bracket map $\sL_1\otimes\sL_{n+1}\to \sL_{n+2}$ which sends $X_i\otimes Y\mapsto [X_i,Y]$. 
This group $\sD_n$ is the natural target for the first non-vanishing Milnor invariants of a link \cite{HM, O}, and 
it turns out that $\eta_n$ maps $\cT^\iinfty_n$ onto $\sD_n$ \cite{CST2}.
In \cite{CST2} we explain the precise relationship between twisted Whitney towers and the (first non-vanishing) Milnor invariants via a $4$-dimensional incarnation of the $\eta_n$-map using \emph{grope duality} \cite{KT}. In particular, we have the following result: 
\begin{thm}[\cite{CST2}]\label{thm:twisted-Milnor}
The Milnor invariants of length $\leq n+1$ vanish for links $L\in  \bW^\iinfty_{n}$, and the length $n+2$ Milnor invariants of $L$ gives rise to a homomorphism
$\mu_n\colon \W^\iinfty_n\to \sD_n$ such that $\mu_n\circ R^\iinfty_n=\eta_n$.
\end{thm}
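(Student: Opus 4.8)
The plan is to derive everything from the relationship between twisted Whitney towers, gropes, and the lower central series, together with a direct Magnus-expansion computation for the realization maps; this is carried out in the companion paper \cite{CST2}. First I would prove the vanishing of the low-order invariants. Using the constructions relating Whitney towers to gropes (\cite{CST,S2,CST2}), convert an order $n$ twisted Whitney tower $\cW\subset B^4$ bounded by $L$ into a grope of class $n+1$ bounded by $L$. The point requiring care, which is absent in the framed case, is the twisting: the framing defect of a twisted Whitney disk of order $k\ge n/2$ contributes only to the part of the grope of class $\ge n+1$, so it does not lower the class. By grope duality \cite{KT}, a link bounding a class $n+1$ grope in $B^4$ has each of its longitudes lying in the $(n+1)$-st lower central series subgroup $G_{n+1}$ of $G=\pi_1(S^3\setminus L)$; Milnor's theorem then gives $\bar\mu_I(L)=0$ for every multi-index $I$ with $|I|\le n+1$.

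Next, since the length $\le n+1$ invariants vanish, the length $n+2$ Milnor invariants of $L\in\bW^\iinfty_n$ are well-defined integers, additive under connected sum, and --- by applying the first step to an annular twisted Whitney tower in $S^3\times I$ --- invariant under order $n+1$ Whitney tower concordance. They therefore assemble into a homomorphism $\mu_n\colon\W^\iinfty_n\to\sL_1\otimes\sL_{n+1}$ recording, for each component $i$, the leading Lie-algebra coefficient in $\sL_{n+1}$ of the $i$-th longitude, tensored with the generator $X_i\in\sL_1$. The classical cyclic-symmetry and shuffle relations among the $\bar\mu$-invariants (due to Milnor) force this element into the kernel $\sD_n$ of the bracket map $\sL_1\otimes\sL_{n+1}\to\sL_{n+2}$, which is exactly the target identified in \cite{HM,O}.

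The last and computational heart of the argument is the identity $\mu_n\circ R^\iinfty_n=\eta_n$. The map $R^\iinfty_n$ realizes a generator $t$ of $\cT^\iinfty_n$ by an explicit clasper / iterated-Bing-double construction along $t$, in the spirit of Cochran's realization of Milnor invariants \cite{C} (see section~\ref{subsec:realization-maps}). For an honest tree $t$ I would compute the Magnus expansions of the longitudes of $R^\iinfty_n(t)$ directly and read off that the degree $n+1$ part of the $i$-th longitude is the sum of the brackets $T_v(t)\in\sL_{n+1}$ over the univalent vertices $v$ of $t$ labeled $i$; summing over $i$ gives $\sum_v X_{\ell(v)}\otimes T_v(t)=\eta_n(t)$. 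For an $\iinfty$-tree $\iinfty\dash J$ the interior twist on the order $k$ Whitney disk is realized by a self-clasper built from $J$, whose longitude expansion computes to $\frac{1}{2}\eta_n(J\dash J)$, matching Definition~\ref{def:eta-infty}.

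The main obstacle is this last step: the bookkeeping of the tree-to-bracket dictionary, the orientation and sign conventions, and above all the factor $\frac{1}{2}$ produced by the self-clasper in the $\iinfty$-case, which is the algebraic shadow of the framing (twisting) data entering the grope-duality computation. A secondary difficulty, already present in the first step, is checking that the twisting permitted on Whitney disks of order $\ge n/2$ genuinely costs no lower-central-series depth; this numerical point is the reason the \emph{twisted}, rather than the framed, filtration is the natural setting for Theorem~\ref{thm:twisted-Milnor}.
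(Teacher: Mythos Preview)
The paper does not prove this theorem; it is stated with attribution to the companion paper \cite{CST2}, and the only information given here about the argument is the one-line remark that it proceeds ``via a $4$-dimensional incarnation of the $\eta_n$-map using grope duality \cite{KT}.'' There is therefore no proof in this paper to compare your proposal against, and you correctly flag at the outset that the work is carried out in \cite{CST2}.

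Taking the paper's hint as the benchmark, your first two steps (Whitney tower $\rightsquigarrow$ grope, then grope duality to push longitudes into the lower central series, giving vanishing of short $\bar\mu$-invariants and well-definedness of $\mu_n$ on $\W^\iinfty_n$) match the indicated strategy. For the identity $\mu_n\circ R^\iinfty_n=\eta_n$ your proposal is a direct $3$-dimensional Magnus-expansion computation on the explicit Bing-double/clasper links produced by $R^\iinfty_n$, in the style of Cochran. The phrase ``$4$-dimensional incarnation of the $\eta_n$-map'' suggests that \cite{CST2} instead reads the longitudes off the Whitney tower/grope directly, so that the sum-over-rootings formula for $\eta_n$ appears geometrically rather than via a link-by-link calculation; this would also make the $\iinfty$-tree case and its coefficient $\tfrac12$ emerge uniformly from the twisted-Whitney-disk contribution rather than from a separate self-clasper computation. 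Either route is plausible, but be aware that your version of the last step is likely organized differently from the cited source.
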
  
We will refer to the maps $\mu_n\colon \W^\iinfty_n\to \sD_n$ as \emph{order $n$} Milnor invariants. For $n=0$ they give linking numbers as well as framings (as coefficients of $X_i\otimes X_i$). 

\begin{cor} \label{cor:twisted-triangle} There is a commutative diagram of epimorphisms
\[
\xymatrix{
\cT^\iinfty_n \ar@{->>}[r]^{R_n^\iinfty} \ar@{->>}[rd]_{\eta_n} & \W^\iinfty_n \ar@{->>}[d]^{\mu_n}\\
& \sD_{n}
}\]
\end{cor}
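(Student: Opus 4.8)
The plan is to observe that Corollary~\ref{cor:twisted-triangle} is essentially a formal consequence of the three theorems immediately preceding it, so the proof will be short and will amount to assembling the pieces and checking that all maps in sight are surjective. Concretely, Theorem~\ref{thm:twisted} supplies the epimorphism $R_n^\iinfty\colon \cT_n^\iinfty\sra\W_n^\iinfty$, and Theorem~\ref{thm:twisted-Milnor} supplies a homomorphism $\mu_n\colon \W_n^\iinfty\to \sD_n$ together with the commutativity identity $\mu_n\circ R_n^\iinfty=\eta_n$. The only thing not already literally in those statements is the surjectivity of $\eta_n$ and of $\mu_n$, which I would pull in from the sentence preceding Theorem~\ref{thm:twisted-Milnor}: it is recorded there (citing \cite{CST2}) that $\eta_n$ maps $\cT_n^\iinfty$ onto $\sD_n$.

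First I would write down the triangle with vertices $\cT_n^\iinfty$, $\W_n^\iinfty$, and $\sD_n$, with the three edges $R_n^\iinfty$, $\mu_n$, and $\eta_n$, and note that commutativity of the triangle is precisely the identity $\mu_n\circ R_n^\iinfty=\eta_n$ from Theorem~\ref{thm:twisted-Milnor}. Next I would verify that each edge is an epimorphism: $R_n^\iinfty$ is onto by Theorem~\ref{thm:twisted}; $\eta_n$ is onto $\sD_n$ by \cite{CST2} as recalled above; and therefore $\mu_n$ is onto because $\eta_n=\mu_n\circ R_n^\iinfty$ factors through $\mu_n$, so the image of $\mu_n$ contains the image of $\eta_n$, which is all of $\sD_n$. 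That is the entire argument.

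There is no real obstacle here — the ``hard part'' in a genuine sense was already done in proving Theorems~\ref{thm:twisted} and~\ref{thm:twisted-Milnor} and the surjectivity of $\eta_n$ in \cite{CST2}. The only thing to be careful about is not to claim more than is true: the diagram is a diagram of \emph{epimorphisms}, not of isomorphisms (indeed the point of the later sections is precisely that these maps can fail to be injective, e.g.\ the higher-order Arf invariant phenomena), so the proof should stop once surjectivity of all three arrows and commutativity of the triangle have been established, and should explicitly cite where each of these four facts comes from.

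\begin{proof}
By Theorem~\ref{thm:twisted}, $R_n^\iinfty\colon \cT^\iinfty_n\sra\W^\iinfty_n$ is an epimorphism. By Theorem~\ref{thm:twisted-Milnor}, $\mu_n$ is a homomorphism satisfying $\mu_n\circ R_n^\iinfty=\eta_n$, which is exactly the commutativity of the triangle. It remains to check that $\eta_n$ and $\mu_n$ are onto. That $\eta_n$ maps $\cT^\iinfty_n$ onto $\sD_n$ is the statement recalled (from \cite{CST2}) just before Theorem~\ref{thm:twisted-Milnor}. Since $\eta_n=\mu_n\circ R_n^\iinfty$, the image of $\mu_n$ contains the image of $\eta_n$, which is all of $\sD_n$; hence $\mu_n$ is also an epimorphism.
\end{proof}
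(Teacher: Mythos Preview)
Your proof is correct and matches the paper's intent: the corollary is stated without proof in the paper precisely because it is an immediate formal consequence of Theorem~\ref{thm:twisted}, Theorem~\ref{thm:twisted-Milnor}, and the surjectivity of $\eta_n$ recalled from \cite{CST2}. Your assembly of these three ingredients, including deducing surjectivity of $\mu_n$ from the factorization, is exactly the argument the reader is expected to supply.
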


Moreover, we show below in Theorem~\ref{thm:isomorphisms} that
$\eta_n:\cT_n^\iinfty\to\sD_n$ is an \emph{isomorphism} except when $n\equiv 2\mod 4$.  Since $\sD_n$ is a free abelian group of known rank for all $n$ \cite{O}, this completes the computation of $\W^\iinfty_n$ in three quarters of the cases:

\begin{thm}\label{thm:intro-twisted-3-quarter-classification}
If $n\not\equiv 2\mod 4$, the maps $R^\iinfty_n$ and $\mu_n$ give rise to isomorphisms
$$
\cT^\iinfty_n\cong \W^\iinfty_n\cong \sD_n
$$
\end{thm}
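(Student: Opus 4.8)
The plan is to combine three ingredients already assembled in the excerpt: the epimorphisms $R^\iinfty_n\colon\cT^\iinfty_n\sra\W^\iinfty_n$ from Theorem~\ref{thm:twisted}, the Milnor-invariant homomorphisms $\mu_n\colon\W^\iinfty_n\to\sD_n$ from Theorem~\ref{thm:twisted-Milnor}, and the factorization $\mu_n\circ R^\iinfty_n=\eta_n$ recorded in Corollary~\ref{cor:twisted-triangle}. Since all three maps in the triangle are surjective, it suffices to prove that one of the composable pairs consists of isomorphisms; the cleanest route is to show that $\eta_n\colon\cT^\iinfty_n\to\sD_n$ is an isomorphism whenever $n\not\equiv 2\bmod 4$, which is exactly the content promised as Theorem~\ref{thm:isomorphisms}. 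Granting that, the commutativity $\mu_n\circ R^\iinfty_n=\eta_n$ forces $R^\iinfty_n$ to be injective (hence an isomorphism, being already onto), and then $\mu_n=\eta_n\circ(R^\iinfty_n)^{-1}$ is an isomorphism as well. Finally $\sD_n$ is free abelian of rank computed by Orr \cite{O}, so $\W^\iinfty_n$ is identified with this known free abelian group.

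First I would set up the diagram and reduce, as above, to the single algebraic statement that $\eta_n$ is an isomorphism for $n\not\equiv 2\bmod 4$. Second, I would prove that statement by a case analysis on $n\bmod 4$. The map $\eta_n$ is surjective onto $\sD_n$ by the result of \cite{CST2} cited just before the corollary, so the real work is injectivity, equivalently a rank/torsion count: one must show $\cT^\iinfty_n$ has no torsion and has the same rank as $\sD_n$ in these residue classes. For even $n$, Theorem~4 of \cite{ST2} gives that the rational realization map is an isomorphism, so $\cT^\iinfty_n\otimes\Q\cong\sD_n\otimes\Q$; the point for $n\equiv 0\bmod 4$ is then to check that $\cT^\iinfty_n$ is torsion-free, using the affirmation of Levine's conjecture from \cite{CST3} together with the effect of adjoining the $\iinfty$-trees and imposing the interior-twist and twisted-IHX relations (these kill precisely the $2$-torsion symmetric trees $\tree{i}{J}{J}$ that obstructed torsion-freeness in $\cT_n$). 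For odd $n$, one uses that $\cT^\iinfty_{2k-1}$ is by definition the quotient of $\cT_{2k-1}$ by its torsion subgroup, so it is automatically torsion-free; the case $n=4k-1$ is the one where the rank matches $\sD_n$, while $n=4k+1\equiv 1\bmod 4$ is handled because there the relevant Levine-type half vanishes. In every case $n\not\equiv 2\bmod 4$ the bracket map $\sL_1\otimes\sL_{n+1}\to\sL_{n+2}$ and the computation of its kernel line up with the presentation of $\cT^\iinfty_n$, giving the isomorphism.

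The main obstacle I anticipate is the torsion-freeness of $\cT^\iinfty_n$ for $n\equiv 0\bmod 4$ and the exact matching of ranks when $n=4k-1$: both require carefully tracking how the extra relations defining $\cT^\iinfty_n$ (boundary-twist, interior-twist, symmetry, twisted-IHX) interact with the symmetric-tree $2$-torsion of $\cT_n$ identified via Levine's conjecture in \cite{CST3}, and showing that the net effect is to land on the nose on $\sD_n$ rather than a proper subquotient. This is essentially a purely algebraic/combinatorial computation in the tree groups, and I would isolate it as the separate Theorem~\ref{thm:isomorphisms} (as the excerpt already signals), so that the proof of Theorem~\ref{thm:intro-twisted-3-quarter-classification} itself is the short diagram-chase plus citation of \cite{O} for the rank. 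One subtlety to be careful about: one must confirm that the surjectivity of $\mu_n$ in Theorem~\ref{thm:twisted-Milnor} is genuinely onto all of $\sD_n$ (not merely that $\mu_n\circ R^\iinfty_n=\eta_n$ with $\eta_n$ onto) — but this follows immediately from surjectivity of $R^\iinfty_n$ together with surjectivity of $\eta_n$, so no extra input is needed.
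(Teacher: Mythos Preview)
Your overall architecture is correct and matches the paper exactly: reduce via the commutative triangle $\mu_n\circ R^\iinfty_n=\eta_n$ of epimorphisms to the single algebraic statement that $\eta_n$ is an isomorphism for $n\not\equiv 2\bmod 4$, then cite Theorem~\ref{thm:isomorphisms}. That part needs no change.

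Where your proposal diverges from the paper, and where it becomes unreliable, is in the sketch of how to \emph{prove} Theorem~\ref{thm:isomorphisms}. You propose a ``surjectivity plus rank/torsion count'' argument, but several steps are either vague or off. First, your distinction between $n=4k-1$ and $n=4k+1$ among the odd cases is spurious: the paper handles all odd $n$ uniformly via a single $5$-lemma argument comparing the exact sequence $0\to\operatorname{span}\{\langle i,(J,J)\rangle\}\to\cT_{2k-1}\to\cT^\iinfty_{2k-1}\to 0$ with Levine's sequence $0\to\Z_2^m\otimes\sL_k\to\sD'_{2k-1}\to\sD_{2k-1}\to 0$, using the Levine Conjecture $\eta'_{2k-1}\cong$ in the middle. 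Second, for $n=4k$ your claim that the twisted relations ``kill precisely the $2$-torsion symmetric trees'' is not how torsion-freeness of $\cT^\iinfty_{4k}$ is established, and it is not clear a priori that adjoining $\iinfty$-trees with their own relations introduces no new torsion. The paper instead uses the short exact sequence $0\to\cT_{4k}\to\cT^\iinfty_{4k}\to\Z_2\otimes\sL'_{2k+1}\to 0$ from Theorem~\ref{thm:exact-sequence} (whose injectivity on the left is a nontrivial input from \cite{UQF}), maps it to $0\to\sD'_{4k}\to\sD_{4k}\to\Z_2\otimes\sL_{2k+1}\to 0$, invokes $\sL'_{2k+1}\cong\sL_{2k+1}$, and applies the $5$-lemma again. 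So your reduction is fine, but your proposed proof of the key lemma should be replaced by these two $5$-lemma comparisons rather than an ad hoc torsion analysis.
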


The last quarter of the cases is more complicated as can already be seen for $n=2$: In the case $m=1$ of knots, we show in 
\cite{CST2} that the classical Arf invariant induces an isomorphism $\W^\iinfty_2(1) \cong\Z_2$, whereas all Milnor invariants vanish for knots. 

In Corollary~\ref{cor:kernel-eta 4k-2} of section~\ref{subsec:eta-isos} we derive the following result which computes the kernel of $\eta_n$ for all $n\equiv 2\mod 4$:
\begin{prop}\label{prop:kerEta4k-2}
The map sending $1\otimes J $ to $ \iinfty-\!\!\!\!\!\dash\!\!\!\!<^{\,J}_{\,J}\,\,\in\cT^\iinfty_{4k-2}$ for rooted trees $J$ of order $k-1$ defines an isomorphism $\Z_2 \otimes \sL_k \cong\Ker(\eta_{4k-2}:\cT_{4k-2}^\iinfty\to\sD_{4k-2})$.
\end{prop}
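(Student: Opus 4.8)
The plan is to exhibit the map $\phi\colon \z\otimes\sL_k\to\cT^\iinfty_{4k-2}$, $1\otimes J\mapsto \iinfty-\!\!\!\!\!\dash\!\!\!\!<^{\,J}_{\,J}$, show it is well-defined with image inside $\Ker\eta_{4k-2}$, and then separately construct a two-sided inverse. First I would check well-definedness: $\sL_k$ is generated by rooted trees of order $k-1$ modulo IHX and self-annihilation relations, so I must verify that $\phi$ kills both. Self-annihilation ($J$ with an order-two symmetry $\mapsto 0$) follows because an $\iinfty$-tree $\iinfty-\!\!\!-(J-\!\!\!-J)$ with such a $J$ acquires an order-two symmetry, and by the interior-twist relation such twisted trees are $2$-torsion; together with the $\z$-coefficients this forces the image to vanish. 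IHX in the $J$-slot is handled by the twisted IHX relation in $\cT^\iinfty$ (Definition~\ref{def:Tau-infty-even}): an IHX triple among the $J$'s, doubled and capped with $\iinfty$, is precisely a twisted IHX relation, so $\phi$ respects it. Finally $\eta_{4k-2}\circ\phi=0$: by Definition~\ref{def:eta-infty}, $\eta_{4k-2}(\iinfty-\!\!\!-J')=\tfrac12\eta_{4k-2}(J'-\!\!\!-J')$ where $J'=J-\!\!\!-J$, and one computes that $\eta$ of a doubled tree lies in the symmetric part $\z\otimes(\sL_1\otimes\sL_{2k})$ which maps into $2\cdot(\sL_1\otimes\sL_{2k})$; after dividing by $2$ the ambiguity is exactly killed, or more directly, $\eta_{4k-2}(J-\!\!\!-J)$ is a symmetrized bracket expression whose image under the further bracket map vanishes by Jacobi — this is the same computation underlying the definition of $\eta$ on $\iinfty$-trees and is recorded in \cite{CST2,CST4}.

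For injectivity and surjectivity onto the kernel I would invoke the structural results already available. By the affirmation of Levine's conjecture \cite{CST3}, the torsion of $\cT_{2k-1}$ is generated by the symmetric trees $\tree{i}{J}{J}$ with $J$ of order $k-1$, and $\cT^\iinfty_{2k-1}$ is obtained by killing exactly these (the boundary-twist relations), so in odd orders the passage $\cT\to\cT^\iinfty$ kills torsion. In even order $4k-2$, the new generators of $\cT^\iinfty_{4k-2}$ beyond those of $\cT_{4k-2}$ are the $\iinfty$-trees $\iinfty-\!\!\!-J'$ with $k$ trivalent vertices, subject to symmetry, twisted IHX and interior-twist relations. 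The point is that $\cT_{4k-2}$ itself is free abelian (Levine's conjecture gives this in even orders), so all torsion in $\cT^\iinfty_{4k-2}$ comes from the $\iinfty$-trees, and the interior-twist relation makes every $\iinfty$-tree $2$-torsion while the symmetry relation identifies $\iinfty-\!\!\!-(A-\!\!\!-B)$ with $\iinfty-\!\!\!-(B-\!\!\!-A)$. Thus the span of the $\iinfty$-trees is a $\z$-vector space, it is precisely the image of $\phi$ (since every $\iinfty-\!\!\!-J'$ with $k$ trivalent vertices has $J'$ of order $k-1$, and by the symmetry/twisted-IHX relations can be written in terms of symmetric ones $\iinfty-\!\!\!-(J-\!\!\!-J)$ — this "quasi-Lie to Lie" identity is exactly what makes $\eta$ on $\iinfty$-trees land in a halved bracket expression), and one reads off a $\z$-vector-space presentation matching $\z\otimes\sL_k$ via the identification of $\sL_k$ with rooted order-$(k-1)$ trees modulo IHX and self-annihilation. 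Since $\eta_{4k-2}$ is injective on the subgroup generated by honest (untwisted) trees — because $\eta_n$ is rationally injective in all orders and $\cT_{4k-2}$ is torsion-free — the kernel of $\eta_{4k-2}$ is contained in the span of the $\iinfty$-trees, giving the reverse inclusion $\Ker\eta_{4k-2}\subseteq\im\phi$.

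Concretely, the argument would be organized as: (1) $\phi$ is a well-defined homomorphism $\z\otimes\sL_k\to\cT^\iinfty_{4k-2}$ with $\eta_{4k-2}\circ\phi=0$, so $\im\phi\subseteq\Ker\eta_{4k-2}$; (2) the subgroup $T\subseteq\cT^\iinfty_{4k-2}$ spanned by the $\iinfty$-trees is a $\z$-vector space and equals $\im\phi$, with the relations among $\iinfty$-trees matching those defining $\z\otimes\sL_k$, so $\phi$ is injective; (3) $\cT^\iinfty_{4k-2}/T \cong\cT_{4k-2}$ is torsion-free and $\eta_{4k-2}$ is injective on it, hence $\Ker\eta_{4k-2}\subseteq T=\im\phi$; combining (1) and (3) gives $\Ker\eta_{4k-2}=\im\phi\cong\z\otimes\sL_k$. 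Step (2) is where I expect the real work to lie: one must verify that the three relations among $\iinfty$-trees (symmetry, twisted IHX, interior twist) give a presentation no finer than the one defining $\sL_k\otimes\z$, i.e. that no further collapse occurs. This is essentially a linear-algebra computation over $\z$ comparing the twisted-IHX relations to the ordinary IHX relations in $\sL_k$, using that the "doubling" map $J\mapsto J-\!\!\!-J$ from quasi-Lie to the symmetric part is compatible with IHX after reduction mod $2$; the subtlety is keeping track of the self-annihilation relations, which on the twisted side appear as consequences of symmetry together with interior-twist. I would expect to lean on the explicit computations in \cite{CST2,CST3,CST4} rather than redo them, citing \cite[Cor.1.2]{CST3} for the torsion of $\cT_{2k-1}$ and the freeness of $\cT_{2k}$, and Theorem~\ref{thm:isomorphisms} (whose proof this proposition feeds into) only insofar as it is logically independent.
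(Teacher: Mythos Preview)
Your outline has the right ingredients but a critical structural confusion in steps (2) and (3) that breaks the argument.

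First, the interior-twist relation does \emph{not} make every $\iinfty$-tree $2$-torsion: it says $2\cdot J'^\iinfty=\langle J',J'\rangle$, and for generic $J'\in\sL'_{2k}$ the tree $\langle J',J'\rangle$ is nonzero in $\cT_{4k-2}$ (e.g.\ for $k=1$, $m\geq 2$, the element $(i,j)^\iinfty$ with $i\neq j$ has infinite order in $\cT^\iinfty_2$). So the span $T$ of all $\iinfty$-trees is not a $\z$-vector space.

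Second, and more seriously, you conflate $T$ with $\im\phi$. The $\iinfty$-trees in $\cT^\iinfty_{4k-2}$ are $J'^\iinfty$ for $J'$ a rooted tree of order $2k-1$, i.e.\ $J'\in\sL'_{2k}$; the symmetric ones $(J,J)^\iinfty$ with $J\in\sL_k$ are only those in the image of the squaring map $sq:\sL_k\to\sL'_{2k}$, and $\sL'_{2k}/sq(\sL_k)\cong\sL_{2k}\neq 0$. There is no way to write a general $J'^\iinfty$ in terms of symmetric ones using only the twisted-IHX and symmetry relations. Consequently $\im\phi\subsetneq T$, and your step (3) --- which needs $\Ker\eta_{4k-2}\subseteq\im\phi$ --- cannot be deduced from injectivity of $\eta_{4k-2}$ on $\cT^\iinfty_{4k-2}/T$.

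Third, the quotient/subgroup picture is backwards: $\cT_{4k-2}$ is a \emph{subgroup} of $\cT^\iinfty_{4k-2}$ with cokernel $\z\otimes\sL'_{2k}$ (Theorem~\ref{thm:exact-sequence}), not a quotient by $T$.

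The paper's proof avoids all of this by constructing a pullback group $\sD^\iinfty_{4k-2}$ fitting into $0\to\z\otimes\sL_k\to\sD^\iinfty_{4k-2}\to\sD_{4k-2}\to 0$, lifting $\eta_{4k-2}$ to $\eta^\iinfty_{4k-2}:\cT^\iinfty_{4k-2}\to\sD^\iinfty_{4k-2}$, and showing the lift is an isomorphism; the kernel of $\eta_{4k-2}$ is then read off from the exact sequence. Your approach can be repaired to something closer to this: use the injection $\cT_{4k-2}\hookrightarrow\cT^\iinfty_{4k-2}$ (on which $\eta_{4k-2}$ restricts to the injective $\eta'_{4k-2}$), conclude that $\Ker\eta_{4k-2}$ injects into the cokernel $\z\otimes\sL'_{2k}$, and then identify its image there as precisely $\im(sq)\cong\z\otimes\sL_k$ using the compatibility of $\eta_{4k-2}$ with the map $s\ell_{4k-2}:\sD_{4k-2}\to\z\otimes\sL_{2k}$ (this is Lemma~\ref{lem:cd}). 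That last step is exactly where the distinction between $\sL'_{2k}$ and $\sL_{2k}$ enters, and it is what your argument is missing.
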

It follows from Corollary~\ref{cor:twisted-triangle} that $\Z_2 \otimes \sL_k$ is also an upper bound on
the kernels of the epimorphisms $R^\iinfty_{4k-2}$ and $\mu_{4k-2}$, and the calculation of $\W^\iinfty_{4k-2}$ will be completed by invariants defined on the kernel of 
$\mu_{4k-2}$ which we believe are new concordance 
invariants generalizing the classical Arf invariant, as we describe next.


\subsubsection*{Higher-order Arf invariants}\label{subsec:intro-higher-order-arf}

Let $\sK^\iinfty_{4k-2}$ denote the kernel of $\mu_{4k-2}: \W^\iinfty_{4k-2} \sra \sD_{4k-2}$. It follows from Corollary~\ref{cor:twisted-triangle} and Proposition~\ref{prop:kerEta4k-2} above that mapping $1\otimes J$ to 
$R^\iinfty_{4k-2}( \iinfty-\!\!\!\!\!-\!\!\!<^{\,J}_{\,J}\,\,)$ induces a surjection $\alpha^\iinfty_k: \Z_2 \otimes \sL_k
\sra \sK^\iinfty_{4k-2}$, for all $k\geq 1$.
Denote by $\overline{\alpha^\iinfty_{k}}$ the induced isomorphism on $(\mathbb Z_2\otimes {\sf L}_{k})/\Ker \alpha^\iinfty_{k}$.

\begin{defn}\label{def:Arf-k}  
The \emph{higher-order Arf invariants} are defined by
$$
\Arf_{k}:=(\overline{\alpha^\iinfty_{k}})^{-1}:\sK^\iinfty_{4k-2}\to(\mathbb Z_2\otimes {\sf L}_{k})/\Ker \alpha^\iinfty_{k}
$$
\end{defn}
From Corollary~\ref{cor:twisted-triangle}, Theorem~\ref{thm:intro-twisted-3-quarter-classification}, Proposition~\ref{prop:kerEta4k-2} and Definition~\ref{def:Arf-k} we see that the groups $\W^\iinfty_n$ are computed by the Milnor and higher-order Arf invariants:
\begin{cor}\label{cor:intro-mu-arf-classify-twisted} 
The groups ${\sf W}^\iinfty_{n}$ are classified by Milnor invariants $\mu_n$ and, in addition, higher-order Arf invariants $\Arf_k$ for $n=4k-2$.
\end{cor}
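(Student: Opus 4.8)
The claim to be proved is Corollary~\ref{cor:intro-mu-arf-classify-twisted}, which asserts that the groups $\W^\iinfty_n$ are classified by the Milnor invariants $\mu_n$ together with the higher-order Arf invariants $\Arf_k$ when $n=4k-2$.

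\medskip

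The plan is to treat the two cases of the residue of $n$ modulo $4$ separately, invoking the earlier structural results. First I would dispose of the case $n\not\equiv 2\bmod 4$. Here Theorem~\ref{thm:intro-twisted-3-quarter-classification} gives that $\mu_n\colon \W^\iinfty_n\to\sD_n$ is an isomorphism, so a link $L\in\bW^\iinfty_n$ is trivial in $\W^\iinfty_n$ if and only if $\mu_n(L)=0$; there is nothing for the higher-order Arf invariants to do in these orders, and the classification is immediate. Second, and this is the substantive case, I would take $n=4k-2$. Here the strategy is to exhibit $\W^\iinfty_{4k-2}$ as an extension
\[
0\to \sK^\iinfty_{4k-2}\to \W^\iinfty_{4k-2}\xrightarrow{\ \mu_{4k-2}\ } \sD_{4k-2}\to 0,
\]
exact by Corollary~\ref{cor:twisted-triangle} (which gives surjectivity of $\mu_{4k-2}$) and the very definition of $\sK^\iinfty_{4k-2}$ as $\Ker\mu_{4k-2}$. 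By Definition~\ref{def:Arf-k}, $\Arf_k$ is precisely the inverse of the isomorphism $\overline{\alpha^\iinfty_k}$, hence it is an isomorphism $\sK^\iinfty_{4k-2}\xrightarrow{\ \cong\ }(\z\otimes\sL_k)/\Ker\alpha^\iinfty_k$. Therefore the pair $(\mu_{4k-2},\Arf_k)$ separates points of $\W^\iinfty_{4k-2}$: if two classes have the same Milnor invariant their difference lies in $\sK^\iinfty_{4k-2}$, and if in addition they have the same higher-order Arf invariant that difference is zero since $\Arf_k$ is injective on $\sK^\iinfty_{4k-2}$. Combining both cases yields the corollary.

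\medskip

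Concretely, the steps in order are: (i) recall that $\W^\iinfty_n$ is a finitely generated abelian group (Theorem~\ref{thm:twisted}) and that $\mu_n$ is a well-defined homomorphism with $\mu_n\circ R^\iinfty_n=\eta_n$ (Theorem~\ref{thm:twisted-Milnor}); (ii) for $n\not\equiv 2\bmod 4$, quote Theorem~\ref{thm:intro-twisted-3-quarter-classification} to conclude $\mu_n$ is an isomorphism; (iii) for $n=4k-2$, identify $\sK^\iinfty_{4k-2}=\Ker\mu_{4k-2}$ and use Corollary~\ref{cor:twisted-triangle} and Proposition~\ref{prop:kerEta4k-2} to see that $\alpha^\iinfty_k\colon\z\otimes\sL_k\sra\sK^\iinfty_{4k-2}$ is the induced surjection; (iv) pass to the quotient by $\Ker\alpha^\iinfty_k$ to obtain the isomorphism $\overline{\alpha^\iinfty_k}$ and invert it, as in Definition~\ref{def:Arf-k}, to get the isomorphism $\Arf_k$ on $\sK^\iinfty_{4k-2}$; (v) assemble the short exact sequence above and conclude that $(\mu_{4k-2},\Arf_k)$ is a complete set of invariants for $\W^\iinfty_{4k-2}$.

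\medskip

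I expect the main obstacle to lie entirely in the inputs rather than the assembly: everything here is a formal consequence of the already-established Corollary~\ref{cor:twisted-triangle}, Theorem~\ref{thm:intro-twisted-3-quarter-classification}, and Proposition~\ref{prop:kerEta4k-2}, so the proof of the corollary itself is short. The genuine difficulty — the computation of $\Ker(\eta_{4k-2})$ in Proposition~\ref{prop:kerEta4k-2}, the surjectivity statements feeding Corollary~\ref{cor:twisted-triangle}, and the $\eta_n$-isomorphism away from $n\equiv 2\bmod 4$ in Theorem~\ref{thm:intro-twisted-3-quarter-classification} — has been pushed into those earlier results and into the companion papers \cite{CST2,CST3,CST4}. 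One point to be careful about is that the corollary as stated is a \emph{classification} statement, i.e.\ it asserts these invariants are complete, not that $\Ker\alpha^\iinfty_k$ is understood; so no claim about the size of $\W^\iinfty_{4k-2}$ beyond ``it is the extension of $\sD_{4k-2}$ by a quotient of $\z\otimes\sL_k$'' is being made here, and the proof should phrase the conclusion accordingly.
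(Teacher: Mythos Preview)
Your proposal is correct and follows essentially the same approach as the paper: the paper's proof is the single sentence preceding the corollary, which simply cites Corollary~\ref{cor:twisted-triangle}, Theorem~\ref{thm:intro-twisted-3-quarter-classification}, Proposition~\ref{prop:kerEta4k-2}, and Definition~\ref{def:Arf-k} as the ingredients from which the classification follows. Your write-up is a faithful expansion of how those four pieces assemble, including the case split on $n\bmod 4$ and the short exact sequence argument for $n=4k-2$.
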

In particular, it follows that a link bounds an order $n$ twisted Whitney tower if and only if its Milnor invariants and higher-order Arf invariants vanish up to order $n$.

We conjecture that the $\alpha^\iinfty_k$ are isomorphisms, which would mean that the 
$\Arf_k$ are very interesting new concordance invariants:
\begin{conj}\label{conj:Arf-k}
 $\Arf_{k}:\sK^\iinfty_{4k-2}\to\mathbb Z_2\otimes {\sf L}_{k}$ are isomorphisms for all $k$.
\end{conj}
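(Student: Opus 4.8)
\textbf{Proof proposal for Conjecture~\ref{conj:Arf-k}.}

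The plan is to prove injectivity of the surjections $\alpha^\iinfty_k\colon \z\otimes\sL_k\sra\sK^\iinfty_{4k-2}$; surjectivity is already supplied by Corollary~\ref{cor:twisted-triangle} and Proposition~\ref{prop:kerEta4k-2}, and by Corollary~\ref{cor:intro-mu-arf-classify-twisted} the injectivity of all $\alpha^\iinfty_k$ is precisely the statement that $\{\mu_n\}$ together with $\{\Arf_k\}$ is a \emph{complete} --- not merely classifying --- set of invariants for the twisted filtration, i.e.\ that there is no further collapse in the orders $n=4k-2$. It suffices to build, for each $k$, a homomorphism $\varphi_k$ on $\sK^\iinfty_{4k-2}$ --- equivalently, a link invariant defined on those $L\in\bW^\iinfty_{4k-2}$ whose Milnor invariants $\mu_j$ vanish for $j\le 4k-2$, invariant under Whitney tower concordance of order $4k-1$ --- with values in a group containing $\z\otimes\sL_k$ as a subgroup, such that $\varphi_k\circ\alpha^\iinfty_k$ is injective; then $\Arf_k=(\alpha^\iinfty_k)^{-1}$ as claimed. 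By additivity of the twisted realization maps and intersection invariants under connected sum, it is enough that $\varphi_k$ separate the generators $L_J:=R^\iinfty_{4k-2}(\tree{\iinfty}{J}{J})$, for $J$ a rooted tree of order $k-1$, and their sums; since $\z\otimes\sL_k$ is a finite $\z$-vector space this is a finite amount of checking per $k$.

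For $k=1$ this $\varphi_1$ is the tuple of classical Arf invariants of the $m$ link components: $\z\otimes\sL_1(m)\cong\z^m$, the link $L_i=R^\iinfty_2(\tree{\iinfty}{i}{i})$ is obtained by band-summing an Arf-one knot (such as the trefoil) into the $i$th component, and $\Arf$ of the $i$th component detects it; this recovers the computation $\W^\iinfty_2(1)\cong\z$ of \cite{CST2}. For $k\ge 2$ I would attempt induction on $k$ via a \emph{derivative} construction. Using the description of $R^\iinfty$ as simple clasper surgery along trees (Section~\ref{subsec:realization-maps}), the link $L_J$ is built from an Arf-one knot by a prescribed sequence of infections and Bing-doublings encoded by the doubled tree $\tree{\iinfty}{J}{J}$. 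One seeks a derivative operation $D$ on links --- assembled from covering-link and satellite calculus together with the Sato--Levine-type derivative whose iterates underlie the higher-order Sato--Levine invariants of Section~\ref{sec:twisted-to-framed-classification} --- which peels off the outermost layer of infections, sends the vanishing-Milnor sublevel of $\bW^\iinfty_{4k-2}$ into that of $\bW^\iinfty_{4(k-1)-2}$ compatibly with Whitney tower concordance, and after $k-1$ iterations lands on knots whose classical Arf invariants, bookkept over the combinatorics of $\eta_n$, reproduce $\varphi_k$. Checking that $D$ descends to the filtration quotients and intertwines $\eta$ and $\Delta_{2k-1}$ correctly would give simultaneously the well-definedness of $\varphi_k$ and its non-vanishing on the $L_J$.

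The main obstacle is exactly the feature that makes the classical Arf invariant non-homological: once the Milnor invariants vanish, the tree-valued Whitney tower obstruction theory produces \emph{no} further obstruction, so $\varphi_k$ cannot be read off from $\tau^\iinfty$ and must come from outside that machinery. The plausible sources --- $\z$-valued metabelian (Casson--Gordon-type) invariants, Kervaire--Arf invariants of characteristic surfaces in $\z$-homology covers, spin-bordism arguments, or mod-$2$ reductions of von Neumann $\rho$-invariants --- are each individually believable, but for none of them is it presently known both that it is well-defined on all of $\sK^\iinfty_{4k-2}$ (rather than on a link equipped with a fixed tower or Seifert surface) and that it is computable and nonzero on the generators $L_J$. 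The hard part will be to control the chosen invariant under the infection and satellite operations used to build the realization links, and to prove that the derivative operation $D$ genuinely preserves the twisted Whitney tower filtration without loss of information; this is where I expect the argument to stall, and it is the reason the statement remains conjectural despite Corollary~\ref{cor:intro-mu-arf-classify-twisted} reducing everything to the single group $\sK^\iinfty_{4k-2}$. As intermediate targets I would first aim to show merely that $\alpha^\iinfty_k\ne 0$ for $k\ge 2$ --- i.e.\ that at least one generator survives --- and to settle $k=2$ directly, using the explicit structure of $\cT^\iinfty_6$ and $\sD_6$ from \cite{CST3} together with an explicit family of detecting links.
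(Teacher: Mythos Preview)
The statement you are attempting to prove is Conjecture~\ref{conj:Arf-k}, which the paper explicitly leaves open: immediately after stating it, the authors write that ``It remains an open problem whether $\Arf_k$ is non-trivial for any $k>1$.'' There is no proof in the paper to compare against. The only case the paper claims is $k=1$, where (as you correctly note) the classical Arf invariants of the components do the job; this is attributed to \cite{CST2}.

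Your proposal is not a proof but a strategy sketch, and you are candid about this in your final paragraph. The reduction to constructing detecting homomorphisms $\varphi_k$ is correct and standard; the difficulty, as you identify, is that no such $\varphi_k$ is currently known for $k\geq 2$. Your proposed ``derivative'' operation $D$ is speculative: there is no established construction that peels off a layer of Bing-doubling while provably preserving the twisted Whitney tower filtration and descending to $\sK^\iinfty_{4k-2}$. Covering-link and satellite techniques of the Cha type (cited in the paper as evidence for the conjecture) show that the realization links $R^\iinfty_{4k-2}((J,J)^\iinfty)$ are not slice, but this is strictly weaker than showing they are nontrivial in $\W^\iinfty_{4k-2}$, which is what injectivity of $\alpha^\iinfty_k$ requires. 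Your list of candidate invariants (Casson--Gordon, spin bordism, $\rho$-invariants) is reasonable, but for none of these is it known that they are well-defined on $\sK^\iinfty_{4k-2}$ rather than on a link equipped with auxiliary choices.

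In short: you have correctly located the gap, and your outline is a sensible research program, but it does not constitute a proof, and the paper does not claim one either. Even your proposed intermediate target---showing $\alpha^\iinfty_k\neq 0$ for some $k\geq 2$---is open.
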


Conjecture~\ref{conj:Arf-k} would imply that  $\W_{4k-2}^\iinfty\cong \cT^\iinfty_{4k-2} \cong (\Z_2 \otimes \sL_k) \oplus \sD_{4k-2}$ where the second isomorphism (is non-canonical and)
already follows from Proposition~\ref{prop:kerEta4k-2}. Conjecture~\ref{conj:Arf-k} is true for $k=1$, with $\Arf_1$ given by the classical Arf invariants of the link components \cite{CST2}. It remains an open problem whether $\Arf_k$ is non-trivial for any $k>1$. 
The links $R^\iinfty_{4k-2}( \iinfty-\!\!\!\!\!-\!\!\!<^{\,J}_{\,J}\,\,)$ realizing the image of $\Arf_{k}$ can all be constructed as internal band sums of iterated Bing doubles of knots having non-trivial classical Arf invariant \cite{CST2}. Such links are known not to be slice
by work of J.C. Cha \cite{Cha}, providing evidence in support of Conjecture~\ref{conj:Arf-k}.

In combination with Theorem~\ref{thm:intro-twisted-3-quarter-classification}, Conjecture~\ref{conj:Arf-k} can be
succinctly expressed in terms of the twisted Whitney tower filtration classification as the statement:
``the twisted realization maps $R^\iinfty_n:\cT^\iinfty_n\to\W_n^\iinfty$ are isomorphisms for all $n$.''

A table of the groups $\sW^\iinfty_n(m)$ for low values of $n,m$ is given in Figure~\ref{fig:twisted-groups}, where the higher-order Arf invariant $\Arf_2$ appears in order $6$. The currently unknown ranks of $\Arf_2$ are represented by the ranges of possible ranks of the $2$-torsion subgroups of the groups $\sW^\iinfty_6(m)$.

For $n=0$, the groups are freely generated by the image under $R_0^\iinfty$ of trees $i -\!\!\! -\, j$, with $i\neq j$, and twisted trees $\iinfty -\!\!\! - \,j$. The resulting links are detected by linking numbers and framings, respectively. For order $n=1$, the generators come (via $R_1^\iinfty$) from trees 
$\tree{i}{j}{k}$ 
where all indices are distinct (otherwise the tree is zero in $\cT^\iinfty_1$ by the boundary-twist relations). 
They are detected by Milnor's triple invariants $\mu(ijk)$.  

In order $n=2$, generators include ($R_2^\iinfty$ of) $\iinfty$-trees $\tree{\iinfty}{i}{j}$ (recall that these indeed lie in $\cT^\iinfty_2$ even though the tree has only one trivalent vertex). If $i\neq j$ these are of infinite order, detected by Milnor's $\mu(ijij)$, but for $i=j$ they have order 2 and are detected by the classical Arf invariant of the $i$th component. This shows how our groups $\cT_{4k-2}^\iinfty$ combine Milnor and Arf invariants in one new formalism.

\begin{figure}
$$
\begin{array}{c|ccccc}
&1&2&3&4&5\\
\hline
0& \Z & \Z^3 & \Z^6 & \Z^{10} & \Z^{15}\\
1& 0 &  0    & \Z & \Z^4      &\Z^{10}\\
2& \z& \Z\oplus\z^2&\Z^6\oplus\z^3&\Z^{20}\oplus \z^4&\Z^{50}\oplus\z^5\\
3&0&0&\Z^6&\Z^{36}&\Z^{126}\\
4&0&\Z^3&\Z^{28}&\Z^{146}&\Z^{540}\\
5&0&0&\Z^{36}&\Z^{340}&\Z^{1740}\\
6&0&\Z^6\oplus \z^{e_{2}}&\Z^{126}\oplus\z^{e_{3}}&\Z^{1200}\oplus\z^{e_{4}}&\Z^{7050}\oplus\z^{e_{5}}\\
\end{array}
$$
\caption{A table of the groups $\W_n^\infty(m)$, 
where $m$ runs horizontally and $n$ runs vertically. The possible ranges of the torsion exponents in order $6$ depend on the currently unknown ranks of $\Arf_2$:
$\quad 0\leq e_{2}\leq1$, $\quad 0\leq e_{3}\leq 3$, $\quad 0\leq e_{4}\leq 6$, $\quad 0\leq e_{5}\leq 10$.}\label{fig:twisted-groups}
\end{figure}

\subsection{Framing twisted Whitney towers}\label{subsec:intro-untwisting-the-twisted-filtration}
Translation of the classification of the twisted Whitney tower filtration back into the framed setting will be be accomplished
in Section~\ref{sec:twisted-to-framed-classification}
using a new interpretation of certain first non-vanishing Milnor invariants as obstructions to framing a twisted Whitney tower.
These are the higher-order Sato-Levine invariants which are defined in all odd orders of the framed Whitney tower filtration (section~\ref{subsec:higher-order-SL}).
The higher-order Arf invariants will also appear as framing obstructions (section~\ref{subsec:higher-order-Arf-in-framed-filtration}), however they will be shifted down one order, due to the fact that a twisted Whitney tower of order $2k$ can always be converted into a framed Whitney tower of order $2k-1$ by twisting and IHX constructions (section~\ref{subsec:boundary-twisted-IHX-lemma}). These geometric constructions will explain the origin of the \emph{framing relations} introduced above in Definition~\ref{def:reduced-tree-group}.

Setting $\widetilde{\cT}_{2k}:=\cT_{2k}$ in even orders, Theorem~\ref{thm:intro-framed-3-quarter-classification} will show that the reduced realization maps $\widetilde{R}_n:\widetilde{\cT}_n\to\W_n$ are isomorphisms in three quarters of the cases, in close analogy with Theorem~\ref{thm:intro-twisted-3-quarter-classification} above. 
Then the higher-order Arf invariants will again appear in the other quarter of cases, and Conjecture~\ref{conj:Arf-k} will have an analogous expression in terms of the framed Whitney tower filtration classification 
as the statement: ``the realization maps $\widetilde{R}_n:\widetilde{\cT}_n\to\W_n$ are isomorphisms for all $n$'' (section~\ref{subsec:higher-order-arf-reduced-realization-conj}).
 
However, the analogy with Theorem~\ref{thm:intro-twisted-3-quarter-classification}  does \emph{not} hold for the Milnor invariants $\mu_n$ in the framed filtration, leading to the appearance of the higher-order Sato-Levine invariants 
in the classification of the framed filtration described in Corollary~\ref{cor:mu-sl-arf-classify}. This subtle interaction between Milnor invariants and framing obstructions is the reason why the framed classification is trickier to describe. 

A table of the framed filtration groups $\W_n(m)$ for low values of $n,m$ is given in Figure~\ref{fig:untwisted-groups}, where the higher-order Arf invariant $\Arf_2$ appears in order $5$. The 
higher-order Sato-Levine invariants correspond to $2$-torsion in all odd orders (for $m>1$), and the ranges of possible ranks of the $2$-torsion subgroups of the groups $\sW_5(m)$ correspond to the possible ranks of $\Arf_2$ (as in Figure~\ref{fig:twisted-groups}). 
\begin{figure}[h]
$$
\begin{array}{c|ccccc}
&1&2&3&4&5\\
\hline
0&\Z&\Z^3&\Z^6&\Z^{10}&\Z^{15}\\
1&\z&\z^3 &\Z\oplus\z^6&\Z^4\oplus\z^{10}&\Z^{10}\oplus\z^{15}\\
2&0&\Z&\Z^6&\Z^{20}&\Z^{50}\\
3&0&\z^2&\Z^6\oplus\z^8&\Z^{36}\oplus\z^{20}&\Z^{126}\oplus\z^{40}\\
4&0&\Z^3&\Z^{28}&\Z^{146}&\Z^{540}\\
5&0&\z^{e_{2}}&\Z^{36}\oplus\z^{e_{3}}&\Z^{340}\oplus\z^{e_{4}}&\Z^{1740}\oplus\z^{e_5}\\
6&0&\Z^6&\Z^{126}&\Z^{1200}&\Z^{7050}\\
\end{array}
$$
\caption{A table of the groups $\sW_n(m)$, where $m$ runs horizontally and $n$ runs vertically. The possible ranges of the torsion exponents in order $5$ depend on the currently unknown ranks of $\Arf_2$:
$\quad 3\leq e_{2}\leq4$, $\quad 18\leq e_{3}\leq 21$, $\quad 60\leq e_{4}\leq 66$, $\quad 150\leq e_{5}\leq 160$. 
}\label{fig:untwisted-groups}
\end{figure}

For $n=0$, the groups come from trees $i -\!\!\! -\, j$, and are detected by linking numbers for $i\neq j$ and framings for $i=j$. For order $n=1$, the generators come (via $R_1$) from trees 
$\tree{i}{j}{k}$.
If all indices are distinct then they are detected by Milnor's triple invariants $\mu(ijk)$. However, in $\widetilde{\cT}_1$ repeating indices also give nontrivial elements of order 2. If $i=j=k$, these are detected by the classical Arf invariant of the $i$th component. In the case where exactly two indices are equal, one needs the classical Sato-Levine invariant (but has to note the framing relations from Figure~\ref{fig:framing-relations}).

The main tool for deriving the framed classification from the twisted one is a commutative diagram of exact sequences
(Theorem~\ref{thm:exact-sequence}) in which the various realization maps connect the tree-groups to the associated graded groups of the filtrations.
As a consequence of our resolution in \cite{CST3} of the Levine Conjecture (Theorem~\ref{thm:LC}), all the relevant tree-groups are completely computed.
So together with some additional geometric and algebraic arguments, the graded groups associated to the framed filtration can be computed in terms of those of the twisted filtration.

In Section~\ref{sec:master-diagrams-and-algebra}, the diagram of Theorem~\ref{thm:exact-sequence} relating the $\cT$- and $\W$-groups is extended by the relevant $\eta$- and $\mu$-maps to include exact sequences of $\sD$-groups, giving a bird's eye view of the classifications. 
The resulting pair of {\em master diagrams} gives a succinct summary of the overall algebraic structure connecting the $\cT$-, $\W$-, and $\sD$-groups.


\subsection{Applications to gropes and $k$-slice theorems}\label{subsec:intro-grope-and-geo-k-slice}

Recall (e.g.~from \cite{T2}) that a \emph{grope} of \emph{class $k$} is defined recursively as follows:
 A grope of class $1$ is a circle and a grope of class $2$ is an orientable surface $\Sigma$ with one boundary component. A grope of class $k$ is formed by attaching to every dual pair of curves in a symplectic basis for  $\Sigma$ a pair of gropes whose classes add to $k$. For details on gropes, including framing conditions, see e.g.~\cite{CT1,CT2,CST,FQ,FT2,Kr,KT,S1,T1}.
  
\subsubsection*{The grope filtration by class}\label{subsubsec:intro-grope-filt}
 
The \emph{grope filtration} (by class) 
on the set $\bL=\bL(m)$ of framed links in $S^3$ with $m$ components is defined by the sets
$\bG_n\subset \bL$ of framed links whose components bound class~$(n+1)$ disjointly embedded framed gropes in $B^4$. The index shift is explained by the next theorem, and for the same reason we {\em define} $\bG_0$ to be the set of evenly framed links. 
The main result from \cite{S1}  implies that this grope filtration equals our framed Whitney tower filtration and hence is being computed in this paper:
\begin{thm}[\cite{S1}]\label{thm:w-tower-equals-grope}
$\bG_{n}=\bW_n$ for all $n$.
\end{thm}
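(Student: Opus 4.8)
The equality $\bG_n = \bW_n$ is to be established by two inclusions, each realized by an explicit geometric conversion procedure between class $(n+1)$ gropes and order $n$ Whitney towers bounded by a link $L \subset S^3$ in $B^4$. Both directions are essentially local: one works in a neighborhood of the grope (resp. Whitney tower), replacing it piece by piece, so the argument reduces to a dictionary between the branching data of a grope and the tree data of a Whitney tower. This is precisely the content of the main theorem of \cite{S1}, so the proof here is a matter of citing that result and checking that the conventions match—in particular the index conventions, the framing hypotheses, and the fact that in \cite{S1} the statement is phrased for surfaces in an arbitrary $4$--manifold whereas we need only the special case of disjointly embedded gropes/towers in $B^4$ bounded by a link in $S^3$.

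\textbf{Step 1: $\bG_n \subseteq \bW_n$.} Suppose $L$ bounds disjointly embedded framed gropes of class $n+1$ in $B^4$. The plan is to convert each grope into a Whitney tower of order $n$ with the same boundary. The bottom-stage surfaces of the gropes become the order zero surfaces of the Whitney tower; the higher stages, which pair up the symplectic bases of the lower stages, are surgered to produce Whitney disks. The key combinatorial point is that a grope of class $n+1$ carries an associated rooted tree of the appropriate order, and surgering the top stages converts this into a Whitney tower whose unpaired intersections carry the corresponding trees of order $\geq n$. One must check that the framing condition on the grope (which is where the $\bG_0$ = evenly framed convention enters) yields exactly the framing condition required of Whitney disks in a (framed, not twisted) order $n$ Whitney tower. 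This is the ``gropes imply Whitney towers'' half of \cite{S1}.

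\textbf{Step 2: $\bW_n \subseteq \bG_n$.} Conversely, given an order $n$ Whitney tower $\cW \subset B^4$ bounded by $L$, one first splits it as in Figure~\ref{fig:W-tower-and-trees-and-split}(b) so that all singularities lie in disjoint $4$--ball neighborhoods of embedded trees. Each such local model—embedded disks bounded by an iterated Bing double, together with the Whitney disks—is then replaced by a standard class $(n+1)$ grope with the same boundary, using the fact that an order $k$ Whitney disk can be traded for a genus-one grope stage (a ``Whitney disk $\leftrightarrow$ grope'' surgery). Reassembling the local grope models produces disjointly embedded class $(n+1)$ gropes bounded by $L$, with framings controlled as before. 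This is the ``Whitney towers imply gropes'' half of \cite{S1}.

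\textbf{Main obstacle.} The genuine content is entirely in \cite{S1}; in this paper the only real work is bookkeeping: verifying that the order/class index shift ($\bG_n = \bW_n$ rather than $\bG_{n+1} = \bW_n$ or similar) is consistent with the tree-order conventions of Definition~\ref{def:Tau}, and that the framing hypotheses on both sides match up—so that the \emph{framed} (untwisted) Whitney tower filtration corresponds to gropes with the standard framing, and correspondingly the twisted filtration would match ``$\iinfty$-gropes.'' The subtlety worth flagging is the role of the convention $\bG_0 := \{\text{evenly framed links}\}$: this is forced by the fact that a class $2$ grope is just a framed Seifert-type surface, whose existence imposes the vanishing of the framing mod $2$, exactly as an order $0$ framed Whitney tower (immersed framed disks) does. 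With these conventions fixed, Theorem~\ref{thm:w-tower-equals-grope} follows immediately from \cite[main theorem]{S1}.
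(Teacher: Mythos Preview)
Your proposal is correct and matches the paper's approach: the paper gives no proof at all, simply stating the theorem with the attribution \cite{S1} and remarking that ``the main result from \cite{S1} implies that this grope filtration equals our framed Whitney tower filtration.'' Your sketch of the two-way conversion (grope-to-tower via surgering higher stages into Whitney disks, tower-to-grope via the local tree models) is an accurate summary of what \cite{S1} actually does, and your observation that the only work in the present paper is bookkeeping of index and framing conventions is exactly right.

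One small correction in your bookkeeping: your explanation of the $\bG_0$ convention is slightly off. For $n=0$ the index shift would ask for class~$1$ gropes, which are merely circles and impose no condition; this is precisely why the paper \emph{defines} $\bG_0$ to be the evenly framed links rather than deriving it from the grope definition. The match with $\bW_0$ then comes from the fact that a framed link component bounds a framed immersed disk in $B^4$ (i.e., one with $\omega=0$) if and only if its framing is even, since interior self-intersections change $\omega$ by $\pm 2$. Your reference to ``class~$2$ gropes'' would pertain to $\bG_1$, not $\bG_0$.
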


The following result is a sample geometric application of our computations, characterizing links with certain vanishing Milnor invariants.
Details and related applications are described in \cite{CST2}.

\begin{thm}[\cite{CST2}]\label{thm:mega-k-slice}
A link has vanishing Milnor invariants of all orders $\leq 2k-2$
(lengths $\leq 2k$)
if and only if its components bound
disjointly embedded surfaces $\Sigma_i$ in the $4$--ball, with each surface a
connected sum of two surfaces $\Sigma'_i$ and $\Sigma''_i$ such that
\begin{enumerate}
\item
 a symplectic basis of curves on $\Sigma'_i$
 bound disjointly embedded framed gropes $G_{i,j}$ of class $k$ in the complement of 
 $\Sigma := \cup_i\Sigma_i$, and 
\item
 a symplectic basis of curves on $\Sigma''_i$ bound immersed disks in the
 complement of 
 $\Sigma\cup G$, where $G$ is the union of all $G_{i,j}$.
\end{enumerate}
\end{thm}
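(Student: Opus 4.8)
### Proof proposal for Theorem~\ref{thm:mega-k-slice}

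\textbf{Strategy.} The plan is to deduce this characterization from the Whitney tower classification of the link filtration together with grope duality. The key dictionary is: vanishing of Milnor invariants up to order $2k-2$ (lengths $\leq 2k$) is, by Theorem~\ref{thm:twisted-Milnor} and Theorem~\ref{thm:intro-twisted-3-quarter-classification}, very nearly equivalent to the link bounding a twisted Whitney tower of order $2k-1$ (the subtlety being the order $4j-2$ higher-order Arf contributions, which must be handled separately — see the obstacle below). So the proof naturally splits into two implications, each passing through the Whitney tower / grope correspondence of Theorem~\ref{thm:w-tower-equals-grope} and its refinements in \cite{S1, KT}.

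\textbf{The ``only if'' direction.} Suppose all Milnor invariants of order $\leq 2k-2$ vanish. I would first promote this to the statement that $L$ bounds a suitable twisted/framed Whitney tower of sufficient order, using Theorem~\ref{thm:twisted-Milnor} to see the vanishing of $\mu_j$ for $j \leq 2k-2$, then Theorem~\ref{thm:twisted-raising-intro} (order-raising) repeatedly to build a twisted Whitney tower of order $2k-1$; the twisted-to-framed conversion of Section~\ref{subsec:boundary-twisted-IHX-lemma} then yields an order $(2k-2)$ framed Whitney tower, which by Theorem~\ref{thm:w-tower-equals-grope} gives disjointly embedded class $(2k-1)$ gropes bounded by $L$ in $B^4$. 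The next step is the geometric heart: apply grope duality \cite{KT} to split each grope $G_i$ along its bottom stage into the connected sum $\Sigma'_i \# \Sigma''_i$, where $\Sigma'_i$ carries the part of the symplectic basis continuing up into class $k$ subgropes $G_{i,j}$, while $\Sigma''_i$ carries the dual curves, which by construction bound immersed disks pushed off into the complementary region. One verifies the disjointness and complement conditions (1) and (2) by tracking the tree/grope-duality bookkeeping: the two halves of each symplectic pair live on opposite sides of the duality, so the $G_{i,j}$ lie in the complement of $\Sigma$ and the disks lie in the complement of $\Sigma \cup G$.

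\textbf{The ``if'' direction.} Conversely, given surfaces $\Sigma_i = \Sigma'_i \# \Sigma''_i$ with class $k$ gropes on one half and disks on the dual half, reverse grope duality: the class $k$ gropes $G_{i,j}$ together with the immersed disks on the dual curves assemble (again via \cite{KT}, now run backwards) into disjointly embedded class $(2k-1)$ gropes bounded by $L$, whence $L \in \bG_{2k-2} = \bW_{2k-2}$ by Theorem~\ref{thm:w-tower-equals-grope}. Then Theorem~\ref{thm:twisted-Milnor} (applied to the associated twisted Whitney tower, whose order is at least $2k-2$) forces the Milnor invariants of length $\leq 2k$, i.e. order $\leq 2k-2$, to vanish.

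\textbf{Main obstacle.} The delicate point is the parity interplay between grope class and Whitney tower order, compounded by the higher-order Arf invariants. The class $(2k-1)$ grope lands $L$ in $\bW_{2k-2}$, but I actually want the stronger conclusion that \emph{all} $\mu_j$ with $j \leq 2k-2$ vanish, and Theorem~\ref{thm:twisted-Milnor} only directly controls the length $n+2$ invariants for a link in $\bW^\iinfty_n$. I expect the careful argument to require iterating the order-raising theorems (Theorem~\ref{thm:twisted-raising-intro}) together with the fact that the Milnor invariant homomorphisms $\mu_j$ vanish on the whole of $\bW^\iinfty_j$, not just detect the top one — so one reads off the full range of vanishing from membership in the filtration, not from a single graded piece. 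The genuinely new geometric input, and the part most likely to need care, is checking that the grope-duality splitting can be arranged so that the ``disk half'' $\Sigma''_i$ has its symplectic curves bounding disks disjoint not only from $\Sigma$ but from \emph{all} the grope stages $G$ simultaneously; this is where one must use that the Whitney tower can be split (Figure~\ref{fig:W-tower-and-trees-and-split}(b)) so that its singularities are confined to small neighborhoods of embedded trees, keeping the relevant pieces disjoint. I would treat the detailed verification of conditions (1) and (2), and the precise match of indices ``$\leq 2k-2$'', by citing the companion paper \cite{CST2} as the theorem statement already does, and here give the structural argument above.
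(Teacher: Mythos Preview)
First, note that this paper does not itself prove Theorem~\ref{thm:mega-k-slice}: it is quoted from the companion paper \cite{CST2}, with the text immediately preceding it saying ``Details and related applications are described in \cite{CST2}.'' So there is no in-paper argument to compare against.

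That said, your proposal has a genuine structural gap, which you flag under ``Main obstacle'' but do not actually resolve. In the ``only if'' direction you want to pass from vanishing of $\mu_j$ for all $j\leq 2k-2$ to membership $L\in\bW^\iinfty_{2k-1}$ (and thence to gropes). But by Corollary~\ref{cor:intro-mu-arf-classify-twisted} this implication is \emph{false} whenever some higher-order Arf invariant $\Arf_j$ with $4j-2<2k-1$ is nonzero: the filtration $\bW^\iinfty_n$ is classified by Milnor invariants \emph{together with} the $\Arf_j$, and the hypothesis of the theorem says nothing about the latter. A Bing double of a trefoil, for example, is a boundary link (all Milnor invariants vanish) yet need not lie in arbitrarily high $\bW^\iinfty_n$. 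So the route through $\bW^\iinfty_{2k-1}$, $\bW_{2k-2}$, or $\bG_{2k-2}$ cannot work as written, and citing \cite{CST2} at the end does not repair it --- it replaces the entire argument.

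The same problem appears in the ``if'' direction. You assert that the data reassemble via grope duality into disjointly embedded class~$(2k-1)$ gropes, but the $\Sigma''_i$ half carries only \emph{immersed} disks on its symplectic basis, not embedded gropes of any class, so the result is not a grope in the sense of Theorem~\ref{thm:w-tower-equals-grope} and you cannot conclude $L\in\bG_{2k-2}$. (Even granting that step, $L\in\bW^\iinfty_{2k-2}$ would only give vanishing of Milnor invariants of length $\leq 2k-1$, one short of what is claimed.)

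The conceptual point you are missing is that the $\Sigma''_i$ summand is not a defect to be absorbed into a grope --- it is exactly the receptacle for the Arf-type obstructions. The theorem characterizes a condition (vanishing Milnor invariants) that is \emph{strictly weaker} than membership in the Whitney tower or grope filtration, and the immersed (rather than embedded) disks on $\Sigma''_i$ are what allow nontrivial $\Arf_j$. A correct argument must therefore avoid the filtration altogether and work more directly: for ``if'', observe that the basis curves on $\Sigma''_i$ are null-homotopic in the complement, so those symplectic pairs contribute trivially to the longitudes modulo the lower central series, while the $\Sigma'_i$ side with its class~$k$ gropes forces the longitudes into the $2k$-th lower central series term; for ``only if'', one must separately realize the Arf part of $\tau^\iinfty$ by $\Sigma''_i$-type pieces (immersed caps) and the Milnor part by genuine class~$2k$ gropes on $\Sigma'_i$. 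This is presumably what \cite{CST2} carries out.
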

Theorem~\ref{thm:mega-k-slice} is a considerable strengthening of the Igusa-Orr \emph{$k$-slice Theorem} \cite{IO}: Since the geometric conditions 
in both theorems are equivalent to the vanishing of Milnor's invariants
through order $2k-2$ (length $2k$), one can read our result as saying that the {\em immersed gropes} of class $k$ found by Igusa and Orr can be cleaned up to immersed {\em disks} (these are immersed gropes of arbitrarily high class) or {\em embedded} gropes of class $k$.


\subsection{Comparisons with other iterated disk constructions}\label{subsec:intro-compare-other-towers}

Andrew Casson was the first who tried to recover the Whitney move in dimension four by an iterated disk construction. He started with a simply connected $4$--manifold $M$ with a knot $K$ in its boundary. He looked for conditions so that $K$ would bound an embedded disk in $M$. His starting point was an {\em algebraically transverse sphere} for a (singular) disk in $M$ bounding $K$, an assumption that is satisfied in the setting of the s-cobordism theorem or the surgery exact sequence (but not for $M=B^4$). He then showed that $K$ bounds a {\em Casson tower} of arbitrary height in $M$. In such a tower, one attaches an immersed disk to the accessory circles of every intersection point in a previous stage (and requires that the new disk does not intersect previous stages).

Mike Freedman \cite{F1} realized that one can actually re-embed one Casson tower into another and that one can obtain enough geometric control to prove his breakthrough result: Any Casson tower of height > 3 contains in its neighborhood a topologically-flat embedded disk with boundary $K$. This implies Freedman's classification result for simply connected closed $4$--manifolds and leads to many stunning applications.

However, there can be no obstruction theory for finding Casson towers of larger and larger height, not even in $M=B^4$ (where a transverse sphere cannot exist): Any knot $K$ bounds a Casson tower of height 1 (which is just a singular disk) and if $K$ bounds a Casson tower of height 4 then it is topologically slice (and hence bounds a Casson tower of arbitrary height).
 
This motivated Cochran, Orr and the third author \cite{COT} to study another type of tower, now best called a {\em symmetric} Whitney tower of \emph{height} $n$. Here one inductively attaches Whitney disks to previous stages but only allows these new Whitney disks to intersect each other (and not the previously constructed stages). It follows that a symmetric Whitney tower of height $h$ is a (particularly nice)  Whitney tower of order $2^h$ as studied in this paper, see \cite{S1}.

Such symmetric Whitney towers have an extremely rich theory, even in the case of knots (see \cite{CT} for the fact that the filtration is nontrivial for all heights). All the iterated graded groups are in fact infinitely generated \cite{CHL}, one reason being the existence of higher-order von Neumann signatures that take values in the reals $\R$ (infinitely generated as abelian group).  
There are currently no known algebraic criteria for raising the height of a symmetric Whitney tower, and hence not too much hope for a complete classification of the symmetric Whitney tower filtration of links, or even knots.

That's why the current authors set out to study a simpler version of this filtration, and succeeded in
giving the first instance of a complete computation of a filtration defined via an iterated disk construction, as described in this paper. These Whitney tower filtrations have analogues for immersed $2$--spheres in $4$--manifolds, including a formulation of the proposed higher-order Arf invariants. The order $1$ theory goes back to \cite{FK} (see also \cite{Ma, ST1,St, Ya}, and 10.8A and 10.8B of \cite{FQ} where the relation to the Kirby-Siebenmann invariant is explained), but the higher-order theory is not generally understood for closed $4$--manifolds. 

The relationship between Milnor invariants and trees goes back to Cochran's method of constructing links realizing given (integer) Milnor invariants by ``Bing-doubling along a tree'' \cite{C,C1}. It is intriguing to note that the sequences of circles of intersection between Seifert surfaces that arise in Cochran's construction are strongly suggestive of the Whitney disk boundaries that appear in our construction in section~\ref{subsec:realization-maps}.

The classifications described here also have implications for $3$--manifolds, in particular filtrations on homology cylinders, as described in \cite{CST5}, which also explores the Whitney tower filtrations on string links and their relation to finite type invariants. 

In a future paper we will show that the relation of order $n$ Whitney tower concordance on links is the same as the equivalence relation generated by concordance and simple clasper surgeries with $n$ nodes. This is the same as an equivalence relation on string links considered by Meilhan and Yasuhara \cite{MY} called $C_{n+1}$-concordance. They give a list of classifying invariants up to order $4$ which consists of classical Arf invariants, Milnor invariants and various mod $2$ reductions of Milnor invariants (what we here call higher-order Sato-Levine invariants). They stop just short of order $5$ which is where the first higher-order Arf invariant $\Arf_2$ lives (Figure~\ref{fig:untwisted-groups}).

{\bf Acknowledgments:} This paper was partially written while the first two authors were visiting the third author at the Max-Planck-Institut f\"ur Mathematik in Bonn. They all thank MPIM for its stimulating research environment and generous support. The exposition of this paper was significantly improved by a careful and insightful anonymous referee. The first author was also supported by NSF grant DMS-0604351 and the last author was also supported by NSF grants DMS-0806052 and DMS-0757312. The second author was partially supported by PSC-CUNY research grant PSCREG-41-386. 



\tableofcontents

\section{Whitney towers}\label{sec:w-towers}
We sketch here the relevant theory of Whitney towers as developed in \cite{CST,S1,ST2}, giving details for the new notion of \emph{twisted} Whitney towers. We work in the \emph{smooth oriented} category (with orientations usually suppressed from notation), even though all our results hold in the locally flat topological category by the basic results on topological immersions in Freedman--Quinn \cite{FQ}. In fact, it can be shown that the filtrations   $\mathbb W_n$, $\mathbb W^\iinfty_n$ and $\mathbb G_n$ are identical in the smooth and locally flat settings. This is because a topologically flat surface can be promoted to a smooth surface at the cost of only creating unpaired intersections of arbitrarily high order (see Remark~\ref{rem:locally-flat-and-smooth}).

\subsubsection*{Operations on trees}\label{subsec:trees}
To describe Whitney towers it is convenient to use the bijective correspondence
between formal non-associative bracketings of elements from
the index set $\{1,2,3,\ldots,m\}$ and
rooted trees, trivalent and oriented, with each univalent vertex labeled by an element from the index set, except
for the \emph{root} univalent vertex which is left unlabeled. Recall from Definition~\ref{def:Tau} that an orientation of a tree is determined by cyclic orderings of the adjacent edges around each trivalent vertex.

\begin{defn}\label{def:Trees}
Let $I$ and $J$ be two rooted trees.
\begin{enumerate} 
\item The \emph{rooted product} $(I,J)$ is the rooted tree gotten
by identifying the root vertices of $I$ and $J$ to a single vertex $v$ and sprouting a new rooted edge at $v$.
This operation corresponds to the formal bracket (Figure~\ref{inner-product-trees-fig} upper right). The orientation of $(I,J)$ is inherited from those of $I$ and $J$ as well as the order in which they are glued.

\item The \emph{inner product}  $\langle I,J \rangle $ is the
unrooted tree gotten by identifying the roots of $I$ and $J$ to a single non-vertex point.
Note that $\langle I,J \rangle $ inherits an orientation from $I$ and $J$, and that
all the univalent vertices of $\langle I,J \rangle $ are labeled.
(Figure~\ref{inner-product-trees-fig} lower right.)

\item The \emph{order} of a tree, rooted or unrooted, is defined to be the number of trivalent vertices.
\end{enumerate}
\end{defn}
The notation of this paper will not distinguish between a bracketing and its corresponding rooted tree
(as opposed to the notation $I$ and $t(I)$ used in \cite{S1,ST2}).
In \cite{S1,ST2} the inner product is written as a dot-product, and the rooted product
is denoted by $*$.

\begin{figure}[ht!]
        \centerline{\includegraphics[scale=.40]{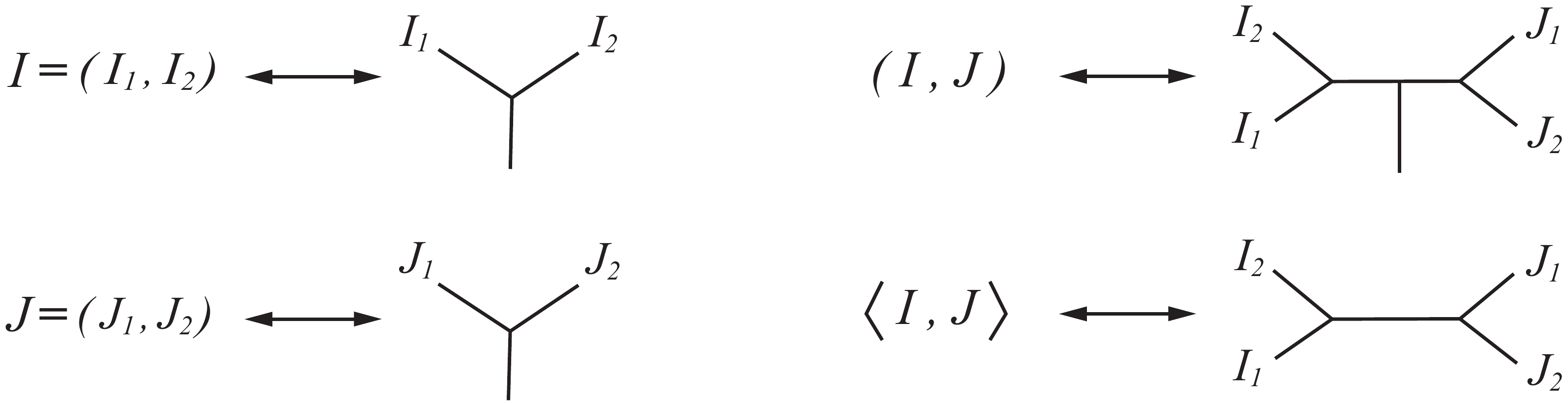}}
        \caption{The \emph{rooted product} $(I,J)$ and \emph{inner product} $\langle I,J \rangle$ of $I=(I_1,I_2)$ and $J=(J_1,J_2)$. All trivalent orientations correspond to a clockwise orientation of the plane.}
        \label{inner-product-trees-fig}
\end{figure}

\subsection{Whitney disks and higher-order intersections}\label{subsec:order-zero-w-towers-and-ints}
A collection $A_1,\ldots,A_m\looparrowright (M,\partial M)$ of 
connected surfaces in a $4$--manifold $M$ is a \emph{Whitney tower of order zero} if the $A_i$ are \emph{properly immersed} in the sense that the boundary is embedded in $\partial M$ and the interior is generically immersed in $M \smallsetminus \partial M$. 




To each \emph{order zero surface} $A_i$ is associated
the order zero rooted tree consisting of an edge with one vertex labeled by $i$, and
to each transverse intersection $p\in A_i\cap A_j$ is associated the order zero
tree $t_p:=\langle \,i\,,\,j\, \rangle$ consisting of an edge with vertices labeled by $i$ and $j$. Note that
for singleton brackets (rooted edges) we drop the bracket from notation, writing $i$ for $(\,i\,)$.

The order 1 rooted Y-tree $(i,j)$, with a single trivalent vertex and two univalent labels $i$ and $j$,
is associated to any Whitney disk $W_{(i,j)}$ pairing intersections between $A_i$ and $A_j$. This rooted tree
can be thought of as being embedded in $M$, with its trivalent vertex and rooted
edge sitting in $W_{(i,j)}$, and its two other edges descending into $A_i$ and $A_j$ as sheet-changing paths. (The cyclic orientation at the trivalent vertex of the bracket  $(i,j)$  corresponds to an orientation of $W_{(i,j)}$ via a convention described below in \ref{subsec:w-tower-orientations}.)

Recursively, the rooted tree $(I,J)$ is associated to any Whitney disk $W_{(I,J)}$ pairing intersections
between $W_I$ and $W_J$ (see left-hand side of Figure~\ref{WdiskIJandIJKint-fig}); with the understanding that if, say, $I$ is just a singleton $i$, then $W_I$ denotes the order zero surface $A_i$. Note that a Whitney disk $W_{(I,J)}$ can be created by a finger move pushing $W_J$ through $W_I$.

To any transverse intersection $p\in W_{(I,J)}\cap W_K$ between $W_{(I,J)}$ and any
$W_K$ is associated the un-rooted tree $t_p:=\langle (I,J),K \rangle$  (see right-hand side of Figure~\ref{WdiskIJandIJKint-fig}).

\begin{figure}[ht!]
        \centerline{\includegraphics[width=120mm]{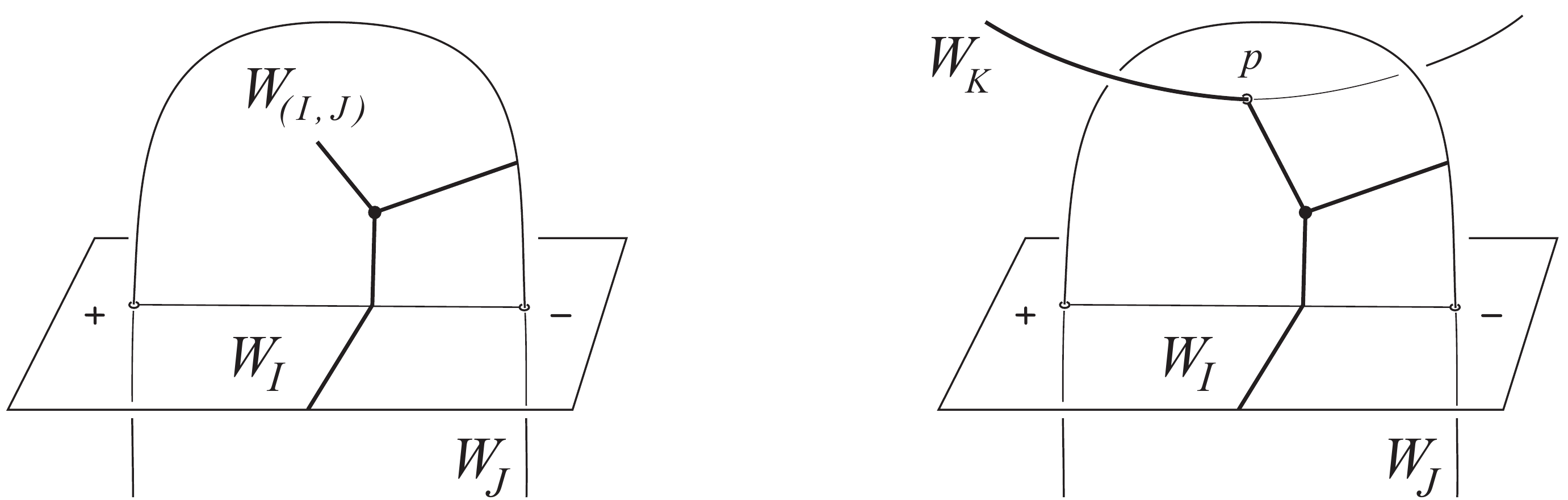}}
        \caption{On the left, (part of) the rooted tree $(I,J)$ associated to a Whitney disk $W_{(I,J)}$. On the right, (part of) the unrooted tree $t_p=\langle (I,J),K \rangle$ associated to an intersection $p\in W_{(I,J)}\cap W_K$. Note that $p$ corresponds to where the roots of $(I,J)$ and $K$ are identified to a (non-vertex) point in $\langle (I,J),K \rangle$.}
        \label{WdiskIJandIJKint-fig}
\end{figure}

\begin{defn}\label{def:int-and-Wdisk-order}
The \emph{order of a Whitney disk} $W_I$ is defined to be the order of the rooted tree $I$, and the \emph{order of a transverse intersection} $p$ is defined to be the order of the tree $t_p$.
\end{defn}

\begin{defn}\label{def:framed-tower}
A collection $\cW$ of properly immersed surfaces together with higher-order
Whitney disks is an \emph{order $n$ Whitney tower}
if $\cW$ contains no unpaired intersections of order less than $n$.  
\end{defn}
The Whitney disks in $\cW$ must have disjointly embedded boundaries, and generically immersed interiors.  All Whitney disks and order zero surfaces must also be \emph{framed} (as discussed in the next subsection).


\begin{rem}\label{rem:locally-flat-and-smooth}
We sketch here a brief explanation of why the smooth and locally flat filtrations are equal. A locally flat surface can be made smooth by a small perturbation, which after introducing cusps as necessary can be assumed to be a regular (locally flat) homotopy. By a general position argument, this regular homotopy can be assumed to be a finite number of finger moves, which are guided by arcs and lead to canceling self-intersection pairs which admit small disjointly embedded Whitney disks (which are `inverses' to the finger moves). These Whitney disks are only locally flat, but can be perturbed to be smooth,
again only at the cost of creating paired self-intersections, and iteration of this process leads to an arbitrarily high-order
smooth sub-Whitney tower pairing all intersections created by the original surface perturbation. 
\end{rem}


\subsection{Twisted and framed Whitney disks}\label{subsec:twisted-w-disks}
The normal disk-bundle of a Whitney disk $W$ in $M$ is isomorphic to $D^2\times D^2$,
and comes equipped with a canonical nowhere-vanishing \emph{Whitney section} over the boundary given by pushing $\partial W$  tangentially along one sheet and normally along the other, avoiding the tangential direction of $W$
(see Figure~\ref{Framing-of-Wdisk-fig}, and e.g.~1.7 of \cite{Sc}).
Pulling back the orientation of $M$ with the requirement that the normal disks
have $+1$ intersection with $W$ means the Whitney section determines
a well-defined (independent of the orientation of $W$)
relative Euler number $\omega(W)\in\Z$ which represents the obstruction to extending
the Whitney section across $W$. Following traditional terminology, when $\omega(W)$ vanishes $W$ is said to be \emph{framed}. (Since $D^2\times D^2$ has a unique trivialization up to homotopy, this terminology is only mildly abusive.)
In general when $\omega(W)=k$, we say that $W$ is
$k$-\emph{twisted}, or just \emph{twisted} if the value of $\omega(W)$ is not specified.
So a $0$-twisted Whitney disk is a framed Whitney disk.

\begin{figure}[ht!]
        \centerline{\includegraphics[width=120mm]{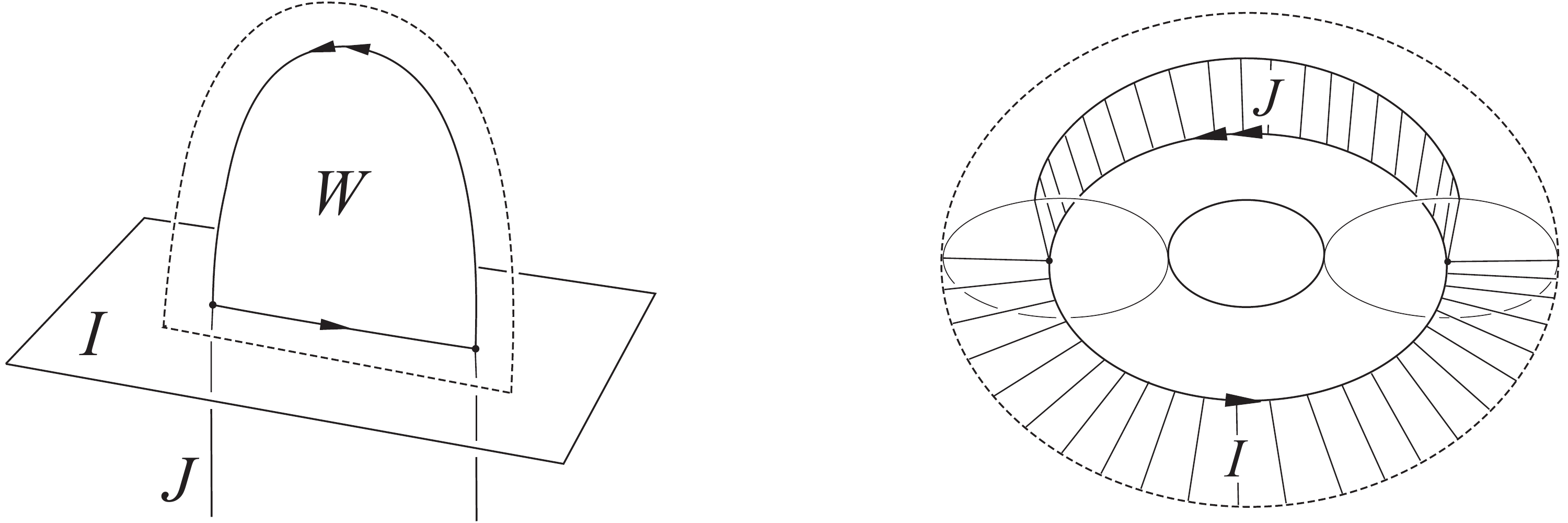}}
        \caption{The Whitney section over the boundary of a framed Whitney disk is
         indicated by the dotted loop shown on the left for an embedded Whitney disk $W$ in
         a 3-dimensional slice of $4$--space. On the right is shown an embedding into $3$--space of the normal
         disk-bundle over $\partial W$, indicating how the Whitney section determines a well-defined nowhere 		
         vanishing section which lies in the $I$-sheet and is normal to the $J$-sheet. }
        \label{Framing-of-Wdisk-fig}
\end{figure}

Note that a {\em framing} of $\partial A_i$ (respectively $A_i$) is by definition a trivialization of the normal bundle of the immersion. If the ambient $4$-manifold is oriented, this is equivalent to an orientation and a nonvanishing normal vector field on $\partial A_i$ (respectively $A_i$).
The twisting $\omega(A_i)\in\Z$ of an order zero surface is also defined when a framing of $\partial A_i$ is given, and $A_i$ is said to be \emph{framed} 
when $\omega(A_i)=0$.


\subsection{Twisted Whitney towers and their orientations}\label{subsec:intro-twisted-w-towers}
In the definition of an order $n$ Whitney tower given just above (following \cite{CST,S1,S2,ST2})
all Whitney disks and order zero surfaces are required to be framed. It turns out that the natural generalization to twisted Whitney towers involves allowing twisted Whitney disks only in at least ``half the order'' as follows:

\begin{defn}\label{def:Twisted-W-towers}
A \emph{twisted Whitney tower of order $0$} is a collection of properly immersed surfaces
in a $4$--manifold
(without any framing requirement).

For $n>0$, a \emph{twisted Whitney tower of order $(2n-1)$} is just a (framed) Whitney
tower of order $(2n-1)$ as in Definition~\ref{def:framed-tower} above.

For $n>0$, a \emph{twisted Whitney tower of order $2n$} is a Whitney
tower having all intersections of order less than $2n$ paired by
Whitney disks, with all Whitney disks of order less than $n$ required to be framed, but Whitney disks of order at least $n$ allowed to be twisted.
\end{defn}

\begin{rem}\label{rem:framed-is-twisted}
Note that, for any $n$, an order $n$ (framed) Whitney tower
is also an order $n$ twisted Whitney tower. We may
sometimes refer to a Whitney tower as a \emph{framed} Whitney tower to emphasize
the distinction, and will always use the adjective ``twisted'' in the setting of
Definition~\ref{def:Twisted-W-towers}.
\end{rem}

\begin{rem}\label{rem:motivate-twisted-towers}
The convention of allowing only order $\geq n$ twisted Whitney disks in order $2n$ twisted Whitney towers is explained both algebraically and geometrically in \cite{CST2} via relationships with Milnor invariants and grope duality. In any event, an order $2n$ twisted Whitney tower can always be modified
so that all its Whitney disks of order $>n$ are framed (see \textbf{notation and conventions} in section~\ref{subsec:twisted-order-raising-thm-proof}), so the twisted Whitney disks of order equal to $n$ are the important ones (i.e.~they may represent obstructions to ``raising the order'').  
\end{rem}


 \subsubsection*{Whitney tower orientations}\label{subsec:w-tower-orientations}

Orientations on order zero surfaces in a Whitney tower $\cW$ are fixed, and required to induce the orientations
on their boundaries.
After choosing and fixing orientations on all the Whitney disks in
$\cW$, the associated trees 
are embedded in $\cW$ so that the vertex orientations are induced from
the Whitney disk orientations, with the descending edges of each
trivalent vertex enclosing the \emph{negative intersection point} of the corresponding Whitney disk, as in Figure~\ref{WdiskIJandIJKint-fig}.
(In fact, if a tree $t$ has more than one trivalent vertex which corresponds to the same Whitney disk, then
$t$ will only be immersed in $\cW$, but this immersion can be taken to be a local embedding around each trivalent vertex of $t$
as in Figure~\ref{WdiskIJandIJKint-fig}.)

This ``negative corner'' convention, which differs from the
positive corner convention in the older papers \cite{CST,ST2}, is
compatible with standard orientation and commutator conventions for group elements used in \cite{CST2}.
With these conventions, different choices of orientations on Whitney disks in $\cW$ correspond to antisymmetry relations (as explained in \cite{ST2}).

  
\subsection{Intersection invariants for (twisted) Whitney towers}\label{subsec:intro-w-tower-int-invariants}



The obstruction theory of \cite{ST2} in the current simply connected setting works as follows.
\begin{defn}
The \emph{order $n$ intersection invariant} $\tau_n(\cW)$ of an order
$n$ Whitney tower $\cW$ is defined to be
$$
\tau_n(\cW):=\sum \epsilon_p\cdot t_p \in\cT_n
$$ 
where the sum is over all order $n$ intersections $p$,
with
$\epsilon_p=\pm 1$ the usual sign of a transverse intersection
point.
\end{defn}

As mentoned in the introduction, 
if $L$ bounds $\cW\subset B^4$ with $\tau_n(\cW)=0\in \cT_n$, then $L$ bounds a Whitney tower
of order $n+1$. This is a special case of the simply connected version of the more general Theorem~2 of \cite{ST2}. 
We will use the following version of Theorem~2 of \cite{ST2}
where the order zero surfaces are either properly immersed disks in 
$B^4$ or properly immersed annuli in $S^3\times I$:  

\begin{thm}[\cite{ST2}]\label{thm:framed-order-raising-on-A}
If a collection $A$ of properly immersed surfaces in a simply connected $4$--manifold supports an order $n$ Whitney tower $\cW$ with $\tau_n(\cW)=0\in\cT_n$, then $A$ is regularly homotopic (rel $\partial$) to 
$A'$ which supports an order $n+1$ Whitney tower.
\end{thm}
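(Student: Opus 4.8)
The plan is to follow the proof of Theorem~2 of \cite{ST2} (see also \cite{CST,S2}), organized in three stages: \emph{split} $\cW$ into a standard form, \emph{geometrically realize} the defining relations of $\cT_n$, and then \emph{geometrically cancel} the remaining order~$n$ intersections in oppositely-signed pairs. First I would split $\cW$ as in Figure~\ref{fig:W-tower-and-trees-and-split}(b): by a sequence of finger moves (pushing a Whitney disk or surface sheet through another) together with re-subdivisions of Whitney disks, one arranges that each Whitney disk contains at most one singularity --- a single transverse interior intersection, or a single boundary arc of a strictly higher-order Whitney disk. These moves are supported in small balls, are regular homotopies rel~$\partial$, do not lower the order of the tower, and preserve the framedness of all Whitney disks (the Whitney disk produced by a finger move is canonically framed). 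After splitting, the order~$n$ intersections of $\cW$ are in bijection with the order~$n$ trees, and each Whitney disk is used by exactly one trivalent vertex of exactly one such tree; reorienting an individual Whitney disk then flips the cyclic order at its vertex independently of all the others, which is the geometric incarnation of the antisymmetry relations of Figure~\ref{fig:ASandIHXtree-relations}.

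Since $\tau_n(\cW)=\sum_p\epsilon_p\,t_p=0\in\cT_n$, the chain $\sum_p\epsilon_p\,t_p$ lies in the subgroup of the free abelian group on order~$n$ trees generated by the AS and IHX relators. I would realize these relators geometrically: AS relators by the Whitney-disk reorientations just described; trivial relators by introducing pairs of canceling order~$n$ intersections via finger moves; and IHX relators by the controlled ``geometric IHX'' homotopy of \cite{S2} (cf.\ \cite{CST}), which replaces a single order~$n$ intersection carrying a tree $I$ by two order~$n$ intersections carrying the trees $H$ and $X$ of an IHX triple, is supported in a neighborhood of the relevant $Y$-subtree, is a regular homotopy rel~$\partial$, creates no intersections of order below $n$, and preserves both framings and the split form of the rest of $\cW$. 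Performing a finite such sequence brings $\cW$ to an order~$n$ framed Whitney tower, supported on a collection regularly homotopic to $A$ rel~$\partial$, whose order~$n$ intersections occur in pairs $\{p,q\}$ with $t_p=t_q$ and $\epsilon_p=-\epsilon_q$.

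Finally I would geometrically cancel each such pair. Writing $t_p=\langle I,K\rangle$, where $W_I$ and $W_K$ are the two Whitney disks (or order zero surfaces) meeting at $p$ and $q$, the two points of $W_I\cap W_K$ have opposite sign and so are the endpoints of a Whitney circle; since the ambient $4$--manifold is simply connected this circle bounds an immersed Whitney disk $W_{(I,K)}$, whose order is one greater than the sum of the orders of $W_I$ and $W_K$, hence equal to $n+1$. By general position $W_{(I,K)}$ may be taken embedded, with interior disjoint from all Whitney-disk boundary arcs and meeting $\cW\cup A$ transversely in finitely many points, each of order $\geq n+1$; its relative Euler number can then be killed by boundary-twisting and interior-twisting (adding local cusps), which, since $W_{(I,K)}$ already has order $n+1$, create only intersections of order $\geq n+1$. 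Adjoining all of these framed order~$n+1$ Whitney disks to $\cW$ pairs every order~$n$ intersection, so the result is an order~$n+1$ Whitney tower on a collection $A'$ regularly homotopic to $A$ rel~$\partial$, as required.

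I expect the technical heart to be the second stage: verifying that the geometric IHX move can be carried out with full control --- rel~$\partial$, without creating intersections of order below $n$, and without disturbing the framings or the split form of the rest of the tower --- which is precisely the part of the Whitney tower obstruction theory established in \cite{CST,ST2,S2}. By comparison, the order bookkeeping in the last stage (each twist needed to frame a cancelling order~$n+1$ Whitney disk costs only an intersection of order $\geq n+1$) is routine once the order count is set up, and the splitting in the first stage is standard.
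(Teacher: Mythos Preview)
Your first two stages are essentially correct and match the paper's summary of the argument from \cite{ST2}. The gap is in your third stage, where you write ``$W_I$ and $W_K$ are the two Whitney disks (or order zero surfaces) meeting at $p$ \emph{and} $q$''. You are assuming that once the order~$n$ intersections are grouped into algebraically canceling pairs $\{p,q\}$ with $t_p=t_q$ and $\epsilon_p=-\epsilon_q$, the two points $p$ and $q$ necessarily lie in the intersection of the \emph{same} pair of Whitney disks. This is false: having isomorphic signed trees (\emph{algebraic} cancellation) does not imply that corresponding trivalent vertices sit in the same Whitney disks (\emph{geometric} cancellation). In a split Whitney tower there may be many distinct Whitney disks carrying isomorphic rooted trees, so one can have $p\in W_I\cap W_K$ and $q\in W_{I'}\cap W_{K'}$ with $W_I\neq W_{I'}$ or $W_K\neq W_{K'}$, even though $t_p=t_q$. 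In that case there is no Whitney circle joining $p$ to $q$, and your construction of $W_{(I,K)}$ never gets started. The paper flags exactly this issue in the paragraph on ``Algebraic versus geometric cancellation'' in Section~\ref{sec:proof-twisted-thm}.

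The actual argument (summarized at the start of section~\ref{subsec:twisted-order-raising-thm-proof}) inserts an additional step between your second and third stages: after arranging algebraically canceling pairs, one applies the Whitney-move IHX construction of \cite{S1} to reduce all trees to a fixed \emph{simple} (right- or left-normed) shape, and then uses the controlled homotopies of \cite[4.5, 4.8]{ST2} to convert each simple algebraically canceling pair into a geometrically canceling pair. Only after this conversion do the intersection points of each pair lie on a common pair of Whitney disks, at which point your third-stage construction goes through. This algebraic-to-geometric conversion is the missing ingredient.
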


In Section~\ref{sec:reduced-groups-obstruction-theory} this theorem will be strengthened by showing that
the same conclusion holds if
$\tau_n(\cW)$ vanishes in the reduced group $\widetilde{\cT}_n$ (Definition~\ref{def:reduced-tree-group}).

The intersection invariants for Whitney towers are extended to
twisted Whitney towers as follows:

\begin{defn}\label{def:Tau-infty-odd}
The abelian group $\cT^{\iinfty}_{2n-1}$ is the quotient of $\cT_{2n-1}$ by the \emph{boundary-twist relations}: 
\[
\langle  (i,J),J \rangle \,=\, i\,-\!\!\!\!\!-\!\!\!<^{\,J}_{\,J}\,\,=\,0
\] 
Here $J$ ranges over all order $n-1$ rooted trees.
\end{defn}

The boundary-twist relations
correspond geometrically to the fact that 
performing a boundary twist (Figure~\ref{boundary-twist-and-section-fig}) on an order $n$ Whitney disk $W_{(i,J)}$ creates an order $2n-1$ intersection point
$p\in W_{(i,J)}\cap W_J$ with associated tree $t_p=\langle  (i,J),J \rangle $ (which is 2-torsion
by the AS relations) and changes $\omega (W_{(i,J)})$ by $\pm1$. Since order $n$ twisted Whitney disks are allowed in an order $2n$ Whitney tower such trees do not represent obstructions to the existence of the next order twisted tower.

For any rooted tree $J$ we define the corresponding {\em $\iinfty$-tree}, denoted by $J^\iinfty$, by labeling the root univalent vertex with the symbol ``$\iinfty$'':
$$
J^\iinfty := \iinfty\!-\!\!\!- J 
$$ 


\begin{defn}\label{def:Tau-infty-even}
The abelian group $\cT^{\iinfty}_{2n}$ is the free abelian group on order $2n$
trees and order $n$ $\iinfty$-trees, modulo the
following relations:
\begin{enumerate}
     \item AS and IHX relations on order $2n$ trees (Figure~\ref{fig:ASandIHXtree-relations})
   \item \emph{symmetry} relations: $(-J)^\iinfty = J^\iinfty$
  \item \emph{twisted IHX} relations: $I^\iinfty=H^\iinfty+X^\iinfty- \langle H,X\rangle $
   \item {\em interior twist} relations: $2\cdot J^\iinfty=\langle J,J\rangle $
\end{enumerate}
\end{defn}

Here the AS and IHX relations are as usual, but they only apply to non-$\iinfty$ trees. 
The \emph{symmetry relation} corresponds to the fact that the relative 
Euler number $\omega(W)$ is independent of the orientation of the Whitney disk $W$, with the minus sign denoting that the cyclic orderings at the trivalent vertices of $-J$ differ from those of $J$ at an odd number of vertices.
The \emph{twisted IHX relation} corresponds to the effect of performing a Whitney move in the presence of a twisted Whitney disk, as described below in
Lemma~\ref{lem:twistedIHX}. The \emph{interior-twist relation} corresponds to the fact that creating a 
$\pm1$ self-intersection
in a $W_J$ changes the twisting by $\mp 2$ (Figure~\ref{InteriorTwistPositiveEqualsNegative-fig}).
\vspace{.25 in}

\begin{rem}\label{rem:quadratic-form}
The symmetry, twisted IHX, and interior twist relations in $\mathcal T^\iinfty_{2n}$ have a surprisingly natural algebraic interpretation that we explain in \cite{UQF}. The idea is to extend the map $J\mapsto J^\iinfty$ to a {\em symmetric quadratic refinement} $q$ of the bilinear form $\langle \cdot,\cdot\rangle$ on the free quasi-Lie algebra of rooted trees (the intersection form on Whitney disks) by defining $q(J)=J^\iinfty$ and extending to linear combinations by the formula
\[
q(J+K):=J^\iinfty+K^\iinfty+\langle J,K\rangle
\]
Expanding $q(I-H+X)=0$ leads to the 6-term IHX relation
\[
I^\iinfty+H^\iinfty+X^\iinfty=\langle I,H \rangle-\langle I,X \rangle+\langle H,X \rangle
\]
which is equivalent to the twisted IHX relation in the presence of the interior-twist relations. Those in turn follow by setting $K:=-J$ from the symmetry relation.
In \cite{UQF} we show that 
$\cT$ is the universal home for invariant symmetric bilinear forms on free quasi-Lie algebras, and that $\cT^\iinfty_{2k}$ is the universal (symmetric quadratic) refinement of this form in order $k$.
\end{rem}

\begin{rem}\label{rem:finite-type-IHX}

We discovered in \cite{CST} that the (framed) IHX relation can be realized in three dimensions as well as four, and
it is interesting to note that many of the relations that we obtain for twisted Whitney towers in four dimensions can also be realized by rooted clasper surgeries (grope cobordisms) in three dimensions. Here the twisted Whitney disk corresponds to a $\pm1$ framed leaf of  a clasper.   For example the relation $I^\iinfty=H^\iinfty+X^\iinfty-\langle H,X\rangle$ has the following clasper explanation. $I^\iinfty$ represents a clasper with one isolated twisted leaf. By the topological IHX relation, one can replace $I^\iinfty$ by two claspers of the form $H^\iinfty$ and $(-X)^\iinfty=X^\iinfty$ embedded in a regular neighborhood of the original clasper with leaves parallel to the leaves of the original. The twisted leaves are now linked together, so applying Habiro's zip construction (which complicates the picture considerably) one gets three tree claspers, of the form 
$H^\iinfty$, $X^\iinfty$ and $\langle H,-X\rangle$ respectively. 

Similarly, the relation $2\cdot J^\iinfty=\langle J,J\rangle$ has an interpretation where one takes a clasper which represents $J^\iinfty$ and splits off a geometrically canceling parallel copy, representing the tree $J^\iinfty$. Again, because the twisted leaves link, we also get the term $\langle J,-J\rangle.$ 

These observations will be enlarged upon in \cite{CST5} to analyze filtrations on homology cylinders and string links.
\end{rem}

Recall from Definition~\ref{def:Twisted-W-towers} (and Remark~\ref{rem:motivate-twisted-towers}) that twisted Whitney disks
only occur in even order twisted Whitney towers, and only those of half-order are
relevant to the obstruction theory. 
\begin{defn}\label{def:Tau-infty}
The \emph{order $n$ intersection intersection invariant}
$\tau_{n}^{\iinfty}(\cW)$ of an order
$n$ twisted Whitney tower $\cW$ is defined to be
$$
\tau_{n}^{\iinfty}(\cW):=\sum \epsilon_p\cdot t_p + \sum \omega(W_J)\cdot J^\iinfty\in\cT^{\iinfty}_{n}
$$
where the first sum is over all order $n$ intersections $p$ and the second sum is over all order $n/2$
Whitney disks $W_J$ with twisting $\omega(W_J)\in\Z$. For $n=0$, recall from \ref{subsec:order-zero-w-towers-and-ints} above our notational convention that $W_j$ denotes $A_j$, and that $\omega(A_j)\in\Z$ is the relative Euler number of the normal bundle of $A_j$ with respect to the given framing of $\partial A_j$ as in \ref{subsec:twisted-w-disks} .

\end{defn}

By splitting the twisted Whitney disks, as explained in subsection~\ref{subsec:split-w-towers} below, 
for $n>0$ we may actually assume that all non-zero $\omega(W_J)\in\{\pm 1\}$, just like the signs $\epsilon_p$. 


As in the framed case, the vanishing of $\tau_{n}^{\iinfty}$ is sufficient for the existence of a 
twisted Whitney tower of order $(n+1)$, and the proof in Section~\ref{sec:proof-twisted-thm} of Theorem~\ref{thm:twisted} (describing the twisted realization maps $R^\iinfty_n:\cT_n^\iinfty\to\W^\iinfty_n$) will be based on the following analogue of the framed order-raising Theorem~\ref{thm:framed-order-raising-on-A} to the twisted setting: 
\begin{thm}\label{thm:twisted-order-raising-on-A}
If a collection $A$ of properly immersed surfaces in a simply connected $4$--manifold supports an order $n$ twisted Whitney tower $\cW$ with $\tau_n^\iinfty(\cW)=0\in\cT^\iinfty_n$, then $A$ is regularly homotopic (rel $\partial$) to 
$A'$ which supports an order $n+1$ twisted Whitney tower.
\end{thm}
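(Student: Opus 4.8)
The plan is to mimic the proof of the framed order-raising Theorem~\ref{thm:framed-order-raising-on-A} of \cite{ST2}, adjusting the bookkeeping to accommodate twisted Whitney disks. First I would put $\cW$ into a convenient \emph{split} form (subsection~\ref{subsec:split-w-towers}), so that each Whitney disk carries at most one interior intersection or one boundary arc of a higher Whitney disk, each twisted Whitney disk has $\omega(W_J)\in\{\pm1\}$, and all singularities are contained in disjoint $4$--ball neighborhoods of the embedded trees. In this form the hypothesis $\tau^\iinfty_n(\cW)=0\in\cT^\iinfty_n$ says precisely that the collection of signed order $n$ trees $\{\epsilon_p t_p\}$ together with the signed order $n/2$ twisted trees $\{\omega(W_J) J^\iinfty\}$ sums to zero via a finite sequence of the defining relations of $\cT^\iinfty_n$: the AS and IHX relations (on honest trees), and — when $n=2k$ is even — the symmetry, twisted IHX, interior-twist, and (for the target, via $\cT_{2n-1}$ in odd order) boundary-twist relations.

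The core of the argument is then to realize each of these algebraic relations by a controlled geometric maneuver that does not decrease the order of the Whitney tower and does not change $\tau^\iinfty_n$, until the unpaired order $n$ intersections and order $n/2$ twistings can be geometrically cancelled in pairs. The honest AS and IHX moves are already available from \cite{CST,S2} and produce no new low-order intersections; the new ingredients are: (i) \emph{twisted Whitney moves} realizing the twisted IHX relation $I^\iinfty = H^\iinfty + X^\iinfty - \langle H,X\rangle$ (this is Lemma~\ref{lem:twistedIHX}, which I would invoke as a black box — performing a Whitney move guided by a twisted Whitney disk splits its tree in an IHX pattern and records the twisting defect as the extra $-\langle H,X\rangle$ term); (ii) \emph{boundary twists}, which let me trade an order $2k-1$ intersection of the form $\langle(i,J),J\rangle$ for a $\pm1$ change of $\omega(W_{(i,J)})$, thereby implementing the passage from $\cT_{2n-1}$ to $\cT^\iinfty_{2n-1}$; (iii) \emph{interior twists}, which let me trade a $\pm2$ change in $\omega(W_J)$ for a $\mp1$ self-intersection of $W_J$ with tree $\langle J,J\rangle$, realizing the interior-twist relation $2 J^\iinfty = \langle J,J\rangle$; and (iv) a construction for \emph{geometrically cancelling a pair of twisted Whitney disks} whose $\iinfty$-trees appear with opposite signs (e.g. via a finger move/Whitney move cancellation after arranging the boundary arcs compatibly), which is the twisted analogue of the standard cancellation of an oppositely-signed pair of unpaired intersections. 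The symmetry relation $(-J)^\iinfty = J^\iinfty$ is automatic geometrically since $\omega(W_J)$ does not depend on the orientation of $W_J$, so it costs nothing.

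Assembling these, the argument runs: use (i)–(iv) together with AS/IHX to rewrite the full collection of unpaired order $n$ intersections and order $n/2$ twistings so that it becomes a disjoint union of ``cancelling configurations'' — pairs $p, p'$ with $t_p = t_{p'}$ and $\epsilon_p = -\epsilon_{p'}$, and pairs of twisted Whitney disks $W_J, W_{J'}$ with $J^\iinfty = (J')^\iinfty$ in $\cT^\iinfty$ and opposite twistings — each supported in its own $4$--ball. Each such configuration is then removed by the appropriate geometric cancellation, which by construction only introduces intersections and twistings of order $> n$, i.e. produces a regular homotopy (rel $\partial$) of $A$ to some $A'$ supporting an order $n+1$ twisted Whitney tower. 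Throughout one must check that each maneuver is supported in a neighborhood disjoint from the rest of $\cW$ (guaranteed by the split form) and that the net effect on $\tau^\iinfty_n$ is exactly to apply the corresponding relation in $\cT^\iinfty_n$, so that tracking $\tau^\iinfty_n$ is consistent with the purely algebraic reduction to $0$.

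\textbf{Main obstacle.} The hard part will be (iv): producing, and controlling the cost of, the geometric cancellation of a pair of twisted Whitney disks with opposite-signed $\iinfty$-trees. Unlike cancelling a pair of ordinary intersection points — where one simply finds a Whitney disk for them — here one must cancel \emph{framing obstructions} carried by Whitney disks that may sit high up in the tower, and the natural cancellation (sliding one twisted Whitney disk to be a parallel oppositely-twisted copy of the other and then tubing/finger-move cancelling) will generically create new intersections among the higher-order Whitney disks and possibly new twistings; one must verify via a careful local analysis (again exploiting the split form and the ``half-order'' convention of Definition~\ref{def:Twisted-W-towers}, cf.\ Remark~\ref{rem:motivate-twisted-towers}) that all these newly created singularities have order strictly greater than $n$. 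A secondary technical point is ensuring that the boundary-twist and interior-twist operations, which change relative Euler numbers, can be performed without disturbing the framings of the \emph{lower}-order Whitney disks that are required to remain framed — this is where the precise placement of twisted disks ``in at least half the order'' is used. The remaining steps (split form, AS/IHX realizations, the twisted IHX Lemma~\ref{lem:twistedIHX}) are either routine or cited, so the weight of the proof rests on these twisting-cancellation maneuvers and their order estimates.
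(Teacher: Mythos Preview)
Your outline is essentially the paper's proof: split the tower, realize the relators of $\cT^\iinfty_n$ geometrically (AS/IHX as in \cite{CST,ST2}, boundary-twist in odd order, interior-twist and twisted IHX via Lemma~\ref{lem:twistedIHX} in even order), reduce to algebraically cancelling pairs, then convert these to geometric cancellations. You also correctly identify step (iv) --- cancelling a pair of oppositely-twisted clean Whitney disks with the same $\iinfty$-tree --- as the crux.

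The gap is precisely there. Your suggested mechanism (``sliding one twisted Whitney disk to be a parallel oppositely-twisted copy of the other and then tubing/finger-move cancelling'') does not work for an arbitrary pair of algebraically cancelling $\iinfty$-trees, and the paper does \emph{not} attempt it. Instead the paper inserts an additional reduction step you are missing: before any geometric cancellation of twisted disks, it applies Lemma~\ref{lem:twistedIHX} \emph{repeatedly} (not just to realize relators, but to push the $\iinfty$-label outward) until every $\iinfty$-tree is \emph{simple} (right- or left-normed), with the $\iinfty$-vertex at one end. Only for simple $\iinfty$-trees does the actual cancellation construction go through: it is an iterated banding procedure (adapted from \cite[10.8]{FQ}) that walks along the simple tree from the far end toward the $\iinfty$-vertex, at each stage banding together the two order-$r$ Whitney disks $W_{J_r}$, $W'_{J_r}$ (corresponding to the $r$th trivalent vertices of the two cancelling trees) across an auxiliary Whitney disk $V$, and cleaning up $V$ by pushing intersections down and pairing them with Whitney-parallel copies of the existing sub-tower. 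The reason simplicity is essential is that each banding step requires the two Whitney disks being merged to pair intersections between \emph{connected} sheets --- this is guaranteed at the first step because simple trees start with two order-zero surfaces $D_i$, $D_{j_1}$, and is propagated because the previous step produced a single connected $W''_{J_{r-1}}$. For a general (non-simple) tree there is no such linear structure to induct along, and your ``sliding'' picture has no mechanism to control the order of the intersections it creates. So the missing idea is: reduce to simple $\iinfty$-trees first, then use the iterated banding construction; the order estimates you worry about then fall out because each use of a Whitney-parallel copy raises order by at least one.
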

The proof of Theorem~\ref{thm:twisted-order-raising-on-A} is given in Section~\ref{sec:proof-twisted-thm} below.

Proofs of the ``order-raising'' Theorems \ref{thm:twisted-order-raising-on-A} and \ref{thm:framed-order-raising-on-A} (and its strengthening Theorem~\ref{thm:framed-order-raising-mod-Delta} below) depend on
realizing the relations in the target groups by controlled manipulations of Whitney towers. The next two subsections introduce combinatorial
notions useful for describing the algebraic effect of such geometric constructions. 

\emph{For the rest of this section we assume our Whitney towers are of positive order for convenience of notation.}

\subsubsection*{Intersection forests}\label{subsec:int-forests}
Recall that the trees associated to intersections and Whitney disks in a Whitney tower can be considered to be immersed in the Whitney tower, with vertex orientations induced by the Whitney tower orientation, as in Figure~\ref{WdiskIJandIJKint-fig}. 

\begin{defn}\label{def:intersection forests}
The \emph{intersection forest} $t(\cW)$ of a framed Whitney tower $\cW$ is the disjoint union of signed trees associated to all unpaired intersections $p$ in $\cW$: 
 \[
 t(\cW)=\amalg_p\ \epsilon_p \cdot  t_p
 \]
with $\epsilon_p$ the sign of the intersection point $p$.
For $\cW$ of order~$n$, we can think of the signed order $n$ trees in $t(\cW)$ as an ``abelian word'' in the generators $\pm t_p$ which represents 
$\tau_n(\cW)\in\cT_n$.  
More precisely, $t(\cW)$ is an element of the free abelian monoid, with unit $\emptyset$, generated by (isomorphism classes of) signed trees, trivalent, labeled and vertex-oriented as usual. We emphasize that there are no cancellations or other relations here. 

\begin{rem}
In the older papers \cite{CST,S1,ST2} we referred to $t(\cW)$ as the ``geometric intersection tree'' (and to the group element
$\tau_n(\cW)$ as the order $n$ intersection ``tree'', rather than ``invariant''), but the term ``forest'' better describes
the disjoint union of (signed) trees $t(\cW)$.
\end{rem}

Similarly to the framed case, the \emph{intersection forest} $t(\cW)$ of a {\em twisted} Whitney tower $\cW$ is the disjoint union of signed trees associated to all unpaired intersections $p$ in $\cW$ and integer-coefficient $\iinfty$-trees associated to all non-trivially twisted Whitney disks $W_J$ in $\cW$:
\[
t(\cW)=\amalg_p \ \epsilon_p \cdot  t_p \,\, + \amalg_J \ \omega(W_J)\cdot  J^\iinfty
\]
with $\omega(W_J)\in\Z$ the twisting of $W_J$. Again, there are no cancellations or relations (and the informal ``$+$'' sign in the expression is purely cosmetic).   
\end{defn}
We will see in the next subsection that all the trees can be made to be disjoint in $\cW$, with all non-zero $\omega(W_J)=\pm 1$, so that $t(\cW)$ is also a topological disjoint union which corresponds to an element in the free abelian monoid generated by (isomorphism classes of) signed trees, 
and signed $\iinfty$-trees.

\subsection{Splitting twisted Whitney towers}\label{subsec:split-w-towers}
A framed Whitney tower is \emph{split} if the set of singularities in the interior
of any Whitney disk consists of either a single point, or a single boundary arc of a Whitney disk, or is empty.
This can always be arranged, as observed in Lemma~13 of \cite{ST2} (Lemma~3.5 of \cite{S1}), by performing finger moves along Whitney disks guided by arcs
connecting the Whitney disk boundary arcs (see Figure~\ref{fig:W-tower-and-trees-and-split}). Implicit in this construction is that the finger moves preserve the Whitney disk framings
(by not twisting relative to the Whitney disk that is being split -- see Figure~\ref{twist-split-Wdisk-fig}).
A Whitney disk $W$ is \emph{clean} if the interior of $W$ is embedded and disjoint from the rest of the Whitney tower.
In the setting of twisted Whitney towers, it
will simplify the combinatorics to use ``twisted'' finger moves to similarly split-off twisted Whitney disks 
into $\pm 1$-twisted
clean Whitney disks.

We call a twisted Whitney tower \emph{split} if all of its non-trivially twisted Whitney disks are clean and have twisting 
$\pm 1$, and all of its framed Whitney disks are split in the usual sense (as for framed Whitney towers).
\begin{lem}\label{lem:split-w-tower}
If $A$ supports an order $n$ twisted Whitney tower $\cW$, then $A$ is homotopic (rel $\partial$) to 
$A'$ which supports a split order $n$ twisted
Whitney tower $\cW'$, such that:
\begin{enumerate}
\item The disjoint union of non-$\iinfty$ trees $\amalg_p \ \epsilon_p \cdot  t_p \subset t(\cW)$ is isomorphic to 
the disjoint union of non-$\iinfty$ trees $\amalg_{p'} \ \epsilon_{p'} \cdot  t_{p'} \subset t(\cW')$.
\item Each $\omega(W_J)\cdot  J^\iinfty$ in $t(\cW)$ gives rise to the disjoint union of exactly $|\omega(W_J) |$-many $\pm 1\cdot J^\iinfty$ in $\cW'$, 
where the sign $\pm$ corresponds to the sign of $\omega(W_J)$.
\end{enumerate}
\end{lem}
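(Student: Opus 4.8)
The plan is to reduce the statement to the already-established framed splitting procedure of \cite[Lem.~13]{ST2} (equivalently \cite[Lem.~3.5]{S1}), supplemented by one new local move, a \emph{twisted finger move}, that peels a single unit of twisting off a clean twisted Whitney disk. If $\cW$ has odd order there is nothing new to do: by Definition~\ref{def:Twisted-W-towers} such a $\cW$ is an ordinary framed Whitney tower with no twisted Whitney disks, so the framed splitting applies verbatim. So assume $\cW$ has even order $n=2\ell$. Using Remark~\ref{rem:motivate-twisted-towers} we first arrange that every Whitney disk of $\cW$ of order $>\ell$ is framed, so that the only non-trivially twisted Whitney disks left have order exactly $\ell$. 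Then apply the framed splitting procedure to all the framed Whitney disks, working up by order as in \cite{ST2,S1}: each is cut into pieces carrying at most one singularity by finger moves guided by arcs joining consecutive Whitney-disk boundary arcs (and isolating interior intersection points), performed without twisting relative to the Whitney disk being split (Figure~\ref{twist-split-Wdisk-fig}) so that all framings — in particular those of the order-$\ell$ twisted Whitney disks — are preserved. These are regular homotopies rel~$\partial$, each canceling pair created is instantly paired by a small framed Whitney disk, no new unpaired intersection appears, and the $\iinfty$-trees are untouched while the disjoint union of signed non-$\iinfty$ trees changes only by an isomorphism.

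It remains to split the order-$\ell$ twisted Whitney disks. Fix such a $W_J$ with $\omega(W_J)=w\neq 0$. First split off the ordinary singularities in its interior — the transverse intersection points, the boundary arcs of higher-order Whitney disks, and the self-intersections of $W_J$ (the latter have tree $\langle J,J\rangle$ of top order $2\ell$ and so are among the allowed unpaired intersections) — by the same framing-preserving finger moves, pushing each onto its own clean framed order-$\ell$ piece; this leaves a \emph{clean} Whitney disk, still called $W_J$, with $\omega(W_J)=w$. Now perform $|w|-1$ \emph{twisted finger moves}. Each one pushes one of the two sheets paired by $W_J$ across the other along an arc running through a collar of $W_J$, but with one extra full twist inserted; re-pairing the resulting canceling pair produces a new clean Whitney disk $W'_J$ — supported in a $4$--ball around the guiding arc, with the same associated tree $J$ and with $\omega(W'_J)=\sign(w)$ — while decreasing $|\omega(W_J)|$ by one (see Figure~\ref{twist-split-Wdisk-fig}; this is the boundary-connect-sum phenomenon behind the relation $2\cdot J^\iinfty=\langle J,J\rangle$ of Definition~\ref{def:Tau-infty-even}, carried out here so that \emph{no} intersection point is created). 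After $|w|-1$ such moves, $W_J$ itself is clean with $\omega(W_J)=\sign(w)$, and we have gained $|w|-1$ further clean Whitney disks $W'_J$ of the same twisting sign.

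Carrying this out for every order-$\ell$ twisted Whitney disk yields $A'$, homotopic to $A$ rel~$\partial$ (a composition of regular homotopies), supporting a twisted Whitney tower $\cW'$ which is split by construction: its framed Whitney disks are split in the usual sense and its non-trivially twisted Whitney disks are clean with twisting $\pm 1$. Because no step ever creates an unpaired intersection of order $<2\ell$ — finger-move canceling pairs are immediately paired, and twisted finger moves create no intersection points at all — $\cW'$ still has order $n$, the disjoint union of signed non-$\iinfty$ trees $\amalg_{p'}\epsilon_{p'}\cdot t_{p'}\subset t(\cW')$ is isomorphic to $\amalg_p\epsilon_p\cdot t_p\subset t(\cW)$ (giving~(1)), and each term $\omega(W_J)\cdot J^\iinfty$ of $t(\cW)$ has been replaced by exactly $|\omega(W_J)|$ copies of $\sign(\omega(W_J))\cdot J^\iinfty$ — namely the twisting carried by $W_J$ together with the $|\omega(W_J)|-1$ new $W'_J$'s — giving~(2).

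The main obstacle is making the twisted finger move precise: one needs a local model verifying that pushing a sheet through another with one extra twist and re-pairing gives a clean Whitney disk whose associated tree is exactly $J$ (with the correct vertex orientation) and whose relative Euler number is exactly $\pm 1$, and that $\omega(W_J)$ changes correspondingly — i.e. that the relative Euler number of a Whitney disk is additive when the disk is cut along a collar of its boundary, just as the twisting of an order zero surface adds under internal band sum. The remaining points — ordering the splittings by increasing Whitney-disk order so that splitting a low-order disk does not disturb an already-split higher-order one, and keeping all guiding arcs in neighborhoods small enough that no intersection with an unrelated sheet is introduced — are handled exactly as in the framed case \cite{ST2,S1}.
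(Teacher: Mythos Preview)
Your approach is essentially the paper's --- combine the standard framed splitting of \cite[Lem.~13]{ST2} with a twisted finger move (Figure~\ref{twist-split-Wdisk-fig}) that redistributes twisting between the two resulting pieces. The paper's proof is terser but uses exactly these ingredients.

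There is, however, one genuine problem. Your preliminary step invokes Remark~\ref{rem:motivate-twisted-towers} to arrange that every Whitney disk of order $>\ell$ is framed. But that normalization (as spelled out in the ``notation and conventions'' of section~\ref{subsec:twisted-order-raising-thm-proof}) is achieved by \emph{boundary-twisting}, which creates new unpaired intersection points and deletes the corresponding $\iinfty$-trees from $t(\cW)$. So after this step the intersection forest has already changed: the higher-order $\omega(W_K)\cdot K^\iinfty$ terms are gone and new non-$\iinfty$ trees have appeared, violating both (1) and (2) for the original $\cW$. The fix is immediate: simply omit this step and apply your twisted finger move to \emph{every} non-trivially twisted Whitney disk, regardless of order --- exactly as the paper does (``any $\omega$-twisted Whitney disk $(\omega\in\Z)$ can be split into $|\omega|$-many \ldots'').

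Two smaller remarks. First, the parenthetical linking the twisted finger move to the relation $2\cdot J^\iinfty=\langle J,J\rangle$ is misplaced: that relation comes from an \emph{interior twist} (which creates a self-intersection and changes $\omega$ by $\mp 2$), not from the move you are describing. The twisted finger move of Figure~\ref{twist-split-Wdisk-fig} creates no intersection point; it splits $W_J$ into two pieces with twistings $\omega_1+1$ and $\omega_2-1$ (where $\omega_1+\omega_2=\omega$), which is precisely the additivity you want. Second, the phrase ``its own \emph{clean} framed order-$\ell$ piece'' is a slip --- that piece carries exactly one singularity, so it is split but not clean.
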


\begin{proof}
\begin{figure}
\centerline{\includegraphics[width=120mm]{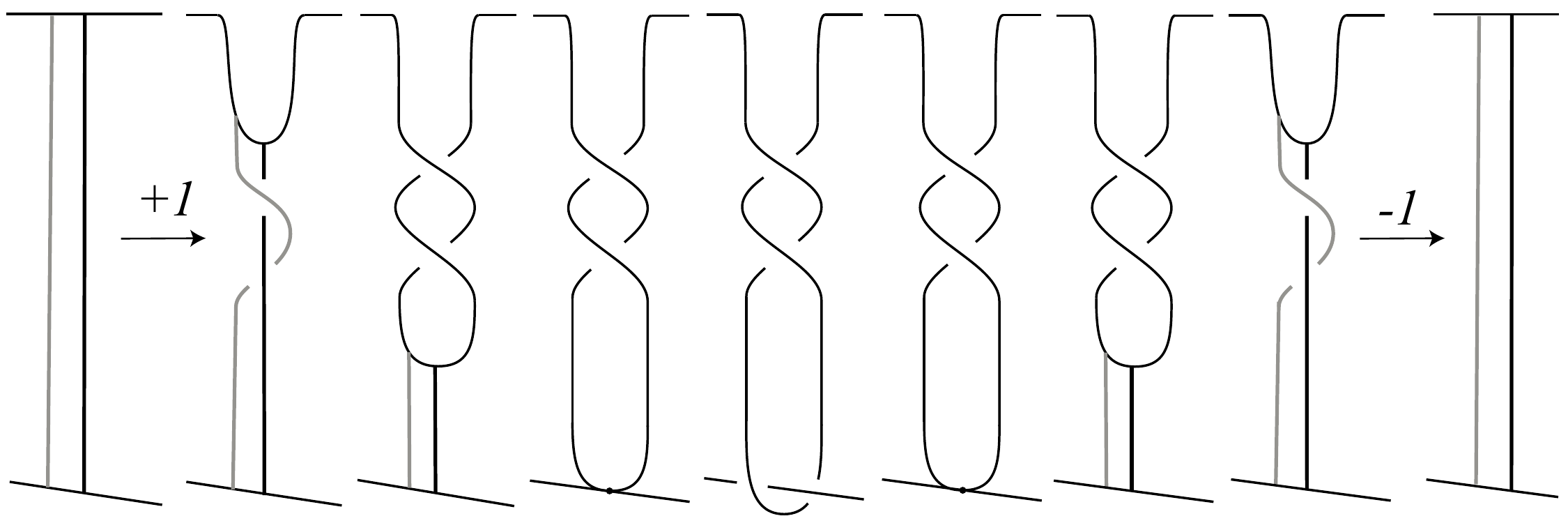}}
         \caption{A neighborhood of a twisted finger move which splits a Whitney disk into two Whitney disks. 
         The vertical black arcs are slices of
         the new Whitney disks, and the grey arcs are slices of extensions of the Whitney sections.
         The finger-move is supported in a neighborhood of an arc in the original Whitney disk running 
         from a point in the Whitney disk boundary on the ``upper'' surface sheet to a point 
         in the Whitney disk boundary on the ``lower'' surface sheet. (Before the finger-move this guiding arc would 
         have been visible in the middle picture as a vertical black arc-slice of the original
         Whitney disk.)}
         \label{twist-split-Wdisk-fig}
\end{figure}
Illustrated in
Figure~\ref{twist-split-Wdisk-fig} is a local picture of a twisted finger move, which splits one Whitney disk into two, while also changing twistings.
If the original Whitney disk in Figure~\ref{twist-split-Wdisk-fig} was framed, then the two new Whitney disks will have twistings $+1$ and $-1$, respectively. In general, if the arc guiding the finger move splits the twisting of the original Whitney disk into $\omega_1$ and $\omega_2$ zeros of the extended Whitney section, then the two new Whitney disks will have twistings $\omega_1+1$ and
$\omega_2-1$, respectively. Thus, by repeatedly splitting off framed corners into $\pm 1$-twisted Whitney disks, any 
$\omega$-twisted Whitney disk ($\omega \in\Z$) can be split into $|\omega |$-many $+1$-twisted or $-1$-twisted clean Whitney disks, together with split framed Whitney disks containing any interior intersections in the original twisted Whitney disk. Combining this with the untwisted splitting of the framed Whitney disks as in Lemma~13 of \cite{ST2} gives the result.
\end{proof}


\section{The realization maps}\label{sec:realization-maps}
This section contains clarifications and proofs of Theorems \ref{thm:R-onto-W} and \ref{thm:twisted} from the introduction
which state the existence of surjections $R_n\colon\cT_n\to\W_n$ and $R^\iinfty_n\colon\cT^\iinfty_n\to\W^\iinfty_n$ for all $n$, in particular exhibiting the 
sets $\W_n$ and $\W^\iinfty_n$ as finitely generated abelian groups under connected sum. 

All proofs in this section apply in the reduced setting as well, and the constructions
described here also define the surjections $\widetilde{R}_n\colon\widetilde{\cT}_n\to\W_n$ described
in the introduction. 

Recall that our manifolds are assumed oriented, but orientations are suppressed from the discussion as much as possible.
In the following an orientation is fixed once and for all on $S^3$; and a \emph{framed link} has oriented components, each equipped with a nowhere-vanishing 
normal section.
\begin{defn}\label{def:links-bounding-towers}
A framed link $L\subset S^3=\partial B^4$ \emph{bounds} an order $n$ Whitney tower $\cW$ if
$\cW\subset B^4$ is an order $n$ Whitney tower whose order zero surfaces are immersed disks bounded by the components of $L$, as in Definition~\ref{def:framed-tower}. 

Similarly, a framed link $L\subset S^3=\partial B^4$ \emph{bounds} an order $n$ twisted Whitney tower $\cW$ if
$\cW\subset B^4$ is an order $n$ twisted Whitney tower whose order zero surfaces are immersed disks bounded by the components of $L$, as in Definition~\ref{def:Twisted-W-towers}. 
\end{defn}

\begin{defn}\label{def:wtc}
For $n\geq 1$, framed links $L_0$ and $L_1$ in $S^3$ are \emph{Whitney tower concordant of order $n$} if the $i$th components of $L_0\subset S^3\times\{0\}$ and $-L_1\subset S^3\times\{1\}$ cobound an immersed annulus $A_i$ for each $i$ such that the $A_i$ are transverse and support an order $n$ Whitney tower.
If the $A_i$ support a \emph{twisted} order $n$ Whitney tower then 
$L_0$ and $L_1$ are said to be \emph{twisted Whitney tower concordant of order $n$}.
\end{defn}
Note that a (twisted) Whitney tower concordance preserves framings on on $L_0$ and $L_1$ (as links in $S^3$)
since, for all $i$, $\omega(A_i)=0$ because all self-intersections of the $A_i$ come in (geometrically) canceling pairs in any (twisted) Whitney tower of order $n\geq 1$.

Recall from the introduction that the set of $m$-component framed links in $S^3$ which bound order $n$ (twisted) Whitney towers in $B^4$ is denoted by $\bW_n=\bW_n(m)$ (resp. $\bW^\iinfty_n$); and the quotient of $\bW_n$ by the equivalence relation of order $n+1$ (twisted) Whitney tower concordance is 
denoted by $\W_n$ (resp. $\W^\iinfty_n$).

Throughout this section the twisted setting mirrors the framed setting, with discussions and arguments given simultaneously.

We begin by deriving from the ``order-raising'' Theorem~\ref{thm:framed-order-raising-on-A} the following essential criterion for links to represent 
equal elements in the associated graded $\W_n$:

\begin{cor}\label{cor:tau=w-concordance}
Links $L_0$ and $L_1$ represent the same element of $\W_n$
if and only if there exist order $n$ Whitney towers $\cW_i$ in $B^4$ with $\partial\cW_i=L_i$ and $\tau_n(\cW_0)=\tau_n(\cW_1)\in\cT_n$.
\end{cor}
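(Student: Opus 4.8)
The plan is to prove the two directions of the biconditional separately, with the forward direction being essentially a bookkeeping exercise and the reverse direction being where the real content of Theorem~\ref{thm:framed-order-raising-on-A} is used.

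\textbf{Forward direction.} Suppose $L_0$ and $L_1$ represent the same element of $\W_n$. Since both links lie in $\bW_n$, each $L_i$ bounds \emph{some} order $n$ Whitney tower $\cW_i\subset B^4$; this does not yet say anything about the $\tau_n(\cW_i)$. The equality $[L_0]=[L_1]\in\W_n$ means that the $i$th components of $L_0\subset S^3\times\{0\}$ and $-L_1\subset S^3\times\{1\}$ cobound immersed annuli $A=\{A_i\}$ in $S^3\times I$ supporting an order $n+1$ Whitney tower $\cV$. First I would glue: cap off $S^3\times\{0\}$ with the $4$--ball containing $\cW_0$ to build $B^4 = B^4\cup_{S^3} (S^3\times I)$, in which $\cW_0\cup_L \cV$ is an order $n$ Whitney tower bounded by $L_1$. (One must check that the annuli and the $\cW_0$-disks fit together smoothly along $L_0$; this is standard, since the framings agree by the remark following Definition~\ref{def:wtc}.) Now I have two order $n$ Whitney towers bounded by the \emph{same} link $L_1$: the glued tower $\cW_0\cup\cV$ and $\cW_1$ itself. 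Crucially, all the order $n$ unpaired intersections of the glued tower live in the $\cW_0$ part, since $\cV$ has order $n+1$, so $\tau_n(\cW_0\cup\cV) = \tau_n(\cW_0)$. Thus I want order $n$ Whitney towers in $B^4$ bounded by $L_0$ and $L_1$ respectively with equal $\tau_n$. To balance this I symmetrically glue $\cW_1$ along a concordance running the other way, or more simply: the argument just given shows $\tau_n(\cW_0)$ is realized by an order $n$ tower on $L_1$; running it in reverse (using $-\cV$, the reflected annuli) shows $\tau_n(\cW_1)$ is realized by an order $n$ tower on $L_0$. Either way produces the required pair with matching intersection invariant. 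The only subtlety here is orientation/framing bookkeeping through the gluing, plus the observation that $\tau$ of a glued tower only sees the lower piece when the concordance piece has higher order.

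\textbf{Reverse direction.} Conversely, suppose $\cW_0,\cW_1\subset B^4$ are order $n$ Whitney towers with $\partial\cW_i=L_i$ and $\tau_n(\cW_0)=\tau_n(\cW_1)\in\cT_n$. Form the closed-off picture: take $\cW_0$ in $B^4$ and the mirror of $\cW_1$ in a reflected $B^4$, and glue along a product region; concretely, build an order $n$ Whitney tower $\cW$ on the annuli $A_i$ connecting $L_0$ to $L_1$ inside $S^3\times I$ by tubing $\cW_0$ to (the reverse of) $\cW_1$ along an arc $S^3\times I$-worth of collar — i.e. view $\cW_0 \cup \overline{\cW_1}$ as living on annuli by removing small disks and connecting. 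Its order $n$ intersection invariant is $\tau_n(\cW_0) - \tau_n(\cW_1) = 0 \in \cT_n$ (the sign flip on the $\cW_1$ side coming from the orientation reversal $-L_1$, exactly as in Definition~\ref{def:wtc}). Now apply Theorem~\ref{thm:framed-order-raising-on-A} with $4$--manifold $S^3\times I$ (which is simply connected) and the collection $A=\{A_i\}$: since $A$ supports an order $n$ Whitney tower with vanishing $\tau_n$, it is regularly homotopic rel $\partial$ to $A'$ supporting an order $n+1$ Whitney tower. That is precisely the statement that $L_0$ and $L_1$ are Whitney tower concordant of order $n+1$, hence $[L_0]=[L_1]\in\W_n$.

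\textbf{Main obstacle.} The genuinely nontrivial input is Theorem~\ref{thm:framed-order-raising-on-A}, which I am allowed to assume. Granting that, the remaining work is the gluing/tubing construction that turns ``two towers bounded by links $L_0,L_1$ with equal $\tau_n$'' into ``one tower on connecting annuli with vanishing $\tau_n$,'' and tracking that orientation reversal of one end contributes the minus sign making the invariants cancel. The one point requiring a little care is verifying that tubing $\cW_0$ to $\overline{\cW_1}$ along the collar does not create any new \emph{low-order} unpaired intersections — it does not, because the tubes can be taken disjointly embedded and parallel to the boundary annuli, so the only order $n$ intersections are the original ones of $\cW_0$ and $\cW_1$, contributing $\tau_n(\cW_0) - \tau_n(\cW_1)$. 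I expect this to be routine given the splitting and manipulation technology already set up in Section~\ref{sec:w-towers}, so the corollary really is a direct consequence of Theorem~\ref{thm:framed-order-raising-on-A}.
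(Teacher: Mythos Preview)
Your proposal is correct and follows essentially the same approach as the paper: in the forward direction you glue a given tower along the order $n+1$ concordance $\cV$ to produce a pair with matching $\tau_n$ (the paper does this more directly by starting with $\cW_1$ and \emph{defining} $\cW_0:=\cV\cup\cW_1$, avoiding your extra ``running it in reverse'' step), and in the reverse direction you form $S^3\times I$ from two $4$--balls, tube the order zero disks into annuli supporting a tower with $\tau_n=\tau_n(\cW_0)-\tau_n(\cW_1)=0$, and invoke Theorem~\ref{thm:framed-order-raising-on-A}. The only cosmetic difference is that the paper phrases the $S^3\times I$ construction explicitly as a connected sum $B_0^4\# B_1^4$ along balls in the tower complements.
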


\begin{proof}
If $L_0$ and $L_1$ are equal in $\W_n$ then they cobound $A$ supporting an order $n+1$ Whitney tower $\cV$ in $S^3\times I$, and any order $n$ Whitney tower $\cW_1$ in $B^4$ bounded by $L_1$ can be extended by $\cV$ to 
form an order $n$ Whitney tower $\cW_0$ in $B^4$ bounded by $L_0$, with 
$\tau_n(\cW_0)=\tau_n(\cW_1)\in\cT_n$
since $\tau_n(\cV)$ vanishes. 

Conversely, suppose that $L_0$ and $L_1$ bound order $n$ Whitney towers $\cW_0$ and $\cW_1$ in $4$--balls $B_0^4$ and $B_1^4$, with 
$\tau_n(\cW_0)=\tau_n(\cW_1)$. Then constructing $S^3\times I$
as the connected sum $B_0^4\# B_1^4$ (along balls in the complements of $\cW_0$ and $\cW_1$), and tubing together the corresponding order zero disks of $\cW_0$ and $\cW_1$, and taking the union of the Whitney disks in $\cW_0$ and 
$\cW_1$, yields a collection $A$ of properly immersed annuli connecting $L_0$ and $L_1$ and supporting an order $n$ Whitney tower $\cV$.  Since the orientation of the ambient $4$--manifold has been reversed for one of the original Whitney towers, say $\cW_1$, which results in a global sign change for 
$\tau_n(\cW_1)$, it follows that $\cV$ has vanishing order $n$ intersection invariant:
$$
\tau_n(\cV)=\tau_n(\cW_0)-\tau_n(\cW_1)=\tau_n(\cW_0)-\tau_n(\cW_0)=0\in\cT_n
$$
So by Theorem~\ref{thm:framed-order-raising-on-A}, $A$ is homotopic (rel $\partial$) to $A'$ supporting
an order $n+1$ Whitney tower, and hence $L_0$ and $L_1$ are equal in $\W_n$.
\end{proof} 

\begin{rem}\label{rem:tau=w-concordance}
The analogous statement and proof of Corollary~\ref{cor:tau=w-concordance} holds in the twisted case (with the ``twisted order-raising'' Theorem~\ref{thm:twisted-order-raising-on-A} playing the role of Theorem~\ref{thm:framed-order-raising-on-A}). For this case, we'll spell out the statement carefully but in several instances below we will just state that the twisted case is analogous:
Links $L_0$ and $L_1$ in $\bW_n^\iinfty$ represent the same element of $\W^\iinfty_n$
if and only if there exist order $n$ twisted Whitney towers $\cW_0$ and $\cW_1$ in $B^4$ bounded by $L_0$ and $L_1$ respectively such that
$\tau^\iinfty_n(\cW_0)=\tau^\iinfty_n(\cW_1)\in\cT^\iinfty_n$.
\end{rem}

\begin{rem}\label{rem:reduced-tau=w-concordance}
Remark~\ref{rem:tau=w-concordance} similarly applies to the reduced setting by Theorem~\ref{thm:framed-order-raising-mod-Delta} below, although we will omit further reference to $\widetilde{\cT}$ in this section.
\end{rem}

\subsection{Band sums of links}\label{subsec:band-sum}
The \emph{band sum} $L\#_\beta L'\subset S^3$
of oriented $m$-component links $L$ and $L'$ along bands $\beta$ is defined as follows: Form $S^3$ as the connected sum of $3$--spheres containing $L$ and $L'$ along balls in the link complements.  Let
$\beta$ be a collection of disjointly embedded oriented bands joining like-indexed link components such that the band orientations are compatible with the link orientations. Take the usual connected sum of each pair of components along the corresponding band. Although it is well-known that the concordance class of $L\#_\beta L'$ depends in general on $\beta$, it turns out that the image of 
$L\#_\beta L'$ in $\W_n$ (or in $\W^\iinfty_n$) does not depend on $\beta$:

\begin{lem}\label{lem:link-sum-well-defined}   
For links $L$ and $L'$ representing elements of $\W_n$, any band sum $L\#_\beta L'$ represents an element of 
$\W_n$ which only depends on the equivalence classes of $L$ and $L'$ in $\W_n$.	The same statement holds in $\W^\iinfty_n$.
\end{lem}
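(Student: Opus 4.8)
The plan is to establish Lemma~\ref{lem:link-sum-well-defined} by combining Corollary~\ref{cor:tau=w-concordance} (together with its twisted analogue in Remark~\ref{rem:tau=w-concordance}) with an explicit geometric construction that realizes a band sum of links by a band sum of the Whitney towers they bound. First I would observe that it suffices to prove two things: (i) if $L$ and $L'$ bound order $n$ (twisted) Whitney towers, then any band sum $L\#_\beta L'$ bounds an order $n$ (twisted) Whitney tower, so that $L\#_\beta L' \in \bW_n$ (resp.\ $\bW^\iinfty_n$); and (ii) the class $[L\#_\beta L']\in\W_n$ (resp.\ $\W^\iinfty_n$) is independent of the choice of bands $\beta$ and depends only on the classes $[L],[L']$. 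By Corollary~\ref{cor:tau=w-concordance}, statement (ii) will follow once I exhibit, for a band sum, an order $n$ Whitney tower $\cW$ bounded by $L\#_\beta L'$ whose intersection invariant $\tau_n(\cW)\in\cT_n$ equals $\tau_n(\cW_0)+\tau_n(\cW_1)$ for given towers $\cW_0,\cW_1$ bounded by $L,L'$ — since the right-hand side depends only on the classes $[L],[L']$ by Corollary~\ref{cor:tau=w-concordance} again, and is manifestly independent of $\beta$.

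The key geometric step is the construction of this tower. Given order $n$ (twisted) Whitney towers $\cW_0\subset B_0^4$ and $\cW_1\subset B_1^4$ bounded by $L$ and $L'$, I form the boundary connected sum $B^4 = B_0^4 \natural B_1^4$ along balls in $S^3=\partial B_0^4$ and $S^3 = \partial B_1^4$ chosen in the link complements; this is again a $4$--ball whose boundary $3$--sphere contains the connected sum of the two link diagrams. Each band $\beta_i$ joining the $i$th components can then be thickened to a $2$--dimensional $1$--handle attached to the order zero disks of $\cW_0$ and $\cW_1$ bounded by those components — geometrically, one tubes together the like-indexed order zero disks along an embedded band lying near $\partial B^4$. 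The union of the Whitney disks of $\cW_0$ and of $\cW_1$, unchanged, together with the banded order zero surfaces, gives a collection of properly immersed disks supporting an order $n$ (twisted) Whitney tower $\cW$ bounded by $L\#_\beta L'$. Since the bands can be taken to be disjoint from all singularities and from all the higher-order Whitney disks (they live in a collar of $\partial B^4$), no new intersections are created and no Whitney disk framings are altered; hence the unpaired intersections of $\cW$ are exactly those of $\cW_0$ together with those of $\cW_1$ (and in the twisted case the twisted Whitney disks are likewise the disjoint union), so that $\tau_n(\cW)=\tau_n(\cW_0)+\tau_n(\cW_1)$ in $\cT_n$ (resp.\ $\tau^\iinfty_n(\cW)=\tau^\iinfty_n(\cW_0)+\tau^\iinfty_n(\cW_1)$ in $\cT^\iinfty_n$). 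This is entirely parallel to the connected-sum-of-$4$--balls argument already used in the proof of Corollary~\ref{cor:tau=w-concordance}, the only difference being that there the order zero annuli were tubed in the interior whereas here the order zero disks are banded near the boundary.

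Finally, I would invoke Corollary~\ref{cor:tau=w-concordance} in the forward direction: if $L_0 \sim L_0'$ and $L_1 \sim L_1'$ in $\W_n$, then I may replace each summand by a representative and a tower with a prescribed intersection invariant, and the construction above shows $L_0\#_\beta L_1$ and $L_0'\#_{\beta'} L_1'$ bound order $n$ towers with equal intersection invariants $\tau_n(\cW_0)+\tau_n(\cW_1)$, so they are equal in $\W_n$; the twisted case is identical using Remark~\ref{rem:tau=w-concordance}. I expect the main obstacle to be purely bookkeeping rather than conceptual: one must check carefully that the bands, their orientations, and the framings on the banded order zero surfaces can all be arranged compatibly (using the orientation-compatibility of $\beta$ with the link orientations built into the definition of band sum) so that the resulting order zero surfaces are genuinely framed and properly immersed, and that pushing the bands into a collar really does keep them disjoint from $\cW_0\cup\cW_1$; once this is granted, the additivity of $\tau_n$ is immediate and the rest is a direct appeal to Corollary~\ref{cor:tau=w-concordance}.
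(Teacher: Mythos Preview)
Your proposal is correct and follows essentially the same approach as the paper: the paper's proof invokes Corollary~\ref{cor:tau=w-concordance} in both directions and appeals to a separate Lemma~\ref{lem:exists-tower-sum} for the geometric step, whose content is exactly your ``boundary-connected-sum of $4$--balls, band the order zero disks near the boundary, take the union of the Whitney disks'' construction yielding $t(\cW^\#)=t(\cW)\amalg t(\cW')$. The only difference is organizational---the paper factors out the geometric construction as its own lemma---but the argument is the same.
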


\begin{proof} We shall only give the proof in the framed case, the twisted case is analogous.
If $L_0$ and $L_1$ represent the same element of $\W_n$, 
and if $L'_0$ and $L'_1$ represent the same element of $\W_n$, then
by Corollary~\ref{cor:tau=w-concordance} above, for $i=0,1$, there are order $n$ Whitney towers $\cW_i$
and $\cW'_i$ bounding $L_i$ and $L'_i$ such that $\tau_n(\cW_0)=\tau_n(\cW_1)$ and $\tau_n(\cW'_0)=\tau_n(\cW'_1)$.
By Lemma~\ref{lem:exists-tower-sum} just below, $L_i\#_{\beta_i} L'_i$ bounds $\cW_i^\#$ for $i=0,1$, with
$$
\tau_n(\cW_0^\#)=\tau_n(\cW_0)+\tau_n(\cW'_0)=\tau_n(\cW_1)+\tau_n(\cW'_1)=\tau_n(\cW_1^\#)
$$
so again by Corollary~\ref{cor:tau=w-concordance}, $L_0\#_{\beta_0} L'_0$ is order $n+1$ Whitney tower concordant to 
$L_1\#_{\beta_1} L'_1$, hence $L_0\#_{\beta_0} L'_0$ 
and $L_1\#_{\beta_1} L'_1$ represent the same element of $\W_n$.
\end{proof}

\begin{lem}\label{lem:exists-tower-sum}
If $L$ and $L'$ bound order $n$ (twisted) Whitney towers $\cW$ and $\cW'$ in $B^4$, then for any $\beta$ there exists an order $n$ (twisted)
Whitney tower $\cW^\#\subset B^4$ bounded by $L\#_\beta L'$, such that 
$t(\cW^\#)=t(\cW)\amalg t(\cW')$, where $t(\cV)$ denotes the intersection forest of a Whitney tower $\cV$ as above in subsection~\ref{subsec:int-forests}.
\end{lem}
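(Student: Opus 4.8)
The plan is to construct $\cW^\#$ very explicitly by performing the band sums ``along the $4$--ball'' and then checking that no new singularities are created, so that the intersection forests simply add disjointly. First I would set up the ambient picture: realize $S^3$ containing $L\#_\beta L'$ as the connected sum of the two copies of $S^3=\partial B^4$ along $3$--balls disjoint from $L$ and $L'$, and correspondingly form the ambient $B^4$ as a boundary-connected sum $B^4\natural B^4$ along $4$--balls whose boundary $3$--balls are the ones used to form the connected sum of the two $S^3$'s, and which we arrange to be disjoint from the (compact) Whitney towers $\cW$ and $\cW'$. Then each band $\beta_i$ of $\beta$, which lives in the new $S^3$ near the connect-sum region, can be pushed slightly into the new $B^4$; attaching such a (thickened) band to the pair of order zero disks $D_i\subset\cW$ and $D'_i\subset\cW'$ produces an immersed disk $D^\#_i$ bounded by the $i$th component of $L\#_\beta L'$. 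Because the band is disjoint from $\cW$ and $\cW'$ (it lives in the connect-sum neck, which we chose to avoid both towers), tubing along it creates no new intersections among the order zero surfaces and no intersections with any Whitney disks.

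Next I would simply take $\cW^\#$ to be the union of these new order zero disks $D^\#_i$ together with \emph{all} of the Whitney disks of $\cW$ and all of the Whitney disks of $\cW'$, with their original framings (or twistings). The key observations to record are: (i) each Whitney disk of $\cW$ still pairs the same intersection points, which still lie on (sub-)disks of the $D^\#_i$ that coincide with the old $D_i$ away from the neck, and likewise for $\cW'$; (ii) since the two towers sit in disjoint $4$--balls joined only along the neck (where nothing of either tower lives), a Whitney disk coming from $\cW$ meets a surface sheet coming from $\cW'$ in no points at all; and (iii) the framing/twisting of every Whitney disk and every order zero surface is unchanged — for the order zero disks this uses that the band $\beta_i$ can be taken so that tubing $D_i$ to $D'_i$ along it respects the normal framings, which is where one invokes that the band orientations are compatible with the link orientations (so that the framings match up and $\omega(D^\#_i)=\omega(D_i)+\omega(D'_i)$; in the framed case both summands, hence the sum, are zero). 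Consequently $\cW^\#$ contains no unpaired intersection of order $<n$, so it is an order $n$ (twisted) Whitney tower, and its set of unpaired intersections (resp. nontrivially twisted Whitney disks) is precisely the disjoint union of those of $\cW$ and of $\cW'$, with the same signs (resp. twistings) and the same associated trees. This gives exactly $t(\cW^\#)=t(\cW)\amalg t(\cW')$.

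The one point requiring genuine care — and the step I expect to be the main obstacle — is verifying that the band sum can be performed while \emph{preserving framings} of the order zero surfaces, and more precisely that the trees associated to the $\cW$-part are literally unchanged rather than merely isomorphic to the old ones ``up to sliding through the neck''. Here I would argue that the guiding arcs (sheet-changing paths) and little embedded tree neighborhoods of $\cW$ can all be isotoped to lie in the $\cW$-side of the neck, away from the band region, so the embedded representatives of the trees, together with their induced vertex orientations, are carried along unchanged; for the framings, I would note that $D^2\times D^2$ has a unique trivialization up to homotopy, so attaching the band (a standard $1$--handle operation on the normal bundles) adds the relative Euler numbers, and compatibility of the band orientation with the link orientations guarantees the Whitney-section / normal-framing data glue without introducing an extra twist. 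Everything else is routine general position and bookkeeping. I would also remark that the twisted case is handled by the identical argument, simply tracking the $\omega(W_J)$-coefficients alongside the $\epsilon_p$-signs, which is why the statement and its proof can be given simultaneously in both settings.
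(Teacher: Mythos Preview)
Your proposal is correct and follows essentially the same approach as the paper: form the ambient $B^4$ by gluing the two $4$--balls along the $3$--balls used for the connected sum, boundary-band-sum the order zero disks by pushing the bands slightly into the interior, and carry along all Whitney disks unchanged so that no new singularities arise. You have supplied considerably more detail than the paper does---in particular on the framing additivity for the order zero surfaces and on why the embedded trees and their vertex orientations are preserved---whereas the paper dispatches the entire argument in three sentences ending with ``since no new singularities have been created.''
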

\begin{proof}
Let $B$ and $B'$ be the $3$--balls in the link complements used to form the $S^3$ containing $L\#_\beta L'$.  Then gluing together the two $4$--balls containing $\cW$ and $\cW'$ along $B$ and $B'$ forms $B^4$ containing $L\#_\beta L'$ in its boundary. Take $\cW^\#$ to be the boundary band sum of $\cW$ and $\cW'$ along the order zero disks guided by the bands 
$\beta$, with the interiors of the bands perturbed slightly into the interior of $B^4$. 
It is clear that $t(\cW^\#)$ is just the disjoint union $t(\cW)\amalg t(\cW')$ since no new singularities have been created.
\end{proof}

\subsection{Definition of the realization maps}\label{subsec:realization-maps}
The realization maps $R_n$ are defined as follows: Given any group element $g\in\cT_n$, by Lemma~\ref{lem:realization-of-geometric-trees} just below there exists an $m$-component link $L\subset S^3$ bounding
an order $n$ Whitney tower $\cW\subset B^4$ such that $\tau_n(\cW)=g\in\cT_n$.
Define $R_n(g)$ to be the class determined by $L$ in $\W_n$.
This is well-defined (does not depend on the choice of such $L$) by Corollary~\ref{cor:tau=w-concordance}. 
The twisted realization map $R^\iinfty_n$ is defined via Lemma~\ref{lem:realization-of-geometric-trees} the same way using twisted Whitney towers.

\begin{lem}\label{lem:realization-of-geometric-trees}
For any disjoint union $\amalg_p\ \epsilon_p \cdot  t_p \,\, + \amalg_J\ \omega (W_J) \cdot  J^\iinfty$ there exists an $m$-component link $L$ bounding a twisted  Whitney tower $\cW$
with intersection forest $t(\cW)= \amalg_p\ \epsilon_p \cdot  t_p \,\, + \amalg_J \ \omega (W_J) \cdot  J^\iinfty$. 
If the disjoint union contains no $\iinfty$-trees then all Whitney disks in $\cW$ are framed.
\end{lem}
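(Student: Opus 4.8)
The plan is to build the link $L$ and the twisted Whitney tower $\cW$ one tree at a time and then take a (boundary) band sum, using Lemma~\ref{lem:exists-tower-sum} to control the resulting intersection forest. So the first reduction is: by Lemma~\ref{lem:exists-tower-sum} it suffices to realize a single signed tree $\epsilon\cdot t$, or a single signed $\iinfty$-tree $\omega\cdot J^\iinfty$ with $\omega=\pm 1$ (after splitting, see Lemma~\ref{lem:split-w-tower}, a general $\omega$-twisted disk is the band sum of $|\omega|$ copies of $\pm 1$-twisted ones, so it is enough to do the $\pm 1$ case), by a link $L_t$ (resp. $L_{J^\iinfty}$) bounding a twisted Whitney tower whose intersection forest is exactly that single generator. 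Then $L:=\#_\beta L_t$ over all the trees in the given disjoint union, with the $L_t$ summed along bands into a single $m$-component link, does the job, and $t(\cW^\#)$ is the required disjoint union because band-summing creates no new singularities.

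For the realization of a single non-$\iinfty$ tree $t$ of order $n$ with sign $\epsilon$, the idea is the standard one underlying Cochran's Bing-doubling construction, carried out here at the level of Whitney towers, as in the example of Figure~\ref{Cochran-R2-map-fig-Color3}. One produces an explicit model link bounding a ``clasper-like'' order~$n$ twisted (in fact framed) Whitney tower all of whose singularities are contained in a $4$--ball neighborhood of an embedded copy of the tree $t$, with exactly one unpaired intersection point $p$ of sign $\epsilon$ and $t_p=t$, and all Whitney disks framed. Concretely, one builds this inductively on the order of $t$: writing $t = \langle I, J\rangle$, one takes the links realizing the rooted subtrees $I$ and $J$ (bounding appropriate Whitney disks rather than order-zero disks) and plumbs/bands them together so that the roots of $I$ and $J$ are identified at the single unpaired intersection, exactly mirroring the inner product of Definition~\ref{def:Trees}. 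The sign $\epsilon$ is obtained by choosing the orientation of the Whitney disks appropriately (the negative-corner convention of section~\ref{subsec:w-tower-orientations}), and/or by reversing a component orientation; and a mirror image switches the sign if needed. One must also check that the framing (twisting) obstruction on every Whitney disk in the model vanishes, which is where the precise ``untwisted'' plumbing in the inductive step matters.

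For a single $\iinfty$-tree $J^\iinfty$ of order $n/2$ with coefficient $\pm 1$, one starts from the framed model realizing the symmetric tree $\langle J, J\rangle$ (equivalently, a link realizing the rooted tree $J$ bounding a Whitney disk $W_J$, then a parallel copy $W_{J'}$, paired up); performing an interior twist or a boundary twist on the relevant order $n/2$ Whitney disk $W_J$, as in Figure~\ref{InteriorTwistPositiveEqualsNegative-fig} and Figure~\ref{boundary-twist-and-section-fig}, converts a $\pm 1$ self-intersection into a $\mp 2$ change of $\omega(W_J)$, or more directly one simply declares $W_J$ to be built with a $\pm 1$ twist in its normal framing and cancels off all the lower-order intersections by Whitney moves, leaving a split twisted Whitney tower whose only nontrivial datum is the single $\pm 1$-twisted clean Whitney disk $W_J$. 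The sign is again handled by orientation/mirror choices. The final sentence of the lemma — that if no $\iinfty$-trees appear then all Whitney disks can be taken framed — is immediate from the construction, since the only twisted Whitney disks introduced are the ones explicitly carrying an $\iinfty$-tree.

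The main obstacle is the inductive construction of the single-tree model link together with the bookkeeping that (i) the unpaired intersection set is exactly $\{p\}$ with the prescribed sign, (ii) every Whitney disk of intermediate order is genuinely framed (no hidden twisting obstruction), and (iii) the orientation conventions are such that the associated tree is $t$ on the nose rather than $t$ with some automorphism-induced sign. Realizing the inner-product operation on trees by an honest plumbing/band operation on Whitney-tower-bounding links, and verifying it is compatible with the order convention of Definition~\ref{def:int-and-Wdisk-order} and the orientation convention of section~\ref{subsec:w-tower-orientations}, is the technical heart; everything else (the band-sum assembly, the reduction to $\pm 1$ twists, the no-$\iinfty$ addendum) is then routine given Lemma~\ref{lem:exists-tower-sum} and Lemma~\ref{lem:split-w-tower}.
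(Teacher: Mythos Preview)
Your reduction to a single generator via band sums (Lemma~\ref{lem:exists-tower-sum}) matches the paper exactly, and so does the final sentence about the framed case. The differences are in how a single generator is realized.

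For a non-$\iinfty$ tree the paper's induction is \emph{leaf-growing}, not root-gluing: it starts from the Hopf link realizing $\langle i,j\rangle$ and, at each step, replaces one component by its untwisted Bing-double, which converts the old order~$0$ disk $D_i$ into a framed Whitney disk $W_{(i_1,i_2)}$ still meeting the rest of the tower in the single previous intersection (so $\langle i,J\rangle$ becomes $\langle (i_1,i_2),J\rangle$). Your description ``write $t=\langle I,J\rangle$, realize $W_I$ and $W_J$ separately, then plumb so the roots meet in one point'' does not specify what geometric move creates a \emph{single} transverse point $W_I\cap W_J$; a finger move gives a canceling \emph{pair}, hence $+\langle I,J\rangle$ together with $-\langle I,J\rangle$ in the forest. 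The Bing-doubling step avoids this because nothing new is created --- the one existing intersection is simply carried along into the new Whitney disk. Since you do invoke Cochran's construction and Figure~\ref{Cochran-R2-map-fig-Color3}, you probably have the right picture in mind; but the inner-product induction you actually wrote down is not it.

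For $\iinfty$-trees the paper again uses Bing-doubling: the $n$-twisted Bing-double of the unknot bounds an order~$2$ twisted tower with forest exactly $n\cdot(i,j)^\iinfty$ (Figures~\ref{fig:Bing-unlink-W-disk}--\ref{fig:Bing-unlink-W-disk-twisting}), and further \emph{untwisted} Bing-doubles grow $J$. Your proposed route --- build a clean framed $W_J$ and then twist --- runs into trouble: a boundary-twist makes $\omega(W_J)=\pm 1$ but simultaneously creates an extra unpaired intersection with the supporting sheet, and an interior twist gives $\omega=\mp 2$ together with a self-intersection; neither yields a clean $\pm 1$-twisted $W_J$. Your phrase ``declare $W_J$ to be built with a $\pm 1$ twist'' is, once unpacked, exactly the twisted Bing-double step, and that is what should be said. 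Finally, the reduction to $\omega=\pm 1$ via splitting is unnecessary and, strictly, backwards for the statement as written: the forest element $\omega\cdot J^\iinfty$ asks for one Whitney disk with twisting $\omega$, which the $n$-twisted Bing-double delivers directly; splitting it into $|\omega|$ disks gives a different intersection forest.
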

Note that if in the disjoint union all non-$\iinfty$ trees are order at least $n$ and all $\iinfty$-trees are order at least $n/2$ then  $\cW$ will have order $n$.

\begin{proof}
It suffices to consider the cases where the disjoint union consists of just a single (signed) tree or $\iinfty$-tree  since by Lemma~\ref{lem:exists-tower-sum} any sum of such trees can then be realized by band sums of links.

The following algorithm, in the untwisted case, is the algorithm called "Bing-doubling along a tree" by Cochran and used in Section 7 of \cite{C} and Theorem 3.3 of \cite{C1} to produce links in $S^3$  with prescribed (first non-vanishing) Milnor invariants.

\begin{figure}
\centerline{\includegraphics[scale=.325]{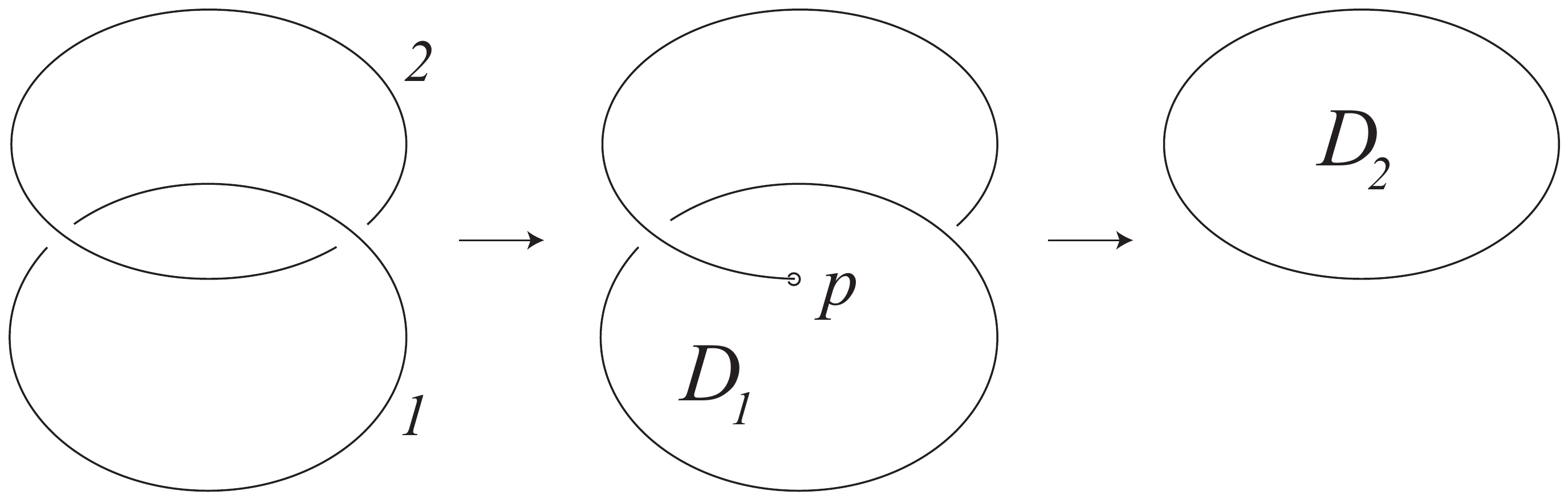}}
         \caption{Pushing into $B^4$ from left to right: A Hopf link in $S^3=\partial B^4$ bounds embedded disks $D_1\cup D_2\subset B^4$ which intersect in a point $p$, with $t_p=\langle 1,2 \rangle$. }
         \label{fig:Hopf-disk}
\end{figure}

\textbf{Realizing order zero trees and $\iinfty$-trees.}
A 0-framed Hopf link bounds an order zero Whitney tower $\cW=D_1\cup D_2\subset B^4$, where the two embedded disks $D_1$ and $D_2$ have a single interior intersection point $p$ with $t_p=\langle 1,2 \rangle = 1 -\!\!\!-\!\!\!- \,2 $ (see Figure~\ref{fig:Hopf-disk}).
Assuming appropriate fixed orientations of $B^4$ and $S^3$, the sign $\epsilon_p$ associated to $p$ is the usual sign of the Hopf link. So taking a 0-framed $(m-2)$-component trivial link together with a Hopf link (as the $i$th and $j$th components) gives an $m$-component link $L$ bounding
$\cW$ with $t(\cW)=\epsilon_p\cdot\langle i,j \rangle = \epsilon_p\cdot i -\!\!\!- \,j $, for any 
$\epsilon_p=\pm 1$, and $i\neq j$.

To realize the tree $\pm\ i -\!\!\!-\!\!\!- \,i $, we can use the unlink with framings 0, except that the component labeled by the index $i$ has framing $\pm 2$. Similarly, if the component has framing $\pm 1$ then the resulting tree is $\pm \ \iinfty -\!\!\!-\!\!\!-\,i $.

\begin{figure}
\centerline{\includegraphics[scale=.4]{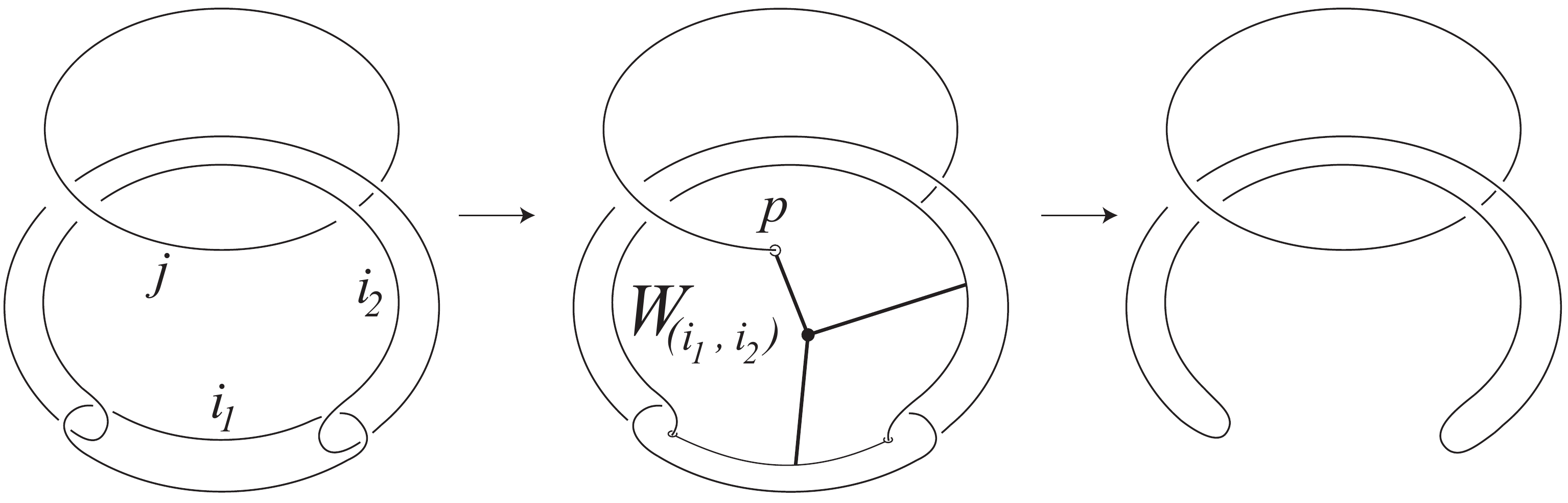}}
         \caption{Pushing into $B^4$ from left to right: The disks $ D_{i_2}$ and $D_j$ extend to the
         right-most picture where they are completed by capping off the unlink. The disk $D_{i_1}$ only extends to the middle picture where the intersections between $D_{i_1}$ and $ D_{i_2}$ are paired by the Whitney disk $W_{(i_1,i_2)}$, that has a single interior intersection $p\in W_{(i_1,i_2)}\cap D_j$ with $t_p=\langle (i_1,i_2),j \rangle$.}
         \label{fig:Borromean-Bing-Hopf}
\end{figure}

\textbf{Realizing order $1$ trees.}
Consider now a link $L$ whose $i$th and $j$th components form a Hopf link $L^i\cup L^j$ bounding disks
$D_i\cup D_j\subset B^4$ with transverse intersection $p=D_i\cap D_j$. Assume that $D_i\cup D_j$ extends
to an order zero Whitney tower $\cW$ bounded by $L$ with $t(\cW)=\epsilon_p\cdot t_p=\epsilon_p\cdot\langle i,j \rangle$.

Replacing $L^i$ by an untwisted Bing-double
$L^{i_1}\cup L^{i_2}$
results in a new sublink of Borromean rings $L^{i_1}\cup L^{i_2}\cup L^j$ bounding disks $D_{i_1}\cup D_{i_2}\cup D_j$
as indicated in Figure~\ref{fig:Borromean-Bing-Hopf}, with
$D_{i_1}$ and $ D_{i_2}$ intersecting in a canceling pair of intersections paired by an order $1$ Whitney disk $W_{(i_1,i_2)}$, which can be formed from $D_i$ with a small collar removed,
so that $W_{(i_1,i_2)}$ has a single intersection with $D_j$ corresponding to the original $p=D_i\cap D_j$.
(One can think of $D_{i_1}$ and $ D_{i_2}$ as being formed by the trace of the obvious 
pulling-apart homotopy that shrinks $L^{i_1}$ and $L^{i_2}$ down in a tubular neighborhood of $L^i$, 
with the canceling pair of intersections between $D_{i_1}$ and $D_{i_2}$ being created as the clasps are pulled apart.)

The effect of this Bing-doubling operation on the intersection forest is that the original order zero
$t_p=\langle i,j \rangle$ has given rise to the order~$1$ tree $\langle (i_1,i_2),j \rangle$. 
Switching the orientation on one of the new components changes the sign of $p$, as can be checked using our orientation conventions.
By relabeling and/or banding together components of this new link any labels on this order~$1$ tree
can be realized. Since the doubling was untwisted, $W_{(i_1,i_2)}$ is framed (see Figures \ref{fig:Bing-unlink-W-disk} and \ref{fig:Bing-unlink-W-disk-twisting}), so the Whitney tower
bounded by the new link
is order $1$. 

\begin{figure}
\centerline{\includegraphics[scale=.35]{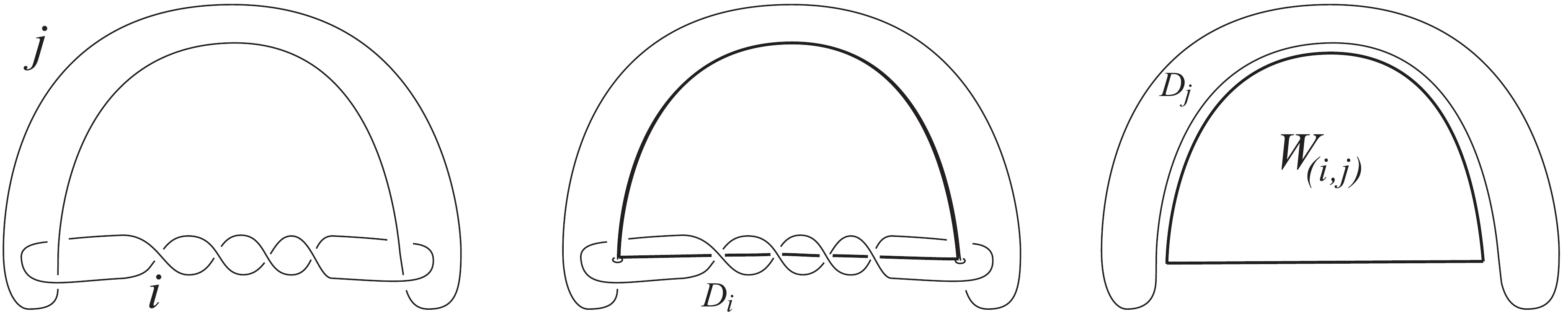}}
         \caption{Pushing into $B^4$ from left to right: An $i$- and $j$-labeled $n$-twisted Bing-double (case $n=2$) of the unknot in 
         $S^3=\partial B^4$
         bounds disks $D_i$ and $D_j$ whose intersections are paired by a Whitney disk $W_{(i,j)}$. 
         $D_j$ extends to the right-hand picture but $D_i$ only extends to the middle picture, where the boundary of $W_{(i,j)}$ is indicated by the dark arcs. The rest of $W_{(i,j)}$ extends into the right-hand picture where disjointly embedded disks bounded by the unlink complete both $W_{(i,j)}$ and $D_j$. The interior of $W_{(i,j)}$ is embedded and disjoint from both $D_i$ and $D_j$. Figure~\ref{fig:Bing-unlink-W-disk-twisting} shows that $W_{(i,j)}$ is twisted, with $\omega(W_{(i,j)})=n$.}
         \label{fig:Bing-unlink-W-disk}
\end{figure}

\textbf{Realizing order $n$ trees.}
Since any order $n$ tree can be gotten from some order $n-1$ tree by attaching two new edges to a univalent vertex as in the previous paragraph, it follows inductively that 
any order $n$ tree is the intersection forest of a Whitney tower bounded by some link. (First create a distinctly-labeled tree of the desired `shape' by doubling, then correct the labels by interior band-summing.)

\textbf{Realizing $\iinfty$-trees of order $1$.}
As illustrated (for the case $n=2$) in Figures~\ref{fig:Bing-unlink-W-disk} and \ref{fig:Bing-unlink-W-disk-twisting}, the $n$-twisted Bing-double
of the unknot (with components labeled $i$ and $j$) bounds an order $2$ twisted Whitney tower $\cW$ with 
$t(\cW)=n\cdot ( i,j )^\iinfty=n\cdot  \iinfty \!-\!\!\!\!\!-\!\!\!\!<^{\,i}_{\,j}$. Banding together the two components
would yield a knot realizing $(i,i)^\iinfty$. 

\begin{figure}
\centerline{\includegraphics[scale=.4]{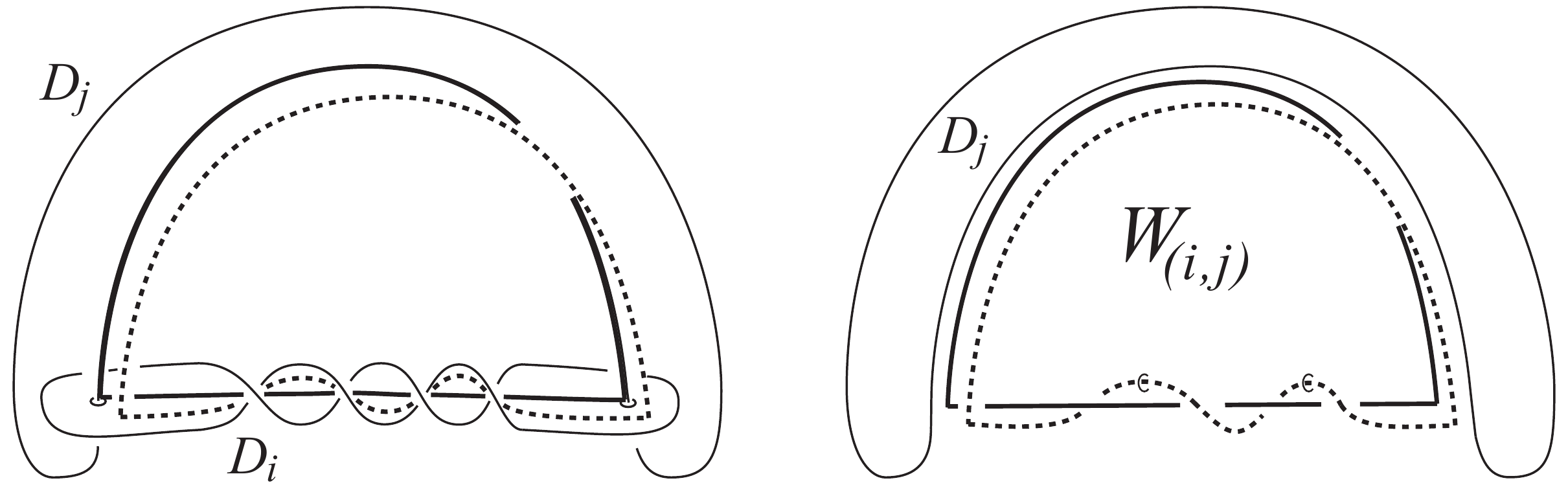}}
         \caption{The Whitney section over $\partial W_{(i,j)}$ (from Figure~\ref{fig:Bing-unlink-W-disk}) is indicated by the dashed arcs on the left. The twisting 
         $\omega (W_{(i,j)})=n$ (the obstruction to extending the Whitney section across the Whitney disk) corresponds to the 
         $n$-twisting of the Bing-doubling operation.}
         \label{fig:Bing-unlink-W-disk-twisting}
\end{figure}

\textbf{Realizing $\iinfty$-trees of order $n$.}
By applying iterated untwisted Bing-doubling operations to the $i$- and $j$-labeled components 
of the order $1$ case, one can construct for any rooted tree $( I,J )$ a link bounding a twisted Whitney tower
$\cW$ with $t(\cW)=n \cdot ( I,J )^\iinfty$. For instance, if in the construction of Figure~\ref{fig:Bing-unlink-W-disk} the $j$-labeled link component is replaced by an untwisted Bing-double, then the disk $D_j$ in that construction would be replaced by a (framed) Whitney disk $W_{(j_1,j_2)}$, and the $n$-twisted $W_{(i,j)}$ would be replaced by an $n$-twisted
$W_{(i,(j_1,j_2))}$.   (As for non-$\iinfty$ trees above, first create a distinctly-labeled tree of the desired `shape' by doubling, then correct the labels by interior band-summing.) 
\end{proof}


\subsection{Surjectivity of the realization maps}\label{subsec:realization-maps}
In this section we will prove Theorems~\ref{thm:R-onto-W} and~\ref{thm:twisted}: The realization maps $R_n$ and $R^\iinfty_n$ are epimorphisms.

We will show, moreover, that $\W_n$ is the set of framed links $L\in\bW_n$ modulo the relation that $[L_1]=[L_2]\in\W_n$ if and only if $L_1 \# -L_2$ lies in $\bW_{n+1}$, 
where $-L$ is the mirror image of $L$ with reversed framing.

\begin{proof} From Lemma~\ref{lem:link-sum-well-defined} the band sum of links gives well-defined operations in $\W_n$ and $\W^\iinfty_n$ which are clearly associative and commutative, with the $m$-component unlink representing an identity element.  The realization maps are homomorphisms by Lemma~\ref{lem:exists-tower-sum} and surjectivity is proven as follows: Given any link $L\in \bW_n$, choose a Whitney tower $\cW$ of order $n$ with boundary $L$ and compute $\tau:=\tau_n(\cW)$. Then take $L':= R_n(\tau)$, a link that's obviously in the image of $R_n$ and for which we know a Whitney tower $\cW'$ with boundary $L'$ and $\tau(\cW') = \tau$.
By Corollary~\ref{cor:tau=w-concordance} it follows that $L$ and $L'$ represent the same element in $\W_n$.

If $L_0$ and $L_1$ represent the same element of $\W_n$ (resp. $\W^\iinfty_n$), then by Corollary~\ref{cor:tau=w-concordance} there exist order $n$ (twisted) Whitney towers $\cW_0$ and $\cW_1$ in $B^4$ bounded by $L_0$ and $L_1$ respectively such that
$\tau_n(\cW_0)=\tau_n(\cW_1)\in\cT_n$ (resp. $\tau^\iinfty_n(\cW_0)=\tau^\iinfty_n(\cW_1)\in\cT^\iinfty_n$). We want to show that $L_0\#-L_1$ bounds an order~$n+1$ (twisted) Whitney tower, which will follow from Lemma~\ref{lem:exists-tower-sum} and the ``order-raising'' Theorem~\ref{thm:framed-order-raising-on-A} (respectively Theorem~\ref{thm:twisted-order-raising-on-A}) if $-L_1$ bounds an order $n$ (twisted) Whitney tower $\overline{\cW_1}$ such that $\tau_n(\overline{\cW_1})=-\tau_n(\cW_1)\in\cT_n$ 
(resp. $\tau^\iinfty_n(\overline{\cW_1})=-\tau^\iinfty_n(\cW_1)\in\cT^\iinfty_n$). If $r$ denotes the reflection on $S^3$ which sends $L_1$ to 
$-L_1$, then the product $r\times\id$ of $r$ with the identity is an involution on $S^3\times I$, and the image $r\times\id(\cW_1)$
of $\cW_1$ is such a $\overline{\cW_1}$. To see this, observe that $r\times\id$ switches the signs of all transverse intersection points,
and is an isomorphism on the oriented trees in $\cW_1$; and hence switches the signs of all Whitney disk framing obstructions (which can be computed as intersection numbers between Whitney disks and their push-offs) -- note that $r\times\id$ is only being applied to 
$\cW_1$, while $S^3\times I$ is fixed.   

Assume now that $L_0\#-L_1\subset S^3$ bounds an order~$n+1$ (twisted) Whitney tower $\cW\subset B^4$. By the definition of connected sum, $S^3$ decomposes as the union of two disjoint $3$--balls $B_0$ and $B_1$ containing $L_0$ and $-L_1$, joined together by the $S^2\times I$ through which passes the bands guiding the connected sum. 
Taking another $4$--ball with the same decomposition of its boundary $3$--sphere, and gluing the $4$--balls together by identifying the 
boundary $2$--spheres of the $3$--balls, and identifying the $S^2\times I$ subsets by the identity map, forms $S^3\times I$ containing
an order $n+1$ (twisted) Whitney tower concordance between $L_0$ and $-L_1$ which consists of $\cW$ together with the parts of the connected-sum bands that are contained in $S^2\times I$.  
\end{proof}


\section{Implications of the twisted IHX construction}\label{sec:proof-twisted-thm}
This section is mostly dedicated to proving the ``twisted order-raising'' Theorem~\ref{thm:twisted-order-raising-on-A} of Section~\ref{sec:w-towers}, which was used in Section~\ref{sec:realization-maps} to construct the twisted realization maps.
A key step in the proof given in section~\ref{subsec:twisted-order-raising-thm-proof} involves a geometric realization of the 
twisted IHX relation as described in Lemma~\ref{lem:twistedIHX} below. 

In section~\ref{subsec:boundary-twisted-IHX-lemma}, Lemma~\ref{lem:twistedIHX} is also used to show how any order $2n$ twisted Whitney tower can be converted into an order $2n-1$ framed Whitney tower. This result (Lemma~\ref{lem:boundary-twisted-IHX}) will be used later in the proof of Theorem~\ref{thm:exact-sequence} in Section~\ref{subsec:proof-thm-exact-sequences}.

Then in section~\ref{sec:reduced-groups-obstruction-theory}, Lemma~\ref{lem:twistedIHX} is used again to prove the order-raising
Theorem~\ref{thm:framed-order-raising-mod-Delta} in the reduced setting.

Recall the statement of Theorem~\ref{thm:twisted-order-raising-on-A}: 
If a collection $A$ of properly immersed surfaces in a simply connected $4$--manifold supports an order $n$ twisted Whitney tower $\cW$
with $\tau^\iinfty_n(\cW)=0\in\cT^\iinfty_n$, then $A$ is regularly homotopic (rel $\partial$) to $A'$ supporting an order $n+1$ twisted Whitney tower.

Recall also from Definition~\ref{def:intersection forests} that the intersection forest $t(\cW)$ of an order 
$n$ twisted Whitney tower $\cW$
is a disjoint union of signed trees which can be considered to be immersed in $\cW$. The order $n$ trees in $t(\cW)$ (together with the order $n/2$ 
$\iinfty$-trees if $n$ is even) represent $\tau_n^\iinfty(\cW)\in\cT^\iinfty_n$, and the proof of Theorem~\ref{thm:twisted-order-raising-on-A} involves controlled manipulations of $\cW$ which first convert $t(\cW)$ into ``algebraically canceling'' pairs of isomorphic trees with opposite signs, and then exchange these for ``geometrically canceling'' intersection points which are paired by a new layer of Whitney disks. We pause here to clarify these
notions:


\textbf{Algebraic versus geometric cancellation:}
Note that $t(\cW)$ is a combinatorial object which by Lemma~\ref{lem:split-w-tower} above can be considered geometrically as the image of an embedding in $\cW$. 
If Whitney disks $W_I$ and $W_J$ in $\cW$ intersect transversely in a pair of points $p$ and $p'$, then $t_p$ and $t_{p'}$ are isomorphic (as labeled, oriented trees). If $p$ and $p'$ have opposite signs, and if the ambient $4$--manifold is simply connected, then there exists a Whitney disk $W_{(I,J)}$ pairing $p$ and $p'$,
and we say that $\{\,p\,,\,p'\,\}$ is a \emph{geometrically canceling} pair.
In this setting we also refer to $\{\,\epsilon_p\cdot t_p\,,\,\epsilon_{p'}\cdot t_{p'}\,\}$ as a geometrically canceling pair of signed trees in $t(\cW)$ (regarding them as subsets of $\cW$ associated to the geometrically canceling pair of points).

On the other hand, given transverse intersections $p$ and $p'$ in $\cW$ with $ t_p = t_{p'}$ 
(as labeled oriented trees) and $\epsilon_p=-\epsilon_{p'}$, we say that $\{\,p\,,\,p'\,\}$ is an \emph{algebraically canceling} pair of intersections, and similarly call $\{\,\epsilon_p\cdot t_p\,,\,\epsilon_{p'}\cdot t_{p'}\,\}$ an algebraically canceling pair of signed trees in $t(\cW)$. Changing the orientations at a \emph{pair} of trivalent vertices in any tree $t_p$ does not change its value in $\cT$ by the AS relations, and (as discussed in 3.4 of \cite{ST2}) such orientation changes can be realized by changing orientations of Whitney disks in $\cW$ together with our orientation conventions (\ref{subsec:w-tower-orientations}).

Any geometrically canceling pair is also an algebraically canceling pair, but the converse is clearly not true as an algebraically canceling pair can have \emph{corresponding trivalent vertices} lying in \emph{different Whitney disks}.  A process for converting algebraically canceling pairs into geometrically canceling pairs by manipulations of the Whitney tower is described in 4.5 and 4.8 of \cite{ST2}.

Similarly, if a pair of twisted Whitney disks $W_{J_1}$ and $W_{J_2}$ have isomorphic (unoriented) trees $J^\iinfty_1$ and $J^\iinfty_2$ with opposite twistings $\omega(W_{J_1})=-\omega (W_{J_2})$, then the Whitney disks form an 
\emph{algebraically canceling} pair (as do the corresponding signed $\iinfty$-trees in $t(\cW)$). Note that the orientations of the $\iinfty$-trees are not relevant here by the independence of $\omega(W)$ from the orientation of $W$ and the symmetry relations in $\cT^\iinfty$. A geometric construction for eliminating algebraically canceling pairs of twisted Whitney disks from a twisted Whitney tower will be described below.

\subsection{Proof of the twisted order-raising Theorem~\ref{thm:twisted-order-raising-on-A}}\label{subsec:twisted-order-raising-thm-proof}

To motivate the proof we summarize here how the methods of 
\cite{CST,S1,ST2} (as described in Section~4 of \cite{ST2})
apply in the framed setting to prove the analogous order-raising theorem in framed setting (Theorem~\ref{thm:framed-order-raising-on-A} of Section~\ref{sec:w-towers}): The first part of the proof changes the intersection forest $t(\cW)$ so that all trees occur in algebraically canceling pairs by using the $4$--dimensional IHX construction of \cite{CST} to realize IHX relators, and by adjusting Whitney disk orientations as necessary to realize AS relations. The second part of the proof uses the Whitney move IHX construction of \cite{S1}
to ``simplify'' the shape of the algebraically canceling pairs of trees. Then the third part of the proof uses controlled
homotopies to exchange the simple algebraic canceling pairs for geometrically canceling intersection points which are paired by a new layer of Whitney disks as described in 4.5 of \cite{ST2}. All constructions only change the order $0$ surfaces by regular homotopies consisting of finger moves, Whitney moves, and isotopies.

Extending these methods to the present twisted setting will require two variations: realizing the new relators in 
$\cT_n^\iinfty$, and achieving an analogous geometric cancellation for twisted Whitney disks corresponding to algebraically canceling pairs of (simple) $\iinfty$-trees. We will concentrate on these new variations, referring the reader to   
\cite{CST,S1,ST2} for the other parts just mentioned.


\textbf{Notation and conventions:}

By Lemma~\ref{lem:split-w-tower} it may be assumed that $\cW$ is split at each stage of the constructions throughout the proof, so that all trees in $t(\cW)$ are embedded in $\cW$.  
In spite of modifications, $\cW$ will not be renamed during the proof.
Throughout this section we
will notate elements of $t(\cW)$ as formal sums, representing disjoint union by juxtaposition.

Note that if $\cW$ is an order $n$ twisted Whitney tower, then the intersection forest $t(\cW)$ may contain
higher order trees and $\iinfty$-trees in addition to those representing $\tau_n^\iinfty(\cW)\in\cT_n^\iinfty$. These higher-order
elements of $t(\cW)$ can be ignored throughout the proof for the following reasons: On the one hand, in a split $\cW$ all the 
constructions leading to the elimination of unpaired order $n$ intersections (and twisted Whitney disks of order $n/2$)
of $\cW$
can be carried out away from any higher-order
elements of $t(\cW)$. Alternatively, one could first exchange all twisted Whitney disks of order greater than $n/2$ for unpaired intersections of order greater than $n$ by boundary-twisting (Figure~\ref{boundary-twist-and-section-fig}). Then, all intersections of order greater than $n$ can be converted into into many algebraically canceling pairs of order $n$ intersections by repeatedly ``pushing down'' unpaired intersections
until they reach the order zero disks, as illustrated for instance in Figure~12 of \cite{S2} (assuming, as we may, that $\cW$ contains no Whitney disks of order greater than $n$). 

Thus, we can and will assume throughout the proof that $t(\cW)$ represents $\tau_n^\iinfty(\cW)\in\cT_n^\iinfty$.

\textbf{The odd order case:} Given $\cW$ of order $2n-1$ with $\tau^\iinfty_{2n-1}(\cW)=0\in\cT^\iinfty_{2n-1}$, it will suffice to modify
$\cW$ --- while only creating unpaired intersections of order at least $2n$ and twisted Whitney disks of order at 
least $n$ --- so that all order $2n-1$ trees in $t(\cW)$ come in algebraically canceling
pairs of trees (since by
\cite{ST2} the corresponding algebraically canceling pairs of order $2n-1$ intersection points can be
exchanged for geometrically canceling intersections which are
paired by Whitney disks, as mentioned just above).

\begin{figure}
\centerline{\includegraphics[width=125mm]{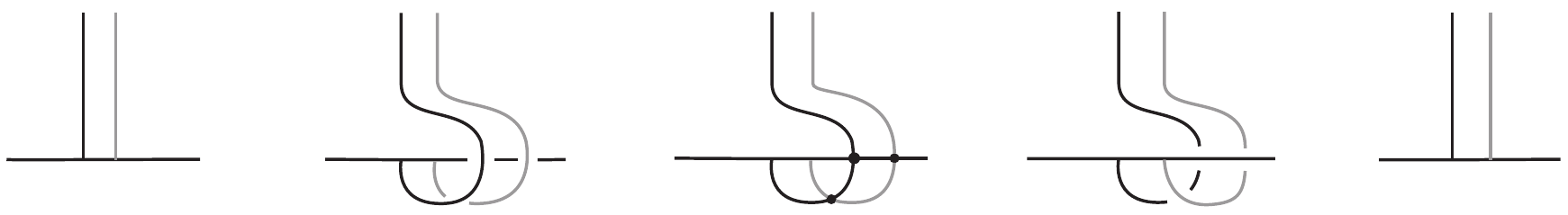}}
         \caption{Boundary-twisting a Whitney disk $W$ changes $\omega (W)$ by $\pm 1$ and creates an intersection point with one of the sheets paired by $W$. The horizontal arcs trace out part of the sheet, the dark non-horizontal arcs
         trace out the newly twisted part of a collar of $W$, and the grey arcs indicate part of the Whitney section over $W$. The bottom-most
         intersection in the middle picture corresponds to the $\pm 1$-twisting created  by the move.}
         \label{boundary-twist-and-section-fig}
\end{figure}

Since $\tau_{2n-1}^\iinfty(\cW)=0\in\cT^\iinfty_{2n-1}$, the intersection forest $t(\cW)$ is in the span of 
IHX and boundary-twist relators, after choosing Whitney disk orientations to realize AS relations as necessary. By locally creating intersection
trees of the form $+I -H  +X$ using  the 4-dimensional geometric IHX theorem of \cite{CST} 
(and by
choosing Whitney disk orientations to realize AS relations as needed), 
$\cW$
can be modified so that all order $2n-1$ trees in $t(\cW)$ either come in algebraically
canceling pairs, or are boundary-relator trees of the form
$\pm\langle (i,J),J \rangle$. 

For each tree of the form 
$t_p=\pm\langle (i,J),J \rangle$ we can create an algebraically canceling
 $t_{p'}=\mp\langle (i,J),J \rangle$ at the cost of only creating order~$n$
$\iinfty$-trees as follows. First use Lemma~14 of \cite{ST2} (Lemma~{3.6} of \cite{S1}) to move
the unpaired intersection point $p$ so that $p\in W_{(i,J)}\cap
W_J$. Now, by boundary-twisting $W_{(i,J)}$ into its supporting
Whitney disk $W'_J$ (Figure~\ref{boundary-twist-and-section-fig}), 
an algebraically canceling intersection $p'\in
W_{(i,J)}\cap W'_J$ can be created at the cost of changing the
twisting $\omega (W_{(i,J)})$ by $\pm 1$. 
Since $\langle (i,J),J \rangle$ has an order $2$ symmetry, the canceling sign can always be realized by a Whitney disk orientation choice. 
This algebraic cancellation of $t_p$ has been achieved at the cost of only adding to $t(\cW)$ the 
order $n$ $\iinfty$-tree $(i,J)^\iinfty$ corresponding to the $\pm 1$-twisted order $n$ Whitney disk 
$W_{(i,J)}$.


Having arranged that all the order $2n-1$ trees in $t(\cW)$ occur in algebraically canceling pairs,
applying the tree-simplification and geometric cancellation described in \cite{ST2} to all these algebraically canceling pairs 
yields an order $2n$ twisted Whitney tower $\cW'$.

\textbf{The even order case:} For $\cW$ of order $2n$ with $\tau^\iinfty_{2n}(\cW)=0\in\cT^\iinfty_{2n}$, we arrange for $t(\cW)$ to
consist of only algebraically canceling pairs of generators by
realizing all relators in $\cT_{2n}^\iinfty$,  then construct an order
$2n+1$ twisted Whitney tower by introducing a new method
for geometrically canceling the pairs of twisted Whitney disks 
(while the algebraically canceling pairs of non-$\iinfty$ trees lead to geometrically canceling intersections as before):

First of all, the order $0$ case corresponding to linking numbers is easily checked, so we will assume $n\geq 1$.
The IHX relators and AS relations for non-$\iinfty$ trees can be realized as usual.
 Note that any signed tree $\epsilon\cdot J^\iinfty\in t(\cW)$ does not depend on the orientation of the tree $J$ because changing the orientation on the corresponding twisted Whitney disk $W_J$ does not change $\omega (W_J)$.

For any rooted tree $J$ the relator $\langle J,J \rangle-2\cdot J^\iinfty$ corresponding to the 
interior-twist relation can be realized as follows. Use finger moves to create a clean
framed Whitney disk $W_J$. Performing a positive interior twist on $W_J$
as in Figure~\ref{InteriorTwistPositiveEqualsNegative-fig}  creates a
self-intersection $p\in W_J\cap W_J$ with $t_p=\langle J,J \rangle$ and
changes the twisting $\omega(W_J)$ of $W_J$ to $-2$. 
The negative of the relator is similarly constructed starting with a negative twist.
\begin{figure}[h]
\centerline{\includegraphics[width=125mm]{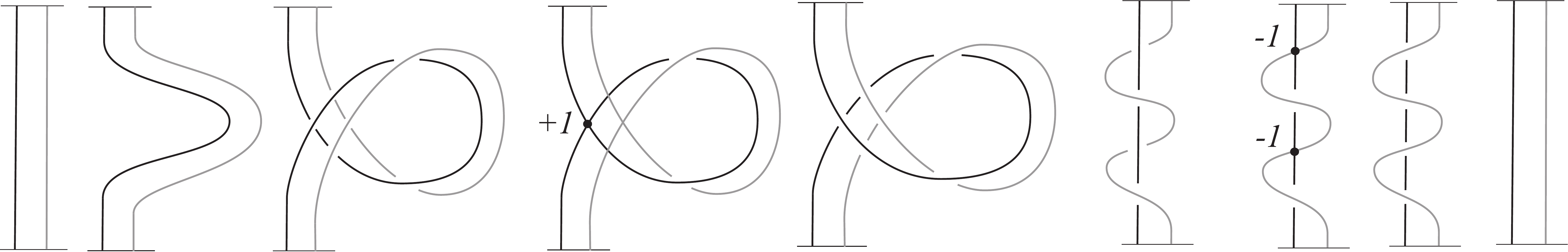}}
         \caption{A $+1$ interior twist on a Whitney disk changes the twisting by $-2$, as is seen in the pair of $-1$ intersections between the black vertical slice of the Whitney disk and the grey slice of a Whitney-parallel copy. Note that the pair of (positive) black-grey intersections near the $+1$ intersection is just an artifact of the immersion of the normal bundle into $4$--space and does not contribute to the relative Euler number.}
         \label{InteriorTwistPositiveEqualsNegative-fig}
\end{figure}

The relator $-I^\iinfty+H^\iinfty+X^\iinfty-\langle H,X \rangle$ corresponding to
the twisted IHX relation is realized as follows. For any
rooted tree $I$, create a clean framed Whitney disk $W_I$ by
finger moves. Then split this framed Whitney disk using the twisted
finger move of Lemma~\ref{lem:split-w-tower} into two clean twisted
Whitney disks with twistings $+1$ and $-1$, and associated signed
$\iinfty$-trees $+I^\iinfty$ and $-I^\iinfty$, respectively. The next step is
to perform a $+1$-twisted version (described in Lemma~\ref{lem:twistedIHX} below)
of the ``Whitney move IHX'' construction of Lemma~{7.2}
in \cite{S1}, which will replace the $+1$-twisted Whitney disk by two
$+1$-twisted Whitney disks having $\iinfty$-trees $+H^\iinfty$ and $+X^\iinfty$,
and containing a single negative intersection point with tree
$-\langle H,X \rangle$, where $H$ and $X$ differ locally from $I$ as in the usual IHX relation.
Thus, any Whitney tower can be modified to create
exactly the relator $-I^\iinfty+H^\iinfty+X^\iinfty-\langle H,X \rangle$, for any rooted tree $I$.
The negative of the relator can be similarly realized by using Lemma~\ref{lem:twistedIHX}
applied to the
$-1$-twisted $I$-shaped Whitney disk.

So since $\tau_{2n}^\iinfty(\cW)$ vanishes, it may be arranged, by realizing relators as above, that
all the trees in $t(\cW)$ occur in algebraically canceling pairs.
Now, by repeated applications of Lemma~\ref{lem:twistedIHX} below, the algebraically canceling pairs of clean $\pm 1$-twisted Whitney disks
can be exchanged for (many) algebraically canceling pairs of clean $\pm 1$-twisted Whitney disks,
all of whose trees are \emph{simple} (right- or left-normed), with the $\iinfty$-label at one end of the tree as illustrated in Figure~\ref{simple-infty-tree-fig} -- this also creates more algebraically canceling pairs of non-$\iinfty$ trees (the ``error term'' trees in
Lemma~\ref{lem:twistedIHX}).

As in the odd case, all algebraically canceling pairs of
intersections with
non-$\iinfty$ trees can be exchanged for geometrically canceling pairs by \cite{ST2}.
To finish building the desired 
order~$2n+1$ twisted Whitney tower, we will describe how to eliminate the remaining algebraically canceling pairs of clean twisted order~$n$ Whitney disks (all having simple trees)
using a construction that bands together Whitney disks and is additive on twistings.
This construction is an iterated elaboration of a construction originally from Chapter~10.8 of \cite{FQ} (which was used to show that
that $\tau_1 \otimes \Z_2$ did not depend on choices of pairing intersections by Whitney disks).

 \begin{figure}[h]
\centerline{\includegraphics[width=85mm]{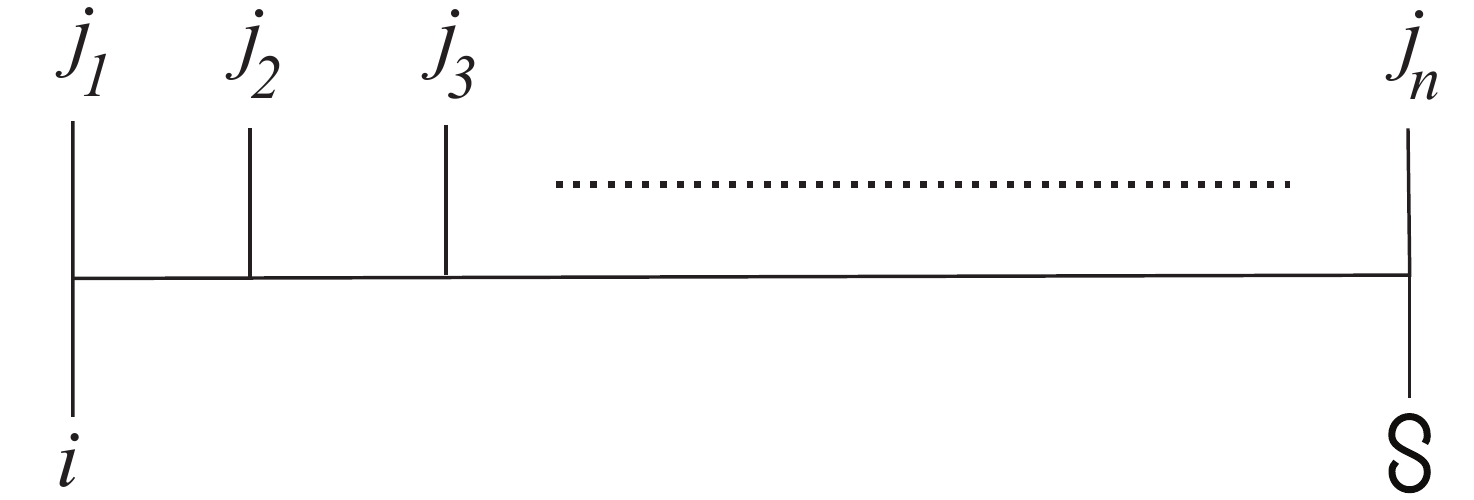}}
         \caption{The simple twisted tree $J_n^\infty$.}
         \label{simple-infty-tree-fig}
\end{figure}

Consider an algebraically canceling pair of clean $\pm 1$-twisted Whitney disks $W_{J_n}$ and $W'_{J_n}$, whose simple $\iinfty$-trees $+ J_n^\iinfty$ and $-J_n^\iinfty$ are as in
Figure~\ref{simple-infty-tree-fig}, using the notation $J_n=(\cdots ((i,j_1),j_2),\cdots , j_n)$. Each trivalent vertex corresponds to a Whitney disk, and we will work from left to right, starting with the order one Whitney disks $W_{(i,j_1)}$ and $W'_{(i,j_1)}$,
banding together Whitney disks of the same order from the two trees, while only creating new unpaired intersections of order greater than $2n$. At the last step, $W_{J_n}$ and $W'_{J_n}$ will be banded together into a single framed clean Whitney disk, providing the desired geometric cancellation. (The reason for working with \emph{simple} trees is that the construction for achieving geometric cancellation requires \emph{connected}
surfaces for certain steps. For instance, the following construction only gets started because the left most trivalent vertices
of an algebraically canceling pair of simple trees correspond to Whitney disks which pair the connected order zero surfaces $D_i$ and $D_{j_1}$.)

\begin{figure}[h]
\centerline{\includegraphics[width=100mm]{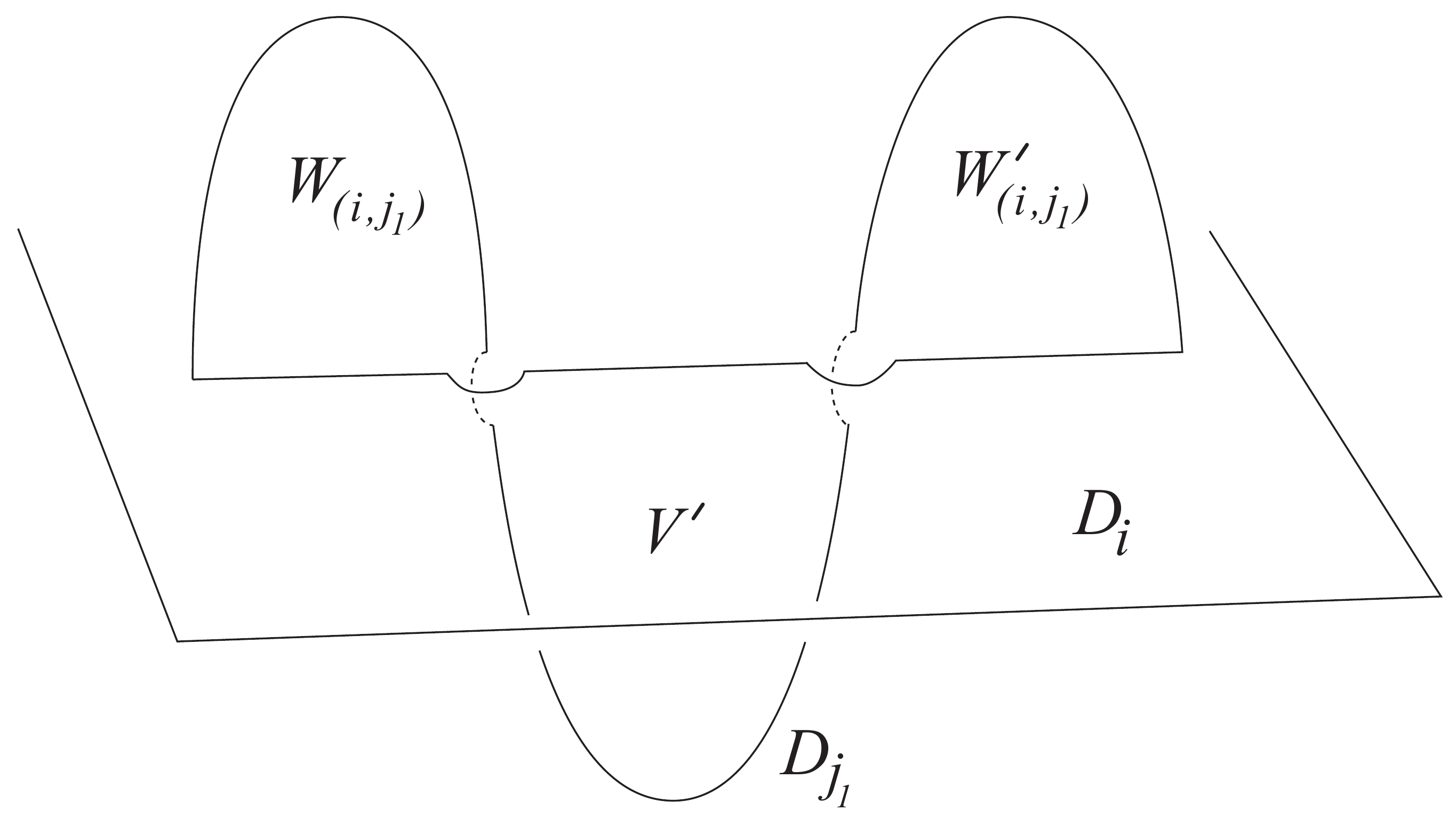}}
         \caption{The Whitney disks $W_{(i,j_1)}$, $W'_{(i,j_1)}$, and $V'$ are 
         banded together to form the Whitney disk
         $W''_{(i,j_1)}$ pairing the outermost pair of intersections between $D_i$ and $D_{j_1}$.
         In the cases $n>1$, the interior of $W''_{(i,j_1)}$ contains two pairs of canceling intersections with $D_{j_2}$ (which are not shown), and supports the sub-towers consisting of the rest of the higher-order Whitney disks (that were supported by $W_{(i,j_1)}$ and $W'_{(i,j_1)}$) corresponding to the trivalent vertices in both trees
$\pm J_n^\infty$.}
         \label{bandedWdisksWithTwistsNoHigherInts}
\end{figure}

To start the construction consider the Whitney disks $W_{(i,j_1)}$ and $W'_{(i,j_1)}$, pairing intersections between the order zero immersed disks $D_i$ and $D_{j_1}$. Let $V$ be another Whitney disk for a canceling pair consisting of one point from each of the points paired by
$W_{(i,j_1)}$ and $W'_{(i,j_1)}$. Figure~\ref{bandedWdisksWithTwistsNoHigherInts} illustrates how a parallel copy $V'$ of $V$ can be banded together
with $W_{(i,j_1)}$ and $W'_{(i,j_1)}$ to form a Whitney disk $W''_{(i,j_1)}$ for the remaining canceling pair. The twisting of $W''_{(i,j_1)}$ is the sum of the twistings on $W_{(i,j_1)}$, $W'_{(i,j_1)}$, and $V$; so $W''_{(i,j_1)}$ is framed if $V$ is framed, since both $W_{(i,j_1)}$ and $W'_{(i,j_1)}$ are framed for $n>1$ (and in the $n=1$ case $W_{(i,j_1)}=W_{J_n}$ and $W'_{(i,j_1)}=W'_{J_n}$ contribute canceling $\pm 1$ twistings). If $V$ is both framed and clean, then the result of
replacing $W_{(i,j_1)}$ and $W'_{(i,j_1)}$ by $V$ and $W''_{(i,j_1)}$ preserves the order of $\cW$ and creates no new intersections.

So if $n=1$, then $W_{J_n}$ and $W'_{J_n}$ have been geometrically canceled, meaning that their corresponding $\iinfty$-trees have been eliminated from $t(\cW)$ without creating any new unpaired order $2n$ intersections or new twisted order $n$ Whitney disks.

The next step shows how $V$ can be arranged to be framed and clean, at the cost of only creating intersections of order greater than $2n$: Any twisting $\omega(V)$ can be killed by boundary twisting $V$ into $D_{j_1}$.  Then, using the construction shown in Figure~\ref{bandedWdisksWithPushDown}, any interior intersection between $V$
and any $K$-sheet (e.g.~an intersection with $D_{j_1}$ from boundary-twisting) can be pushed down into $D_i$ and paired by a thin Whitney disk $W_{(K,i)}$, which in turn has intersections with the $D_{j_1}$-sheet that can be paired by a Whitney disk
$W_{K_1}:=W_{((K,i),j_1)}$ made from a Whitney-parallel copy of $W_{(i,j_1)}$. 
Now, parallel copies of the Whitney disks from the sub-tower supported by $W_{(i,j_1)}$ can be used to build a sub-tower on $W_{K_1}$:
Using the notation $K_{r+1}=(K_r,i)$,
for $r=1,2,3,\ldots n$, the Whitney disk $W_{K_{r+1}}$ is built from a Whitney-parallel copy of $W_{J_r}$, and pairs intersections between $W_{K_r}$ and $j_r$. Note that the order of each $W_{K_r}$ is at least $r$.
The top order $W_{K_{n+1}}$ inherits the $\pm 1$-twisting from $W_{J_n}$, and has a single interior intersection with tree $\langle K_{n+1}, J_n \rangle$ which is of order at least $2n+1$. 

\begin{figure}[h]
\centerline{\includegraphics[width=100mm]{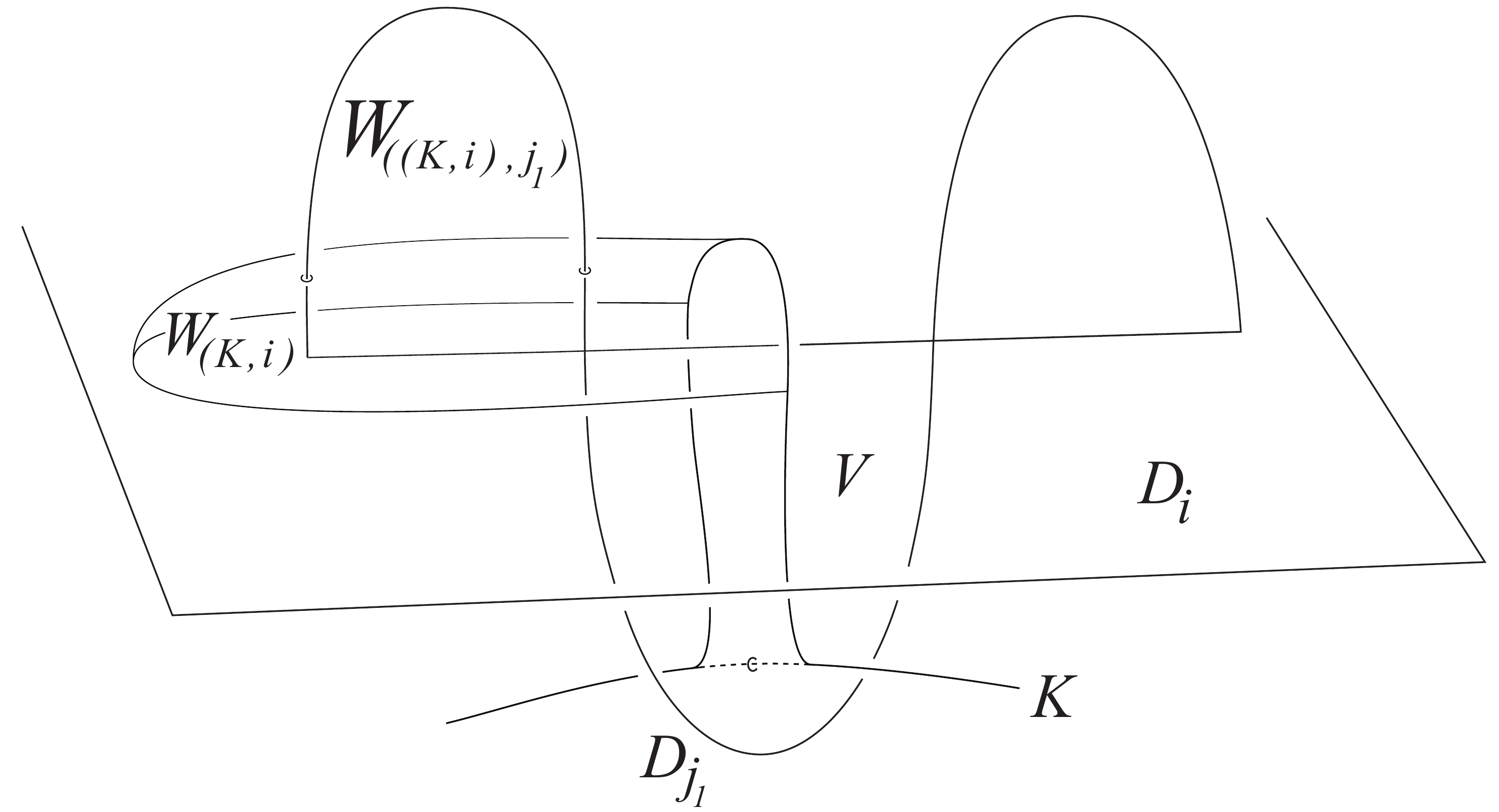}}
         \caption{}
         \label{bandedWdisksWithPushDown}
\end{figure}

For multiple intersections between $V$ and various $K$-sheets this part of the construction can be carried out simultaneously using nested parallel copies of the thin Whitney disk in 
Figure~\ref{bandedWdisksWithPushDown} and more Whitney-parallel copies of the sub-towers described in the previous paragraph.

The result of the construction so far is that the left-most trivalent vertices of the trees $+ J_n^\iinfty$ and $-J_n^\iinfty$ now correspond to the \emph{same} order $1$ Whitney disk $W''_{J_1}=W''_{(i,j_1)}$, at the cost of having created (after splitting-off) a clean twisted Whitney disk of order at least $n+1$, and an unpaired intersection of order at least $2n+1$. In particular, this completes the proof for the case $n=1$.


For the cases $n>1$, observe that since $W''_{J_1}$ is \emph{connected}, this construction can be repeated, with $W''_{J_1}$ playing the role of $D_i$, and $D_{j_2}$ playing the role of $D_{j_1}$, to get a single order $2$ Whitney disk $W''_{J_2}$ which corresponds to the second trivalent vertices
from the left in both trees $+ J_n^\iinfty$ and $-J_n^\iinfty$. By iterating the construction, eventually we band together $W_{J_n}$ and $W'_{J_n}$ into a single framed clean Whitney disk at the last step, having created only clean twisted Whitney disks of order at least $n+1$, and unpaired intersections of order at least $2n+1$.

\subsection{The geometric twisted IHX relation}\label{subsec:twistedIHX}

The proof of Theorem~\ref{thm:twisted-order-raising-on-A} is completed by the following lemma which describes a
twisted IHX construction on the intersection tree $t(\cW)$ of a twisted Whitney tower $\cW$. This geometric move is
based on the framed version given in Lemma~{7.2}
of \cite{S1}.

\begin{lem}\label{lem:twistedIHX}
Any split twisted Whitney tower $\cW$ containing a clean $+1$-twisted Whitney disk with signed $\iinfty$-tree
$+I^\iinfty$ can be modified (in a neighborhood of the Whitney disks and local order zero sheets corresponding to $I^\iinfty$)
to a twisted Whitney tower $\cW'$ such that $t(\cW')$ differs from $t(\cW)$ exactly by replacing
$+I^\iinfty$ with the signed trees $+H^\iinfty$, $+X^\iinfty$, and $-\langle H,X \rangle$, where $+I-H+X$ is a Jacobi relator.

Similarly, a clean $-1$-twisted Whitney disk with $\iinfty$-tree $-I^\iinfty$ in $t(\cW)$
can be replaced by $-H^\iinfty$, $-X^\iinfty$, and $+\langle H,X \rangle$ in $t(\cW')$.
\end{lem}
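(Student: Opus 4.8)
The plan is to reduce the twisted IHX construction to the framed ``Whitney move IHX'' construction of Lemma~7.2 of \cite{S1}, which already produces the tree-level IHX relation $+I \rightsquigarrow +H - X$ (so that, for an unpaired intersection, one tree is traded for three trees summing to an IHX relator). The key observation is that the framed construction is carried out entirely inside a regular neighborhood of the Whitney disks and local order zero sheets corresponding to $I$, and that construction is \emph{local and compatible with normal framings}: the Whitney move, the finger moves, and the re-layering of Whitney disks it uses can all be performed so as to carry a prescribed nowhere-vanishing normal section along. Concretely, I would start with the clean $+1$-twisted Whitney disk $W_I$ carrying $+I^\iinfty$, and first extend its Whitney section (the section which fails to close up, with framing obstruction $+1$) over a collar; then I would run the framed Whitney move IHX construction of \cite{S1} on the underlying framed picture, keeping track of how the single ``defect'' in the normal section is distributed among the new Whitney disks.

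The main steps, in order, would be: (1) Set up the local model: the clean $+1$-twisted $W_I$ sits in a standard neighborhood, with its two paired sheets and the lower-order Whitney disks recorded by the subtree structure of $I$; record the normal section and its single zero. (2) Apply the \cite{S1} Whitney move IHX construction to the framed data underlying this picture. In the framed case this replaces the Whitney disk $W_I$ (thought of as carrying an unpaired intersection with tree $I$) by two Whitney disks $W_H$, $W_X$ together with one new intersection point $p$ with $t_p = \langle H, X\rangle$; the shapes $H$, $X$ are exactly those of a Jacobi relator $I - H + X$. (3) Track the twisting: because the construction is supported in a neighborhood in which the normal section is standard away from its single zero, and because the Whitney move and banding operations used carry the section along without introducing new zeros, the one unit of framing obstruction on $W_I$ is inherited by exactly one of the two output Whitney disks and can in fact be spread so that $W_H$ and $W_X$ each carry $+1$ twisting — equivalently, one checks that the relative Euler numbers are additive across the banding/re-layering moves, matching the algebraic identity $q(I) = q(H) + q(X) - \langle H, X\rangle$ from Remark~\ref{rem:quadratic-form}. (4) Split (via Lemma~\ref{lem:split-w-tower}) so that $W_H$, $W_X$ are clean $+1$-twisted, obtaining $\cW'$ with $t(\cW')$ differing from $t(\cW)$ by replacing $+I^\iinfty$ with $+H^\iinfty$, $+X^\iinfty$, $-\langle H,X\rangle$. (5) The $-1$-twisted case follows by applying the same construction to the mirror picture (or by reversing the relevant orientation), which negates all the framing obstructions and the sign of the new intersection, giving $-H^\iinfty$, $-X^\iinfty$, $+\langle H,X\rangle$.

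The main obstacle I expect is step (3): carefully verifying that the framing obstruction behaves additively and can be allocated as claimed. In the framed construction of \cite{S1} there is no framing to track, so one has to re-examine each elementary move (finger move, Whitney move, boundary- and interior-twist bookkeeping, and the banding of parallel copies of Whitney disks) and confirm that it either preserves the normal section or changes the relative Euler number by a controlled amount, so that the total over the construction is exactly the predicted $+1$ on each of $W_H$ and $W_X$ and $0$ elsewhere (with the $-\langle H, X\rangle$ intersection being genuinely framed-neutral). The cleanest way to organize this is to invoke the algebra of Remark~\ref{rem:quadratic-form}: since $q$ is the \emph{universal} symmetric quadratic refinement and the geometric moves are exactly the moves realizing the bilinear form $\langle\cdot,\cdot\rangle$ and its quadratic refinement, the only consistent allocation of twistings is the one dictated by expanding $q(I - H + X) = 0$. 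I would present the geometric construction explicitly but lean on this algebraic constraint to pin down the twisting values, rather than grinding through every Euler-number computation by hand. A secondary, more routine point is making sure the construction stays in a neighborhood disjoint from the rest of $\cW$ so that no other trees in $t(\cW)$ are affected — this is immediate from the cleanness of $W_I$ and the locality of the \cite{S1} construction.
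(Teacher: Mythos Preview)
Your overall strategy --- reduce to the framed Whitney move IHX construction of \cite{S1} and track what happens to the twisting --- is the same as the paper's. But steps (2) and (3) contain a real gap that the paper's argument closes with a single clean geometric observation you are missing.

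First, your step (2) misdescribes the framed construction: in the framed IHX Whitney move of \cite{S1}, the output is two Whitney disks $W_H$ and $W_X$ (carrying the $H$- and $X$-subtowers), and there is \emph{no} extra intersection point with tree $\langle H,X\rangle$. The $W_H$ and $W_X$ are Whitney-parallel copies of the original $W_I$, and in the framed case parallel copies are disjoint. The $-\langle H,X\rangle$ term is not present in the framed picture; it arises only from the twisting.

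Second, and more seriously, your step (3) never gives a geometric mechanism for why \emph{both} $W_H$ and $W_X$ end up $+1$-twisted. You say the $+1$ obstruction is ``inherited by exactly one'' output disk and then ``can in fact be spread'' to both --- these are contradictory, and your proposed resolution is to invoke the algebraic identity $q(I-H+X)=0$ from Remark~\ref{rem:quadratic-form} to pin down the answer. That is circular: the algebraic relation is a consequence of the geometry, not a substitute for it.

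The paper's argument is simply this: in the construction of \cite{S1}, $W_H$ and $W_X$ are both Whitney-parallel copies of $W_I$. A parallel copy of a $+1$-twisted disk is itself $+1$-twisted (twisting is a self-intersection number and is independent of orientation), so $W_H$ and $W_X$ each inherit $+1$ twisting. Moreover, two Whitney-parallel copies of a $+1$-twisted disk intersect each other transversely in exactly one point --- that is precisely what $\omega(W_I)=+1$ means. To preserve tree orientations, $W_H$ keeps the orientation of $W_I$ while $W_X$ gets the opposite orientation, so this single intersection has sign $-1$, giving the $-\langle H,X\rangle$ term. No section-tracking, additivity bookkeeping, or appeal to the quadratic-refinement algebra is needed. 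You should replace your step (3) with this observation.
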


\begin{proof}
Before describing how to adapt the construction and notation of
\cite{S1} to give a detailed proof of Lemma~\ref{lem:twistedIHX}, we explain why the framed geometric relation
$+I=+H-X$ leads to the twisted relation $+I^\iinfty=+H^\iinfty+X^\iinfty-\langle H,X \rangle$. In the framed case, a Whitney disk
with tree $I$ is replaced by Whitney disks with trees $H$ and $X$, such that the new Whitney disks are parallel
copies of the original using the Whitney framing, and inherit the framing of the original. In order to preserve the trivalent
vertex orientations of the trees, the orientation of the H-Whitney disk is the same as the original 
I-Whitney disk, and the
orientation of the X-Whitney disk is the opposite of the I-Whitney disk. Now, if the original I-Whitney disk was 
$+1$-twisted,
then both the H- and X-Whitney disks will inherit this same $+1$-twisting, because the twisting -- which is a self-intersection number -- is independent of the Whitney disk orientation. 
The H- and X-Whitney disks will also intersect in a single point with sign $-1$,
since they inherited opposite orientations from the I-Whitney disk. Thus, (after splitting) a twisted Whitney tower can be modified so that a $+I^\iinfty$ is replaced by exactly $+H^\iinfty+X^\iinfty-\langle H,X \rangle$ in the intersection forest. Similarly, a $-I^\iinfty$ can be replaced exactly by $-H^\iinfty-X^\iinfty+\langle H,X \rangle$.

The framed IHX Whitney move construction is described in detail in \cite{S1} (over four pages, including six figures).
We describe here how to adapt that construction to the present twisted case, including the relevant modification of notation.
Orientation details are not given in \cite{S1}, but all that needs to be checked is that the X-Whitney disk inherits the opposite
orientation as the H-Whitney disk (given that the tree orientations are preserved, and using our negative-corner orientation convention in \ref{subsec:w-tower-orientations} above). In Lemma~{7.2}
of \cite{S1}, the ``split sub-tower $\cW_p$''  refers to the Whitney disks and order zero sheets containing the tree $t_p$ of an unpaired intersection $p$ in a split Whitney tower $\cW$. In the current setting, a clean $+1$-twisted Whitney disk $W$ plays the role of $p$, and the construction will modify $\cW$ in a neighborhood of the Whitney disks and order zero sheets containing the $\iinfty$-tree associated to $W$.
In the notation of Figure~18 of \cite{S1}, the sub-tree of the I-tree denoted by $L$ contains $p$, so to interpret the entire construction in our case only requires the understanding that this sub-tree contains the $\iinfty$-label sitting in $W$. (Note that in Figure~18 of \cite{S1} the labels $I$, $J$, $K$ and $L$ denote \emph{sub}-trees, and in particular the $I$-labeled sub-tree should not be confused with the ``I-tree'' in the IHX relation.)

In the case where the $L$-labeled sub-tree is order zero, then $L$ is just the $\iinfty$-label, and the upper trivalent vertex of the I-tree in Figure~18 of \cite{S1} corresponds to the clean $+1$-twisted $W$, with $\iinfty$-tree $((I,J),K)^\iinfty$. Then the construction, which starts by performing a Whitney move on the framed Whitney disk
$W_{(I,J)}$ corresponding to the lower trivalent vertex of the I-tree, yields the $+1$-twisted
H- and X-Whitney disks as discussed in the first paragraph of this proof, with $\iinfty$-trees $(I,(J,K))^\iinfty$ and $(J,(I,K))^\iinfty$, and non-$\iinfty$ tree $\langle (I,(J,K)),(J,(I,K))\rangle$ corresponding to the resulting unpaired intersection (created by taking Whitney-parallel copies of the twisted $W$ to form the H- and X-Whitney disks).

In the case where the $L$-labeled sub-tree is order $1$ or greater, then the upper trivalent vertex of the
I-tree in Figure~18 of \cite{S1} corresponds to a framed Whitney disk, and Whitney-parallel copies of this framed Whitney disk and the other Whitney disks corresponding to
the $L$-labeled sub-tree are also used to construct the sub-towers containing the $+1$-twisted Whitney disks with H and X $\iinfty$-trees (which will again will lead to a single unpaired intersection as before).
\end{proof}

\subsection{Twisted even order and framed odd order Whitney towers}\label{subsec:boundary-twisted-IHX-lemma}
The following lemma implies that 
$\bW^\iinfty_{2n}\subset \bW_{2n-1}$, a fact that will be used later in the proof of Theorem~\ref{thm:exact-sequence}:
\begin{lem}\label{lem:boundary-twisted-IHX}
If a collection $A$ of properly immersed surfaces in a simply connected $4$--manifold supports an order $2n$ \emph{twisted} Whitney tower, then $A$ is homotopic (rel $\partial$) to 
$A'$ which supports an order $2n-1$ \emph{framed} Whitney tower.
\end{lem}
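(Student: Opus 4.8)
The plan is to split $\cW$ and then convert every twisted Whitney disk into a framed one by combining the geometric twisted IHX construction (Lemma~\ref{lem:twistedIHX}) with boundary-twists, arranging that all intersection points created along the way have order at least $2n-1$, which is harmless for an order $2n-1$ framed Whitney tower.

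By Lemma~\ref{lem:split-w-tower} we may first replace $A$ (up to homotopy rel $\partial$) so that $\cW$ is split; in particular every non-trivially twisted Whitney disk $W_J$ is clean and $\pm1$-twisted, and since $\cW$ has order $2n$ each such $W_J$ has order $k:=\mathrm{ord}(J)\geq n$. Next, for each clean $\pm1$-twisted Whitney disk of order $k$ we apply Lemma~\ref{lem:twistedIHX} repeatedly, exactly as in the even-order case of the proof of Theorem~\ref{thm:twisted-order-raising-on-A}, until all twisted Whitney disks have \emph{simple} $\iinfty$-trees $\pm J_k^\iinfty$, with $J_k=(\cdots((i,j_1),j_2),\cdots,j_k)$ as in Figure~\ref{simple-infty-tree-fig}. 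Each such move only adds to the intersection forest a single unpaired intersection with non-$\iinfty$ tree $\langle H,X\rangle$ of order $2k\geq 2n$, so it creates no unpaired intersection of order less than $2n$ and leaves all framed Whitney disks framed; after re-splitting we again have all twisted Whitney disks clean.

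For the key step, consider a clean $\pm1$-twisted Whitney disk $W_{J_k}$ with simple $\iinfty$-tree. Its top trivalent vertex corresponds to a Whitney disk pairing the order $(k-1)$ Whitney disk $W_{J_{k-1}}$, where $J_{k-1}=(\cdots((i,j_1),\cdots),j_{k-1})$, with the order zero sheet $D_{j_k}$. Boundary-twisting $W_{J_k}$ into $W_{J_{k-1}}$ (Figure~\ref{boundary-twist-and-section-fig}) changes $\omega(W_{J_k})$ by $\mp1$ and hence makes $W_{J_k}$ framed, at the cost of creating a single unpaired intersection $p\in W_{J_k}\cap W_{J_{k-1}}$ with $t_p=\langle J_k,J_{k-1}\rangle$ of order $k+(k-1)=2k-1\geq 2n-1$. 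Performing this for every twisted Whitney disk produces a Whitney tower all of whose Whitney disks are framed and whose only new unpaired intersections have order $\geq 2n-1$; since $\cW$ had no unpaired intersections of order $<2n$ to begin with, the result is an order $2n-1$ framed Whitney tower on a surface homotopic (rel $\partial$) to $A$.

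I expect the main obstacle to be the bookkeeping in the middle step: verifying that the twisted IHX moves can genuinely drive every $\iinfty$-tree to simple form --- which, as in Theorem~\ref{thm:twisted-order-raising-on-A}, rests on the fact that any tree is IHX-equivalent to a combination of right-normed trees through single IHX moves, each realized geometrically by Lemma~\ref{lem:twistedIHX} --- and that no step introduces an unpaired intersection of order below $2n$, which holds because every error term $\langle H,X\rangle$ has order $2k\geq 2n$. (For $n\leq 2$ the twisted IHX is not even needed: one may boundary-twist any twisted Whitney disk of order $k\geq n$ directly into its larger paired sheet, whose order is at least $\lceil(k-1)/2\rceil$, giving an intersection of order $\geq k+\lceil(k-1)/2\rceil\geq 2n-1$.) Finally, none of the moves disturbs $\partial A$: splitting and the twisted IHX construction alter $A$ only by finger moves, Whitney moves and isotopies, and boundary-twisting does not move the order zero surfaces at all.
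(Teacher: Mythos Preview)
Your proof is correct and follows essentially the same strategy as the paper's: use the twisted geometric IHX construction (Lemma~\ref{lem:twistedIHX}) to arrange that each twisted Whitney disk has a boundary arc on an order zero surface, then boundary-twist into the other supporting sheet (of order $\geq n-1$) to trade the twist for an unpaired intersection of order $\geq 2n-1$. The only differences are cosmetic: you push the IHX moves further than necessary, driving each $\iinfty$-tree all the way to simple form rather than stopping as soon as $J=(I,j)$ for some singleton $j$; and you split first so that all twistings are $\pm1$, whereas the paper works directly with arbitrary $\omega(W_J)$ and performs $|\omega(W_J)|$ boundary-twists.
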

\begin{proof}
Let $\cW$ be any order $2n$ twisted Whitney tower $\cW$ supported by $A$. If $\cW$ contains no order $n$ non-trivially twisted Whitney disks, 
then $\cW$ is an order $2n$ framed Whitney tower, hence also is an order $2n-1$ framed Whitney tower. If $\cW$ does contain order $n$ non-trivially twisted Whitney disks, they can be eliminated at the cost of only creating intersections of order at least $2n-1$ as follows:

Consider an order $n$ twisted Whitney disk $W_J\subset\cW$ with twisting $\omega(W_J)=k\in\Z$. If $W_J$ pairs intersections between an order zero surface $A_i$ and an order $n-1$ Whitney disk $W_I$ then $J=(i,I)$, and by performing $|k|$ boundary-twists of $W_J$ into $W_I$, $W_J$ can be made to be framed at the cost of only creating $|k|$ order $2n-1$ intersections, whose corresponding trees are of the form $\langle\,(i,I),I\,\rangle$.

If $W_J$ pairs intersections between two Whitney disks, then by applying the twisted geometric IHX move of
Lemma~\ref{lem:twistedIHX} (as many times as needed), $W_J$ can be replaced by (many) order $n$ twisted Whitney disks
each having a boundary arc on an order zero surface as in the previous case, at the cost of only creating unpaired intersections of order $2n$, each of which is an error term in Lemma~\ref{lem:twistedIHX}.   
\end{proof}

\subsection{Obstruction theory for the reduced tree groups}\label{sec:reduced-groups-obstruction-theory}
Using the twisted IHX Lemma~\ref{lem:twistedIHX}, this section strengthens the obstruction theory for framed Whitney towers described in \cite{ST2} by showing that the vanishing
of $\tau_{2n-1}(\cW)$ in the reduced group $\widetilde{\cT}_{2n-1}:=\cT_{2n-1}/\im(\Delta_{2n-1})$ is sufficient for the promotion of $\cW$ to a Whitney tower of order 
$2n$. This means that $\cT_n$ can be replaced everywhere by 
$\widetilde{\cT}_n$ (with $\widetilde{\cT}_{2n}:=\cT_{2n}$) throughout Section~\ref{sec:realization-maps}, showing that the
realization maps $\widetilde{R}_n:\widetilde{\cT}_n\to\W_n$ are well-defined epimorphisms.


Recall from the introduction the \emph{framing relations} which in odd orders give the reduced group $\widetilde\cT_{2n-1}:={\cT}_{2n-1}/\im(\Delta_{2n-1})$ with $\Delta_{2n-1}$ given as follows.
\begin{defn}\label{def:Delta}
The map $\Delta_{2n-1}:\cT_{n-1}\rightarrow \cT_{2n-1}$ is defined for generators $t\in\cT_{n-1}$ by
$$\Delta (t):=\sum_{v\in t} \langle \ell(v),(T_v(t),T_v(t))\rangle$$
where the sum is over all univalent vertices $v$ of $t$, with $T_v(t)$ denoting the rooted tree gotten by replacing $v$ with a root,
and $\ell(v)$ the original label of $v$. 
\end{defn}
That $\Delta_{2n-1}$ is a well-defined homomorphism is clear since AS and IHX relations go to ``doubled'' relations. 
See Figure~\ref{fig:Delta trees} for explicit illustrations of $\Delta_1$ and $\Delta_3$.
\begin{figure}[h]
\centerline{\includegraphics[width=90mm]{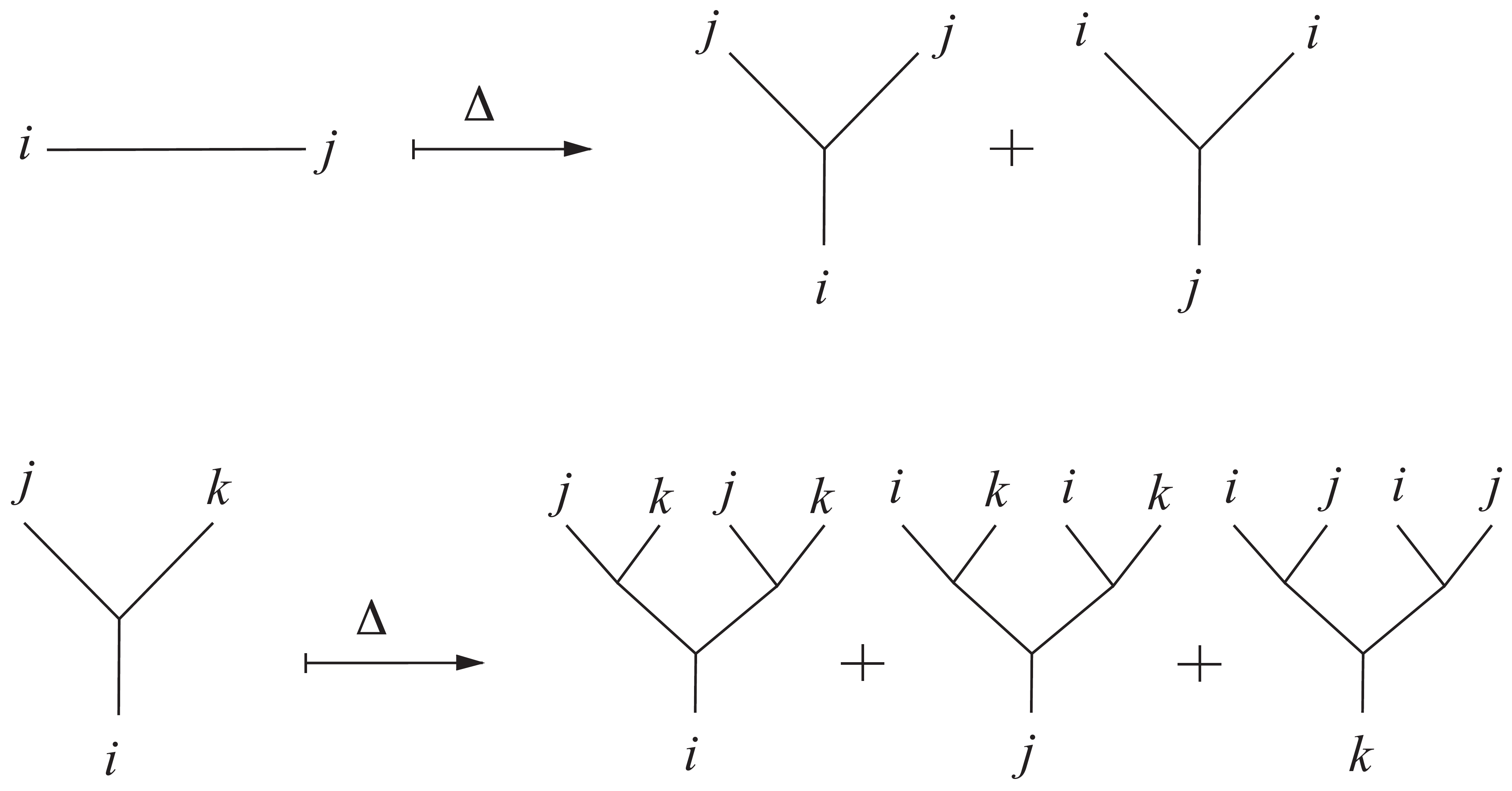}}
         \caption{The map $\Delta_{2n-1}:\cT_{n-1}\rightarrow \cT_{2n-1}$ in the cases $n=1$ and $n=2$.}
         \label{fig:Delta trees}
\end{figure}

The following theorem strengthens Theorem~\ref{thm:framed-order-raising-on-A} in Section~\ref{sec:w-towers}.
\begin{thm}\label{thm:framed-order-raising-mod-Delta}
If a collection $A$ of properly immersed surfaces in a simply connected $4$--manifold supports a framed
 Whitney tower $\cW$ of order $(2n-1)$ with $\tau_{2n-1}(\cW)\in\im(\Delta_{2n-1})$, then $A$ is regularly homotopic (rel $\partial$) to  $A'$ which supports a framed Whitney tower of order $2n$.
\end{thm}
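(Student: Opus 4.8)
The plan is to reduce the theorem to the following \emph{framing-relation realization}: for any tree $t$ of order $n-1$, the collection $A$ is regularly homotopic (rel $\partial$) to a collection $A''$ supporting a \emph{framed} Whitney tower $\cW''$ of order $2n-1$ whose intersection forest is obtained from that of $\cW$ by adjoining a single copy of $\Delta_{2n-1}(t)$ --- that is, the trees $\tree{\ell(v)}{T_v(t)}{T_v(t)}$, one for each univalent vertex $v$ of $t$ --- together with finitely many trees of order $\geq 2n$. Granting this, write $\tau_{2n-1}(\cW)=\Delta_{2n-1}(g)$ with $g=\sum_k c_k t_k\in\cT_{n-1}$, and apply the framing-relation realization once for each generator $t_k$ with $c_k$ odd, each time inside a fresh $4$--ball disjoint from the previous constructions so that the effects on the intersection forests simply add up. Since every element of $\im(\Delta_{2n-1})$ is $2$-torsion (Definition~\ref{def:reduced-tree-group}), we have $\sum_{c_k\ \mathrm{odd}}\Delta_{2n-1}(t_k)=\Delta_{2n-1}(g)$ in $\cT_{2n-1}$, while the adjoined trees of order $\geq 2n$ do not affect $\tau_{2n-1}$; hence the resulting framed order $2n-1$ Whitney tower $\cW''$ has
$$
\tau_{2n-1}(\cW'')=\Delta_{2n-1}(g)+\Delta_{2n-1}(g)=2\,\Delta_{2n-1}(g)=0\in\cT_{2n-1}.
$$
Theorem~\ref{thm:framed-order-raising-on-A} then promotes $\cW''$ to a framed Whitney tower of order $2n$, as desired.

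For the framing-relation realization I would imitate, one order higher, the twisted-to-framed conversion from the proof of Lemma~\ref{lem:boundary-twisted-IHX}. Fix a univalent vertex $v_0$ of $t$, put $K:=(\ell(v_0),T_{v_0}(t))$ --- a rooted tree of order $n$ --- and let $K^\iinfty$ be the corresponding $\iinfty$--tree, obtained by labelling the root of $K$ with the twist symbol; note that deleting from $K^\iinfty$ the twist-labelled univalent vertex together with its incident trivalent vertex recovers $t$. By iterated finger moves (the boundary-fixing version of the realization procedure of Lemma~\ref{lem:realization-of-geometric-trees}, carried out inside a small ball) we may arrange that $A$ bounds $\cW$ together with a clean \emph{framed} order $n$ Whitney disk $W_K$, disjoint from the rest of $\cW$. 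Split $W_K$ by a twisted finger move (Lemma~\ref{lem:split-w-tower}) into clean Whitney disks $W^+$ and $W^-$ with twistings $+1$ and $-1$, so the intersection forest gains $+K^\iinfty$ and $-K^\iinfty$. Boundary-twist $W^-$ once into the order $n-1$ Whitney disk on which part of its boundary lies: by Lemma~\ref{lem:boundary-twisted-IHX} this makes $W^-$ framed and replaces $-K^\iinfty$ by a single unpaired order $2n-1$ intersection with tree $\langle(\ell(v_0),T_{v_0}(t)),T_{v_0}(t)\rangle=\tree{\ell(v_0)}{T_{v_0}(t)}{T_{v_0}(t)}$, the $v_0$--term of $\Delta_{2n-1}(t)$. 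Then apply the geometric twisted IHX move of Lemma~\ref{lem:twistedIHX} to $W^+$ repeatedly (possibly not at all, when $n=1$), exactly as in the proof of Lemma~\ref{lem:boundary-twisted-IHX}, routing the moves so that $W^+$ is replaced by clean $+1$--twisted Whitney disks whose $\iinfty$--trees are \emph{simple}, of the form $(\ell,I_\ell)^\iinfty$ as $\ell$ runs over the remaining univalent labels of $t$; these moves introduce only non-$\iinfty$ trees of order $2n$. Finally boundary-twist each of these simple twisted Whitney disks once into its order $n-1$ Whitney disk, re-framing it and --- using $\langle(\ell,I_\ell),I_\ell\rangle=\langle\ell,(I_\ell,I_\ell)\rangle$ and the identification $I_\ell=T_\ell(t)$ --- adjoining the remaining trees $\tree{\ell}{T_\ell(t)}{T_\ell(t)}$ of $\Delta_{2n-1}(t)$.

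The delicate step, and the one I expect to be the main obstacle, is the combinatorial bookkeeping: one must check that the twisted IHX moves on $W^+$ can be routed to expose each of the $n$ remaining univalent labels of $t$ exactly once modulo $2$, so that the total new contribution to the intersection forest is precisely $\Delta_{2n-1}(t)$ modulo trees of order $2n$. This is the geometric content of the definition of $\Delta_{2n-1}$ --- that converting an order $n$ twisted Whitney disk to a framed Whitney tower symmetrizes its $\iinfty$--tree over all univalent vertices --- and verifying it in general will require tracking the twisted IHX relator through the construction of \cite{S1} as in Lemma~\ref{lem:twistedIHX}. Two observations keep this manageable. First, each tree $\tree{\ell}{T_\ell(t)}{T_\ell(t)}$ is $2$-torsion, so all sign ambiguities (orientations of the auxiliary Whitney disks, the sign of an intersection created by a boundary-twist, and the direction in which each twisting is reduced) are irrelevant; only the multiset of trees produced, with multiplicities mod $2$, matters. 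Second, the remaining points are routine: every move used (finger moves, the twisted finger move, twisted IHX, boundary-twists) is supported in the small ball and is either a regular homotopy of the order $0$ disks or leaves them fixed, so $A$ is genuinely regularly homotoped rel $\partial$; and $\cW''$ is a \emph{framed} tower of order $2n-1$ because every twisted Whitney disk created along the way is re-framed by a boundary-twist before the construction ends, while the only new unpaired intersections have order $2n-1$ (the trees of $\Delta_{2n-1}(t)$) or order $2n$ (the twisted IHX error terms), neither below the order of the tower.
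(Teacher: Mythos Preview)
Your approach is essentially the same as the paper's: reduce to geometrically realizing $\Delta_{2n-1}(t)$ in the intersection forest, then invoke Theorem~\ref{thm:framed-order-raising-on-A}. The paper creates a clean framed $W_{(\ell(v_0),T_{v_0}(t))}$ by finger moves, twist-splits it into a $\pm 1$-twisted pair, leaves one copy (already of the form $(\ell,J)^\iinfty$) alone, pushes the $\iinfty$-root of the other outward via repeated twisted IHX until every resulting $\iinfty$-tree has its $\iinfty$-vertex adjacent to a leaf, and then boundary-twists each such $W_{(\ell,J)}$ into the order $n-1$ Whitney disk $W_J$ to produce the terms $\langle\ell,(T_\ell(t),T_\ell(t))\rangle$. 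Your version differs only cosmetically (you boundary-twist $W^-$ first rather than last). A couple of terminological points to clean up: the $\iinfty$-trees produced by the IHX pushes are not \emph{simple} in the paper's sense (right/left-normed); what matters is only that the $\iinfty$-vertex is adjacent to a labeled leaf, so the corresponding Whitney disk has a boundary arc on an order~$0$ surface. Also, index the output by the remaining univalent \emph{vertices} of $t$, not labels, since labels may repeat. The combinatorial bookkeeping you flag is exactly the content of Figure~\ref{fig:Delta-infty-tree-example}: an induction on the order of the subtree containing the $\iinfty$-vertex shows that pushing $\iinfty$ outward produces precisely one $\iinfty$-tree $(\ell(v),T_v(t))^\iinfty$ for each $v\neq v_0$ (plus order $2n$ error terms), so no further tracking through \cite{S1} is needed.
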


\begin{proof}
As discussed above in the outline the proof of Theorem~\ref{thm:twisted-order-raising-on-A} (section~\ref{subsec:twisted-order-raising-thm-proof}), to prove Theorem~\ref{thm:framed-order-raising-mod-Delta} it will suffice to show that the intersection forest $t(\cW)$ can be changed by trees representing any element in
$\im(\Delta_{2n-1})<\cT_{2n-1}$ at the cost of only introducing trees of order greater than or equal to $2n$, so that the order $2n-1$ trees in $t(\cW)$ all occur in algebraically canceling pairs. Note that $\im(\Delta_{2n-1})$ is $2$-torsion by the AS relations, so orientations and signs are not an issue here. As in Section~\ref{sec:proof-twisted-thm}, elements of $t(\cW)$ will be denoted by formal sums, and $\cW$ will not be renamed as modifications are made.

\textbf{The case $n=1$:} Given any order zero tree $\langle i,j \rangle$, create a clean framed Whitney disk 
$W_{( i,j )}$ by performing a finger move between the order zero surfaces $A_i$ and $A_j$. Then use a twisted finger move 
(Figure~\ref{twist-split-Wdisk-fig}) to split $W_{( i,j )}$ into two twisted Whitney disks with associated trees $(i,j)^\iinfty -(i,j)^\iinfty$. 
Now boundary-twist each Whitney disk into a different sheet to recover the framing and add 
\[
\langle i,(i,j) \rangle +  \langle j,(i,j) \rangle = \Delta_1( \langle i,j \rangle )
\]
 to $t(\cW)$. Alternatively, after creating the framed $W_{( i,j )}$, perform an interior twist on $W_{( i,j )}$ to get 
$\omega(W_{( i,j )})=\pm2$, then kill $\omega(W_{( i,j )})$ by two boundary-twists, one into each sheet, again adding $\langle i,(i,j) \rangle +  \langle j,(i,j) \rangle$ to $t(\cW)$.
Note that $\im\Delta_1$ in $\cT_1$ corresponds to the order $1$ FR framing relation of \cite{S3,ST1}.

\begin{figure}[h]
\centerline{\includegraphics[scale=.35]{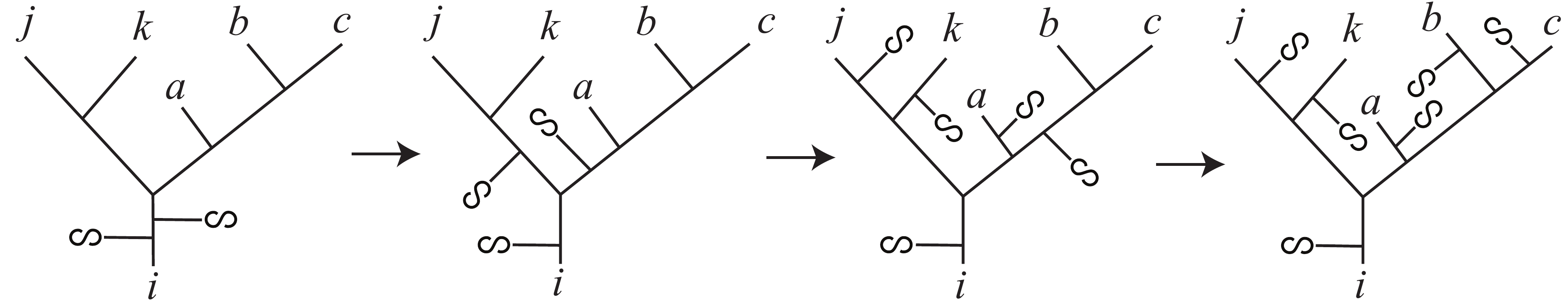}}
         \caption{Multiple $\infty$-roots attached to a tree represent sums (disjoint unions) of trees. On the left: the two trees that result from twist-splitting a clean $W_{( i, (I_1, I_2))}$ in the case $\langle i,(I_1, I_2) \rangle=\langle i, ((j,k),(a,(b,c))) \rangle$. Each arrow indicates an application of a twisted IHX Whitney move, which pushes $\infty$-roots towards the univalent vertices. The right-most sum of trees becomes the image of $\langle i,(I_1, I_2) \rangle$ under $\Delta$ after applying boundary-twists to the associated twisted Whitney disks.}
         \label{fig:Delta-infty-tree-example}
\end{figure}
\textbf{The cases $n>1$:} For any order $n-1$ tree $\langle i,(I_1, I_2) \rangle$, create a clean $W_{( i, (I_1, I_2))}$ by finger moves. (Here we are taking any order $n-1$ tree, choosing an $i$-labeled univalent vertex, and writing it as the inner product 
of the order zero rooted tree $i$ and the remaining order $n-1$ tree.)
Then split $W_{( i, (I_1, I_2))}$ using a twisted finger move to get two twisted Whitney disks each having associated $\iinfty$-tree
$( i, (I_1, I_2))^\iinfty$. 
Leave one of these twisted Whitney disks alone, and to the other apply the twisted geometric IHX Whitney move (Lemma~\ref{lem:twistedIHX} of Section~\ref{sec:proof-twisted-thm}) to replace 
$( i, (I_1, I_2))^\iinfty$ by $( I_1, (I_2, i))^\iinfty+( I_2, (i,I_1))^\iinfty-\langle ( I_1, (I_2, i)),( I_2, (i,I_1))\rangle$ in $t(\cW)$. Note that the 
tree $\langle ( I_1, (I_2, i)),( I_2, (i,I_1))\rangle$ is order $2n$. If $I_1$ and $I_2$ are not both order zero then continue to 
apply the twisted geometric IHX Whitney move (pushing the $\iinfty$-labeled vertices away from the $\iinfty$-labeled vertex that is adjacent to the original $i$-labeled vertex) until the resulting union of trees has all $\iinfty$-labeled vertices adjacent to a univalent vertex (all twisted Whitney disks have a boundary arc on an order zero surface) -- see Figure~\ref{fig:Delta-infty-tree-example} for an example.  Then, boundary-twisting each twisted Whitney disk into the order zero surface recovers the framing on each Whitney disk and the resulting change in $t(\cW)$ is a sum of trees as in the right hand side of the equation in Definition~\ref{def:Delta} representing the image of 
$\langle i,(I_1, I_2) \rangle$ under $\Delta_{2n-1}$, together with trees of order at least $2n$.
\end{proof}

\section{From the twisted to the framed classification}\label{sec:twisted-to-framed-classification}

This section fills in the outline of section~\ref{subsec:intro-untwisting-the-twisted-filtration} by explaining how to derive the classification of the framed Whitney tower filtration from the classification of
the twisted filtration described in section~\ref{subsec:intro-twisted-filtration} of the introduction.
The main tool is Theorem~\ref{thm:exact-sequence}, which relates the
two filtrations and their relevant tree groups in a diagram of exact sequences. The subsequent sections~\ref{subsec:higher-order-SL} and \ref{subsec:higher-order-Arf-in-framed-filtration} show how this result, together with the Levine Conjecture 
and the Milnor invariant-intersection invariant relationship, leads to the framed classification in terms of Milnor invariants, higher-order Sato-Levine invariants and higher-order Arf invariants. 
The proof of Theorem~\ref{thm:exact-sequence} is completed in section~\ref{subsec:proof-thm-exact-sequences}. 

The starting point is the following surprisingly simple relation between the twisted and framed Whitney tower filtrations. Recall that in even orders the reduced groups $\widetilde{\cT}_{2k}$ and realization maps $\widetilde{R}_{2k}$ are by definition equal to $\cT_{2k}$ and $R_{2k}$.

\begin{thm}\label{thm:exact-sequence}
There are commutative diagrams of exact sequences
\[
\xymatrix{ 
0\ar[r] & \widetilde{\cT}_{2k} \ar@{->>}[d]^{\widetilde{R}_{2k}}\ar[r] &  \cT^\iinfty_{2k}\ar@{->>}[d]^{R^\iinfty_{2k}} \ar[r] & \widetilde\cT_{2k-1}\ar@{->>}[d]^{\widetilde{R}_{2k-1}}\ar[r] &  \cT^\iinfty_{2k-1}  \ar@{->>}[d]^{R^\iinfty_{2k-1}}\ar[r] & 0 \\
0\ar[r] & \W_{2k}  \ar[r] &  \W^\iinfty_{2k} \ar[r] & \W_{2k-1}\ar[r] &  \W^\iinfty_{2k-1} \ar[r] & 0
 } \]
where all maps in the bottom row are induced by the identity on the set of links.
Moreover, there are isomorphisms
\[
\Cok ( \cT_{2k} \to  \cT^\iinfty_{2k}) \cong \Z_2 \otimes \sL'_{k+1} \cong \Ker(  \widetilde\cT_{2k-1}  \to  \cT^\iinfty_{2k-1} )
\]
\end {thm}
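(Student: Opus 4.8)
The plan is to prove the statement in three movements: first establish the exact sequences of tree-groups (the top row), then establish the exact sequences of $\W$-groups (the bottom row) and their compatibility, and finally identify the cokernel/kernel with $\z\otimes\sL'_{k+1}$.

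\textbf{The top row of tree-groups.} First I would set up the four maps. The inclusion $\widetilde\cT_{2k}=\cT_{2k}\to\cT^\iinfty_{2k}$ is the obvious one sending a non-$\iinfty$ tree to itself; it is well-defined since the AS and IHX relations in $\cT_{2k}$ are among the defining relations of $\cT^\iinfty_{2k}$. The map $\cT^\iinfty_{2k}\to\widetilde\cT_{2k-1}$ should send a non-$\iinfty$ tree to $0$ and an $\iinfty$-tree $J^\iinfty=\iinfty\!-\!\!\!- J$ (with $J=(i,I)$ of order $k$, so $J$ has a univalent $i$-vertex) to the class of $\langle (i,I),I\rangle=\tree{i}{I}{I}$ in $\widetilde\cT_{2k-1}$; I would need to check this is independent of the choice of $i$-vertex of $J$ modulo framing relations --- this is exactly the content of the geometric conversion in Lemma~\ref{lem:boundary-twisted-IHX} and the fact that $\im\Delta_{2k-1}$ is quotiented out (the symmetry, twisted IHX and interior-twist relations in $\cT^\iinfty_{2k}$ must all map to relations that already hold, or to framing relations, in $\widetilde\cT_{2k-1}$). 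Finally $\widetilde\cT_{2k-1}\to\cT^\iinfty_{2k-1}$ is the quotient map from Theorem~\ref{thm:reduced R}, killing $\im\Delta_{2k-1}$ further down to the boundary-twist relations; it is surjective essentially by definition of $\cT^\iinfty_{2k-1}$. Exactness at the four spots is then a purely combinatorial check: injectivity of $\cT_{2k}\to\cT^\iinfty_{2k}$ follows because a presentation of $\cT^\iinfty_{2k}$ shows the non-$\iinfty$ part is a direct summand-ish piece (more carefully, one uses the interior-twist relation to see that adding $\iinfty$-trees does not introduce new relations among non-$\iinfty$ trees); image = kernel at $\cT^\iinfty_{2k}$ says the non-$\iinfty$ trees are precisely the kernel of the $\iinfty$-label-reading map; and the last piece of exactness is Theorem~\ref{thm:reduced R} combined with the description of $\im\Delta_{2k-1}$ versus the boundary-twist relations.

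\textbf{The bottom row and the ladder.} For the $\W$-sequence, I would use that all maps are induced by the identity on links, together with the inclusions of filtration subsets already recorded in the excerpt: $\bW_{2k}\subseteq\bW^\iinfty_{2k}$, and $\bW^\iinfty_{2k}\subseteq\bW_{2k-1}$ by Lemma~\ref{lem:boundary-twisted-IHX}. The key observation is that two links are Whitney-tower concordant at a given order iff their $\tau$-invariants agree (Corollary~\ref{cor:tau=w-concordance} and Remark~\ref{rem:tau=w-concordance}), so each $\W$-group is a quotient of the corresponding $\cT$-group by the kernel of the realization map, and the vertical maps $\widetilde R$, $R^\iinfty$ are the induced epimorphisms. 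Commutativity of the ladder is then immediate from the fact that the realization maps are defined by realizing $\tau$-values (Lemma~\ref{lem:realization-of-geometric-trees}) and the top-row maps are the algebraic shadows of the geometric operations (forgetting $\iinfty$-labels $\leftrightarrow$ forgetting twisting data; boundary-twisting a half-order Whitney disk into an order-zero sheet). Exactness of the bottom row I would then deduce from exactness of the top row by a diagram chase: surjectivity of the vertical maps plus exactness upstairs gives exactness downstairs at each spot, once one checks that the image of $\ker(\text{vertical})$ behaves correctly --- concretely, $L\in\bW^\iinfty_{2k}$ maps to $0$ in $\W^\iinfty_{2k-1}$ iff it lies in $\bW_{2k}$ up to order $2k+1$ concordance, which is Lemma~\ref{lem:boundary-twisted-IHX} applied in $S^3\times I$.

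\textbf{Identifying the cokernel and kernel.} This is where I expect the main obstacle, and where I would invoke Milnor-invariant bookkeeping. The cokernel $\Cok(\cT_{2k}\to\cT^\iinfty_{2k})$ is the free abelian group on $\iinfty$-trees modulo the symmetry, twisted IHX and interior-twist relations --- equivalently, via $\eta$ (Definition~\ref{def:eta-infty}) and the fact that $\eta_n$ is an isomorphism for $n\not\equiv 2\bmod 4$ while its kernel in the remaining case is computed by Proposition~\ref{prop:kerEta4k-2}, one identifies the $\iinfty$-part of $\cT^\iinfty_{2k}$ with a quotient of $\sL_{k+1}$; the quadratic-refinement picture of Remark~\ref{rem:quadratic-form} makes this precise, and the self-annihilation relations versus antisymmetry relations (the discrepancy between $\sL$ and the quasi-Lie algebra) produce exactly the $\z$-tensor and the ``reduced'' degree-$(k+1)$ piece $\sL'_{k+1}$. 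On the kernel side, $\Ker(\widetilde\cT_{2k-1}\to\cT^\iinfty_{2k-1})$ is the image of the boundary-twist relators inside $\widetilde\cT_{2k-1}=\cT_{2k-1}/\im\Delta_{2k-1}$; since the boundary-twist relators are symmetric trees $\tree{i}{J}{J}$ with $J$ of order $k-1$, this image is a quotient of $\z\otimes\sL_k$, and after accounting for $\im\Delta_{2k-1}$ one again gets $\z\otimes\sL'_{k+1}$. The cleanest route to matching the two sides is to observe that the middle map $\cT^\iinfty_{2k}\to\widetilde\cT_{2k-1}$ restricted to $\iinfty$-trees realizes both identifications simultaneously (it sends $J^\iinfty$ to the boundary-twist relator $\tree{i}{I}{I}$ when $J=(i,I)$), so the exactness of the top row at $\widetilde\cT_{2k-1}$ already forces $\Cok(\cT_{2k}\to\cT^\iinfty_{2k})\cong\im(\cT^\iinfty_{2k}\to\widetilde\cT_{2k-1})=\Ker(\widetilde\cT_{2k-1}\to\cT^\iinfty_{2k-1})$, and it remains only to name this common group as $\z\otimes\sL'_{k+1}$ using the Levine-conjecture computations of \cite{CST3} together with Proposition~\ref{prop:kerEta4k-2}. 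The delicate point throughout will be keeping the quasi-Lie versus Lie distinction straight --- i.e.\ which symmetric trees are killed by AS alone versus by the stronger self-annihilation relation --- since that is precisely what separates $\sL'_{k+1}$ from $\sL_{k+1}$.
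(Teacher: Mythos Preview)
Your overall structure matches the paper's, but there are two genuine gaps.

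\textbf{Injectivity of $\cT_{2k}\to\cT^\iinfty_{2k}$.} You write that this follows because ``one uses the interior-twist relation to see that adding $\iinfty$-trees does not introduce new relations among non-$\iinfty$ trees.'' This is exactly backwards. The interior-twist relation $2J^\iinfty=\langle J,J\rangle$ and the twisted IHX relation $I^\iinfty=H^\iinfty+X^\iinfty-\langle H,X\rangle$ both mix $\iinfty$-trees with non-$\iinfty$ trees, so combining them could in principle produce new relations among non-$\iinfty$ trees alone. Showing that this does \emph{not} happen is nontrivial; the paper explicitly defers it to \cite{UQF} (the universal quadratic refinement paper), where it is proved via the universality of $\cT^\iinfty_{2k}$ as the target for symmetric quadratic refinements of the pairing $\langle\,,\,\rangle$ on $\sL'_{k+1}$. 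You would need either that machinery or an independent argument.

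\textbf{Exactness of the bottom row.} You propose to deduce this from exactness of the top row by a diagram chase using surjectivity of the vertical maps. But surjectivity of vertical maps does not transport exactness downward in general, and the extra condition you allude to (``image of $\ker(\text{vertical})$ behaves correctly'') is exactly what would need independent proof. The paper instead proves the bottom row directly and first, using only the chain of inclusions $\bW_{2k}\subseteq\bW^\iinfty_{2k}\subseteq\bW_{2k-1}=\bW^\iinfty_{2k-1}$ (the middle inclusion being Lemma~\ref{lem:boundary-twisted-IHX}) together with $\bW^\iinfty_{2k+1}=\bW_{2k+1}$. With these, the bottom row is literally the standard four-term sequence for a three-step filtration $\bW_{2k+1}\subseteq\bW_{2k}\subseteq\bW^\iinfty_{2k}\subseteq\bW_{2k-1}$, and exactness is immediate---no diagram chase and no appeal to the top row at all.

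For the kernel identification $\Ker(\widetilde\cT_{2k-1}\to\cT^\iinfty_{2k-1})\cong\Z_2\otimes\sL'_{k+1}$, your instinct to use the Levine conjecture is right and is what the paper does: it transports the question via the isomorphism $\eta'_{2k-1}:\cT_{2k-1}\to\sD'_{2k-1}$ (Theorem~\ref{thm:LC}) to Levine's computation of $\Ker(\sD'_{2k-1}\to\sD_{2k-1})$ in \cite{L3}, together with a snake-lemma argument involving $\Delta_{2k-1}$ and the squaring map. Your sketch of this part is in the right direction but would need the explicit diagram to be convincing.
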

Here $\sL'=\oplus\sL'_n$ is Levine's \emph{quasi-Lie algebra}, which we define using the usual identification of brackets with rooted trees:
\begin{defn}[\cite{L2}]\label{def:L'} 
The \emph{degree $n$} abelian group $\sL'_n=\sL'_n(m)$ is generated by rooted trees of order~$(n-1)$, each having an unlabeled root univalent vertex, with all other univalent vertices labeled by elements of $\{1,\dots,m\}$, modulo the AS and IHX relations of Figure~\ref{fig:ASandIHXtree-relations}.
\end{defn}
The prefix `quasi' reflects the fact that, although the IHX relation corresponds
to the Jacobi identity, the usual Lie algebra self-annihilation relation $[X,X]=0$ does 
{\em not} hold in $\sL'$. 
It is replaced by the weaker antisymmetry (AS) relation $[Y,X]=-[X,Y]$.
Levine shows that in odd \emph{degrees} (even orders) the natural projection $\sL_{2k-1}'\to\sL_{2k-1}$ is an isomorphism, 
while in even \emph{degrees} (odd orders) we have 
the split exact sequence
\begin{equation} \label{Z2-L'-L ses}
\tag{\sf L} 0\to\mathbb Z_2\otimes {\sf L}_{k}\to\sL'_{2k}\to \sL_{2k}\to 0
\end{equation}
where the left map sends $X$ to $[X,X]$, and the right map is the natural projection \cite{L3}.

To see exactness of the bottom row of Theorem~\ref{thm:exact-sequence} one first observes that there is a natural inclusion $\bW_{n} \subseteq \bW_{n}^\iinfty$, and that $\bW_{2k-1}^\iinfty = \bW_{2k-1}$ by definition.
Then, from the inclusion 
$\bW_{2k}^\iinfty \subseteq \bW_{2k-1}$ shown above in Lemma~\ref{lem:boundary-twisted-IHX}, exactness follows since $\W_n := \bW_n/ \bW_{n+1}$ and $\W^\iinfty_n := \bW^\iinfty_n/  \bW^\iinfty_{n+1}$. 
Proof of Theorem~\ref{thm:exact-sequence} is completed in Section~\ref{subsec:proof-thm-exact-sequences}, using the \emph{universality} of $ \cT^\iinfty_{2k}$ as the target of quadratic refinements of the canonical `inner product' pairing 
\[
\langle \ , \ \rangle: \sL'_{k+1} \times \sL'_{k+1} \to \cT_{2k}
\]
given by gluing the roots of two rooted trees, see Definition~\ref{def:Trees} and \cite{UQF}.

In order to proceed with the analysis of the framed filtration, notice that
the group 
\[
\sK^\mu_{2k-1}:=\Ker(\W_{2k-1} \sra \W^\iinfty_{2k-1}\cong\sD_{2k-1})
\]
is precisely the kernel of the Milnor invariant $\mu_{2k-1}\colon \W_{2k-1}\to \sD_{2k-1}$, where the \emph{order $n$ Milnor invariant} 
$\mu_n:\W_n \to\sD_n$ is defined in the framed setting via the composition $\W_n\to\W^\iinfty_n\to\sD_n$ induced by the inclusion $\bW_n \subset \bW^\iinfty_n$. This breaks the bottom exact sequence in Theorem~\ref{thm:exact-sequence} into two short exact sequences, and calculating $\sK^\mu_{2k-1}$ will thus allow us to compute $\W_{2k}$ and $\W_{2k-1}$ in terms of $\W^\iinfty_{2k}$ and $\W^\iinfty_{2k-1}$.

On the other hand, the group $\sK^\mu_{2k-1}\cong\bW^\iinfty_{2k}/\bW_{2k}$ precisely measures the obstructions to framing a twisted Whitney tower of order $2k$. We will show next how these obstructions are detected by the higher-order Sato-Levine invariants defined as projections of the order $2k$ Milnor invariants, together with a direct analogue of the higher-order Arf invariants defined for the framed Whitney tower filtration. 

In direct analogy with $\sD_n$, the group $\sD'_n$ is defined as the kernel of the quasi-Lie bracketing map $\sL'_1\otimes\sL'_{n+1}\to \sL'_{n+2}$. Analogously to $\eta_n$, there is a map $\eta'_n\colon \cT_n\to \sD_n'$ defined again by summing over choosing a root at each univalent vertex of each generator: 
\begin{defn}[\cite{L2}]\label{def:eta-prime}
On trees $t\in\cT_n$ set $\eta'_n(t):=\sum_{v\in t} X_{\ell(v)}\otimes T'_v(t)$, 
where the sum is over all univalent vertices $v$ of $t$, with $T'_v(t)\in\sL'_{n+1}$ denoting the rooted tree gotten by replacing $v$ with a root,
and $\ell(v)$ the original label of $v$.
\end{defn}

The previously-mentioned Levine Conjecture \cite{L2} is the statement that $\eta'_n$ is an isomorphism, a surprisingly difficult fact which we prove in \cite{CST3}: 
\begin{thm}[\cite{CST3}]\label{thm:LC} 
The maps $\eta'_n\colon \cT_n\to \sD_n'$ are isomorphisms.
\end{thm}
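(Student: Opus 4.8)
The plan is to isolate the $2$--primary content of the statement and to recognize $\eta'_n$ as a differential in a complex of tree groups that is acyclic in the relevant range. First I would record two easy reductions. Rationally $\sL'\otimes\mathbb{Q}=\sL\otimes\mathbb{Q}$ and $\sD'_n\otimes\mathbb{Q}=\sD_n\otimes\mathbb{Q}$, so $\eta'_n\otimes\mathbb{Q}$ is identified with $\eta_n\otimes\mathbb{Q}$, which Levine \cite{L2} already showed to be an isomorphism. Next, $\eta'_n$ is always \emph{onto} $\sD'_n$: the kernel of the quasi-Lie bracket map $\sL'_1\otimes\sL'_{n+1}\to\sL'_{n+2}$ is the module of first syzygies of the bracketing of a free quasi-Lie algebra, which is generated by the Jacobi (IHX) relators, and these are precisely the images under $\eta'_n$ of the generators of $\cT_n$. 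So it remains to prove $\eta'_n$ is \emph{injective}; since $\eta'_n\otimes\mathbb{Q}$ is injective, $\Ker\eta'_n$ is torsion, and by the antisymmetry relation every torsion element of $\cT_n$ has order $2$, so it suffices to show that $\eta'_n\otimes\z$ is injective.

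The second step is to set $\eta'_n$ inside a chain complex. Thinking of $\sL'_{n+1}$ as the group of rooted order $n$ trees and of $\cT_n$ as unrooted ones, there is a natural ``mark an extra univalent vertex'' operation that extends $\eta'_n$ to the left: writing $\cT_n^{[2]}$ for the group of order $n$ trees carrying an ordered pair of distinguished labelled univalent vertices (modulo AS and IHX), one obtains a complex
\[
\cdots \longrightarrow \cT_n^{[2]} \stackrel{d}{\longrightarrow} \cT_n \stackrel{\eta'_n}{\longrightarrow} \sL'_1\otimes\sL'_{n+1} \stackrel{b}{\longrightarrow} \sL'_{n+2}\longrightarrow 0,
\]
in which all differentials are alternating sums over the choices of distinguished univalent vertices; this is the relevant portion of the Koszul-type complex of the quasi-Lie operad. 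Levine's conjecture is precisely the exactness of this complex at $\cT_n$ and at $\sL'_1\otimes\sL'_{n+1}$. By the reductions above the complex is already exact after $\otimes\mathbb{Q}$ (this is Levine's rational computation, reflecting the Koszulness of the rational Lie operad), so the remaining task is exactness after $\otimes\z$.

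The heart of the argument is then the $2$--primary computation. I would apply Levine's comparison of $\sL'$ with $\sL$ --- in even degrees $\sL'_{2k-1}\cong\sL_{2k-1}$, and in odd degrees the split sequence $0\to\z\otimes\sL_k\to\sL'_{2k}\to\sL_{2k}\to 0$ of \cite{L3} --- term by term in the complex above, together with the analogous comparison of $\cT_n$ with the Lie-type tree complex, to obtain a long exact sequence relating the mod $2$ homology of the quasi-Lie complex to that of the Lie complex (acyclic in this range) and to a ``correction complex'' assembled out of the pieces $\z\otimes\sL_\bullet$; the connecting homomorphisms here are essentially doubling-type maps like $\Delta$, and the correction terms are spanned by the symmetric trees $\tree{i}{J}{J}$. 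One then reduces to the multilinear case --- each label occurring a prescribed number of times --- by inducting on the number $m$ of labels and on $n$, so that every group in sight becomes a finite $\z$--vector space with an explicit Hall/Lyndon-word basis, and one checks acyclicity of the correction complex directly: compute $\dim_{\z}$ of each term, verify the Euler characteristic, and produce a contracting homotopy (equivalently, match a spanning set of $2$--cycles with the image of the next differential). This yields injectivity of $\eta'_n\otimes\z$, hence of $\eta'_n$; combined with the surjectivity onto $\sD'_n$ it shows $\eta'_n$ is an isomorphism, and since $\sL'_n$ and $\sD'_n$ are then computable from the sequences above (with $\sD_n$ free abelian of known rank \cite{O}) one simultaneously reads off that $\cT_{2k}$ is free abelian of known rank and that the torsion of $\cT_{2k-1}$ is generated by the symmetric trees $\tree{i}{J}{J}$.

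The main obstacle I expect is exactly this last step: establishing that the gap between antisymmetry and self-annihilation --- the relations distinguishing $\sL'$ from $\sL$, equivalently $\cT_n$ from its Lie-type analogue --- contributes \emph{no} homology to the quasi-Lie complex beyond what the dimension count permits. This is a delicate combinatorial vanishing statement about how IHX relators interact with the $2$--torsion generators $\tree{i}{J}{J}$, requiring careful bookkeeping of signs, cyclic orientations and the prime $2$, and it is the technical core of \cite{CST3}.
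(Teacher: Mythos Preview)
This theorem is not proved in the present paper. It is stated with the citation \cite{CST3} and the surrounding text says explicitly that the Levine Conjecture ``is a surprisingly difficult fact which we prove in \cite{CST3}''; the paper under review only \emph{uses} the result (in Theorems~\ref{thm:even-R-and-mu}, \ref{thm:isomorphisms}, and in Section~\ref{subsec:proof-thm-exact-sequences}). So there is no proof here against which to compare your proposal.

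That said, your outline is broadly consonant with what the title and abstract of \cite{CST3} (``Tree homology and a conjecture of Levine'') suggest: the rational reduction and surjectivity are indeed the easy parts, the problem is $2$--primary, and the argument is organized around a chain complex of tree groups in which $\eta'_n$ sits as a differential. Your identification of the technical core---controlling the gap between antisymmetry and self-annihilation, i.e.\ the $\tree{i}{J}{J}$ contributions---matches what the present paper reports as the output of \cite{CST3} (see the remarks after Theorem~\ref{thm:intro-even-order-R} and Corollary~1.2 of \cite{CST3}). Whether your specific mechanism (a ``correction complex'' plus a Hall-basis dimension count and contracting homotopy) matches the actual argument of \cite{CST3} cannot be judged from this paper alone; you would need to consult \cite{CST3} directly.
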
 

The surjectivity of the realization maps (Theorem~\ref{thm:R-onto-W}) together with the factorization $\eta'_n=\mu_n\circ R_n$ from \cite{CST2,ST2}
then gives us the following commutative diagram:
\[
\xymatrix{
\cT_{2k} \ar@{->>}[r]^{R_{2k}} \ar@{>->>}[rd]_{\eta'_{2k}} & \W_{2k} \ar@{->>}[d]^{\mu_{2k}}\\
& \sD'_{{2k}}
}\]
The groups $\sD'_{2k}$ are finite index subgroups of $\sD_{2k}$ and hence are free abelian of known rank for all $k$ \cite[Cor.2.3]{L3}, so by Theorem~\ref{thm:LC} this diagram gives the classification of the Whitney tower filtration in all \emph{even} orders, implying Theorem~\ref{thm:intro-even-order-R}:
\begin{thm}\label{thm:even-R-and-mu}
The maps $R_{2k}$ and $\mu_{2k}:\W_{2k}\to \sD'_{2k}$ are isomorphisms.
\end{thm}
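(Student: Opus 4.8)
The plan is to deduce Theorem~\ref{thm:even-R-and-mu} directly from the commutative triangle displayed immediately before it, together with the surjectivity of $R_{2k}$ (Theorem~\ref{thm:R-onto-W}), the factorization $\eta'_{2k}=\mu_{2k}\circ R_{2k}$ from \cite{CST2,ST2}, and the affirmation of the Levine Conjecture (Theorem~\ref{thm:LC}). The key observation is purely formal: if $g\circ f = h$ with $h$ an isomorphism and $f$ an epimorphism, then $f$ is injective (hence an isomorphism) and $g=h\circ f^{-1}$ is an isomorphism as well. I would apply this with $f=R_{2k}$, $g=\mu_{2k}$, and $h=\eta'_{2k}$.

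Here are the steps in order. First, recall that $\eta'_{2k}\colon \cT_{2k}\to\sD'_{2k}$ is an isomorphism: this is exactly Theorem~\ref{thm:LC} specialized to $n=2k$. Second, recall from Theorem~\ref{thm:R-onto-W} that $R_{2k}\colon\cT_{2k}\sra\W_{2k}$ is surjective. Third, note that since $\eta'_{2k}=\mu_{2k}\circ R_{2k}$ is injective, $R_{2k}$ must be injective; combined with its surjectivity, $R_{2k}$ is an isomorphism. Fourth, from $\mu_{2k}=\eta'_{2k}\circ R_{2k}^{-1}$ we conclude that $\mu_{2k}\colon\W_{2k}\to\sD'_{2k}$ is a composite of two isomorphisms, hence an isomorphism. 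This also records that $\mu_{2k}$ indeed takes values in the finite-index subgroup $\sD'_{2k}\subseteq\sD_{2k}$, as already built into the displayed diagram. For the final sentence of the theorem's surrounding discussion, the freeness and rank statement for $\sD'_{2k}$ is \cite[Cor.2.3]{L3}, so $\W_{2k}$ and $\cT_{2k}$ are free abelian of that same known rank.

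The only real content being invoked is Theorem~\ref{thm:LC} (the resolution of the Levine Conjecture), which is the genuinely hard input and is proved elsewhere in \cite{CST3}; given it, the argument here is a two-line diagram chase. Thus the main obstacle is not in this proof at all but is imported: without knowing $\eta'_{2k}$ is injective one could not conclude that $R_{2k}$ is injective, and without knowing it is surjective one could not pin down the rank of $\W_{2k}$. I would also remark briefly that this argument is the even-order analogue of the one used to prove Theorem~\ref{thm:intro-twisted-3-quarter-classification} in the twisted setting (via Corollary~\ref{cor:twisted-triangle}), the difference being that in the framed even-order case the relevant target is Levine's $\sD'_{2k}$ rather than $\sD_{2k}$, reflecting the replacement of $\sL$ by the quasi-Lie algebra $\sL'$ in odd degrees.
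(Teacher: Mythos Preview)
Your proposal is correct and follows essentially the same approach as the paper: the theorem is deduced immediately from the commutative triangle $\eta'_{2k}=\mu_{2k}\circ R_{2k}$ together with the surjectivity of $R_{2k}$ (Theorem~\ref{thm:R-onto-W}) and the Levine Conjecture (Theorem~\ref{thm:LC}), via the elementary observation that an epimorphism followed by a map whose composite is an isomorphism must itself be an isomorphism. The paper does not spell out the diagram chase at all, so your version is simply a more explicit rendering of the same argument.
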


\subsection{Higher-order Sato-Levine invariants}\label{subsec:higher-order-SL}
In order, to define the \emph{order $2k-1$ Sato-Levine invariant} $\SL_{2k-1}\colon \sK^\mu_{2k-1}\sra \z\otimes \sL_{k+1}$,  we first use a result of Levine to define homomorphisms $s\ell_{2k}$ algebraically:
\begin{defn}\label{def:sl-maps} The epimorphisms $s\ell_{2k}\colon {\sf D}_{2k}\sra \z\otimes{\sL}_{k+1}$ are defined by the snake lemma applied to the diagram:
$$\xymatrix{
&&\mathbb Z_2\otimes\sL_{k+1}\ar@{>->}[d]^{sq}\\
\sD'_{2k}\ar@{>->}[d] \ar@{>->}[r]&\sL_1\otimes\sL'_{2k+1}\ar@{>->>}[d]^\cong\ar[r]&\sL'_{2k+2}\ar@{->>}[d]\\
\sD_{2k} \ar@{>->}[r]\ar@{-->>}[d]^{s\ell_{2k}}&\sL_1\otimes \sL_{2k+1}\ar@{->>}[r]&\sL_{2k+2}\\
\z\otimes \sL_{k+1}&&
}$$
The two horizontal sequences are exact by definition and the vertical sequence on the right is exact by Theorem 2.2 of \cite{L3}. The squaring map on the upper right is $sq(1\otimes X) := [X,X]$.
\end{defn}

Now suppose $L\in\bW_{2k-1}$ represents an element in 
\[
\sK^\mu_{2k-1}:=\Ker(\mu_{2k-1}\colon \W_{2k-1}\to\sD_{2k-1}).
\]
 Since $\mu_{2k-1}(L)=0$, we have that $\mu_{2k}(L)\in \sD_{2k}$ is defined. This gives the definition of the higher-order Sato-Levine invariants:

\begin{defn}\label{def:SL-invariants}
The Sato-Levine invariant $\SL_{2k-1}(L)$ is equal to $s\ell_{2k}\circ \mu_{2k}(L)$.
\end{defn}

Summarizing the previous discussion, the cokernel of the inclusion $\sD_{2k}'\to\sD_{2k}$ is isomorphic to $\z\otimes\sL_{k+1}$, and we get the following commutative diagram:
$$\xymatrix{
{\sf W}_{2k}\ar@{>->}[r]\ar[d]^{\mu_{2k}}_\cong& {\sf W}^\iinfty_{2k}\ar@{->>}[r]\ar@{->>}[d]^{\mu_{2k}}& {\sf K}^\mu_{2k-1}\ar@{-->>}[d]^{\SL_{2k-1}}\\
\sD'_{2k}\ar@{>->}[r]&\sD_{2k}\ar@{->>}[r]&\z\otimes \sL_{k+1}
}$$

We know from Theorem~\ref{thm:intro-twisted-3-quarter-classification} that $\mu_{2k}:{\sf W}^\iinfty_{2k}\to\sD_{2k}$ is an isomorphism when $k$ is even, implying the following proposition:
\begin{prop}\label{prop:SL4k-1-iso}
$\SL_{4k-1}$ gives an isomorphism  $\sK^\mu_{4k-1}\cong \z\otimes \sL_{2k+1}$.
\end{prop}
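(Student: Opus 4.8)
The plan is a short diagram chase off the commutative diagram of short exact sequences displayed immediately before the statement, specialized to the even case.

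First I would record that, for every $k$, both rows of that diagram are short exact. The top row $0\to\W_{2k}\to\W^\iinfty_{2k}\to\sK^\mu_{2k-1}\to 0$ is exact because $\bW_{2k}\subseteq\bW^\iinfty_{2k}$ gives the injection $\W_{2k}\hookrightarrow\W^\iinfty_{2k}$, while the bottom exact sequence of Theorem~\ref{thm:exact-sequence} identifies $\sK^\mu_{2k-1}=\Ker(\W_{2k-1}\sra\W^\iinfty_{2k-1})$ with the cokernel $\W^\iinfty_{2k}/\W_{2k}$. The bottom row $0\to\sD'_{2k}\to\sD_{2k}\to\z\otimes\sL_{k+1}\to 0$ is exact by the snake-lemma construction of $s\ell_{2k}$ in Definition~\ref{def:sl-maps}. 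Commutativity of both squares, and in particular the fact that the right-hand vertical map is $\SL_{2k-1}=s\ell_{2k}\circ\mu_{2k}$ of Definition~\ref{def:SL-invariants}, is exactly what was verified in the paragraph preceding the statement.

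Next I would substitute $k\mapsto 2k$, so that the diagram now lives in orders $4k$ and $4k-1$. The left vertical map $\mu_{4k}\colon\W_{4k}\to\sD'_{4k}$ is an isomorphism by Theorem~\ref{thm:even-R-and-mu}. The middle vertical map $\mu_{4k}\colon\W^\iinfty_{4k}\to\sD_{4k}$ is an isomorphism by Theorem~\ref{thm:intro-twisted-3-quarter-classification}, since $4k\not\equiv 2\bmod 4$. A map of short exact sequences whose two leftmost vertical maps are isomorphisms has its third vertical map an isomorphism (the five lemma), so $\SL_{4k-1}\colon\sK^\mu_{4k-1}\to\z\otimes\sL_{2k+1}$ is an isomorphism, which is the claim.

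There is no real obstacle; the only point requiring care is bookkeeping of which classification input applies. It is precisely the hypothesis $4k\not\equiv 2\bmod 4$ that makes the twisted Milnor invariant $\mu_{4k}$ an isomorphism, and this is why the clean statement is restricted to orders $4k-1$ rather than all odd orders; in the orders $4k-3$ (where $2k-1\equiv 2\bmod 4$ in the notation of the general diagram) the map $\mu_{2k}$ on $\W^\iinfty_{2k}$ is only surjective and the argument degrades accordingly. All the genuine content (the Levine Conjecture, the computation of $\cT^\iinfty_{4k}$, the order-raising theorems) has already been packaged into Theorems~\ref{thm:even-R-and-mu} and~\ref{thm:intro-twisted-3-quarter-classification}.
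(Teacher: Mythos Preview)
Your proof is correct and follows essentially the same route as the paper: specialize the commutative diagram of short exact sequences to even $k$ (i.e., replace $2k$ by $4k$), invoke Theorem~\ref{thm:even-R-and-mu} for the left vertical isomorphism and Theorem~\ref{thm:intro-twisted-3-quarter-classification} for the middle one, and conclude by the five lemma. One small slip in your final parenthetical: you wrote ``$2k-1\equiv 2\bmod 4$'' where you meant ``$2k\equiv 2\bmod 4$'' (it is the even order $2k$ that must avoid the residue $2$ in Theorem~\ref{thm:intro-twisted-3-quarter-classification}), but this does not affect the argument.
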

Then, using the fact that $\sL_{2k+1}=\sL'_{2k+1}$ we get the following commutative diagram from Theorem~\ref{thm:exact-sequence} and Theorem~\ref{thm:intro-twisted-3-quarter-classification}:
$$
\xymatrix{
\z\otimes \sL_{2k+1}\ar@{-->>}[d]\ar@{>->}[r]&\widetilde{\cT}_{4k-1}\ar@{->>}^{\widetilde{R}_{4k-1}}[d]\ar@{->>}[r]&\cT^\iinfty_{4k-1}\ar[d]^\cong\\
\sK^\mu_{4k-1}\ar@{>->}[r]&\W_{4k-1}\ar@{->>}[r]&\W^\iinfty_{4k-1}
}
$$
Because $\sK^\mu_{4k-1}\cong \z\otimes \sL_{2k+1}$, the left-hand epimorphism of finite dimensional $\z$-vector spaces must be an isomorphism. This implies that $\widetilde{R}_{4k-1}$ is an isomorphism, and in combination with Theorem~\ref{thm:even-R-and-mu} gives the classification of the framed filtration in three quarters of the cases:
\begin{thm}\label{thm:intro-framed-3-quarter-classification}
If 
$n\not\equiv 1\mod 4$, then $\widetilde{R}_n:\widetilde{\cT}_n\to \W_n$ is an isomorphism.
\end{thm}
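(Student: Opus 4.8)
The statement asserts that $\widetilde{R}_n\colon\widetilde{\cT}_n\to\W_n$ is an isomorphism whenever $n\not\equiv 1\bmod 4$; equivalently, it is an isomorphism in even orders and in orders $\equiv 3\bmod 4$. I would organize the proof by splitting into these two cases, both of which have already been essentially assembled in the preceding discussion, so the ``proof'' is mostly a matter of citing and combining what is above.

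\textbf{Even orders.} Here $\widetilde{\cT}_{2k}=\cT_{2k}$ and $\widetilde{R}_{2k}=R_{2k}$ by definition. The commutative triangle
\[
\xymatrix{
\cT_{2k} \ar@{->>}[r]^{R_{2k}} \ar@{>->>}[rd]_{\eta'_{2k}} & \W_{2k} \ar@{->>}[d]^{\mu_{2k}}\\
& \sD'_{2k}
}
\]
combines the surjectivity of $R_{2k}$ (Theorem~\ref{thm:R-onto-W}) with the factorization $\eta'_{2k}=\mu_{2k}\circ R_{2k}$ from \cite{CST2,ST2}. Since $\eta'_{2k}$ is an isomorphism by the Levine Conjecture (Theorem~\ref{thm:LC}), and $R_{2k}$ is onto, it follows formally that $R_{2k}$ is injective, hence an isomorphism (and $\mu_{2k}$ too); this is exactly the content of Theorem~\ref{thm:even-R-and-mu}, which I would simply invoke.

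\textbf{Orders $\equiv 3\bmod 4$.} Write $n=4k-1$. I would extract from Theorem~\ref{thm:exact-sequence} the commutative diagram with exact rows
\[
\xymatrix{
\z\otimes \sL_{2k+1}\ar@{-->>}[d]\ar@{>->}[r]&\widetilde{\cT}_{4k-1}\ar@{->>}^{\widetilde{R}_{4k-1}}[d]\ar@{->>}[r]&\cT^\iinfty_{4k-1}\ar[d]^\cong\\
\sK^\mu_{4k-1}\ar@{>->}[r]&\W_{4k-1}\ar@{->>}[r]&\W^\iinfty_{4k-1}
}
\]
Here the identification of the left term of the top row with $\z\otimes\sL_{2k+1}$ uses the isomorphism $\mathrm{Ker}(\widetilde{\cT}_{2k-1}\to\cT^\iinfty_{2k-1})\cong\Z_2\otimes\sL'_{k+1}$ from Theorem~\ref{thm:exact-sequence} together with the identity $\sL'_{2k+1}=\sL_{2k+1}$ (Levine's result that the projection $\sL'_{2k+1}\to\sL_{2k+1}$ is an isomorphism in odd degree), and the right vertical arrow is an isomorphism by Theorem~\ref{thm:intro-twisted-3-quarter-classification} since $4k-1\not\equiv 2\bmod 4$. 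The left vertical arrow is the restriction of $\widetilde{R}_{4k-1}$ to kernels, which is surjective because $\widetilde{R}_{4k-1}$ is. Then Proposition~\ref{prop:SL4k-1-iso} (via the higher-order Sato--Levine invariant $\SL_{4k-1}$) identifies $\sK^\mu_{4k-1}\cong\z\otimes\sL_{2k+1}$; since both source and target of the left vertical map are finite-dimensional $\z$-vector spaces of the same dimension, a surjection between them is an isomorphism. A short diagram chase (five lemma on the rows, with the right arrow and now the left arrow both isomorphisms) yields that $\widetilde{R}_{4k-1}$ is an isomorphism.

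\textbf{Main obstacle.} Almost all the real content has been pushed into earlier results — the Levine Conjecture (Theorem~\ref{thm:LC}), the twisted three-quarter classification (Theorem~\ref{thm:intro-twisted-3-quarter-classification}), the exact-sequence diagram (Theorem~\ref{thm:exact-sequence}), and the computation of $\sK^\mu_{4k-1}$ (Proposition~\ref{prop:SL4k-1-iso}). So the only genuine step to verify carefully is the dimension-count argument in the $4k-1$ case: one must be sure that $\z\otimes\sL_{2k+1}$ is finite-dimensional (it is, since $\sL_{2k+1}(m)$ is a finitely generated free abelian group of known rank) so that ``surjective $\Rightarrow$ bijective'' is legitimate, and that the vertical map on the left is indeed the one identified with the restriction of $\widetilde{R}_{4k-1}$. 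The bookkeeping that $\sK^\mu_{4k-1}$ as defined via $\mu_{4k-1}$ matches the kernel appearing in the exact sequence (i.e.\ $\sK^\mu_{2k-1}=\Ker(\W_{2k-1}\to\W^\iinfty_{2k-1})$ under the identification $\W^\iinfty_{2k-1}\cong\sD_{2k-1}$) must also be spelled out, but this is immediate from the definition of $\mu_{2k-1}$ as the composite $\W_{2k-1}\to\W^\iinfty_{2k-1}\to\sD_{2k-1}$ together with Theorem~\ref{thm:intro-twisted-3-quarter-classification}.
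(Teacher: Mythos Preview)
Your proposal is correct and follows essentially the same approach as the paper: the even case is handled by invoking Theorem~\ref{thm:even-R-and-mu} (via the $\eta'_{2k}$ triangle and the Levine Conjecture), and the $4k-1$ case uses exactly the commutative diagram extracted from Theorem~\ref{thm:exact-sequence}, the right-hand isomorphism from Theorem~\ref{thm:intro-twisted-3-quarter-classification}, the identification $\sL'_{2k+1}=\sL_{2k+1}$, Proposition~\ref{prop:SL4k-1-iso}, and the finite-dimensional $\z$-vector space dimension count to conclude via the five lemma.
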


In particular, this result (together with Proposition~\ref{prop:SL4k-1-iso}) proves Theorem~\ref{thm:intro-odd-R-isos} from the introduction.

\subsection{Higher-order Arf invariants in the framed filtration}\label{subsec:higher-order-Arf-in-framed-filtration}
Finally we consider the remaining cases $\W_{4k-3}$ of the framed filtration, which is where the higher-order Arf invariants reappear. Let $\sK^{\SL}_{4k-3}$ be the kernel of the order $4k-3$ Sato-Levine invariant $\SL_{4k-3}\colon \sK^\mu_{4k-3}\to \z\otimes \sL_{2k}$. 
Recall from Definition~\ref{def:Arf-k} that in the twisted setting the higher-order Arf invariants $\Arf_{k}:\sK^\iinfty_{4k-2}\to(\mathbb Z_2\otimes {\sf L}_{k})/\Ker \alpha^\iinfty_{k}$ are defined by inverting a surjection $\alpha^\iinfty_k: \Z_2 \otimes \sL_k
\sra \sK^\iinfty_{4k-2}$ onto the 
kernel $\sK^\iinfty_{4k-2}$ of the order $4k-2$ invariant $\mu_{4k-2}:\sW^\iinfty_{4k-2}\to\sD_{4k-2}$.
\begin{lem}\label{lem:canonical-arf-iso}
$\sK^{\SL}_{4k-3}$ is canonically isomorphic to $\sK^\iinfty_{4k-2}$.
\end{lem}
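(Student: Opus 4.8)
The strategy is to identify both groups with a common middle term in a piece of the big commutative diagram coming from Theorem~\ref{thm:exact-sequence}, much as was done for $\W_{4k-1}$ in the previous subsection. Recall that in the excerpt we have the four-term exact sequence of the bottom row of Theorem~\ref{thm:exact-sequence} in the orders $2k=4k-2$ and $2k-1=4k-3$; combining this with the Milnor invariants $\mu_n$ breaks it into short exact sequences. Concretely, I would first assemble the commutative diagram
\[
\xymatrix{
\sK^\iinfty_{4k-2} \ar@{>->}[r] & \W^\iinfty_{4k-2} \ar@{->>}[r]\ar@{->>}[d]^{\mu_{4k-2}} & \sK^\mu_{4k-3} \ar@{->>}[d]^{\SL_{4k-3}} \\
& \sD_{4k-2} \ar@{->>}[r]^{s\ell_{4k-2}} & \z\otimes\sL_{2k}
}
\]
whose top row is the portion $\W^\iinfty_{2k}\to\W_{2k-1}\to\W^\iinfty_{2k-1}$ of Theorem~\ref{thm:exact-sequence} restricted to the kernels of the $\mu$-maps (note $\W_{4k-2}\cong\sD'_{4k-2}$ injects into $\W^\iinfty_{4k-2}$ and lands inside nothing relevant here, so the surjection $\W^\iinfty_{4k-2}\sra\sK^\mu_{4k-3}$ is what one obtains from the four-term sequence after quotienting by $\W_{4k-2}$). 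The left vertical map is $\mu_{4k-2}$, whose kernel is $\sK^\iinfty_{4k-2}$ by definition, and the bottom row is the defining diagram of $s\ell_{4k-2}$ from Definition~\ref{def:sl-maps}, whose kernel restricted to $\sD_{4k-2}$ is exactly $\sD'_{4k-2}$.

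Next I would chase this diagram. The composite $\W^\iinfty_{4k-2}\sra\sK^\mu_{4k-3}\xrightarrow{\SL_{4k-3}}\z\otimes\sL_{2k}$ equals $s\ell_{4k-2}\circ\mu_{4k-2}$ by the very definition of $\SL_{4k-3}$ (Definition~\ref{def:SL-invariants}) together with the commuting square relating $\mu_{4k-2}$ on $\W^\iinfty$ and on $\W$. Hence an element of $\W^\iinfty_{4k-2}$ maps into $\sK^{\SL}_{4k-3}\subset\sK^\mu_{4k-3}$ if and only if its $\mu_{4k-2}$-image lies in $\Ker(s\ell_{4k-2})=\sD'_{4k-2}$, i.e.\ if and only if it lies in $\W_{4k-2}+\sK^\iinfty_{4k-2}$ inside $\W^\iinfty_{4k-2}$. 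Since the surjection $\W^\iinfty_{4k-2}\sra\sK^\mu_{4k-3}$ has kernel $\W_{4k-2}$, the preimage of $\sK^{\SL}_{4k-3}$ is $\W_{4k-2}+\sK^\iinfty_{4k-2}$, and therefore $\sK^{\SL}_{4k-3}\cong (\W_{4k-2}+\sK^\iinfty_{4k-2})/\W_{4k-2}\cong \sK^\iinfty_{4k-2}/(\sK^\iinfty_{4k-2}\cap\W_{4k-2})$. So the only thing to verify is that $\sK^\iinfty_{4k-2}\cap\W_{4k-2}=0$ inside $\W^\iinfty_{4k-2}$; but $\W_{4k-2}\xrightarrow{\mu_{4k-2}}\sD'_{4k-2}$ is injective (Theorem~\ref{thm:even-R-and-mu}) and the restriction of $\mu_{4k-2}$ to $\sK^\iinfty_{4k-2}$ is zero by definition of $\sK^\iinfty_{4k-2}$, so their intersection maps to $0$ injectively, hence is trivial. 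This yields the isomorphism $\sK^{\SL}_{4k-3}\cong\sK^\iinfty_{4k-2}$, and one reads off from the construction that it is induced by the identity on the underlying set of links, hence canonical.

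\textbf{Main obstacle.} The routine part is the diagram chase; the one point requiring genuine care is the identification of the composite $s\ell_{4k-2}\circ\mu_{4k-2}$ on $\W^\iinfty_{4k-2}$ with $\SL_{4k-3}$ on the image $\sK^\mu_{4k-3}$ — that is, checking that the definition of $\SL_{2k-1}$ as $s\ell_{2k}\circ\mu_{2k}$ on $\sK^\mu_{2k-1}\subset\W_{2k-1}$ is compatible with the passage through the order $2k$ twisted filtration group $\W^\iinfty_{2k}$. This amounts to unwinding that $\mu_{2k}$ on $\W^\iinfty_{2k}$ and $\mu_{2k}$ on $\W_{2k-1}$ (which is defined on the kernel $\sK^\mu_{2k-1}$ via the inclusion $\bW_{2k-1}=\bW^\iinfty_{2k-1}$, lifted using $\bW^\iinfty_{2k}\subset\bW_{2k-1}$ of Lemma~\ref{lem:boundary-twisted-IHX}) agree under the surjection $\W^\iinfty_{2k}\sra\sK^\mu_{2k-1}$; this is essentially built into how the four-term sequence of Theorem~\ref{thm:exact-sequence} is constructed, but it should be stated explicitly. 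Everything else — exactness, injectivity of $\mu_{4k-2}$ on $\W_{4k-2}$, the snake-lemma description of $\Ker(s\ell_{4k-2})$ — is quoted directly from results already in the excerpt.
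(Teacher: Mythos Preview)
Your proof is correct and follows essentially the same route as the paper's: both assemble the commutative diagram with middle row $\W_{4k-2}\hookrightarrow\W^\iinfty_{4k-2}\twoheadrightarrow\sK^\mu_{4k-3}$ and bottom row $\sD'_{4k-2}\hookrightarrow\sD_{4k-2}\twoheadrightarrow\z\otimes\sL_{2k}$, then use the isomorphism $\W_{4k-2}\cong\sD'_{4k-2}$ from Theorem~\ref{thm:even-R-and-mu} to identify the induced map on kernels $\sK^\iinfty_{4k-2}\to\sK^{\SL}_{4k-3}$ as an isomorphism. The paper simply displays the $3\times 3$ diagram and leaves the snake-lemma-type chase implicit, whereas you spell it out; the commutativity you flag as the ``main obstacle'' is indeed tautological from Definition~\ref{def:SL-invariants} once one notes that both occurrences of $\mu_{4k-2}$ are the same Milnor invariant of the underlying link.
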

\begin{proof}
This follows from the commutative diagram: 
$$\xymatrix{
&{\sf K}^\iinfty_{4k-2}\ar@{>-->>}[r]\ar@{>->}[d]&{\sf K}^{\SL}_{4k-3}\ar@{>->}[d]\\
{\sf W}_{4k-2}\ar@{>->}[r]\ar[d]_\cong& {\sf W}^\iinfty_{4k-2}\ar@{->>}[r]\ar@{->>}[d]& {\sf K}^\mu_{4k-3}\ar@{->>}[d]\\
\sD'_{4k-2}\ar@{>->}[r]&\sD_{4k-2}\ar@{->>}[r]&\z\otimes \sL_{2k}
}$$
(The left-hand isomorphism comes from Theorem~\ref{thm:even-R-and-mu}.) 
\end{proof}

Thus the higher-order Arf invariants induce maps $\Arf_k\colon \sK^{\SL}_{4k-3}\to \z\otimes\sL_k/\Ker(\alpha_k)$, and we get a complete classification for the framed filtration:

\begin{cor}\label{cor:mu-sl-arf-classify}
The groups $\W_n$ are classified by Milnor invariants $\mu_n$ and in addition, Sato-Levine invariants $\SL_n$ if $n$ is odd, and finally, Arf invariants $\Arf_k$ for $n=4k-3$. 
\end{cor}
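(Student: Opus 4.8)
\textbf{Proof proposal for Corollary~\ref{cor:mu-sl-arf-classify}.}
The plan is to assemble the classification from the three exact-sequence diagrams already established, organizing the argument by the residue of $n$ modulo $4$. The statement to be proved is that two links in $\bW_n$ are order $n+1$ Whitney tower concordant if and only if they are not distinguished by the collection of invariants listed: $\mu_n$ always, $\SL_n$ additionally when $n$ is odd, and $\Arf_k$ additionally when $n=4k-3$. Since $\W_n$ is a group (Theorem~\ref{thm:R-onto-W}), it suffices to show that the listed invariants are jointly injective on $\W_n$; surjectivity-type statements (the range being as described) follow from the epimorphism statements already in hand.

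First I would dispose of the three-quarters of cases covered by Theorem~\ref{thm:intro-framed-3-quarter-classification}. When $n\not\equiv 1\bmod 4$, the reduced realization map $\widetilde{R}_n\colon\widetilde{\cT}_n\to\W_n$ is an isomorphism. For $n$ even this is Theorem~\ref{thm:even-R-and-mu}, which identifies $\W_{2k}\cong\sD'_{2k}$ via $\mu_{2k}$ alone, so $\mu_n$ classifies and no further invariant is needed; this matches the statement since $n$ is even. For $n\equiv 3\bmod 4$, write $n=4k-1$; the diagram just before Theorem~\ref{thm:intro-framed-3-quarter-classification}, together with Proposition~\ref{prop:SL4k-1-iso}, shows that $\W_{4k-1}$ sits in a short exact sequence with kernel $\sK^\mu_{4k-1}\cong\z\otimes\sL_{2k+1}$ detected by $\SL_{4k-1}$ and quotient $\W^\iinfty_{4k-1}\cong\sD_{4k-1}$ detected by $\mu_{4k-1}$. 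So an element of $\W_{4k-1}$ with vanishing $\mu_{4k-1}$ lies in $\sK^\mu_{4k-1}$, where $\SL_{4k-1}$ is an isomorphism; hence $(\mu_{4k-1},\SL_{4k-1})$ is jointly injective, again matching the statement since $n$ is odd and not of the form $4k-3$.

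The remaining and main case is $n\equiv 1\bmod 4$, say $n=4k-3$. Here I would filter $\W_{4k-3}$ by the two nested kernels:
\[
\sK^{\SL}_{4k-3}\ \subseteq\ \sK^\mu_{4k-3}\ \subseteq\ \W_{4k-3},
\]
where $\sK^\mu_{4k-3}=\Ker(\mu_{4k-3})$ and $\sK^{\SL}_{4k-3}=\Ker(\SL_{4k-3}\colon\sK^\mu_{4k-3}\to\z\otimes\sL_{2k})$. The quotient $\W_{4k-3}/\sK^\mu_{4k-3}$ injects into $\sD_{4k-3}$ via $\mu_{4k-3}$ by definition of $\sK^\mu$, and $\sK^\mu_{4k-3}/\sK^{\SL}_{4k-3}$ injects into $\z\otimes\sL_{2k}$ via $\SL_{4k-3}$ by definition of $\sK^{\SL}$. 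It therefore remains to analyze $\sK^{\SL}_{4k-3}$ itself, and this is exactly where Lemma~\ref{lem:canonical-arf-iso} enters: it provides a canonical isomorphism $\sK^{\SL}_{4k-3}\cong\sK^\iinfty_{4k-2}$, the kernel of $\mu_{4k-2}$ in the twisted filtration. By Definition~\ref{def:Arf-k} the higher-order Arf invariant $\Arf_k$ is, by construction, an isomorphism of $\sK^\iinfty_{4k-2}$ onto $(\z\otimes\sL_k)/\Ker\alpha^\iinfty_k$. Transporting along Lemma~\ref{lem:canonical-arf-iso}, the composite invariant $\Arf_k\colon\sK^{\SL}_{4k-3}\to(\z\otimes\sL_k)/\Ker\alpha^\iinfty_k$ is injective (indeed bijective). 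Chasing the three-step filtration, any element of $\W_{4k-3}$ killed by $\mu_{4k-3}$, $\SL_{4k-3}$, and $\Arf_k$ lies successively in $\sK^\mu_{4k-3}$, then in $\sK^{\SL}_{4k-3}$, then is zero; so the three invariants together classify $\W_{4k-3}$.

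I expect the only genuinely delicate point to be bookkeeping the compatibility of the maps across the two filtrations — specifically, checking that the isomorphism of Lemma~\ref{lem:canonical-arf-iso} intertwines the framed-filtration invariant $\SL_{4k-3}$-kernel structure with the twisted $\mu_{4k-2}$-kernel structure in the way the commutative diagram in that lemma's proof asserts, so that ``$\Arf_k$ evaluated on $\sK^{\SL}_{4k-3}$'' is unambiguous. Everything else is assembling short exact sequences whose terms have already been computed via the Levine Conjecture (Theorem~\ref{thm:LC}), so no new geometry is required; the corollary is a formal consequence of Theorems~\ref{thm:even-R-and-mu}, \ref{thm:intro-framed-3-quarter-classification}, Proposition~\ref{prop:SL4k-1-iso}, Lemma~\ref{lem:canonical-arf-iso}, and Definition~\ref{def:Arf-k}.
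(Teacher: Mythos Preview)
Your proposal is correct and follows essentially the same approach as the paper. The paper does not give a separate proof of this corollary but presents it as the immediate consequence of the preceding development in sections~\ref{subsec:higher-order-SL} and~\ref{subsec:higher-order-Arf-in-framed-filtration}; your write-up is a careful unpacking of exactly that argument, organized by residue of $n$ modulo $4$ and using Theorem~\ref{thm:even-R-and-mu}, Proposition~\ref{prop:SL4k-1-iso}, Lemma~\ref{lem:canonical-arf-iso}, and Definition~\ref{def:Arf-k} in the same roles.
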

In particular, a link bounds an order $n$ Whitney tower if and only it has all vanishing Milnor, Sato-Levine and Arf invariants up to order $n$ (Compare Theorem~\ref{thm:intro-classification}).

\subsection{The Arf invariant conjecture in terms of reduced realization maps}\label{subsec:higher-order-arf-reduced-realization-conj}
To complete the translation from the twisted setting, as promised in section~\ref{subsec:intro-untwisting-the-twisted-filtration}, we observe that
Conjecture~\ref{conj:Arf-k} is equivalent to the statement that $\widetilde{R}_{4k-3}$ is an isomorphism
by considering the following commutative diagram:
$$
\xymatrix{
\z\otimes \sL'_{2k}\ar@{-->>}[d]\ar@{>->}[r]&\widetilde{\cT}_{4k-3}\ar@{->>}^{\widetilde{R}_{4k-3}}[d]\ar@{->>}[r]&\cT^\iinfty_{4k-3}\ar[d]^\cong\\
\sK^\mu_{4k-3}\ar@{>->}[r]&\W_{4k-3}\ar@{->>}[r]&\W^\iinfty_{4k-3}
}
$$
Conjecture~\ref{conj:Arf-k} is equivalent to $\sK^{\SL}_{4k-2}\cong \z\otimes\sL_k$, which in turn is equivalent to $\sK^\mu_{4k-3}\cong \sL'_{2k}\otimes \z$ (not necessarily canonically), by Levine's exact sequence~\eqref{Z2-L'-L ses} (see just after Definition~\ref{def:L'} above). 

This happens if and only if the left-hand vertical epimorphism is an isomorphism, which happens if and only if $\widetilde{R}_{4k-3}$ is an isomorphism.

Thus, Theorem~\ref{thm:exact-sequence} well illustrates both the relationships between the various $\cT$- and $\W$-groups, and the implications of the higher-order Arf invariant Conjecture~\ref{conj:Arf-k}. In Section~\ref{sec:master-diagrams-and-algebra}
this commutative diagram is extended by the relevant $\eta$- and $\mu$-maps to include exact sequences of $\sD$-groups, giving our {\em Master Diagrams}.

\subsection{Exactness of the tree sequence}\label{subsec:proof-thm-exact-sequences}
Here we complete the proof of Theorem~\ref{thm:exact-sequence} by showing the exactness of the top sequence of $\cT$-groups.
The exactness of the bottom sequence of $\sW$-groups was checked just after the statement, and the realization maps are well-defined surjections by Section~\ref{sec:realization-maps}.
Commutativity of the diagram follows from the fact that elements of the $\sW$-groups are determined by the 
value of the corresponding $\tau$-invariant (Corollary~\ref{cor:tau=w-concordance}).

So the proof of Theorem~\ref{thm:exact-sequence} will be completed by showing that for any $k\in\N$, there are short exact sequences
\[
\xymatrix{ 
0\ar[r] & \cT_{2k} \ar[r] &  \cT^\iinfty_{2k} \ar[r] & \Z_2 \otimes \sL'_{k+1} \ar[r] & 0
 } \]
and
\[
\xymatrix{ 
0\ar[r] &  \Z_2 \otimes \sL'_{k+1} \ar[r] & \widetilde\cT_{2k-1}  \ar[r] &  \cT^\iinfty_{2k-1} \ar[r] & 0.
 } \]

The injectivity of the map $\cT_{2k}\hookrightarrow \cT^\iinfty_{2k}$ is proven in \cite{UQF} (see also Remark~40 and Corollary~44 of \cite{CST4}).  The cokernel is then spanned by $\iinfty$-trees, with relations coming from the defining relations of $\cT^\iinfty_{2k}$, with non-$\iinfty$ trees set to $0$: $J^\iinfty=(-J)^\iinfty, I^\iinfty=H^\iinfty+X^\iinfty$, and $2J^\iinfty=0$. Thus the cokernel is isomorphic to $\z\otimes \sL'_{k+1}$.

The odd order sequence is shown to be exact as follows:

Recall that the framing relations in $\widetilde{\cT}_{2k-1}$ are the image of $\Delta_{2k-1}\colon \cT_{k-1}\to \cT_{2k-1}$, as described in Section~\ref{sec:reduced-groups-obstruction-theory}.
The image of $\Delta_{2k-1}$ is $2$-torsion by AS relations and hence it factors through $\Z_2\otimes\cT_{k-1}$.
Thus we get an exact sequence as in the top of the diagram below, the middle exact sequence is Corollary 2.3 of \cite{L3}.
$$
\xymatrix{
(\z\otimes\cT_{k-1})/\Ker\Delta\ar@{>->}[r]^(.65){\Delta}\ar@{>->}[d]&\cT_{2k-1}\ar@{->>}[r]\ar[d]_\cong^{\eta'}&\widetilde{\cT}_{2k-1}\ar@{->>}[d]\\
\z^m\otimes\sL_k \ar@{=}[d] \ar@{>->}[r]^{sq} &{\sD}'_{2k-1} \ar@{>->}[d]\ar@{->>}[r]&{\sD}_{2k-1} \ar@{>->}[d]\\
\z\otimes\sL_1 \otimes\sL_k\ar@{>->}[r]^{sq} &\sL_1 \otimes\sL'_{2k}\ar@{->>}[r]&\sL_1 \otimes\sL_{2k}
}
$$
The map on the right is defined via the factorization $\widetilde{\cT}_{2k-1}\twoheadrightarrow {\cT}^\iinfty_{2k-1}\overset{\eta}{\longrightarrow} \sD_{2k-1}.$ So by definition the right-hand square commutes, and induces the left-hand vertical map. In fact, we claim that the induced map $\z\otimes\cT_{k-1}\to\z^m\otimes\sL_k$ factors as 
\[
\z\otimes\cT_{k-1}\overset{1\otimes\eta'}{\longrightarrow}\z\otimes{\sD}'_{k-1}\to\z^m\otimes\sL_k,
\]
 with the right hand map induced by $\sD'_{k-1}\to\Z^m\otimes\sL'_k\twoheadrightarrow\Z^m\otimes \sL_k$. To see this, let $t\in\cT_{k-1}$ and compute
\begin{eqnarray*}
\eta'\left(\Delta (1\otimes t)\right) &= &\eta'\left(\sum_{v\in t} \langle \ell(v),(T_v(t),T_v(t))\rangle\right)\\
&=&\sum_{v\in t} X_{\ell(v)}\otimes (T_v(t),T_v(t))\\
&=&\sum_{v\in t} X_\ell(v)\otimes sq(T_v(t))\\
&=&sq(1\otimes\eta')(1\otimes t)
\end{eqnarray*}
Now we claim that for all orders $k$, there is an exact sequence $$\z\otimes\sD'_{k-1}\to\z^m\otimes\sL_k\to\z\otimes \sL'_{k+1}\to 0.$$
This is clear if $k$ is odd, by tensoring the defining exact sequence for $\sD'_{k-1}$ with $\z$. If $k$ is even, then this follows since there is a surjection $\sD_{k-1}'\twoheadrightarrow \sD_{k-1}$ and $\sL'_{k+1} \cong\sL_{k+1}$. Therefore, the commutative diagram above supports a vertical short exact sequence on the left:
$$
\xymatrix{
&&\z\otimes\sL'_{k+1}\ar@{>-->}[d]\\
(\z\otimes\cT_{k-1})/\Ker\Delta\ar@{>->}[r]^(.65){\Delta}\ar@{>->}[d]&\cT_{2k-1}\ar@{->>}[r]\ar@{>->>}[d]^{\eta'}&\widetilde{\cT}_{2k-1}\ar@{->>}[d]\\
\z^m\otimes\sL_k\ar@{->>}[d]\ar@{>->}[r]&{\sD}'_{2k-1}\ar@{->>}[r]&{\sD}_{2k-1}\\
\z\otimes \sL'_{k+1}&&
}
$$
which gives us the indicated map on the right. Furthermore, $\eta\colon\cT^\iinfty_{2k-1}\to\sD_{2k-1}$ is an isomorphism by 
Theorem~\ref{thm:isomorphisms} below, so the right-hand exact sequence is precisely the exact sequence we're interested in.

\section{Summary of computations for the Whitney filtrations}\label{sec:master-diagrams-and-algebra}

In this section the commutative diagram of $\cT$- and $\W$-groups from Theorem~\ref{thm:exact-sequence} is extended by the relevant $\eta$- and $\mu$-maps to include exact sequences of $\sD$-groups (section~\ref{subsec:master-diagrams}). Then section~\ref{subsec:eta-isos} establishes some implications of the Levine Conjecture that were used earlier, including a proof of Proposition~\ref{prop:kerEta4k-2} from the introduction.

\subsection{The easy Master Diagram}\label{subsec:master-diagrams}
We have already extensively used the commutative diagram of Theorem~\ref{thm:exact-sequence} connecting the 4-term exact sequences for the various $\cT$- and $\W$-groups. Here and in the subsequent subsection we introduce an exact sequence of $\sD$-groups to complement these. For this we need to define two additional groups, $\widetilde{\sD}_{2k-1}$ and $\sD^\iinfty_{4k-2}$. The group $\widetilde{\sD}_{2k-1}$ is defined to be the quotient of $\sD'_{2k-1}$ by the image of the framing relations under $\eta'_{2k-1}$. The group $\sD^\iinfty_{4k-2}$, explained in Definition~\ref{def:D-infty-and-tilde}, is (non-canonically) isomorphic to $\sD_{4k-2}\oplus (\z\otimes \sL_k).$ 

Then the entire classification picture can be organized into {\em Master Diagrams} relating the various aspects of the story. There will be two such diagrams, each covering half of the cases,  
We present first the ``easier'' diagram, which combines three commutative triangles we have already seen, together with a new one involving $\widetilde{\sD}_{4k-1}$. This new triangle follows more-or-less by definition: since $\widetilde{R}_{4k-1}$ is an isomorphism, we can let $\widetilde{\mu}_{4k-1}:=\widetilde{\eta}_{4k-1}\circ \widetilde{R}_{4k-1}^{-1}$. The maps $\sD'_{4k}\to \sD_{4k}$ and $\widetilde{\sD}_{4k-1}\to \sD_{4k-1}$ are clear, and commutativity follows by definition of the various $\eta$ maps. The map $\sD_{4k}\to\widetilde{\sD}_{4k-1}$ can be defined so that it is induced by the map $\cT^\iinfty_{4k}\to\widetilde{\cT}_{4k-1}$.
\begin{thm}\label{thm:diagram 1}
 The following is a commutative diagram connecting three $4$-term exact sequences with triangles of isomorphisms.
\[
\xymatrix{ 
 \cT_{4k}\ar@{>->>}[dd]^(.6){\eta_{4k}'} \ar@{>->>}[dr]^{\widetilde{R}_{4k}} \ar@{>->}[rr] & &  \cT^\iinfty_{4k} \ar@{>->>}[dd]_(.6){\eta_{4k}}  \ar@{>->>}[dr]^{R^\iinfty_{4k}} \ar[rr] &&\widetilde{\cT}_{4k-1} \ar@{>->>}[dd]_(.6){\widetilde{\eta}_{4k-1}}  \ar@{>->>}[dr]^{\widetilde{R}_{4k-1}} \ar@{->>}[rr] &&\cT^\iinfty_{4k-1}\ar@{>->>}[dd]_(.6){\eta_{4k-1}}\ar@{>->>}[dr]^{R^\iinfty_{4k-1}}&
 \\
  & \W_{4k} \ar@{>->>}[dl]^{\mu_{4k}} \ar@{>->}[rr]|(.50)\ &&  \W^\iinfty_{4k}\ar @{>->>}[dl]^{\mu_{4k}}   \ar[rr]|(.485)\ && \W_{4k-1}\ar @{>->>}[dl]^{\widetilde{\mu}_{4k-1}}   \ar@{->>}[rr]|(.485)\ && \W^\iinfty_{4k-1}\ar@{>->>}[dl]^{\mu_{4k-1}} 
  \\
\sD'_{4k} \ar@{>->}[rr] &&  \sD_{4k} \ar[rr]&&  \widetilde{\sD}_{4k-1} \ar@{->>}[rr]&&\sD_{4k-1}&
}
\]


Moreover, the three horizontal sequences can each be split into two short exact sequences, with the term $\sK^\mu_{4k-1}\cong \z\otimes\sL_{2k+1}$ appearing in the middle as the cokernel of the left-hand maps and the kernel of the right-hand maps. 
\end{thm}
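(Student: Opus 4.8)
The plan is to verify each of the four triangles and the exactness statements separately, since most of the work has already been carried out earlier in the excerpt; the proof of Theorem~\ref{thm:diagram 1} is mainly a matter of assembling the pieces coherently. First I would recall that the left-hand triangle with vertices $\cT_{4k}$, $\W_{4k}$, $\sD'_{4k}$ commutes with all maps isomorphisms by Theorem~\ref{thm:even-R-and-mu} (and the factorization $\eta'_{4k}=\mu_{4k}\circ R_{4k}$ from \cite{CST2,ST2}). The triangle $\cT^\iinfty_{4k}\to\W^\iinfty_{4k}\to\sD_{4k}$ commutes by Corollary~\ref{cor:twisted-triangle} (with $\mu_n\circ R^\iinfty_n=\eta_n$ from Theorem~\ref{thm:twisted-Milnor}), and its maps are isomorphisms in the case $n=4k\not\equiv 2\bmod 4$ by Theorem~\ref{thm:intro-twisted-3-quarter-classification}. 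The right-hand triangle $\cT^\iinfty_{4k-1}\to\W^\iinfty_{4k-1}\to\sD_{4k-1}$ likewise commutes and consists of isomorphisms by the same Theorem~\ref{thm:intro-twisted-3-quarter-classification} (using $4k-1\not\equiv 2\bmod 4$); here I should note that $\eta_{4k-1}\colon\cT^\iinfty_{4k-1}\to\sD_{4k-1}$ being an isomorphism is exactly Theorem~\ref{thm:isomorphisms}, which is also invoked at the end of Section~\ref{subsec:proof-thm-exact-sequences}.

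Next I would treat the one genuinely new triangle, the one at $\widetilde{\cT}_{4k-1}$, $\W_{4k-1}$, $\widetilde{\sD}_{4k-1}$. The key input is Theorem~\ref{thm:intro-framed-3-quarter-classification}, which tells us $\widetilde{R}_{4k-1}\colon\widetilde{\cT}_{4k-1}\to\W_{4k-1}$ is an isomorphism (the case $n=4k-1\not\equiv 1\bmod 4$). I would then simply \emph{define} $\widetilde{\mu}_{4k-1}:=\widetilde{\eta}_{4k-1}\circ\widetilde{R}_{4k-1}^{-1}$, where $\widetilde{\eta}_{4k-1}\colon\widetilde{\cT}_{4k-1}\to\widetilde{\sD}_{4k-1}$ is the map induced on the quotient by $\eta'_{4k-1}\colon\cT_{4k-1}\to\sD'_{4k-1}$ (this descends precisely because $\widetilde{\sD}_{4k-1}$ is defined as the quotient of $\sD'_{4k-1}$ by the image of the framing relations under $\eta'_{4k-1}$). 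That $\widetilde{\eta}_{4k-1}$ is an isomorphism follows from Theorem~\ref{thm:LC} (Levine's conjecture, $\eta'_{4k-1}$ an isomorphism), passing to quotients by matching subgroups; the details are essentially the top exact row of the diagram in Section~\ref{subsec:proof-thm-exact-sequences}. With $\widetilde{\mu}_{4k-1}$ so defined, the triangle commutes by construction, and all three of its maps are isomorphisms.

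Having the four triangles, I would assemble the three horizontal four-term sequences. The top $\cT$-sequence is exact by Theorem~\ref{thm:exact-sequence} (its proof completed in Section~\ref{subsec:proof-thm-exact-sequences}); the middle $\W$-sequence is exact by the argument given just after the statement of Theorem~\ref{thm:exact-sequence} (using $\bW_n\subseteq\bW^\iinfty_n$, $\bW^\iinfty_{2k-1}=\bW_{2k-1}$, and $\bW^\iinfty_{2k}\subseteq\bW_{2k-1}$ from Lemma~\ref{lem:boundary-twisted-IHX}). For the bottom $\sD$-sequence I need exactness of $0\to\sD'_{4k}\to\sD_{4k}\to\widetilde{\sD}_{4k-1}\to\sD_{4k-1}\to 0$: the left map is the (injective, finite-index) inclusion from \cite[Cor.2.3]{L3}; the cokernel $\sD_{4k}/\sD'_{4k}\cong\z\otimes\sL_{2k+1}$ by Definition~\ref{def:sl-maps} (the snake-lemma diagram); the right map $\widetilde{\sD}_{4k-1}\to\sD_{4k-1}$ has kernel exactly this same $\z\otimes\sL_{2k+1}$, which I can read off from the left-hand vertical short exact sequence in the displayed diagram of Section~\ref{subsec:proof-thm-exact-sequences} (the surjectivity $\sD'_{4k-1}\twoheadrightarrow\sD_{4k-1}$ and the identification of the kernel of $\widetilde{\sD}_{4k-1}\to\sD_{4k-1}$ with $\z\otimes\sL'_{2k+1}=\z\otimes\sL_{2k+1}$). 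Commutativity of the three outer squares of the prism then follows from the fact that an element of a $\W$-group is determined by its $\tau$-invariant (Corollary~\ref{cor:tau=w-concordance}), exactly as in Section~\ref{subsec:proof-thm-exact-sequences}. Finally, the splitting into short exact sequences with common middle term $\sK^\mu_{4k-1}\cong\z\otimes\sL_{2k+1}$ is Proposition~\ref{prop:SL4k-1-iso} together with the identification of this group as both $\Cok(\cT_{4k}\to\cT^\iinfty_{4k})$ and $\Ker(\widetilde{\cT}_{4k-1}\to\cT^\iinfty_{4k-1})$ from Theorem~\ref{thm:exact-sequence}.

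The main obstacle is not any single hard computation but rather the bookkeeping in the new triangle: one must check that $\eta'_{4k-1}$ genuinely descends to an isomorphism $\widetilde{\eta}_{4k-1}$ between the \emph{correct} quotients, i.e.\ that the image of $\Delta_{4k-1}$ in $\cT_{4k-1}$ corresponds under the isomorphism $\eta'_{4k-1}$ to the subgroup of $\sD'_{4k-1}$ by which $\widetilde{\sD}_{4k-1}$ is defined — this is the computation $\eta'(\Delta(1\otimes t))=sq(1\otimes\eta')(1\otimes t)$ carried out in Section~\ref{subsec:proof-thm-exact-sequences}, so strictly it is already available, but it is the one place where the diagram's commutativity is not purely formal. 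Everything else is a consequence of results stated above, and the proof reduces to citing them in the right order.
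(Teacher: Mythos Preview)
Your proposal is correct and follows essentially the same approach as the paper: assemble the three previously-established commuting triangles of isomorphisms, define the one new ingredient $\widetilde{\mu}_{4k-1}:=\widetilde{\eta}_{4k-1}\circ\widetilde{R}_{4k-1}^{-1}$ (using that $\widetilde{R}_{4k-1}$ is an isomorphism from Theorem~\ref{thm:intro-framed-3-quarter-classification}), and then read off exactness of the $\sD$-row from that of the $\cT$-row via the $\eta$-isomorphisms. The only minor difference is that the paper defines the map $\sD_{4k}\to\widetilde{\sD}_{4k-1}$ simply as the one induced by $\cT^\iinfty_{4k}\to\widetilde{\cT}_{4k-1}$ through the vertical isomorphisms (so exactness of the bottom row is automatic), whereas you verify exactness of the $\sD$-row more directly using Definition~\ref{def:sl-maps} and the computations of Section~\ref{subsec:proof-thm-exact-sequences}; both are fine and lead to the same result.
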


Theorem~\ref{thm:diagram 1} will be made precise and proven in the next subsection.

\subsection{The hard Master Diagram}\label{subsec:hard master-diagrams}

The other half of cases are covered by the following diagram. Here we notice that $\z\otimes \sL'_{2k}$ breaks the $\cT$-sequence into two exact sequences, but that the exact sequence at the bottom $\sD'_{4k-2}\to\sD_{4k-2}\to\z\otimes\sL_{2k}$ involves $\sL_{2k}$ and not $\sL'_{2k}$. Hence to make the corresponding 4-term $\sD$ sequence, we create $\sD^\iinfty_{4k-2}$ by a pullback diagram as pictured by the ``p.b."-labeled parallelogram. Then one can lift $\eta_{4k-2}$ to $\eta^{\iinfty}_{4k-2}$ which becomes an isomorphism (Theorem~\ref{thm:isomorphisms}).
We have seen that $\mu_{4k-2}$ can be defined with image in $\sD_{4k-2}$. Lifting it to $\mu^\iinfty_{4k-2}$ is equivalent to Conjecture~\ref{conj:Arf-k}, since that conjecture is equivalent to $R^\iinfty_{4k-2}$ being an isomorphism. If the lifted maps $\mu^\iinfty_{4k-2}$ exist, they would automatically be isomorphisms.
\begin{thm}\label{thm:diagram 2}
The following diagram commutes.
\[
\xymatrix{ 
 \cT_{4k-2}\ar@{>->>}[dd]^(.6){\eta_{4k-2}}  \ar@{>->>}[dr]^{\widetilde{R}_{4k-2}} \ar@{>->}[rr] &&  \cT^\iinfty_{4k-2} \ar@{>->>}[dd]_(.6){\eta^\iinfty_{4k-2}}  \ar@{->>}[dr]^{R^\iinfty_{4k-2}} \ar[rr] && \widetilde\cT_{4k-3} \ar@{>->>}[dd]_(.6){\widetilde\eta_{4k-3}} \ar@{->>}[dr]^{\widetilde{R}_{4k-3}} \ar@{->>}[rr] &&  \cT^\iinfty_{4k-3}\ar@{>->>}[dd]_(.6){\eta_{4k-3}} \ar@{>->>}[dr]^{R^\iinfty_{4k-3}}\\
& \W_{4k-2} \ar@{>->>}[dl]^{\mu_{4k-2}} \ar@{>->}[rr]|(.48)\ &&  \W^\iinfty_{4k-2}\ar @{->>}[ddl]^(.3){\mu_{4k-2}}|(.5){\phantom{X}}|(.66){\phantom{X}}  \ar@{-->}[dl]|{\mu^\iinfty_{4k-2} ?} \ar[rr]|(.52)\ && \W_{4k-3}  \ar@{-->}[dl]|{\widetilde \mu_{4k-3} ?}  \ar@{->>}[rr]|\  &&  \W^\iinfty_{4k-3} \ar@{>->>}[dl]^{\mu_{4k-3}}\\
\sD'_{4k-2} \ar@{=}[d] \ar@{>->}[rr] &&  \sD^\iinfty_{4k-2} \ar@{->>}[d]\ar@{->>}[dr]\ar[rr] && \widetilde {\sD}_{4k-3}  \ar@{->>}[rr] &&  \sD_{4k-3}\\
\sD'_{4k-2} \ar@{>->}[rr]\ &&  \sD_{4k-2} \ar@{}[r]|(.42)*[F-,]{\, p.b.\,} \ar@{->>}[dr] & \sL'_{2k}\otimes\z \ar@{->>}[d] \ar@{>->}[ur]&&& \\
&&&  \sL_{2k} \otimes\z&&&
 } \]

As in the first diagram, the three 4-term horizontal sequences are exact. The horizontal $\cT$- and 
$\sD$-sequences actually break into two short exact sequences with the groups $\z\otimes \sL'_{2k}$ in the middle. For the horizontal $\W$-sequence, the group $\sK^\mu_{4k-3}$ sits in the middle, which we conjecture to be isomorphic to $\z\otimes\sL'_{2k}$. At the bottom, there are two short exact sequences, both starting with $\sD'_{4k-2}$ because the diagonal square is a pullback (p.b.).
\end{thm}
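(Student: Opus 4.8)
The plan is to prove Theorem~\ref{thm:diagram 2} by assembling pieces that have essentially all been established earlier in the paper, so that the content of the proof is the verification of commutativity and exactness of the various squares and sequences rather than any new geometric input. First I would fix the definition of the pullback group $\sD^\iinfty_{4k-2}$: it is the fibered product $\sD_{4k-2}\times_{\sL_{2k}\otimes\z}(\sL'_{2k}\otimes\z)$, where $\sD_{4k-2}\to\sL_{2k}\otimes\z$ is the map $s\ell_{4k-2}$ of Definition~\ref{def:sl-maps} and $\sL'_{2k}\otimes\z\to\sL_{2k}\otimes\z$ is the reduction induced by the projection $\sL'_{2k}\to\sL_{2k}$ from the Levine exact sequence~\eqref{Z2-L'-L ses}. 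With this definition the ``p.b.'' parallelogram commutes by construction, and the two short exact sequences along the bottom both starting with $\sD'_{4k-2}$ follow from a diagram chase: the right-hand column $\sD'_{4k-2}\hookrightarrow\sD_{4k-2}\twoheadrightarrow\sL_{2k}\otimes\z$ is the bottom row of Definition~\ref{def:sl-maps}, and pulling it back along $\sL'_{2k}\otimes\z\twoheadrightarrow\sL_{2k}\otimes\z$ (which is surjective with kernel $\z\otimes\sL_k$ via $sq$) produces $\sD'_{4k-2}\hookrightarrow\sD^\iinfty_{4k-2}\twoheadrightarrow\sL'_{2k}\otimes\z$, since the kernel is unchanged under pullback along a surjection.

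Next I would construct the lifted map $\eta^\iinfty_{4k-2}\colon\cT^\iinfty_{4k-2}\to\sD^\iinfty_{4k-2}$. By Corollary~\ref{cor:twisted-triangle} we have $\eta_{4k-2}\colon\cT^\iinfty_{4k-2}\to\sD_{4k-2}$, and composing the inclusion $\cT_{4k-2}\hookrightarrow\cT^\iinfty_{4k-2}$ of Theorem~\ref{thm:exact-sequence} with the quotient $\cT^\iinfty_{4k-2}\twoheadrightarrow\z\otimes\sL'_{2k}$ gives the second coordinate; I would check these two maps agree after projection to $\sL_{2k}\otimes\z$ using the computation $\eta'(\Delta(1\otimes t))=sq(1\otimes\eta')(1\otimes t)$ already carried out in Section~\ref{subsec:proof-thm-exact-sequences}, which is exactly the compatibility needed to factor through the pullback. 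That $\eta^\iinfty_{4k-2}$ is an isomorphism is then the content of Theorem~\ref{thm:isomorphisms} (it is stated there that $\eta_n^\iinfty$ is an isomorphism except when $n\equiv 2\bmod 4$, and the pullback is precisely the modification that repairs the $n\equiv2\bmod4$ case); I would cite this. The remaining vertical triangles are handled as in Theorem~\ref{thm:exact-sequence} and the discussion of the first Master Diagram: $\widetilde\eta_{4k-3}$ is defined via $\widetilde\cT_{4k-3}\twoheadrightarrow\cT^\iinfty_{4k-3}\xrightarrow{\eta}\sD_{4k-3}$ composed with the lift to $\widetilde\sD_{4k-3}$, $\mu_{4k-2}$ and $\mu_{4k-3}$ are the Milnor invariants of Theorem~\ref{thm:twisted-Milnor}, and the factorizations $\eta=\mu\circ R$ (twisted) and $\eta'=\mu\circ R$ (framed) of \cite{CST2,ST2} give commutativity of every slanted triangle via Corollary~\ref{cor:tau=w-concordance}, which says the $\W$-classes are determined by the $\tau$-invariants. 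The conjectural dashed maps $\mu^\iinfty_{4k-2}$ and $\widetilde\mu_{4k-3}$ are simply declared to exist iff $R^\iinfty_{4k-2}$, respectively $\widetilde R_{4k-3}$, are isomorphisms — which is Conjecture~\ref{conj:Arf-k} by section~\ref{subsec:higher-order-arf-reduced-realization-conj} — and when they exist they are isomorphisms by the two-out-of-three property applied to the commuting triangle with the isomorphism $\eta^\iinfty$ (resp.\ $\widetilde\eta$).

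Finally I would verify the exactness claims in the three horizontal 4-term sequences. The $\cT$-row is exact and splits into the two short exact sequences with $\z\otimes\sL'_{2k}$ in the middle by Theorem~\ref{thm:exact-sequence} directly. The $\W$-row is exact by the discussion immediately following the statement of Theorem~\ref{thm:exact-sequence} (using $\bW^\iinfty_{2k}\subseteq\bW_{2k-1}$ from Lemma~\ref{lem:boundary-twisted-IHX}), and its middle term is $\sK^\mu_{4k-3}=\Ker(\W_{4k-3}\to\W^\iinfty_{4k-3})$ by definition of $\mu_{4k-3}$; its conjectural identification with $\z\otimes\sL'_{2k}$ follows from Lemma~\ref{lem:canonical-arf-iso} together with Levine's sequence~\eqref{Z2-L'-L ses}, as spelled out in section~\ref{subsec:higher-order-arf-reduced-realization-conj}. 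The $\sD$-row exactness and its splitting with $\z\otimes\sL'_{2k}$ in the middle is obtained by transporting the $\cT$-row across the vertical isomorphisms $\eta_{4k-2}$, $\eta^\iinfty_{4k-2}$, $\widetilde\eta_{4k-3}$, $\eta_{4k-3}$ — all of which are isomorphisms (the first, third and fourth by Theorem~\ref{thm:LC} and Theorem~\ref{thm:isomorphisms}, the second by the pullback construction above). The only genuine obstacle I anticipate is checking that the definition of $\eta^\iinfty_{4k-2}$ into the pullback is consistent — i.e.\ that the two natural maps into $\sL_{2k}\otimes\z$ really do coincide — but this is exactly the identity $\eta'\circ\Delta=sq\circ(1\otimes\eta')$ that is already proved in Section~\ref{subsec:proof-thm-exact-sequences}, so even this reduces to bookkeeping. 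Everything else is a routine, if somewhat lengthy, diagram chase.
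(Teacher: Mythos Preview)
Your overall strategy is correct and matches the paper's approach: Theorem~\ref{thm:diagram 2} is indeed proved by assembling Definition~\ref{def:D-infty-and-tilde}, Lemma~\ref{lem:lifted eta}, Theorem~\ref{thm:isomorphisms}, and the pieces of Theorem~\ref{thm:exact-sequence}, with no new geometric input.

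However, you have mis-identified the compatibility check needed to define $\eta^\iinfty_{4k-2}$ as a map into the pullback. What must be verified is that the square
\[
\xymatrix{
\cT^\iinfty_{4k-2}\ar[r]^{\eta_{4k-2}}\ar[d]&\sD_{4k-2}\ar[d]^{s\ell_{4k-2}}\\
\z\otimes\sL'_{2k}\ar[r]^p&\z\otimes\sL_{2k}
}
\]
commutes, where the left vertical map is the cokernel map of Theorem~\ref{thm:exact-sequence} sending $J^\iinfty\mapsto 1\otimes J$. This is \emph{not} the identity $\eta'\circ\Delta = sq\circ(1\otimes\eta')$ from Section~\ref{subsec:proof-thm-exact-sequences}: that identity concerns the odd-order framing relations $\Delta_{2k-1}\colon\cT_{k-1}\to\cT_{2k-1}$ and the squaring map into $\sD'_{2k-1}$, and has nothing to do with $s\ell_{4k-2}$ or with the even-order quotient $\cT^\iinfty_{4k-2}\to\z\otimes\sL'_{2k}$. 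The correct supporting fact is Lemma~\ref{lem:cd}, which computes $s\ell_{2k}(\eta_{2k}(J^\iinfty)) = 1\otimes J$ directly by tracing through the snake lemma defining $s\ell_{2k}$ and pushing the central root of $(J,J)$ to one side via IHX. The paper isolates this lemma separately precisely because it is the compatibility needed here; your cited $\Delta$-identity would not establish it.

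With this one correction, the rest of your outline goes through as written.
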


In the rest of this subsection we will collect the remaining algebraic definitions and proofs needed to set up these master diagrams.

First we need the definitions of $\widetilde{\sD}_{2k-1}$ and $\sD^\iinfty_{4k-2}$:

\begin{defn}\label{def:D-infty-and-tilde}\hfill
 The groups $\sD^\iinfty_{4k-2}$ (as well as the maps $s\ell_{4k-2}',p_{2k},sq^\iinfty$) are defined by the pullback diagram
\begin{equation}\tag{$\sD^\iinfty$}
\xymatrix{
\z\otimes \sL_{k} \ar@{>->}[r]^{sq^\iinfty} \ar@{=}[d] & \sD^\iinfty_{4k-2}\ar@{->>}[d]^{s\ell_{4k-2}'}\ar@{->>}[r]^{p_{4k-2}}&\sD_{4k-2}\ar@{->>}[d]^{s\ell_{4k-2}}\\
\z\otimes \sL_{k} \ar@{>->}[r]^{sq} & \Z_2\otimes \sL'_{2k}\ar@{->>}[r]^p &\Z_2\otimes \sL_{2k}
}
\end{equation}

To define the group $\widetilde{\sf D}_{2k-1}$, we note that the homomorphism $\id\otimes sq\colon \z^m \otimes\sL_{k}\to \z^m\otimes\sL_{2k}$ restricts to a homomorphism $\z\otimes\sD'_{k-1}\to \sD'_{2k-1}$, because elements of the form $X\otimes[Y,Y]$ are in the kernel of the bracketing map.
 Now $\widetilde{\sf D}_{2k-1}$ is defined to be the quotient  of ${\sf D}'_{2k-1}$ by the image of this homomorphism. It is not hard to show that the image of this homomorphism is equal to
 the image  $\eta'_{2k-1}\circ\Delta(\z\otimes \cT_{k-1})$.  Thus there is an induced map $\widetilde{\eta}_{2k-1}\colon \widetilde{\cT}_{2k-1}\to \widetilde{\sD}_{2k-1}$.
\end{defn}

\begin{lem}\label{lem:lifted eta}
There is a canonical lift $\eta_{4k-2}^\iinfty$ of $\eta_{4k-2}$ 
\[
\xymatrix{
& \sD^\iinfty_{4k-2}\ar@{->>}[d]^{p_{4k-2}}\\
\cT^\iinfty_{4k-2} \ar@{->>}[r]^{\eta_{4k-2}} \ar@{->>}[ru]^{\eta^\iinfty_{4k-2}} & \sD_{4k-2} 
}\]
such that $\eta_{4k-2}^\iinfty((J,J)^\iinfty) = sq^\iinfty(1\otimes J)$ for all rooted trees $J\in \sL_k$.
\end{lem}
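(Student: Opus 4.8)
The plan is to construct the lift $\eta^\iinfty_{4k-2}$ directly on generators and then verify that it respects all the defining relations of $\cT^\iinfty_{4k-2}$. Recall that $\cT^\iinfty_{4k-2}$ is generated by order $4k-2$ (non-$\iinfty$) trees together with order $2k-1$ $\iinfty$-trees $J^\iinfty = \iinfty\!-\!\!\!- J$ with $J$ a rooted tree of order $2k-2$ (i.e.\ $J \in \sL_{2k-1}$... wait, one needs the order of $J$ to be $2k-2$, so $J\in\sL_{2k-1}$; but the statement refers to $J\in\sL_k$, which belongs to the $n\equiv 2 \bmod 4$ setting where the half-order is $2k-1$ and $J$ ranges appropriately — I will match the indexing used in Proposition~\ref{prop:kerEta4k-2}, where $J$ has order $k-1$, so $J\in\sL_k$, and $n=4k-2$). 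On a non-$\iinfty$ tree $t$ I would set $\eta^\iinfty_{4k-2}(t) := sq^\iinfty$ applied to nothing — rather, I send $t$ to the unique element of $\sD^\iinfty_{4k-2}$ lying over $\eta_{4k-2}(t)\in\sD_{4k-2}$ and mapping to $0$ under $s\ell'_{4k-2}$; such a lift exists and is unique because by the pullback defining $\sD^\iinfty_{4k-2}$ the fiber of $p_{4k-2}$ over any point is a torsor over $\ker(p) = sq(\z\otimes\sL_k)$, and one checks $s\ell_{4k-2}\circ\eta_{4k-2}(t)=0$ since $\eta_{4k-2}(t)$ actually lies in the subgroup $\sD'_{4k-2}\subset\sD_{4k-2}$ (this uses $\eta_{4k-2}|_{\cT_{4k-2}} = \eta'_{4k-2}$ composed with $\sD'\hookrightarrow\sD$, from \cite{CST2,L3}). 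On an $\iinfty$-tree $J^\iinfty$ I set $\eta^\iinfty_{4k-2}(J^\iinfty) := sq^\iinfty(1\otimes J)$, which is forced by the requirement in the statement and is consistent with $p_{4k-2}\circ sq^\iinfty(1\otimes J) = sq(1\otimes J)\in\z\otimes\sL'_{2k}$... no: $p_{4k-2}\circ sq^\iinfty = sq$ into $\Z_2\otimes\sL'_{2k}$ only after the bottom identification — rather by the pullback square $p_{4k-2}(sq^\iinfty(1\otimes J))$ equals the image of $sq(1\otimes J)$ in $\sD_{4k-2}$, which is exactly $\eta_{4k-2}(J^\iinfty)$ by Definition~\ref{def:eta-infty} together with the formula $\eta_n(J^\iinfty) = \tfrac12\eta_n(J\!-\!\!\!- J)$.

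Next I would check that these assignments descend to $\cT^\iinfty_{4k-2}$, i.e.\ kill the four families of relations in Definition~\ref{def:Tau-infty-even}. The AS and IHX relations on non-$\iinfty$ trees are handled by the fact that $\eta_{4k-2}$ (equivalently $\eta'_{4k-2}$) already respects them and the lift is by uniqueness compatible. The symmetry relation $(-J)^\iinfty = J^\iinfty$ maps to $sq^\iinfty(1\otimes(-J)) = sq^\iinfty(1\otimes J)$ since $sq$ factors through $\z\otimes\sL_k$ where orientation reversal of $J$ acts trivially (it changes $J$ by a sign, invisible mod $2$). The interior-twist relation $2\cdot J^\iinfty = \langle J,J\rangle$ maps to $2\cdot sq^\iinfty(1\otimes J) = 0$ on the left (as $sq^\iinfty$ lands in the $2$-torsion subgroup $sq^\iinfty(\z\otimes\sL_k)$), and on the right $\eta^\iinfty_{4k-2}(\langle J,J\rangle)$ should also vanish: indeed $p_{4k-2}\circ\eta^\iinfty_{4k-2}(\langle J,J\rangle) = \eta_{4k-2}(\langle J,J\rangle)$ is $2\cdot\eta_{4k-2}(J^\iinfty)$ divided by... here I must use that $\eta'_{4k-2}(\langle J,J\rangle) = sq(\text{something})$ lies in $\ker(\sD'_{4k-2}\to\sD_{4k-2})$ appropriately, so that its chosen lift is exactly $sq^\iinfty$ applied to the relevant class, matching $2\cdot sq^\iinfty(1\otimes J) = 0$. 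The twisted IHX relation $I^\iinfty = H^\iinfty + X^\iinfty - \langle H,X\rangle$ maps to $sq^\iinfty(1\otimes I) = sq^\iinfty(1\otimes H) + sq^\iinfty(1\otimes X) - \eta^\iinfty_{4k-2}(\langle H,X\rangle)$; modulo the $sq^\iinfty$-torsion this is the ordinary IHX identity $I = H + X$ in $\sL_k$ (recall $+I - H + X$ is a Jacobi relator so $I \equiv H - X$, but signs die mod $2$), and the cross term $\langle H,X\rangle$ is absorbed because $\eta'_{4k-2}(\langle H,X\rangle)$ lands in the image of $sq$ in $\sD'_{4k-2}$ — this is exactly the computation done in the proof of exactness of the tree sequence in section~\ref{subsec:proof-thm-exact-sequences} (the identity $\eta'(\Delta(1\otimes t)) = sq(1\otimes\eta')(1\otimes t)$), and I would cite/adapt that calculation.

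Finally, the equation $\eta^\iinfty_{4k-2}((J,J)^\iinfty) = sq^\iinfty(1\otimes J)$ is then literally the definition on those generators, noting $(J,J)^\iinfty$ is the $\iinfty$-tree whose underlying rooted tree is $(J,J)$ of order $2k-2$... — wait, with $J$ of order $k-1$, $(J,J)$ has order $2k-1$, which is \emph{not} $2k-2$; but $(J,J)^\iinfty$ as an $\iinfty$-tree in $\cT^\iinfty_{4k-2}$ has $2k-1$ trivalent vertices and should live in order $4k-2$, so this is consistent (an order-$n$ $\iinfty$-tree has $n$ trivalent vertices and lies in $\cT^\iinfty_{2n}$, here $n = 2k-1$). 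So the $\iinfty$-tree $(J,J)^\iinfty$ is indeed a generator and the formula holds by construction. The main obstacle, and where I would spend most of the effort, is verifying the twisted IHX relation is respected: it requires knowing precisely how $\eta'_{4k-2}$ behaves on inner products $\langle H,X\rangle$ and that the resulting element of $\sD'_{4k-2}$ has the right image under the chosen splitting of the pullback — this is the one place where the ``coincidence'' of the combinatorics (the factorization through $sq$) must be invoked carefully, and it relies on Theorem~\ref{thm:LC} to identify $\cT$ with $\sD'$ so that these computations can be carried out on the Lie-algebra side.
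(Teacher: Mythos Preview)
Your proposal has a genuine gap at the very start of the construction. You write ``On an $\iinfty$-tree $J^\iinfty$ I set $\eta^\iinfty_{4k-2}(J^\iinfty) := sq^\iinfty(1\otimes J)$'', but this is ill-typed: a general $\iinfty$-tree in $\cT^\iinfty_{4k-2}$ has the form $K^\iinfty$ with $K$ a rooted tree of order~$2k-1$, i.e.\ $K\in\sL'_{2k}$, whereas $sq^\iinfty$ has domain $\z\otimes\sL_k$. The formula in the statement refers only to the \emph{special} $\iinfty$-trees $(J,J)^\iinfty$ with $J\in\sL_k$; it is not a definition on all generators. This error then propagates: your claim that $p_{4k-2}(sq^\iinfty(1\otimes J))=\eta_{4k-2}(J^\iinfty)$ is false for generic $J^\iinfty$, since $sq^\iinfty$ lands in $\ker p_{4k-2}$ while $\eta_{4k-2}(J^\iinfty)$ is typically nonzero. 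Similarly, your treatment of the twisted IHX relation applies $sq^\iinfty$ to $I,H,X\in\sL'_{2k}$ and asserts that $\eta'_{4k-2}(\langle H,X\rangle)$ lies in the image of $sq$; neither statement makes sense or is correct.

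The paper's approach avoids all of this by using the universal property of the pullback defining $\sD^\iinfty_{4k-2}$. One already has the well-defined map $\cT^\iinfty_{4k-2}\to\z\otimes\sL'_{2k}$ (sending $K^\iinfty\mapsto 1\otimes K$ and non-$\iinfty$ trees to $0$; this is the cokernel map from Theorem~\ref{thm:exact-sequence}) and the map $\eta_{4k-2}:\cT^\iinfty_{4k-2}\to\sD_{4k-2}$. The only thing to check is that these agree after composing to $\z\otimes\sL_{2k}$, which is Lemma~\ref{lem:cd}; the lift $\eta^\iinfty_{4k-2}$ then exists for free. The formula on $(J,J)^\iinfty$ follows by reading off the two components: $\eta_{4k-2}((J,J)^\iinfty)=0$ and the $\sL'_{2k}$-component is $1\otimes(J,J)=sq(1\otimes J)$, so the lift is $sq^\iinfty(1\otimes J)$. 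Your generator-by-generator strategy \emph{can} be made to work if you instead define $\eta^\iinfty_{4k-2}(K^\iinfty):=(\eta_{4k-2}(K^\iinfty),\,1\otimes K)$ in the pullback and then verify the relations, but this just reproves by hand that the cokernel map is well-defined together with Lemma~\ref{lem:cd}; the Levine Conjecture is not needed here.
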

\begin{proof}
To construct $\eta^\iinfty_{4k-2}$ it suffices to observe that we have a commutative diagram
$$
\xymatrix{\cT^\iinfty_{4k-2}\ar[r]^{\eta_{4k-2}}\ar[d]&{\sD}_{4k-2}\ar[d]\\
\z\otimes {\sL}'_{2k}\ar[r]^p&\z\otimes{\sL}_{2k}
}
$$
which gives rise to a map to the pullback $\sD_{4k-2}^\iinfty$ from  diagram ${\sf D}^\iinfty$ in Definition~\ref{def:D-infty-and-tilde} above. To calculate $\eta^\iinfty_{4k-2}((J,J)^\iinfty)$ notice that $\eta_{4k-2}((J,J)^\iinfty)=0$ and $s\ell^\prime_{4k-2}((J,J)^\iinfty)=1\otimes (J,J)=sq(1\otimes J)$. So $\eta^\iinfty_{4k-2}((J,J)^\iinfty)=sq^\iinfty(1\otimes J)$. 
\end{proof}

\begin{rem}
The superscripts in our $\eta$-maps reflect those of their {\em target} groups. 
\end{rem}

\subsection{Variations on the Levine conjecture}\label{subsec:eta-isos}
The next theorem establishes that various versions of the $\eta$ map are isomorphisms as a result of the Levine Conjecture (Theorem~\ref{thm:LC} above), and gives as a corollary the characterization of the
kernel of $\eta_{4k-2}$ stated in Proposition~\ref{prop:kerEta4k-2} of the introduction.

\begin{thm}\label{thm:isomorphisms} The following maps  are all isomorphisms: 
\begin{enumerate}
\item\label{item:eta-tilde} $\widetilde{\eta}_{2k-1}\colon\widetilde{\cT}_{2k-1}\to \widetilde{\sD}_{2k-1}$
\item\label{item:eta-odd} $\eta_{2k-1}\colon\cT^\iinfty_{2k-1}\to\sD_{2k-1}$
\item\label{item:eta-4k} $\eta_{4k}\colon\cT^\iinfty_{4k}\to\sD_{4k}$
\item\label{item:eta-infty-4k-2} $\eta^\iinfty_{4k-2}\colon \cT^\iinfty_{4k-2}\to\sD^\iinfty_{4k-2}$
\end{enumerate}

\end{thm}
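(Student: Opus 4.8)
\textbf{Proof plan for Theorem~\ref{thm:isomorphisms}.}

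The overall strategy is to bootstrap everything from the Levine Conjecture (Theorem~\ref{thm:LC}), which says $\eta'_n\colon\cT_n\to\sD'_n$ is an isomorphism, by feeding it through the various short exact sequences that have already been assembled. First I would dispose of item~(\ref{item:eta-tilde}): in the commutative diagram just before the statement of Theorem~\ref{thm:isomorphisms} (the one used to prove exactness of the odd sequence in Theorem~\ref{thm:exact-sequence}), we have $\eta'_{2k-1}$ an isomorphism by Theorem~\ref{thm:LC}, and $\widetilde{\cT}_{2k-1}$ and $\widetilde{\sD}_{2k-1}$ are by construction the quotients of $\cT_{2k-1}$ and $\sD'_{2k-1}$ by $\im\Delta_{2k-1}$ and $\im(\id\otimes sq)$ respectively. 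The computation displayed there shows $\eta'_{2k-1}\circ\Delta = sq\circ(1\otimes\eta')$ on $\z\otimes\cT_{k-1}$, so $\eta'_{2k-1}$ carries $\im\Delta_{2k-1}$ onto (a generating set for) the subgroup being quotiented in $\sD'_{2k-1}$; since $1\otimes\eta'_{k-1}$ is onto (again Theorem~\ref{thm:LC}) the images match exactly, and $\widetilde{\eta}_{2k-1}$ is the induced isomorphism on quotients.

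Next, for item~(\ref{item:eta-odd}): $\cT^\iinfty_{2k-1}$ is $\cT_{2k-1}$ modulo the boundary-twist relations $\langle(i,J),J\rangle=0$, and the odd short exact sequence of Theorem~\ref{thm:exact-sequence} (proved in section~\ref{subsec:proof-thm-exact-sequences}) identifies $\Ker(\widetilde{\cT}_{2k-1}\to\cT^\iinfty_{2k-1})$ with $\z\otimes\sL'_{k+1}$. Under $\widetilde{\eta}_{2k-1}$ this kernel maps isomorphically to the kernel of $\widetilde{\sD}_{2k-1}\twoheadrightarrow\sD_{2k-1}$, which by Corollary~2.3 of \cite{L3} (the middle row of that same diagram) is exactly $\z\otimes\sL'_{k+1}$; comparing the two four-term sequences via the five lemma then forces $\eta_{2k-1}$ to be an isomorphism. (Alternatively one reads this directly off the bottom short exact sequence of section~\ref{subsec:proof-thm-exact-sequences}.) For item~(\ref{item:eta-4k}): here $\cT_{4k}=\widetilde{\cT}_{4k}\hookrightarrow\cT^\iinfty_{4k}$ with cokernel $\z\otimes\sL'_{2k+1}$ (even short exact sequence of Theorem~\ref{thm:exact-sequence}), but $4k$ is twice an even number, so Levine's isomorphism $\sL'_{2k+1}\cong\sL_{2k+1}$ applies and $\z\otimes\sL'_{2k+1}=0$ when $2k+1$ is\,... no: rather, the point is that $\sD'_{4k}\hookrightarrow\sD_{4k}$ has cokernel $\z\otimes\sL_{2k+1}$ by Definition~\ref{def:sl-maps}, and $\z\otimes\sL'_{2k+1}\cong\z\otimes\sL_{2k+1}$ since $2k+1$ is odd so $\sL'_{2k+1}\to\sL_{2k+1}$ is an isomorphism; chasing $\eta'_{4k}$ (iso by Theorem~\ref{thm:LC}) and $\eta_{4k}$ through the square $\cT_{4k}\hookrightarrow\cT^\iinfty_{4k}$, $\sD'_{4k}\hookrightarrow\sD_{4k}$ and matching cokernels via the computation $\eta_{4k}$ sends an $\iinfty$-tree $J^\iinfty$ to a class projecting to $sq$ of the corresponding generator, we again apply the five lemma.

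Finally item~(\ref{item:eta-infty-4k-2}), which I expect to be the main obstacle. The target $\sD^\iinfty_{4k-2}$ is the pullback in diagram $(\sD^\iinfty)$ of Definition~\ref{def:D-infty-and-tilde}, and Lemma~\ref{lem:lifted eta} already produces the lift $\eta^\iinfty_{4k-2}$ and pins down its value on the twisted generators $(J,J)^\iinfty$. The plan is to show $\eta^\iinfty_{4k-2}$ is an isomorphism by comparing the two horizontal short exact sequences: on the source side, $0\to\cT_{4k-2}\to\cT^\iinfty_{4k-2}\to\z\otimes\sL'_{2k}\to0$ (even case of Theorem~\ref{thm:exact-sequence}), and on the target side the pullback diagram gives $0\to\cT_{4k-2}\to\sD^\iinfty_{4k-2}\to\z\otimes\sL'_{2k}\to0$ after identifying $\Ker p_{4k-2}=\z\otimes\sL_k\xrightarrow{sq^\iinfty}\sD^\iinfty_{4k-2}$ — wait, the correct identification is that $s\ell'_{4k-2}\colon\sD^\iinfty_{4k-2}\twoheadrightarrow\z\otimes\sL'_{2k}$ has kernel $\sD'_{4k-2}\cong\cT_{4k-2}$ via $\eta'_{4k-2}$. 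So the diagram to build is
\[
\xymatrix{
0\ar[r] & \cT_{4k-2}\ar[d]_{\eta'_{4k-2}}^{\cong}\ar[r] & \cT^\iinfty_{4k-2}\ar[d]^{\eta^\iinfty_{4k-2}}\ar[r] & \z\otimes\sL'_{2k}\ar[d]^{=}\ar[r] & 0\\
0\ar[r] & \sD'_{4k-2}\ar[r] & \sD^\iinfty_{4k-2}\ar[r]_{s\ell'_{4k-2}} & \z\otimes\sL'_{2k}\ar[r] & 0
}
\]
where I must check: (i) the left square commutes — this is the statement that $\eta^\iinfty_{4k-2}$ restricted to $\cT_{4k-2}\subset\cT^\iinfty_{4k-2}$ agrees with $\eta'_{4k-2}$, which follows from the defining commutative square in Lemma~\ref{lem:lifted eta} together with the fact that $p_{4k-2}\circ\eta^\iinfty_{4k-2}=\eta_{4k-2}$ and $\eta_{4k-2}|_{\cT_{4k-2}}$ equals the composite $\cT_{4k-2}\xrightarrow{\eta'_{4k-2}}\sD'_{4k-2}\hookrightarrow\sD_{4k-2}$; (ii) the right square commutes — here I need $s\ell'_{4k-2}\circ\eta^\iinfty_{4k-2}$ to equal the quotient $\cT^\iinfty_{4k-2}\twoheadrightarrow\z\otimes\sL'_{2k}$, which one verifies on the two families of generators: non-$\iinfty$ trees $t$ map to $\eta'_{4k-2}(t)\in\sD'_{4k-2}=\Ker s\ell'_{4k-2}$ (consistent, both sides zero), and twisted trees $J^\iinfty$ map by Lemma~\ref{lem:lifted eta} to $sq^\iinfty(1\otimes J')$ for the appropriate root, so $s\ell'_{4k-2}$ of that is $1\otimes J'\in\z\otimes\sL'_{2k}$, matching the image of $J^\iinfty$ under the cokernel projection described right after Theorem~\ref{thm:exact-sequence}. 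Once both squares commute, the five lemma gives that $\eta^\iinfty_{4k-2}$ is an isomorphism. The delicate point — the real obstacle — is getting the right-hand identifications exactly right: making sure the map $s\ell'_{4k-2}$ from the pullback genuinely has kernel $\sD'_{4k-2}$ (i.e. that the pullback square in Definition~\ref{def:D-infty-and-tilde} really does split off $\z\otimes\sL_k$ the way claimed, using that $sq$ and $sq^\iinfty$ are injective), and that the cokernel generator $J^\iinfty\mapsto 1\otimes J'$ uses a consistent choice of root across all the maps $\eta$, $\eta^\iinfty$, $\Delta$; orientations and signs are not an issue since everything in sight is $2$-torsion, but the bookkeeping of which $\sL'$ versus $\sL$ appears (and why $\sL'_{2k}$ and not $\sL_{2k}$) must be tracked carefully, exactly as in the pullback parallelogram of Theorem~\ref{thm:diagram 2}.
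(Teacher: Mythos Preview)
Your arguments for items~(\ref{item:eta-tilde}) and~(\ref{item:eta-4k}) are essentially the paper's, and your approach to~(\ref{item:eta-infty-4k-2}) is a valid alternative: you use the short exact sequence $0\to\cT_{4k-2}\to\cT^\iinfty_{4k-2}\to\z\otimes\sL'_{2k}\to0$ (the even case of Theorem~\ref{thm:exact-sequence}, proved independently via \cite{UQF}) and compare it to $0\to\sD'_{4k-2}\to\sD^\iinfty_{4k-2}\to\z\otimes\sL'_{2k}\to0$ coming from the pullback, whereas the paper compares the sequences $\langle(J,J)^\iinfty\rangle\to\cT^\iinfty_{4k-2}\xrightarrow{\eta_{4k-2}}\sD_{4k-2}$ and $\z\otimes\sL_k\to\sD^\iinfty_{4k-2}\xrightarrow{p_{4k-2}}\sD_{4k-2}$. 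Both routes work; yours has the small advantage that exactness of the top row is already established, while the paper's top row requires knowing that $\eta_{4k-2}$ is surjective with kernel generated by symmetric $\iinfty$-trees.

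However, your argument for item~(\ref{item:eta-odd}) is circular. You invoke the odd short exact sequence of Theorem~\ref{thm:exact-sequence} to identify $\Ker(\widetilde{\cT}_{2k-1}\to\cT^\iinfty_{2k-1})$ with $\z\otimes\sL'_{k+1}$, but if you look at the end of section~\ref{subsec:proof-thm-exact-sequences} you will see that this very identification is obtained by first proving exactness of $0\to\z\otimes\sL'_{k+1}\to\widetilde{\cT}_{2k-1}\to\sD_{2k-1}\to 0$ via the snake lemma, and then replacing $\sD_{2k-1}$ by $\cT^\iinfty_{2k-1}$ \emph{using} the isomorphism $\eta_{2k-1}$ from Theorem~\ref{thm:isomorphisms}. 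So you are assuming what you want to prove. The paper avoids this by working directly with the defining sequence
\[
0\to\operatorname{span}\{\langle i,(J,J)\rangle\}\to\cT_{2k-1}\to\cT^\iinfty_{2k-1}\to 0,
\]
comparing it via $\eta'_{2k-1}$ to Levine's sequence $0\to\z^m\otimes\sL_k\xrightarrow{sq}\sD'_{2k-1}\to\sD_{2k-1}\to0$ from \cite{L3}, and checking by hand that $\eta'(\langle i,(J,J)\rangle)=X_i\otimes(J,J)=sq(X_i\otimes J)$ (the cross-terms from the two copies of $J$ cancel by AS). This establishes~(\ref{item:eta-odd}) without passing through $\widetilde{\cT}_{2k-1}$ at all.
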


\begin{proof}
(\ref{item:eta-tilde}) follows from Theorem~\ref{thm:LC} and the definition of $\widetilde{\eta}_{2k-1}$. 

To show~(\ref{item:eta-odd}), consider the following diagram (commutative by definition of $\eta$):
$$\xymatrix{
0\ar[r]&\operatorname{span}\{\langle i,(J,J)\rangle\}\ar[r]\ar[d]^{\eta'}&\mathcal T_{2k-1}\ar[r]\ar[d]_\cong^{\eta'}&\mathcal T_{2k-1}^\iinfty\ar[r]\ar[d]^{\eta}&0\\
0\ar[r]&\mathbb Z_2^m\otimes {\sf L}_k\ar[r]^{sq}&\sD'_{2k-1}\ar[r]&\sD_{2k-1}\ar[r]&0\\
}
$$
The bottom row is exact by Corollary 2.3 of \cite{L3}. The top row is exact by definition and the middle map $\eta'$ is an isomorphism by Theorem~\ref{thm:LC}. This implies the left-hand restriction $\eta'$ is one to one, and it is onto since $\eta'(\langle i,(J,J)\rangle)=X_i\otimes J$. Therefore, the right-hand map $\eta$ is an isomorphism by the 5-lemma. 

For (\ref{item:eta-4k}), consider the following diagram (commutative by Lemma~\ref{lem:cd})
$$\xymatrix{
0\ar[r]&\cT_{4k}\ar[r]\ar[d]_\cong^{\eta'_{4k}}&\cT_{4k}^\iinfty\ar[r]\ar[d]^{\eta_{4k}}&\z\otimes{\sL}'_{2k+1}\ar[r]\ar[d]^p_{\cong}&0\\
0\ar[r]&{\sD}'_{4k}\ar[r]&{\sD}_{4k}\ar[r]^{s\ell_{4k}\,\,\,\,\,\,\,\,\,\,\,\,}&\z\otimes{\sL}_{2k+1}\ar[r]&0
}
$$
The bottom horizontal sequence is exact by Definition~\ref{def:sl-maps}. The top one is part of Theorem~\ref{thm:exact-sequence}, proven in Section~\ref{subsec:proof-thm-exact-sequences}.
Since ${\sL}_{2k+1}\cong {\sL}'_{2k+1}$ it follows that $\eta_{4k}$ is an isomorphism.

For (\ref{item:eta-infty-4k-2}), we note that
Diagram $\sD^\iinfty$ and Lemma~\ref{lem:lifted eta} imply a commutative diagram:
$$
\xymatrix{
0\ar[r]&\langle(J,J)^\iinfty\rangle\ar[r]\ar@{->>}[d]&\mathcal T^{\iinfty}_{4k-2}\ar[r]^{\eta_{4k-2}}\ar[d]_{\eta^\iinfty_{4k-2}}&{\sf D}_{4k-2}\ar[r]\ar@{=}[d]&0\\
0\ar[r]&\z\otimes{\sf L}_{k}\ar[r]^(.6){sq^\iinfty_{4k-2}}&{\sD}^\iinfty_{4k-2}\ar[r]^{p_{4k-2}}&{\sD}_{4k-2}\ar[r]&0
}
$$
where the vertical left hand map sends $(J,J)^\iinfty$ to $1\otimes J$.
The right-hand square commutes by the main commutative triangle of Lemma~\ref{lem:lifted eta}, whereas the left square commutes by the calculation $\eta^\iinfty_{4k-2}((J,J)^\iinfty)=sq^\iinfty_{4k-2}(1\otimes J)$, as in Lemma~\ref{lem:cd}. 
Thus $\eta^\iinfty_{4k-2}$ is an isomorphism by the 5-lemma.
\end{proof}

As a corollary to the above proof of (\ref{item:eta-infty-4k-2}) we get the following characterization of $\Ker(\eta_{4k-2}\colon \cT^\iinfty_{4k-2}\to \sD_{4k-2})$ which is equivalent to 
Proposition~\ref{prop:kerEta4k-2} in the introduction:
\begin{cor}\label{cor:kernel-eta 4k-2}
The kernel of the homomorphism $\eta_{4k-2}\colon \cT^\iinfty_{4k-2}\to \sD_{4k-2}$ is isomorphic to $\z\otimes{\sf L}_k$, with $(J,J)^\iinfty\mapsto 1\otimes J$.
\end{cor}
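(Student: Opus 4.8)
The plan is to deduce the corollary formally from the isomorphism $\eta^\iinfty_{4k-2}\colon\cT^\iinfty_{4k-2}\to\sD^\iinfty_{4k-2}$ of Theorem~\ref{thm:isomorphisms}(\ref{item:eta-infty-4k-2}) together with the factorization $\eta_{4k-2}=p_{4k-2}\circ\eta^\iinfty_{4k-2}$ supplied by Lemma~\ref{lem:lifted eta}. First I would observe that, because $\eta^\iinfty_{4k-2}$ is bijective and $\eta_{4k-2}=p_{4k-2}\circ\eta^\iinfty_{4k-2}$, the map $\eta^\iinfty_{4k-2}$ restricts to an isomorphism from $\Ker(\eta_{4k-2})$ onto $\Ker(p_{4k-2})$. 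So the task reduces to computing $\Ker(p_{4k-2})$ and transporting it back along $\eta^\iinfty_{4k-2}$.

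Next I would read off $\Ker(p_{4k-2})$ from the defining pullback square $(\sD^\iinfty)$ of Definition~\ref{def:D-infty-and-tilde}: its bottom row is short exact (it is Levine's split sequence~\eqref{Z2-L'-L ses} tensored with $\z$, using $\z\otimes(\z\otimes\sL_k)\cong\z\otimes\sL_k$), and since the square is a pullback this forces the top row, with maps $sq^\iinfty$ and $p_{4k-2}$, to be short exact as well. Hence $sq^\iinfty$ identifies $\z\otimes\sL_k$ with $\Ker(p_{4k-2})$, and $(sq^\iinfty)^{-1}\circ\eta^\iinfty_{4k-2}$ gives an isomorphism $\Ker(\eta_{4k-2})\cong\z\otimes\sL_k$. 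To pin down the correspondence on generators I would invoke the computation $\eta^\iinfty_{4k-2}((J,J)^\iinfty)=sq^\iinfty(1\otimes J)$ from (the proof of) Lemma~\ref{lem:lifted eta}, valid for every rooted tree $J$ of order $k-1$: since $sq^\iinfty$ is injective and the elements $1\otimes J$ generate $\z\otimes\sL_k$, this shows at once that the $\iinfty$-trees $(J,J)^\iinfty$ all lie in $\Ker(\eta_{4k-2})$, that they generate it, and that the isomorphism above sends $(J,J)^\iinfty\mapsto 1\otimes J$. Equivalently, one can package everything as the morphism of short exact sequences appearing in the proof of Theorem~\ref{thm:isomorphisms}(\ref{item:eta-infty-4k-2}) and apply the five lemma, concluding that the left-hand vertical map $\operatorname{span}\{(J,J)^\iinfty\}\to\z\otimes\sL_k$ is an isomorphism while exactness of the top row identifies its source with $\Ker(\eta_{4k-2})$.

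I do not expect a genuine obstacle here: the statement is a bookkeeping consequence of work already done. The one point that deserves attention is that the $(J,J)^\iinfty$ truly \emph{generate} the kernel, not merely lie in it; this is exactly what forces the use of the lift $\eta^\iinfty_{4k-2}$ (rather than $\eta_{4k-2}$ alone) being an isomorphism, which is where the affirmation of the Levine Conjecture enters through Theorem~\ref{thm:LC} and Theorem~\ref{thm:isomorphisms}(\ref{item:eta-infty-4k-2}). The only prerequisite to verify in passing is the short exactness of the top row of diagram $(\sD^\iinfty)$, which is immediate from its pullback description.
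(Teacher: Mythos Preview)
Your proposal is correct and follows essentially the same approach as the paper: both deduce the corollary from the fact that $\eta^\iinfty_{4k-2}$ is an isomorphism lifting $\eta_{4k-2}$ (Theorem~\ref{thm:isomorphisms}(\ref{item:eta-infty-4k-2}) and Lemma~\ref{lem:lifted eta}), together with the short exact sequence $0\to\z\otimes\sL_k\to\sD^\iinfty_{4k-2}\to\sD_{4k-2}\to 0$ coming from the pullback diagram, and both invoke the computation $\eta^\iinfty_{4k-2}((J,J)^\iinfty)=sq^\iinfty(1\otimes J)$ for the explicit correspondence. Your write-up is simply more detailed than the paper's two-sentence proof.
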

\begin{proof}
That there is an isomorphism follows since $\eta^\iinfty_{4k-2}$ is an isomorphism which is a lift of $\eta_{4k-2}$ and from the exact sequence $0\to\z\otimes\sL_{k}\to\sD^\iinfty_{4k-2}\to \sD_{4k-2}\to 0.$ The fact that $(J,J)^\iinfty\mapsto 1\otimes J$ follows from Lemma~\ref{lem:lifted eta}
\end{proof}

The following lemma was used in the proof of Theorem~\ref{thm:isomorphisms}.
\begin{lem}\label{lem:cd}
Sending $J^\iinfty$ to $1\otimes J$ gives a commutative diagram
$$\xymatrix{
\cT_{2k}^\iinfty\ar[r]\ar[d]^{\eta_{2k}}&\z\otimes{\sL}'_{k+1}\ar[d]^p\\
{\sD}_{2k}\ar[r]^{s\ell_{2k}\,\,\,\,\,\,\,\,\,\,\,\,}&\z\otimes{\sL}_{k+1}
}
$$
\end{lem}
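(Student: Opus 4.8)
The plan is to verify commutativity on the generators of $\cT^\iinfty_{2k}$, namely the order $2k$ trees and the order $k$ $\iinfty$-trees. Recall from Section~\ref{subsec:proof-thm-exact-sequences} that the top horizontal map is the cokernel projection of the inclusion $\cT_{2k}\hookrightarrow\cT^\iinfty_{2k}$, so it sends $J^\iinfty$ to $1\otimes J$ and kills every non-$\iinfty$ tree; and that $s\ell_{2k}$ is, by the snake lemma of Definition~\ref{def:sl-maps}, the cokernel map $\sD_{2k}\twoheadrightarrow\sD_{2k}/\sD'_{2k}\cong\z\otimes\sL_{k+1}$ of the inclusion $\sD'_{2k}\hookrightarrow\sD_{2k}$ appearing there.

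For a non-$\iinfty$ tree $t$, the composite along the top and right is $0$. Along the left and bottom: since $2k+1$ is odd the projection $\sL'_{2k+1}\to\sL_{2k+1}$ is an isomorphism, so $\eta_{2k}(t)$ is the image of $\eta'_{2k}(t)\in\sD'_{2k}$ under $\sD'_{2k}\hookrightarrow\sD_{2k}$, which $s\ell_{2k}$ annihilates. So both composites vanish on non-$\iinfty$ trees, and the lemma reduces to the single identity $s\ell_{2k}(\eta_{2k}(J^\iinfty))=1\otimes\bar J$ for rooted trees $J$, where $\bar J\in\sL_{k+1}$ denotes the image of $J$ (this matches $p(1\otimes J)=1\otimes\bar J$).

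To compute $s\ell_{2k}(\eta_{2k}(J^\iinfty))$ I would trace through the snake lemma. Since the middle vertical map $\sL_1\otimes\sL'_{2k+1}\to\sL_1\otimes\sL_{2k+1}$ is an isomorphism, the element $\eta_{2k}(J^\iinfty)=\frac12\eta_{2k}(\langle J,J\rangle)$ has a unique preimage $\widetilde b$ in $\sL_1\otimes\sL'_{2k+1}$, namely $\widetilde b=\frac12\eta'_{2k}(\langle J,J\rangle)$; this is integral because in the odd degree $2k+1$ the coefficients of $\eta'_{2k}(\langle J,J\rangle)$ agree with the (even) coefficients of $\eta_{2k}(\langle J,J\rangle)$, and by the copy-exchanging symmetry of $\langle J,J\rangle$ one has $\widetilde b=\sum_v X_{\ell(v)}\otimes T_v(\langle J,J\rangle)$ with the sum taken over the leaves $v$ of just one copy of $J$. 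Because $\eta_{2k}(J^\iinfty)$ lies in $\sD_{2k}$, the quasi-Lie bracket of $\widetilde b$ lies in $\ker(\sL'_{2k+2}\to\sL_{2k+2})$, which is the image of $sq$, i.e.\ the copy of $\z\otimes\sL_{k+1}$ inside $\sL'_{2k+2}$; by the snake lemma $s\ell_{2k}(\eta_{2k}(J^\iinfty))$ is exactly the corresponding element of $\z\otimes\sL_{k+1}$. Applying the Jacobi (IHX) identity to the explicit sum for $\widetilde b$ --- using that $T_v(\langle J,J\rangle)$ is $J$ rerooted at $v$ with a second copy of $J$ grafted at the old root-edge --- all of the cross-terms telescope away, leaving precisely $[\bar J,\bar J]=sq(1\otimes\bar J)$, and hence the class $1\otimes\bar J$, as required.

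The main obstacle is this last Jacobi calculation: one must track the vertex-orientation signs built into $\eta'$ (cf.\ Section~\ref{subsec:w-tower-orientations}) carefully enough to be sure the telescoping leaves exactly the squared term $[\bar J,\bar J]$, and not some other element of the $2$-torsion group $\z\otimes\sL_{k+1}$. A cleaner route avoids the bookkeeping: both composites kill $\cT_{2k}$ and so descend to endomorphisms of the quotient map $\z\otimes\sL'_{k+1}\twoheadrightarrow\z\otimes\sL_{k+1}$, whence it suffices to check the formula on each generator $J$; and in terms of the quadratic-refinement picture of Remark~\ref{rem:quadratic-form}, where $q(J)=J^\iinfty$ extends by $q(J+K)=J^\iinfty+K^\iinfty+\langle J,K\rangle$ and $\frac12\eta'_{2k}(\langle J,J\rangle)$ is the associated polarization, the identity ``the quasi-Lie bracket of $\widetilde b$ equals $[\bar J,\bar J]$'' is precisely the statement that $\eta'$ carries $q$ to the squaring map $sq$, which is proved in \cite{UQF}. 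I would therefore cite that universality in place of the orientation bookkeeping.
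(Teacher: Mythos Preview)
Your proposal is correct and follows essentially the same route as the paper: check commutativity on the two types of generators, use that $\eta_{2k}$ on a non-$\iinfty$ tree factors through $\sD'_{2k}$ (hence dies under $s\ell_{2k}$), and for $J^\iinfty$ trace the snake lemma by lifting $\eta_{2k}(J^\iinfty)=\tfrac12\eta_{2k}(\langle J,J\rangle)$ to the sum over leaves of one copy of $J$ and then using IHX to push the root of the bracketed element to the center, obtaining $(J,J)=[\bar J,\bar J]$. The paper carries out exactly this IHX ``root-pushing'' step (stated tersely as ``pushing the central root of $(J,J)$ to one side using IHX relations'') rather than invoking \cite{UQF}; your worry about orientation signs is legitimate bookkeeping but not a different argument, and your proposed fallback to \cite{UQF} is unnecessary here since the IHX telescoping is the paper's intended one-line justification.
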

\begin{proof}
First, we need a better handle on the map $sl_{2k}\colon\sD_{2k}\to \mathbb Z_2\otimes \sL_{k+1}$. Let $Z\in\sD_{2k}$ and pick a lift $Z'\in \sL_1\otimes\sL'_{2k+1}$. Tracing through the snake lemma diagram in Definition~\ref{def:sl-maps}, one sees that the bracket of $Z'$ is a sum of commutators $[J_i,J_i]$, and that $sl_{2k}(Z)=\sum 1\otimes J_i$.

Consider a tree $t\in\mathcal T^\iinfty_{2k}$ which maps to zero in $\mathbb Z_2\otimes {\sf L}'_{k+1}$ by definition.
Mapping $t$ down by $\eta_{2k}$, we end up in $\sD_{2k}'$ and hence in the kernel of $s\ell_{2k}$.

Now consider $J^\iinfty\in\mathcal T^\iinfty_{2k}$. Then $\eta_{2k}(J^\iinfty)$ doubles $J$ to $\langle J,J\rangle$ and sums over putting a root at all of the leaves of one copy of $J$. The result represents an element in $\sL_1\otimes \sL_{k+1}$.
Calculating the bracket has the effect of summing over putting a root near all of the leaves on one copy of $J$ in $\langle J,J\rangle$, which modulo IHX is equal to $(J,J)$. To see this requires pushing the central root of $(J,J)$ to one side using IHX relations.

Thus $sl_{2k}(\eta_{2k}(J^\iinfty))=1\otimes J$ which is equal to mapping $J^\iinfty$ right and then down.
\end{proof}

\end{document}